\documentclass{amsart}
\usepackage{amsmath}                % See geometry.pdf to learn the layout options. There are lots.
\usepackage{graphicx}
\usepackage{mathrsfs}
\usepackage{amssymb}
\usepackage{esint}
\usepackage{epstopdf}
\usepackage{tikz-cd}
\usepackage{caption}
\usepackage{subcaption}
\usepackage{changes}

\usepackage{MnSymbol}
\DeclareGraphicsRule{.tif}{png}{.png}{`convert #1 `dirname #1`/`basename #1 .tif`.png}
\usepackage{amsthm}
\theoremstyle{plain}

\newtheorem{Lemma}{Lemma}[section]
\newtheorem*{Lemma*}{Lemma}
\newtheorem{Theorem}{Theorem}
\newtheorem*{Theorem*}{Theorem}
\newtheorem{Proposition}[Lemma]{Proposition}
\newtheorem*{Proposition*}{Proposition}

\newtheorem{Remark}[Lemma]{Remark}
\newtheorem*{Remark*}{Remark}

\newtheorem{Hypothesis}[Lemma]{Hypothesis}
\newtheorem*{Hypothesis*}{Hypothesis}

\newcommand{\R}{\mathbb{R}}
\newcommand{\C}{\mathbb{C}}
\newcommand{\N}{\mathbb{N}}
\newcommand{\Z}{\mathbb{Z}}
\newcommand{\bbP}{\mathbb{P}}
\newcommand{\bbH}{\mathbb{H}}

\newcommand{\mmH}{\mathcal{H}}

\newcommand{\eps}{\varepsilon}

\def\Re{\mathop{\mathrm{Re}}}

\def\coker{\mathop{\mathrm{Coker}\,}}

\newcommand{\rmO}{\mathrm{O}}
\newcommand{\rmo}{\mathrm{o}}

\newcommand{\rme}{\mathrm{e}}
\newcommand{\rmi}{\mathrm{i}}
\renewcommand{\ker}{\mathrm{Ker}\,}
\newcommand{\Id}{\mathrm{\,Id}\,}

\newcommand{\supp}{\mathrm{supp}}

\usepackage[labelformat=simple,labelfont={}]{subcaption}

\begin{document}

\author{Gabriela Jaramillo and Shankar C. Venkataramani}

\address{\noindent Department of Mathematics, University of Arizona,
  Tucson, AZ  85721
\newline
e-mail:  \rm \texttt{gjaramillo@math.arizona.edu}
}

\address{\noindent Department of Mathematics, University of Arizona,
  Tucson, AZ  85721
\newline
e-mail:  \rm \texttt{shankar@math.arizona.edu}
}

\title[Target patterns with nonlocal coupling]{Target Patterns in a 2-d Array of Oscillators with Nonlocal Coupling}

\begin{abstract}
\noindent
We analyze the effect of adding a weak, localized, inhomogeneity to a two dimensional array of oscillators with nonlocal coupling. We propose and also justify a model for the phase dynamics in this system. Our model is a generalization of a viscous eikonal equation that is known to describe the phase modulation of traveling waves in reaction-diffusion systems. We show the existence of a branch of  target pattern solutions that bifurcates from the spatially homogeneous state when $\eps$, the strength of the inhomogeneity, is nonzero and we also show that these target patterns have an asymptotic wavenumber that is small beyond all orders in $\eps$. 

The strategy of our proof is to pose a good ansatz for an approximate form of the solution and use the implicit function theorem to prove the existence of a solution in its vicinity. The analysis presents two challenges. First, the linearization about the homogeneous state is a convolution operator of diffusive type and hence not invertible on the usual Sobolev spaces. Second, a regular perturbation expansion in $\eps$ does not provide a good ansatz for applying the implicit function theorem since the nonlinearities play a major role in determining the relevant approximation, which also needs to be ``correct" to all orders in $\eps$. We overcome these two points by proving Fredholm properties for the linearization in appropriate Kondratiev spaces and using a refined ansatz for the approximate solution, which obtained using matched asymptotics.
\end{abstract}

\subjclass[2010]{primary: 47G20; secondary: 45M05, 35B36; }

\keywords{Target patterns, Nonlocal Eikonal equation, Kondratiev space, Fredholm operators,  Asymptotics beyond all orders}

\maketitle

\section{Introduction}\label{s:introduction}

Reaction-Diffusion equations describe the evolution of quantities $u(x,t)$ that are governed by  ``local" nonlinear dynamics, given by a reaction term $F(u)$, coupled with Fickian diffusion, 
\begin{equation}
\frac{\partial u}{\partial t}  = D \Delta u + F(u).
\label{e:reaction-diffusion}
\end{equation}
They are generic models for patterns forming systems and have applications to a wide range of phenomena from population biology \cite{fisher1937wave,kolmogorov1937study}, chemical reactions \cite{winfree1973scroll,kopell1981target,meron1992pattern,tyson1980target}, fluid  \cite{newell1969finite,segel1969distant} and granular \cite{eggers1999continuum,aranson2006patterns} flow patterns,  and in waves in neural  
\cite{fitzhugh1961impulses,nagumo1962active,NEURON_book} and in 
cardiac \cite{gray1995mechanisms} tissue. 

The onset of self-organized patterns in reaction-diffusion models typically corresponds to a bifurcation where a steady equilibrium for the local dynamics $\dot{u} = F(u)$ loses stability either through a pitchfork bifurcation, giving a {\em bistable medium}, or through a Hopf bifurcation, giving an {\em oscillatory medium}. 
In the latter case, a defining feature is the occurrence of temporally periodic, spatially homogeneous states, and an analysis of the symmetries of the system show that they can generically give rise to traveling waves, spiral waves, and target patterns \cite{golubitsky1988singularities,doelman2005dynamics}. A prototypical example of these behaviors is the Belousov-Zhabotinsky reaction where chemical oscillations are manifested as a change in the color of the solution. This system displays self-organized spiral waves, i.e. they form without any external forcing or perturbation,  as well as target patterns, which in contrast form when an impurity is present at the center of the pattern \cite{stich2006target, pagola1987wave,tyson1980target}. 

As already mentioned, these patterns are not specific to oscillating chemical reactions and in fact can be seen in any spatially extended oscillatory medium. In this more general setting, and for the particular case of target patterns, an impurity or {\em defect} constitutes a localized region where the system is oscillating at a slightly different frequency from the rest of the medium. Depending on the sign of the frequency shift,  and for systems of dimensions $d\leq 2$, these defects can act as {\em pacemakers} and generate waves that propagate away from the impurity. In the case when $d=2$, these waves are seen as concentric circular patterns that propagate away from the defect.

Showing the existence of these target pattern solutions in reaction-diffusion systems, and related amplitude equations, has been the subject of extensive research, see \cite{kopell1981target, tyson1980, hagan1981, nagashima1991, moriyama1995, golubitsky2000, stich2006target, kollar2007coherent, jaramillo2015inhomogeneities} for some examples as well as the reference in \cite{cross1993}. Mathematically one can describe these patterns as modulated wave trains which correspond to solutions to ~\eqref{e:reaction-diffusion} of the form $u({\bf x},t) = u_*(\phi({\bf x},t); \nabla \phi({\bf x},t))$, where $u_*({\bf \xi};{\bf k})$ is a $2\pi$ periodic function of ${\bf \xi}$ that depends on the  wavenumber, ${\bf k}$ \cite{kopell1981target,doelman2005dynamics,kollar2007coherent}. In other words, these patterns correspond to periodic traveling waves, whose phase varies slowly in time and space. Using multiple scale analysis one can show that the evolution of the phase, $\phi$, over long times is given by the {\em viscous eikonal equation},
\[ \phi_t= \Delta \phi - |\nabla \phi|^2.\]
This equation has been studied extensively in the physics literature, starting with the work of Kuramoto \cite{kuramoto1976}. More recently it was shown that it does indeed provide a valid approximation for the phase modulation of the patterns seen in oscillatory media \cite{doelman2005dynamics}.

Since oscillating chemical reaction can be thought of as a continuum of diffusively coupled oscillators, it is not surprising that an analogue of the above equation can also be derived as a description for the phase dynamics for an array of oscillators. Indeed, in Appendix \ref{sec:hierarchy} we formally show that the following integro-differential equation provides a phase approximation for a slow-time, O(1) in space, description of nonlocally coupled oscillators,

\begin{equation}\label{e:nonloc1}
 \phi_t = \mathcal{L} \ast \phi- |\mathcal{J} \ast \nabla \phi|^2 + \eps g(x,y) \quad (x,y) \in \R^2.
 \end{equation}
 This equation will be the focus of our paper. Here the operators $\mathcal{L}$ and $\mathcal{J}$ are spatial kernels that %are exponentially localized, 
 depend on the underlying nonlocal coupling between the oscillators. In particular, the convolution kernel $\mathcal{L}$ models the nonlocal coupling of these phase oscillators and can be thought of as an analog of $\Delta$. Similarly, the term $|\mathcal{J}\ast \nabla \phi|^2 = \mathcal{J} \ast \nabla \phi \cdot \mathcal{J} \ast \nabla \phi$  represents nonlocal transport along diffused gradients and is a generalization of the quadratic nonlinearity of the viscous eikonal equation. Finally, the function $g(x,y)$ represents an inhomogeneity that perturbs the ``local" frequency of the oscillators. Notice that the viscous eikonal equation is recovered when  $\mathcal{L}$ and $\mathcal{J}$ are the Laplacian and the identity operator, respectively. This framework also incorporates other models for spatio-temporal pattern formation, including the Kuramoto-Sivashinsky equation which corresponds to $\mathcal{L} = -\Delta - \Delta^2, \,\mathcal{J} = \mathrm{Id}$.
  
 Our mathematical motivation for studying the above model comes from its nonlocal aspect and the resulting analytical challenges.  The approach we propose for studying the operator $\mathcal{L}$ is novel and could be adapted to study other problems which involve similar convolution operators. In particular, the type of linear operators that we will consider are a generalization of the kernels used in neural field models or  continuum coupled models of granular flow.  Just as in reaction-diffusion systems, these models exhibit  spatio-temporal periodic patterns, bumps, and traveling waves, (see  \cite{coombes2005waves, ermentrout1998neural, pinto2001spatially, kilpatrick2010effects} for the case of neural field models, and  \cite{umbanhowar1998periodic,venkataramani1998spatiotemporal,venkataramani2001pattern,aranson2006patterns} for the case of patterns in granular flows). In particular,  among the examples of traveling waves seen in experiments and replicated in the neural field models are spirals and target patterns \cite{huang2004spiral, kilpatrick2010spatially, folias2004breathing}, which are of interest to us.

The challenge however is that,  like our model~\eqref{e:nonloc1}, these systems are not amenable to methods from spatial dynamics, which are typically used to study these phenomena. So we look for a more functional analytic approach, e.g. using the implicit function theorem, for proving the existence of solutions. As with other integro-differential equations the difficulty  comes from the linearization, which is a noncompact convolution operator, and in general not invertible when considered as a map between Sobolev spaces. This is a significant analytical challenge, and one way to overcome this difficulty is to use specific convolution kernels that allow for these integro-differential equations to be converted into PDEs via the Fourier Transform \cite{laing2003pde}, or in the radially symmetric case use sums of modified Bessel functions as models for synaptic footprint to simplify the analysis \cite{folias2004breathing}. 

The approach we consider in this paper is broader as it allows us to consider a larger class of convolution kernels by showing that these operators are Fredholm in appropriate weighted spaces. This approach is similar to the one in \cite{jaramillo2016pacemakers}, where we treated the one dimensional case and showed existence of target patterns in a large one dimensional array of oscillators with nonlocal coupling. For the applications we have in mind, e.g. neural field models, we need to extend these results to two dimensional arrays. 

The two dimensional case is technically more interesting because a regular perturbation expansion in $\eps$ does not always provide the correct ansatz. Indeed, in the case with local coupling, the equation
\[ -\omega = \Delta \tilde{\phi} - |\nabla \tilde{\phi}|^2 + \eps g(x,y), \quad (x,y) \in \R^2\]
 which results from inserting the ansatz $\phi(x,y,t) = \tilde{\phi}(x,y) - \omega t$ into the perturbed viscous eikonal equation, is conjugate to a Schr\"odinger eigenvalue problem via the Hopf-Cole transform, $\tilde{\phi} = - \ln (\Psi)$:
 \[ \omega \Psi  = \Delta \Psi - \eps g(x,y) \Psi, \quad (x,y) \in \R^2.\]
 In two dimensions, it is well known that the Schr\"odinger eigenvalue problem has bound states  if $\eps \int g < 0$ \cite{simon1976bound}. Notice that the ground state eigenfunction $\Psi_0$ can be chosen to be everywhere positive, so that $-\ln(\Psi_0)- \omega t$ does define a phase function $\phi$ solving the viscous eikonal equation with inhomogeneity. At the same time, the eigenvalue corresponding to the ground state is small beyond all orders of $\eps$ (see \cite{simon1976bound} and Section~\ref{s:eikonal} below), and is therefore not accessible to a regular perturbation expansion. This is the other analytical challenge that we have to overcome, and our approach is to develop a {\em superasymptotic perturbation expansion} for $\phi$, i.e. an approximation whose error is  $\rmO(\exp(-|c/\eps|))$ and captures behaviors that are small beyond all orders in $\eps$, \cite{boyd1999asymptotics}. 
 
 To show the existence of traveling waves for equation \eqref{e:nonloc1} we make the following assumptions on the convolution kernels $\mathcal{L}$ and $J$. First, we assume the kernel $\mathcal{L}$ is a diffusive and exponentially localized kernel that commutes with rotations. Consequently, its Fourier symbol $L$  depends only on $\xi = | {\bf k}|^2$.  We also impose additional properties that we specify in the Hypotheses \ref{h:analyticity} and \ref{h:multiplicity}. A representative example to keep in mind throughout the paper is the convolution kernel that would result in the formal operator $\Delta(\Id - \Delta)^{-1}$. 
 
 We reiterate that the model~\eqref{e:nonloc1} is derived under the assumption that the phase $\phi(x,t)$ varies slowly in time, {\em with no assumptions on its spatial variation}. If we assume that the solutions also vary slowly in space, then hypothesis~\ref{h:multiplicity} implies that the nonlocal operator $\mathcal{L}$ can be (formally) replaced by $\Delta$, and~\eqref{e:nonloc1} reduces to the ``local" viscous eikonal equation. Indeed, this is the setting for a substantial body of work on weakly coupled nonlinear oscillators \cite{kuramoto1984book, schwemmer2012theory}. Our additional contribution is that we  rigorously show the existence of target solutions of~\eqref{e:nonloc1} that vary slowly (on a scale $\sim e^{1/\epsilon}$) in space and time, if the model satisfies:
 
  \begin{Hypothesis}\label{h:analyticity}
  The  multiplication operator $L$ is a function of $\xi := | {\bf k} |^2$. Its domain can be extended to a strip in the complex plane, $\Omega = \R \times (-\rmi \xi_0, \rmi \xi_0)$ for some sufficiently small and positive $\xi_0 \in \R$, and on this domain the operator is uniformly bounded and analytic. Moreover, there is a constant $\xi_m \in \R$ such that the operator $L(\xi)$ is invertible with uniform bounds for $|\Re \xi| > \xi_m$.
  \end{Hypothesis}
 
 Our main result, Theorem~\ref{t:main} requires $\ell =1$ in the following hypothesis. We state the hypothesis in more generality because some of the intermediate results also hold more generally with $\ell \geq 1$.
 \begin{Hypothesis}\label{h:multiplicity}
 The multiplication operator $L(\xi)$ has a zero, $\xi^*$, of multiplicity $\ell \geq 1$ which we assume is at the origin. Therefore, the symbol $L(\xi)$ admits the following Taylor expansion near the origin.
  \[ L(\xi ) = (- \xi)^\ell + \rmO(\xi^{\ell+1}), \quad \mbox{for} \quad \xi \sim 0.\]
 \end{Hypothesis}
 
\begin{Hypothesis} \label{h:nonlinearity}
The kernel $\mathcal{J}$ is radially symmetric, exponentially localized, twice continuously differentiable, and 
\[ \int_{\R^2} \mathcal{J}({\bf x}) \;d{\bf x}  =1 \]
\end{Hypothesis}

Our strategy to show the existence of traveling waves will be to first establish the Fredholm properties of the convolution operator $\mathcal{L}$ following the ideas described in \cite{jaramillo2016effect}. This will allow us to precondition our equation by an operator $\mathscr{M}$, resulting in an equation which has as its linear part the Laplace operator. We then proceed to show the existence of target patterns in the nonlocal problem. More precisely, we prove Theorem \ref{t:main} where we use the following {\bf notation}:
\begin{itemize}
\item Here $L^2_\sigma(\R^2)$ denotes the $L^2$ space with weight $(1 + |{\bf x}|^2)^{\sigma/2}$.
\item Similarly, $H^s_\sigma(\R^2)$ denotes the Hilbert space $H^s$ with weight $(1 + |{\bf x}|^2)^{\sigma/2}$.
\item Lastly, the symbol $M^{s,p}_\sigma(\R^2)$ describes the completion of $C_0(\R^2)$ functions under the norm
\[ \| u\|^p_{M^{s,p}_\sigma(\R^d)} = \sum_{|\alpha|\leq s} \left \| D^{\alpha}u({\bf x}) \cdot (1+|{\bf x}|^2)^{(\sigma + |\alpha|)/2} \right \|^p_{L^p(\R^d)}. \]
\end{itemize}
We describe these last spaces with more detail in Section \ref{s:weightedspaces}.

\begin{Theorem}\label{t:main}
Suppose that the kernels $\mathcal{L}$ and $\mathcal{J}$ satisfy Hypotheses~\ref{h:analyticity},~\ref{h:multiplicity} with $\ell =1$, and~\ref{h:nonlinearity}. Additionally, suppose $g$ is in the space $  L^2_\sigma(\R^2)$ with $\sigma>1$ and let $M= \frac{1}{2\pi} \int_{\R^2} g <0$. Then, there exists a number $\eps_0>0$ and a $C^1$ map
\begin{equation*}
	\begin{matrix}
	\Gamma:& [0,\eps_0) &\longrightarrow & \mathcal{D} \subset M^{2,1}_{\gamma-1}(\R^2)\\
	& \eps& \longmapsto & \psi\\
	\end{matrix}
\end{equation*}
where $\gamma>0$ and $ \mathcal{D} = \{\phi \in M^{1,2}_{\gamma-1}(\R^2) \mid  \nabla\phi \in H^1_\gamma(\R^2)  \}$, that allows us to construct an $\eps$-dependent family of target pattern solutions to \eqref{e:nonloc1}. Moreover, these solutions have the form
\[\Phi(r,\theta, t;\eps) = -\chi(\lambda(\eps) r) \ln( K_0(\lambda(\eps) r) ) + \psi(r, \theta ;\eps)- \lambda^2(\eps) t, \quad \lambda(\eps) >0.\]
In particular, as $r \rightarrow \infty$,
\begin{itemize}
\item $ \psi(r;\eps) < C r^{-\delta}$,  for $C \in \R$ and $\delta \in (0,1)$;  and
\item $ \lambda(\eps)^2 \sim 4\,C(\eps) \,\rme^{-2\gamma_e} \exp\left( \frac{ 2 }{ \eps M}\right)$,  where  $C(\eps) $ represents a constant that depends on $\eps$,  and $\gamma_e$ is the Euler constant.
\end{itemize}
In addition, these target pattern solutions have the following asymptotic expansion for their wavenumber
 \[ k( \eps) \sim  \exp\left (\frac{1}{\eps M} \right ) + \rmO\left( \frac{1}{r^{\delta+1}}\right)\quad \mbox{as} \quad r \rightarrow \infty.\]
\end{Theorem}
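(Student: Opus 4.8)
The plan is to reduce the problem to a stationary one, precondition it so its linear part becomes the Laplacian, feed in an ansatz built by matched asymptotics, and close with the implicit function theorem on Kondratiev spaces. First I would substitute the travelling-wave ansatz $\phi(x,y,t)=\tilde\phi(x,y)-\lambda^2 t$ into~\eqref{e:nonloc1}, obtaining
\[ -\lambda^2 = \mathcal{L}\ast\tilde\phi - |\mathcal{J}\ast\nabla\tilde\phi|^2 + \eps g, \qquad (x,y)\in\R^2, \]
with $\lambda>0$ an unknown frequency. Since $\ell=1$, Hypothesis~\ref{h:multiplicity} gives $L(\xi)=-\xi+\rmO(\xi^2)$, so the symbol $-\xi/L(\xi)$ is analytic and nonvanishing near $\xi=0$ with value $1$ there, and is controlled at infinity by Hypothesis~\ref{h:analyticity}; the associated operator $\mathscr{M}$ then preconditions $\mathcal{L}\ast$ to $\Delta$, and since $\mathscr{M}$ fixes constants we obtain
\[ -\lambda^2 = \Delta\tilde\phi - \mathscr{M}\ast|\mathcal{J}\ast\nabla\tilde\phi|^2 + \eps\,\mathscr{M}\ast g , \]
whose principal part is the two-dimensional Laplacian. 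Using the Fredholm theory for such convolution operators on the Kondratiev spaces of Section~\ref{s:weightedspaces} (obtained following~\cite{jaramillo2016effect}), $\Delta$ is Fredholm between the relevant $M^{s,p}_\sigma$ spaces with a one-dimensional cokernel, the solvability functional being the total integral.

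Next I would build the approximate solution by matched asymptotics, guided by the Hopf--Cole/Schr\"odinger picture of the local case. In the inner region $|{\bf x}|=\rmO(1)$, where $g$ lives, the leading equation is $\Delta u = -\eps g$, whose solution with decaying gradient grows like $\eps M\ln r$; in the outer region $\lambda|{\bf x}|=\rmO(1)$ the inhomogeneity $g$ is exponentially small and the linearisation is $(\Delta-\lambda^2)\Psi=0$, with radial decaying solution $K_0(\lambda r)$, so $\tilde\phi\approx -\ln K_0(\lambda r)$ there. One checks that $-\ln K_0(\lambda r)$ is in fact an \emph{exact} solution of the homogeneous viscous eikonal equation with frequency $\lambda^2$, so patching it with the cutoff to form $\Phi_{\mathrm{ap}}(r;\lambda):=-\chi(\lambda r)\ln K_0(\lambda r)$ concentrates the residual of the ansatz near the defect and in the transition annulus. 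Matching the logarithmic terms across the overlap forces $\ln(2/\lambda)-\gamma_e \sim -1/(\eps M)$, i.e. $\lambda^2\sim 4\,C(\eps)\,\rme^{-2\gamma_e}\exp(2/(\eps M))$, and since $-\ln K_0(\lambda r)\sim \lambda r$ as $r\to\infty$ the asymptotic wavenumber is $\sim\lambda\sim\exp(1/(\eps M))$, up to the slowly-varying prefactor.

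Finally I would set up the implicit function theorem. Writing $\tilde\phi=\Phi_{\mathrm{ap}}+\psi$ and using that $\Phi_{\mathrm{ap}}$ solves the homogeneous eikonal equation where $\chi\equiv1$, the preconditioned equation becomes $\Delta\psi=\mathcal{N}(\psi,\lambda,\eps)$, where $\mathcal{N}$ is the sum of (i) a residual depending only on $(\lambda,\eps)$ that is $\rmO(\exp(-c/\eps))$ in the target norm, (ii) a term linear in $\psi$ of size $\rmO(\lambda)$, and (iii) a genuinely quadratic term. I would combine this PDE, projected onto the range of $\Delta$, with the scalar solvability (cokernel) condition into a single map $F(\psi,\lambda,\eps)=0$ on $\mathcal{D}\times(0,\Lambda)\times[0,\eps_0)$; after reparametrising by, e.g., $\mu=1/\ln(1/\lambda)$ so that all data depend differentiably on the parameters, the scalar component is precisely the matching relation. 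At $\eps=0$ we have $\psi=0$ and $\lambda=0$ (so $\Phi_{\mathrm{ap}}\equiv 0$), which solves $F=0$; the linearisation of $F$ in $(\psi,\mu)$ there is block-triangular, with the invertible part of $\Delta$ in the $\psi$-block (the linear corrections vanish as $\lambda\to0$ and the nonlinearity is quadratic) and a nonzero $\mu$-derivative of the solvability relation. The implicit function theorem then yields the $C^1$ branch $\eps\mapsto(\psi(\eps),\lambda(\eps))$; the bound $\psi<Cr^{-\delta}$ and the wavenumber expansion are read off from the Kondratiev weight and from $\Phi_{\mathrm{ap}}$.

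I expect the main obstacle to be controlling the beyond-all-orders scale while keeping all the function spaces compatible: the cutoff $\chi$, the weight $\gamma$, and the overlap region must be chosen so that simultaneously $\Delta$ (hence $\mathcal{L}\ast$) is Fredholm with the right index, $\mathscr{M}$ and convolution with $\mathcal{J}$ act boundedly, the quadratic nonlinearity maps the solution space into the data space, and both $\Phi_{\mathrm{ap}}$ and $\psi$ lie in these spaces --- all while the residual is genuinely exponentially small in $\eps$ and its cokernel projection reproduces the sharp matching relation, including the constant $4\rme^{-2\gamma_e}$. The nonsmooth dependence of $\Phi_{\mathrm{ap}}$ on $\lambda$ near $\lambda=0$ (nested logarithms and exponentially small terms) is exactly why a regular perturbation expansion in $\eps$ cannot even start; packaging that behaviour into the ansatz and reparametrising so that what remains is a bona fide implicit-function-theorem problem is the technical heart of the argument.
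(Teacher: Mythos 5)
Your overall architecture (precondition by $\mathscr{M}$ so the linear part becomes $\Delta$, use Kondratiev--space Fredholm theory, build the ansatz $-\chi(\lambda r)\ln K_0(\lambda r)$ by matched asymptotics, close with the implicit function theorem, and read off the beyond-all-orders frequency from logarithmic matching) coincides with the paper's. But the mechanism by which you determine $\lambda(\eps)$ is genuinely different, and it is there that your argument has a gap. The paper never imposes a solvability condition in the full problem: it includes the drift $-2\lambda\partial_r$ coming from linearizing the quadratic term around $\psi_0=-\chi\ln K_0(\lambda r)$ in the linear operator $\mathscr{L}_\lambda=\Delta-2\lambda\partial_r$, which is \emph{invertible} on the weight range $\gamma>0$ (Lemmas~\ref{l:fredholmLambda},~\ref{l:invertibleLambda}). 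This produces a full two-parameter family of solutions in $(\eps,\lambda)$ with no constraint between them (Proposition~\ref{p:nonlocalfull}). The relation $\lambda=\lambda(\eps)$ is then obtained by separately constructing the $\omega=0$ ``intermediate'' solution (Proposition~\ref{p:nonlocalintermediate}), whose core behavior $-\chi\ln(1-a_0\ln r)$ with $a_0=-\eps M+\rmO(\eps^2)$ carries the mass of $g$, and matching wavenumbers in the overlap region $r=\eta\eps/\lambda$. You instead propose a single Lyapunov--Schmidt reduction in which the cokernel projection of the residual, after reparametrizing $\mu=1/\ln(1/\lambda)$, ``is precisely the matching relation.''

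That assertion is the missing step, and it is not innocuous. First, your claim that the $(\lambda,\eps)$-dependent residual is $\rmO(\exp(-c/\eps))$ is false: the term $\eps\mathscr{M}\ast g$ alone contributes $2\pi\eps M$ to the solvability functional, and $\Phi_{\mathrm{ap}}$ ignores the inner region entirely, so the residual is only $\rmO(\eps)$ --- this is exactly why the paper needs the intermediate approximation with its core corrections $a_\alpha\chi r^{-|\alpha|}\rme^{\rmi\alpha\theta}$ and $-\chi\ln(1-a_0\ln r)$. Second, on the weight range $1<\gamma<2$ where $\Delta$ has cokernel $\{1\}$ (needed for your scalar equation), $\Delta\Phi_{\mathrm{ap}}$ by itself is not in $L^2_\gamma$ (it decays only like $\lambda/r$); only the full combination $\Delta\Phi_{\mathrm{ap}}+\lambda^2-\mathscr{M}\ast|\mathcal{J}\ast\nabla\Phi_{\mathrm{ap}}|^2$ is localized, and its integral collects $\rmO(1)$ contributions from the disc and annulus $\lambda r\lesssim 2$ (e.g.\ $\int_{\lambda r\le 2}\lambda^2\,dx=4\pi$) that must cancel exactly before a term proportional to $\mu=1/\ln(1/\lambda)$ can emerge to balance $2\pi\eps M$. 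You provide no computation showing this cancellation, nor that the resulting scalar map is $C^1$ in $\mu$ at $\mu=0$ (where $\Phi_{\mathrm{ap}}$ depends on $\mu$ through $\lambda=\rme^{-1/\mu}$ and nested logarithms), nor that the cross term $2\lambda\partial_r\psi$ leaves the claimed block-triangular structure intact away from the base point. These are precisely the difficulties the paper's two-solution-plus-matching construction is designed to avoid: matching is performed on the wavenumber $\partial_r\Phi$ of two independently constructed exact solutions in the overlap region, not on an integral solvability functional dominated by the annulus $\lambda r\sim1$. To repair your route you would either have to carry out and justify that projection computation in detail, or fall back on the paper's strategy of treating $\lambda$ as a free parameter with an invertible linearization and importing the mass of $g$ through the separate intermediate problem.
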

\begin{Remark} \label{r:weighted}
If $g \in L^2_\sigma(\mathbb{R}^2)$ with $\sigma > 1$, it follows that 
$$
\int_{\R^2} |g| (1+|{\bf x}|^2)^{(\sigma-1)/3} d{\bf x} \leq \left[\int_{\R^2} |g|^2 (1+|{\bf x}|^2)^{\sigma} d{\bf x} \, \int_{\R^2} (1+|{\bf x}|^2)^{-1-(\sigma-1)/3} d{\bf x} \right]^{1/2} < \infty,
$$
and 
$$
\int_{\R^2} |g|^{2\sigma/(\sigma+1)} d{\bf x} \leq \left[\int_{\R^2} |g|^2 (1+|{\bf x}|^2)^{\sigma} d{\bf x}\right]^{\sigma/(\sigma+1)} \left[ \int_{\R^2} (1+|{\bf x}|^2)^{-\sigma^2} d{\bf x} \right]^{1/(\sigma+1)} < \infty.
$$
Consequently, there is $\delta > 0$ such that $\int_{\R^2} |g({\bf x})|(1+|{\bf x}|^\delta) d{\bf x} < \infty$ and $\int_{\R^2} |g({\bf x})|^{1+\delta} d{\bf x} < \infty$, so that the Schr\"odinger operator $-\Delta + \eps g$ satisfies the hypothesis in~\cite{simon1976bound}.
\end{Remark}

\begin{Remark*}
Here and henceforth in the paper the function $\chi(x) \in C^{\infty}(\R)$ is a cut off function, whose precise form is immaterial, and satisfies $\chi(x) =1$ for $|x| >2$ and $\chi(x) = 0 $ for $|x|<1$.
\end{Remark*}

The rest of this paper is organized as follows: In Section~\ref{s:eikonal} we analyze the case with local coupling  and show the existence of traveling waves for the viscous eikonal equation using matched asymptotics. In Section~\ref{s:weightedspaces} we review properties of Kondratiev spaces and state Fredholm properties of the Laplacian and related operators, leaving the proofs of these results for the appendices. Finally, in Section~\ref{s:nonlocal} we derive Fredholm properties for the convolution operator $\mathcal{L}$ and then, guided by the results from Section~\ref{s:eikonal}, we proceed to prove Theorem~\ref{t:main}. We present a formal derivation of the nonlocal eikonal equation~\eqref{e:nonloc1} in Appendix~\ref{sec:hierarchy}. In Appendix~\ref{sec:proofs}, we prove various subsidiary results that are needed for the proof of Theorem~\ref{t:main}.

\section{Matched asymptotics for 2D Target patterns}\label{s:eikonal}

As we discussed in the introduction, the viscous eikonal equation 
\begin{equation}\label{e:eik}
 \partial_t \phi = \Delta\phi  - | \nabla \phi|^2 + \eps g(x,y) ,\quad (x,y) \in \R^2.
 \end{equation}
is  an abstract model for the evolution of the phase of an array of oscillators with nearest neighbor coupling \cite{doelman2005dynamics}. The perturbation $\eps g(x,y)$, a localized function, represents a small patch of oscillators with a different frequency than the rest of the network. It is well known that this system can produce target patterns  that bifurcate from the steady state, $\phi=0$,  when the parameter  $\eps$ is of the appropriate sign. Our aim in this section is to 
determine an accurate approximation to these  target wave solutions using a formal approach based on matched asymptotics.

We therefore consider solutions to equation \eqref{e:eik} of the form $\phi(x,y,t) = \tilde{\phi}(x,y) - \omega t$, where $\tilde{\phi}$ solves
\begin{equation}\label{e:eikwaves}
-\omega = \Delta \tilde{\phi}  - |\nabla \tilde{\phi}|^2 + \eps g(x,y) ,\quad (x,y) \in \R^2, \quad \omega>0.
 \end{equation}
We also assume in this section that $g$ is a radial and algebraically localized function that satisfies 
\begin{equation}
\int_0^\infty |g(r)| (1+r)^\sigma r dr < \infty, \quad \mbox{for some} \quad \sigma>0.
\label{assumption}
\end{equation}
This simplifies our analysis since we can restrict ourselves to finding radially symmetric solutions. In addition, because the viscous eikonal equation~\eqref{e:eikwaves} only depends on derivatives of $\tilde{\phi}$, we can recast it as a first order ODE for the wavenumber $\zeta  = \tilde{\phi}_r$:
\begin{equation} 
\zeta_r + \frac{\zeta}{r} - \zeta^2 + \eps g(r) = -\omega.
\label{eq:wavenumber}
\end{equation}
Here, $\omega = \omega(\eps)$ is an eigenparameter, i.e. it is not specified, rather it is determined in such a way as to ensure that the solutions satisfy the required boundary conditions. 

For radial solutions that are regular at the origin, $\zeta(r)$ is $\rmO(r)$ as $r \to 0$, and we can rewrite the ODE in an equivalent integral form
\begin{equation}
\zeta(r) = \frac{1}{r}\int_0^r \left[\zeta^2(\eta) - \omega - \eps g(\eta) \right] \eta d \eta.
\label{eq:integral}
\end{equation}
Moreover, because we are bifurcating from the trivial state $\zeta = 0$, we can assume that $\zeta$ is small if $\eps$ is small, and posit  the following regular expansions
 \begin{align*}
 \zeta & = \eps \zeta_1 + \eps^2 \zeta_2+ \eps^3 \zeta_3 + \cdots,\\
 \omega& = \eps \omega_1 + \eps^2 \omega^2 + \eps^3 \omega_3 + \cdots.
 \end{align*} 
 Substituting these expressions in~\eqref{eq:integral}, we obtain that at $\rmO(\eps)$,
 \[ 
 \zeta_1 = -\frac{\omega_1 r}{2} - \frac{1}{r} \int_0^r g(\eta) \; \eta d\eta. 
 \]

 Now, because we are interested in target patterns, solutions should satisfy  $\zeta \rightarrow k(\eps)>0$ as $r \rightarrow \infty$, where $k(\eps)$ is the asymptotic wavenumber.  This requires us to consider functions $\zeta_1$ that have a finite limit as $r \to \infty$, and forces us to pick $\omega_1 = 0$. The result is that   $r \zeta_1(r) \to -\int g(r)\;rdr = -M < \infty$, in the limit of $r$ going to infinity.
 
 At the same time, from  assumption~\eqref{assumption} on the algebraic localization of $g$, we have the quantitative estimate
$$
|M - r \zeta_1(r)| \leq \frac{1}{(1+r)^{\sigma}} \int_r^\infty |g(\eta)| (1+\eta)^{\sigma} \eta d \eta \leq C (1+r)^{-\sigma},
$$
so that at order $\rmO(\eps^2)$ we find 
$$
\zeta_2 = -\frac{\omega_2 r}{2} + \frac{1}{r} \int_0^r \zeta_1^2 \; \eta d\eta,
$$
with $\zeta_1^2$ in $L^1(\mathbb{R}^2)$. Again the boundary conditions force $\omega_2 = 0$, and as a result we obtain  $\displaystyle{\zeta_2 \sim \frac{M^2}{r} \ln \left(\frac{r}{r_c}\right)}$ for large values of $r$. Here the constant $r_c$ depends on $g$ and is given by the following limit
$$
\ln(r_c) = \lim_{r \to \infty}  \left[\ln(r) - \frac{1}{M^2}  \int_0^r \zeta_1^2 \; \eta d\eta\right],
$$
which we know exists from the estimate for $|M - r \zeta_1(r)|$. 

Note that these expressions for $\omega_2$ and $\zeta_2$ imply that the asymptotic wavenumber is $\rmo(\eps^2)$.  In fact, continuing this procedure it is easy to check that  we get $\omega_j = 0$ for all $j$, so that the frequency $\omega$ is $\rmo(\eps^n)$ for all orders in $\eps$. In addition, the expressions for $\zeta_1$ and $\zeta_2$ yield the expansion
\begin{equation}
\zeta \approx -\frac{\eps M}{r} +  \frac{\eps^2 M^2 \ln(r/r_c)}{r} + \cdots.
\label{inner-expnsn}
\end{equation}
whose terms are not uniformly ordered. For instance,
\[ |\eps^2 \zeta_2| \geq | \eps \zeta_1 | \quad \mbox{for} \quad r \geq r_c \exp\left(\frac{1}{\eps |M|} \right).\]
This suggests that the above {\it inner} expansion  is not uniformly valid. We therefore need  to introduce an {\it outer} expansion and match both solution in an intermediate region given by $r \sim r_c \exp(|\eps^{-1} M^{-1}|)$.

 In this intermediate region the inhomogeneity, $\eps g(r)$, and the frequency, $\omega$, are small compared to the other terms in the equation, so that the radial eikonal equation~\eqref{eq:wavenumber} reduces to 
\[\zeta_r + \frac{\zeta}{r} - \zeta^2 \approx 0.\]
We can solve this explicitly to find that
\[ \zeta = \frac{1}{r( C - \ln(r))},\]
where $C = C(\eps)$ is a, {\em possibly $\eps$ dependent}, constant of integration. Comparing this result with the inner expansion~\eqref{inner-expnsn} leads to $C \approx \frac{-1}{\eps M}$, to leading order. We can then write $C = -(\eps M)^{-1} + c_0 + c_1 \eps + \ldots$, and for fixed $r$ and as $\eps \to 0$, obtain
\[ \zeta = \frac{\eps M }{r} + \frac{\eps^2 M^2(\ln(r)-c_0)}{r} + \cdots.\]
Comparing again with the inner expansion~\eqref{inner-expnsn}, we see that $r_c = \exp(c_0)$.

In the outer region, where we retain the frequency, $\omega$, and neglect the inhomogeneity, g,  solutions are described by the equation
\[ \zeta_r +\frac{\zeta}{r} - \zeta^2 = - \omega.\]
If we define the 'outer' variable $\xi = \sqrt{\omega} r$ and scaling function $F$ so that $\zeta(r) = \sqrt{\omega}F( \sqrt{\omega}r)$, then $F$ satisfies the $\omega$ (and hence also $\eps$) independent equation $F_\xi +F/\xi -F^2 = -1$. Using the (differentiated) Hopf-Cole transformation $F(\xi) = - \psi'(\xi)/\psi(\xi)$, and then solving for $\psi$ gives
\[ F(\xi) = - \frac{\partial_\xi K_0(\xi)}{K_0(\xi)},\]
 where $K_0(\xi)$ is the modified Bessel's function of the first kind \cite{whittaker1996course}. Consequently, for $\xi = \sqrt{\omega} r \ll 1$ fixed and $\eps \to 0$, a solution $\zeta$ of the outer equation  is given by 
\begin{equation} 
\zeta(r) \sim \frac{1}{r( - \ln( \sqrt{\omega r}/2) - \gamma_e)}, 
\label{outer-expnsn}
\end{equation}
where $\gamma_e = 0.5772\ldots$ is the Euler constant \cite{Abram_Stegun}. This approximation is also valid in the intermediate region, allowing us to match it to the inner expansion, 
\[ \frac{1}{r(-\frac{1}{\eps M} + \ln(r_c) -\ln(r))} \approx \frac{1}{r( - \frac{1}{2} \ln \omega + \ln 2 - \gamma_e - \ln(r)) },\]
and obtain the following approximation for the frequency 
\[ \omega \sim \frac{4 \exp(-2 \gamma_e)}{r_c^2}\exp\left(\frac{2}{\eps M}\right). \]
Hence, if we assume $\eps M < 0$, this does indeed show that $\omega$ is small beyond all orders in $\eps$.

\begin{table}
 \caption{In this section we will make use of the asymptotic behavior of $K_0(z)$ and its derivative $\partial_z K_0(z) = - K_1(z)$ (see Ref.~\cite{Abram_Stegun}), which we summarize in this table
}
\begin{center}
\begin{tabular}{ c  c  c}
\hline
& $ z \rightarrow 0$ & $z \rightarrow \infty$\\
\hline\\
$K_0(z)$ & $-\ln (z/2) - \gamma_e + \rmO(z^2 |\ln z|)$ &$ \sqrt{\frac{\pi}{2z}}e^{-z} (1+ \rmo(1/z))$\\[2ex]
 $K_1(z)$ & $\frac{1}{z}+\rmO(z |\ln z|)$ &$ \sqrt{\frac{\pi}{2z}}e^{-z} (1+ \rmo(1/z))$\\[2ex]
\hline
  \end{tabular} 

  \end{center}
\label{tab:Bessel} 
  \end{table}

\begin{Remark*}
The viscous eikonal equation~\eqref{e:eik} is conjugate to a Schr\"odinger eigenvalue problem via the Hopf-Cole transform. The frequency $\omega =  -\partial_t \phi$ in the viscous eikonal equation corresponds to the ground state energy for the Schr\"odinger operator $\Delta - \eps g(r)$. 
Indeed, the expression above is a refinement of the results from \cite{simon1976bound} and \cite{kollar2007coherent} for the ground state eigenvalue/frequency respectively,  in that we have an expression for the numerical prefactor. 
\end{Remark*}

\begin{Remark*} Integrating $\zeta = \partial_r \phi$, we can determine the phase $\phi(r)$ for the target patterns. From~\eqref{inner-expnsn}~and~\eqref{outer-expnsn} we get the inner and outer expansions for the phase $\phi$ 
\[
\phi(r) \sim \phi_0 + \begin{cases} - \ln\left[ \frac{-1}{\eps M} + c_0 + c_1 \eps + \cdots - \ln(r)\right] & r \text{ fixed}, \eps \to 0 \\ - \ln(K_0(\sqrt{\omega} r)) &  \xi = \sqrt{\omega} r  \text{ fixed}, \eps \to 0 \end{cases}
\]
where $\phi_0$ is an arbitrary phase shift corresponding to a constant of integration.
\end{Remark*}

\begin{figure}\label{f:simulations}
    \centering
    \begin{minipage}{0.8\textwidth}
    \centering
    \begin{subfigure}{0.5\textwidth}
    \centering
    \includegraphics[width=1\textwidth]{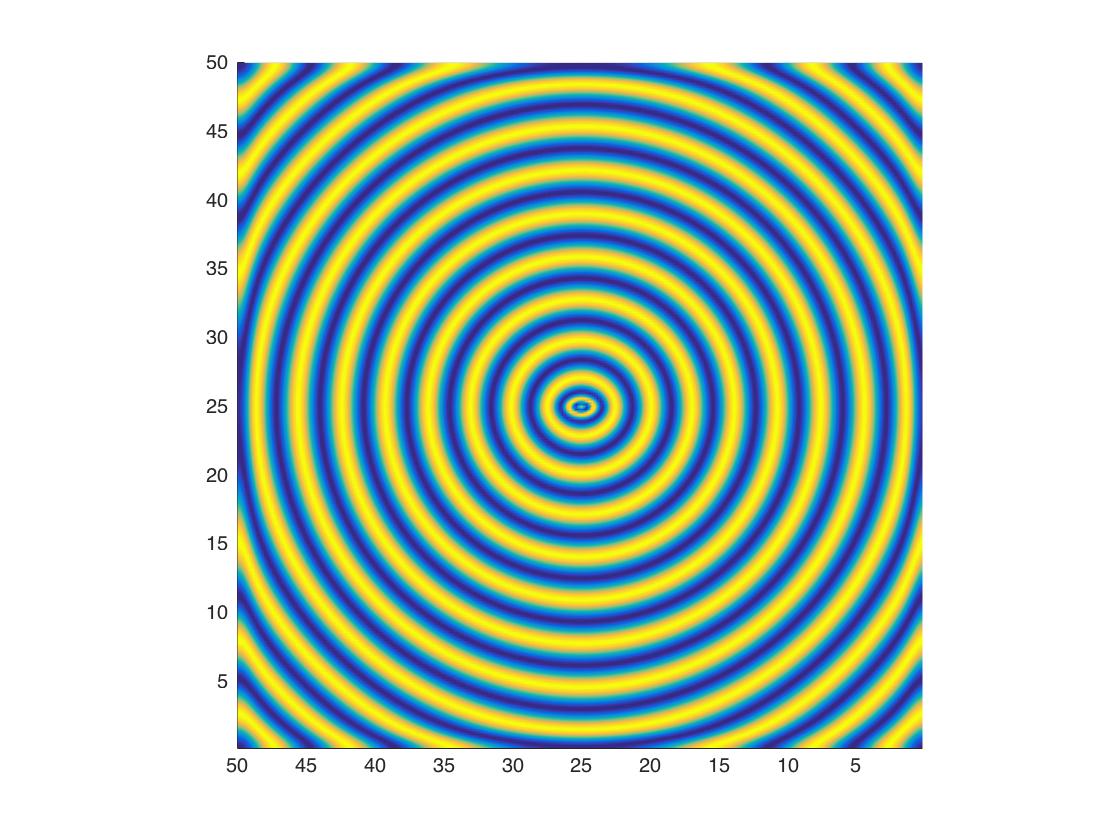}
	\caption{}
	\label{f:ellipse-out}
    \end{subfigure}%
    \begin{subfigure}{0.5\textwidth}
    \centering
    \includegraphics[width=1\textwidth]{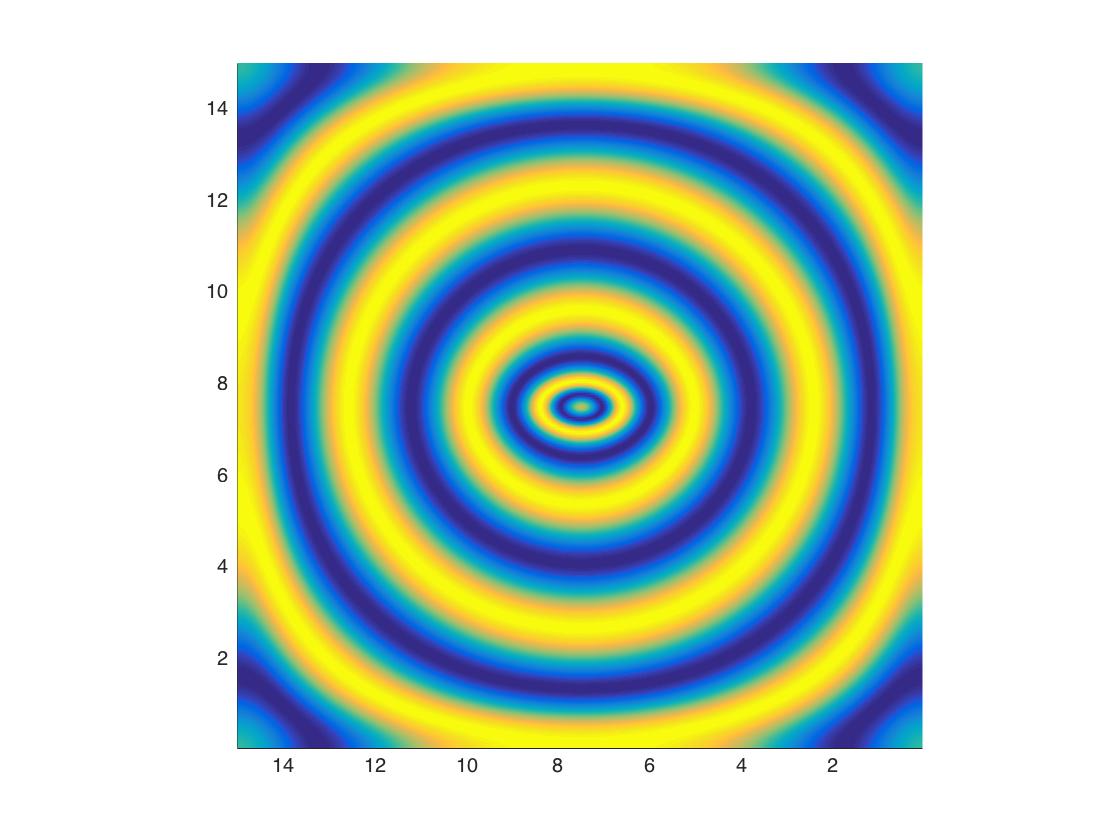}
    \caption{}
    \label{f:ellipse-in}
    \end{subfigure}%
    
\end{minipage}
\centering
\begin{minipage}{0.8\textwidth}
    \centering
    \begin{subfigure}{0.5\textwidth}
    \centering
    \includegraphics[width=1\textwidth]{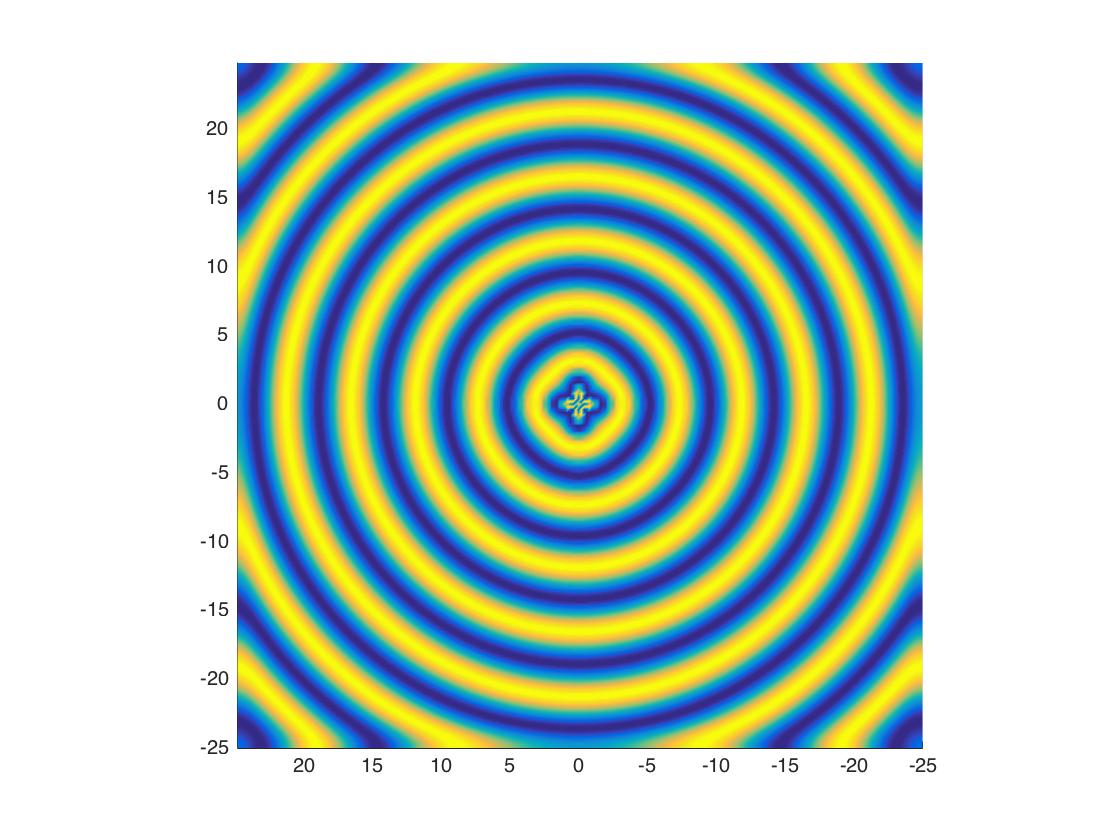}
	\caption{}
	\label{f:square-out}
    \end{subfigure}%
    \begin{subfigure}{0.5\textwidth}
    \centering
    \includegraphics[width=1\textwidth]{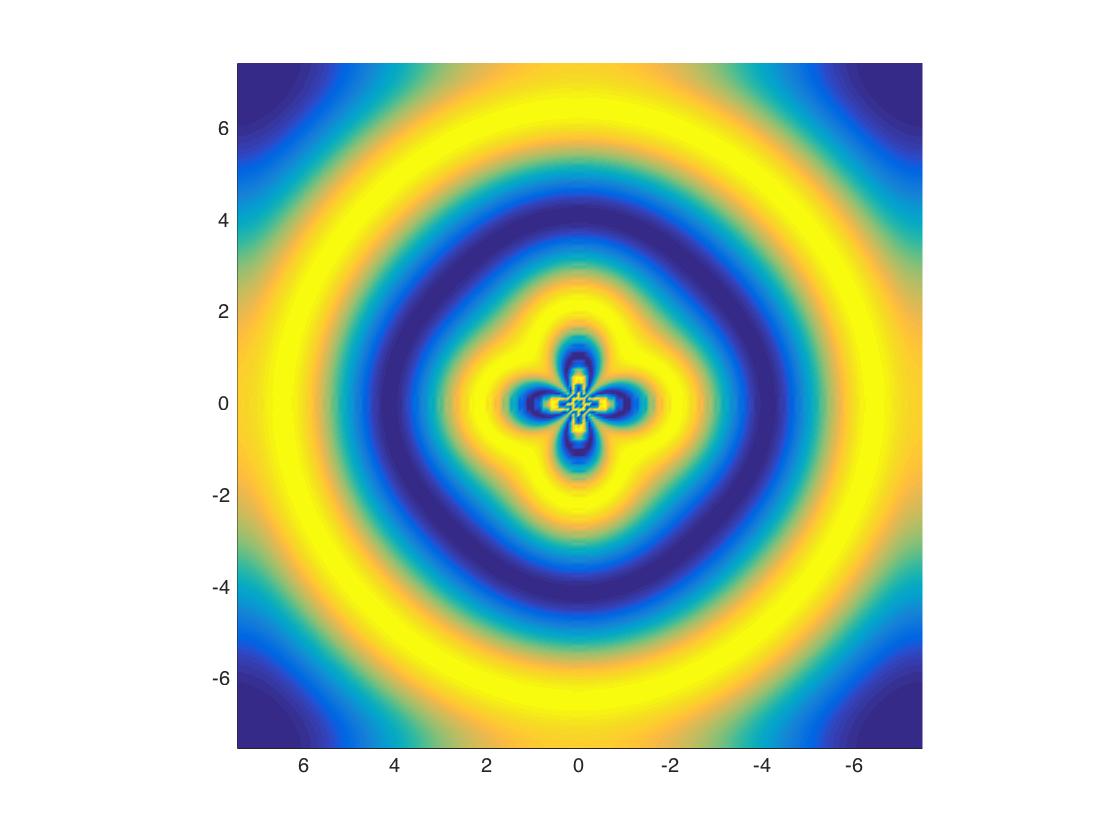}
	\caption{}
	\label{f:square-in}
    \end{subfigure}%
    
    \end{minipage}
\caption{ $g(x,y)  = \displaystyle \frac{1}{(1 + 3x^2+y^2)^{3/2}}$ in Figures~(a)~and~(b). Notice how the pattern in the far field is radially symmetric, Figure (a), where as near the inhomogeneity it is elliptical, Figure (b). On the other hand, Figures (c) and (d) correspond to a system perturbed by  $g(r,\theta) = \displaystyle \frac{1 + \cos(4\theta)}{(1+r)^3}$. Again notice how the pattern is radially symmetric in the far field, Figure (c), even though the inhomogeneity, $g$, is not radially symmetric, which results in a nonradially symmetric core, Figure (d).}
    \label{f:numerics}
\end{figure}

The matching procedure above relied strongly on the coupling kernels $\mathcal{L}$ and $J$ being local, and having a radial inhomogeneity $g(r)$, so the results do not immediately carry over to  the case of nonlocal coupling and/or nonradial and algebraically localized inhomogeneities. Figure~\ref{f:numerics} depicts numerical results for particular cases of the nonlocal eikonal equation~\eqref{e:nonloc1} given by 
$$
\partial_t \phi = (\Id - \Delta)^{-1} \Delta \phi - | \nabla (\Id-\Delta)^{-1} \phi|^2 + \eps g(x,y), \quad (x,y) \in \R^2,
$$
with $\eps  < 0$ and $g(x,y) = (1 + 3x^2+y^2)^{-3/2}$ in \ref{f:numerics}(a)~and~(b) (respectively $g(r,\theta) = (1 + \cos(4\theta))/ (1+r)^3$ in \ref{f:numerics}(c)~and~(d)). This evolution equation was integrated using a spectral discretization for the spatial operator and  exponential time differencing  (ETD) for the time stepping \cite{cox2002exponential,kassam2005fourth}.  We will present a full discussion of our numerical methods and results 
in future work;  here we only note that the far-field behavior of the target waves are (nearly) radially symmetric (see Figure~\ref{f:numerics}) even in the general nonlocal problem. Indeed, setting $\partial_t \phi = - \omega$ with  $\omega > 0$, and rescaling to the ``outer variables" $\tilde{x} = \sqrt{\omega} x, \tilde{y} = \sqrt{\omega} y$, we get  $\tilde{\nabla} = \omega^{-1/2} \nabla, \tilde{\Delta} = \omega^{-1} \Delta$. Further, from the algebraic localization of $g$, it follows that 
$$
\frac{1}{\omega} g(\omega^{-1/2}\tilde{x}, \omega^{-1/2} \tilde{y}) \sim \rmO(\omega^\sigma) \to 0  \text{ as } \omega \to 0.
$$
Consequently, the (formal) $\eps \to 0$  limit equation in the outer variables is the viscous eikonal equation
$$
- 1 = (\Id - \omega \tilde{\Delta})^{-1} \tilde{\Delta} \phi - | \tilde{\nabla} (\Id-\omega \tilde{\Delta})^{-1} \phi|^2 + \eps \omega^{-1} g(\omega^{-1/2}\tilde{x}, \omega^{-1/2} \tilde{y}) \approx \tilde{\Delta} \phi - |\tilde{\nabla} \phi|^2.
$$

This argument, together with the numerical results depicted in Figure~\ref{f:numerics}, suggests that, even for general inhomogeneities, we may approximate the dynamics of target waves solutions in the outer region by a radial viscous eikonal equation
\[ -\omega =  \partial_{rr} \phi  + \frac{1}{r} \partial_r \phi- (\partial_r \phi)^2.\]
This intuition will guide our analysis of the nonlocal equation~\eqref{e:nonloc1} for general coupling kernels and nonradially symmetric, algebraically localized perturbations $g$. We will show that the frequency indeed scales as $\omega \sim \exp \left( \frac{2}{\eps M} \right)$, with $\eps M<0$,  and that the target waves that bifurcate from the steady state are radially symmetric far away from the inhomogeneity. Our strategy consists of first finding, in Section~\ref{s:nonlocalintermediate}, solutions to equation~\eqref{e:nonloc1}, with $\omega=0$, which give the appropriate intermediate approximation. Then in Section~\ref{s:nonlocalfull} we find the ``outer" solutions to equation~\eqref{e:nonloc1} by treating the frequency $\omega$ as an extra parameter. Finally, in Section~\ref{s:nonlocalmatching} we derive a relation between the frequency $\omega$ and the parameter $\eps$ using asymptotic matching, and then proceed to prove the results of Theorem \ref{t:main}.

\section{Weighted Spaces}\label{s:weightedspaces}
We define the Kondratiev space \cite{mcowen1979behavior}, $M^{s,p}_\gamma(\R^d)$, with $d \in \N, s\in \N \bigcup \{0\},  \gamma \in \R,  p \in (1,\infty)$,  as the space of locally summable, $s$ times weakly differentiable functions $u: \R^d \rightarrow \R$ endowed with the norm
\[ \| u\|^p_{M^{s,p}_\gamma(\R^d)} = \sum_{|\alpha|\leq s} \left \| D^{\alpha}u({\bf x}) \cdot (1+|{\bf x}|^2)^{(\gamma + |\alpha|)/2} \right \|^p_{L^p(\R^d)}. \]
From the definition, it is clear that these spaces admit functions with algebraic decay or growth, depending on the weight $\gamma$, and that these functions gain localization with each derivative. Moreover, given real numbers $\alpha, \beta$, such that $\alpha>\beta$, the embedding $M^{s,p}_\alpha (\R^d)\subset M^{s,p}_\beta(\R^d)$ holds, and additionally if $s$ and $r$ are integers such that $s<r$ then  $M^{s,p}_\gamma(\R^d) \subset M^{r,p}_\gamma(\R^d)$. As in the case of Sobolev spaces, we may identify the dual $(M^{s,p}_\gamma(\R^d))^*$ with the space $M^{-s,q}_{-\gamma}(\R^d)$, where $p$ and $q$ are conjugate exponents, and in the case when $p=2$ we also have that Kondratiev spaces are Hilbert spaces. In particular, given $f, g \in M^{s,2}_\gamma(\R^d)$ the pairing 
$$\langle f, g\rangle := \sum_{|\alpha|\leq s} \int_{\R^d} D^\alpha f({\bf x}) \cdot D^{\alpha} g({\bf x}) \cdot (1+ |{\bf x}|^2)^{(\gamma+|\alpha|)} \;dx $$  
satisfies all the properties of an inner product. This is not hard to see once we notice that for every $f \in L^2_\gamma(\R^d)$ the function $f({\bf x}) \dot (1+|{\bf x}|^2)^{\gamma/2}$ is in the familiar Hilbert space $L^2(\R^d)$.

We will use this last property to decompose the space $M^{s,2}_\gamma(\R^d)$ into a direct sum of its polar modes. Here we restrict ourselves to the case $d=2$ which is relevant for our analysis, but mention that a similar decomposition is possible in higher dimensions. More precisely, using  the notation $M^{s,p}_{r, \gamma}(\R^d) $ to denote the subspace of radially symmetric functions in $M^{s,p}_\gamma(\R^d)$, we show that
\begin{Lemma}\label{l:polar}
Given $s \in \N \bigcup \{ 0\}$ and $\gamma \in \R$,  the space  $M^{s,2}_{\gamma}(\R^2)$ can be written as a direct sum decomposition
 \[ M^{s,2}_{\gamma}(\R^2) =  \bigoplus m^n_\gamma,\]
  where $n \in \Z$ and
\[ m^n_\gamma = \left \{ u \in M^{s,2}_\gamma(\R^2) \mid u = w(r) \rme^{\rmi n \theta} \; \mbox{and} \; w(r) \in M^{s,2}_{r,\gamma} (\R^2) \right \}.\]
\end{Lemma}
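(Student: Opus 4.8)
The plan is to read off the decomposition from the classical Fourier series in the polar angle $\theta$ and then propagate it from $L^{2}$ to the whole Kondratiev scale, exploiting that the weight $(1+|{\bf x}|^{2})^{\cdot}$ is radial and that the operators $\partial_{\pm}:=\partial_{x}\pm\rmi\partial_{y}$ act very cleanly on polar modes. For the base case $s=0$: since $(1+|{\bf x}|^{2})^{\gamma/2}$ is radial, multiplication by it is an isometric isomorphism $L^{2}_{\gamma}(\R^{2})\to L^{2}(\R^{2})$ that commutes with the orthogonal projections $P_{n}\colon v(r,\theta)\mapsto\bigl(\tfrac{1}{2\pi}\int_{0}^{2\pi}v(r,\phi)\rme^{-\rmi n\phi}\,\rmd\phi\bigr)\rme^{\rmi n\theta}$. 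Applying the classical Fourier series on $L^{2}(S^{1})$ fibrewise in $r$ then yields the orthogonal decomposition $L^{2}_{\gamma}(\R^{2})=\bigoplus_{n\in\Z}P_{n}L^{2}_{\gamma}(\R^{2})$, and $P_{n}L^{2}_{\gamma}(\R^{2})=m^{n}_{\gamma}$ by definition, which is the assertion for $s=0$.

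Next I would show that each $P_{n}$ preserves $M^{s,2}_{\gamma}(\R^{2})$. In polar coordinates $\partial_{+}=\rme^{\rmi\theta}(\partial_{r}+\tfrac{\rmi}{r}\partial_{\theta})$, $\partial_{-}=\rme^{-\rmi\theta}(\partial_{r}-\tfrac{\rmi}{r}\partial_{\theta})$, so $\partial_{+}\bigl(w(r)\rme^{\rmi n\theta}\bigr)=\bigl(w'-\tfrac{n}{r}w\bigr)\rme^{\rmi(n+1)\theta}$ and $\partial_{-}\bigl(w(r)\rme^{\rmi n\theta}\bigr)=\bigl(w'+\tfrac{n}{r}w\bigr)\rme^{\rmi(n-1)\theta}$; in particular $\partial_{\pm}P_{n}=P_{n\pm1}\partial_{\pm}$. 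Since every $D^{\alpha}$ with $|\alpha|=k$ is a fixed linear combination of the operators $\partial_{+}^{a}\partial_{-}^{b}$ with $a+b=k$ (and conversely), and each $\partial_{+}^{a}\partial_{-}^{b}u$ lies in $L^{2}_{\gamma+k}(\R^{2})$ whenever $u\in M^{k,2}_{\gamma}(\R^{2})$, while $P_{m}$ is a contraction on every $L^{2}_{\sigma}(\R^{2})$, it follows that $D^{\alpha}(P_{n}u)\in L^{2}_{\gamma+|\alpha|}(\R^{2})$; thus $P_{n}$ is a bounded projection on $M^{s,2}_{\gamma}(\R^{2})$ with closed range. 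Writing $P_{n}u=w_{n}(r)\rme^{\rmi n\theta}$, a direct comparison of the polar-coordinate expressions for the derivatives — dropping the manifestly nonnegative $\partial_{\theta}$-contributions, which near the origin also force $w_{n}$ to vanish to the order needed to keep the remaining $r^{-j}$ factors integrable — gives $\|w_{n}(|{\bf x}|)\|_{M^{s,2}_{\gamma}}\lesssim\|P_{n}u\|_{M^{s,2}_{\gamma}}$ (constant depending on $n$), so $w_{n}\in M^{s,2}_{r,\gamma}(\R^{2})$ and $\mathrm{Ran}\,P_{n}=m^{n}_{\gamma}$.

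For convergence and uniqueness I would use the partial sums $S_{N}:=\sum_{|n|\le N}P_{n}$. The relations $\partial_{\pm}S_{N}=\widetilde{S}_{N}\partial_{\pm}$, with $\widetilde{S}_{N}$ again a (shifted) partial-Fourier projection, give for each $\alpha$ with $|\alpha|\le s$ that $D^{\alpha}S_{N}u=\sum_{a+b=|\alpha|}c^{\alpha}_{ab}\,\widetilde{S}_{N}^{\,a-b}\bigl(\partial_{+}^{a}\partial_{-}^{b}u\bigr)\to D^{\alpha}u$ in $L^{2}_{\gamma+|\alpha|}(\R^{2})$, since each $\widetilde{S}_{N}^{\,j}\to\mathrm{Id}$ strongly there; hence $S_{N}u\to u$ in $M^{s,2}_{\gamma}(\R^{2})$, i.e.\ $u=\sum_{n}P_{n}u$ with convergence in $M^{s,2}_{\gamma}$, and $\sum_{n}\|P_{n}u\|_{M^{s,2}_{\gamma}}^{2}\le C\|u\|_{M^{s,2}_{\gamma}}^{2}$ by the same identities, while uniqueness of a representation $u=\sum u_{n}$ with $u_{n}\in m^{n}_{\gamma}$ follows on applying $P_{m}$. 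This establishes $M^{s,2}_{\gamma}(\R^{2})=\bigoplus_{n}m^{n}_{\gamma}$. (The sum is orthogonal for the equivalent, rotation-invariant renorming in which $\sum_{|\alpha|\le s}|D^{\alpha}u|^{2}$ is replaced by $\sum_{k\le s}\sum_{|\alpha|=k}\binom{k}{\alpha}|D^{\alpha}u|^{2}$; since the norms are equivalent, the direct-sum decomposition is unchanged.)

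The part I expect to require actual work is the norm comparison in the second step: controlling precisely how the Cartesian derivatives defining the Kondratiev norm interact with the polar-mode structure, in particular the singular $r^{-|\alpha|}$ factors produced by $\partial_{\theta}$ near the origin. Everything else is the classical Fourier series together with the clean commutation behavior of $\partial_{\pm}$ on modes, which is exactly what makes the origin manageable.
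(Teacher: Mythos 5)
Your argument is correct in substance but takes a genuinely different route from the paper's. The paper identifies $\R^2$ with $\C$, expands $f(r\rme^{\rmi\theta})$ in a Fourier series in $\theta$ fibrewise in $r$, and uses Parseval together with monotone convergence to get convergence of the partial sums in $L^2_\gamma$; for derivatives it only verifies convergence of $\partial_\theta^\alpha f$, not of the Cartesian derivatives $D^\alpha$ weighted by $(1+|{\bf x}|^2)^{(\gamma+|\alpha|)/2}$ that actually define the $M^{s,2}_\gamma$ norm. Your commutation relations $\partial_\pm P_n = P_{n\pm1}\partial_\pm$ address exactly this point: they reduce the boundedness of $P_n$ and the strong convergence $S_N\to\mathrm{Id}$ on $M^{s,2}_\gamma$ to the $L^2_\sigma$ case, where the radiality of the weight makes everything immediate. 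In this respect your proof is more careful than the one in the paper, at the cost of the extra bookkeeping with $\partial_+^a\partial_-^b$.

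The step you flag as requiring real work --- that the profile $w_n$ of $P_nu$ lies in $M^{s,2}_{r,\gamma}(\R^2)$ --- is precisely the step the paper's proof asserts without argument (``Notice that the functions $f_n(r)$ are in the space $M^{s,2}_{r,\gamma}(\R^2)$''), so you have located the genuine soft spot of the lemma. Be aware, though, that your plan of ``dropping the nonnegative $\partial_\theta$-contributions'' only controls the pure radial derivatives $\partial_r^jw_n$, whereas $M^{s,2}_{r,\gamma}(\R^2)$ is defined in the paper as the set of radially symmetric elements of $M^{s,2}_\gamma(\R^2)$, and the Cartesian derivatives of the radial function $w_n(|{\bf x}|)$ involve $r^{-1}\partial_rw_n$ separately from $\partial_{rr}w_n$. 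With that literal reading the claim can fail for $s\ge2$ and $n\ne0$: if $u$ is smooth, compactly supported, and equals $x+\rmi y=r\rme^{\rmi\theta}$ near the origin, then $u\in M^{s,2}_\gamma(\R^2)$ for every $s$, but its profile $w_1(r)=r$ near $0$ gives $\partial_x^2|{\bf x}|=y^2/|{\bf x}|^3\sim|{\bf x}|^{-1}$, which is not in $L^2$ near the origin in dimension two, so $w_1(|{\bf x}|)\notin M^{2,2}_\gamma(\R^2)$. The decomposition is salvaged (and your estimate does close the gap) if $M^{s,2}_{r,\gamma}$ is understood with the radial-derivative norm $\sum_{j\le s}\|\partial_r^jw\,(1+r^2)^{(\gamma+j)/2}\|_{L^2(r\,dr)}$, which is how the radial spaces are used elsewhere in the paper (e.g.\ in Lemma~\ref{l:Delta_nFredholm}); you should make that choice of norm explicit rather than appealing to vanishing of $w_n$ at the origin, which does not rescue the literal statement.
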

The proof of this result follows the analysis of Stein and Weiss in \cite{stein2016introduction}.
\begin{proof} We need to show that each element $f \in M^{s,p}_\gamma(\R^2)$ can be well approximated by an element in the direct sum $\bigoplus m^n_\gamma$. To obtain a candidate function in the latter space we first identify $\R^2$ with the complex plane $\C$ by letting $z = x +\rmi y$ for $(x,y) \in \R^2$. Then, using the notation $z= r\rme^{\rmi \theta}$ we write  $f(z) = f(r\rme^{\rmi \theta})$  for each function $f \in M^{s,2}_\gamma(\R^2)$. By Fubini's Theorem the function $f(r \rme^{\rmi \theta})$ is in $H^s([0,2\pi])$ for a.e. $r \in [0,\infty)$, so we may express this function as a Fourier series in $\theta$.
\[ f(r\rme^{\rmi \theta} ) = \sum_{n \in \Z} f_n(r) \rme^{\rmi n \theta}, \quad \mbox{where} \quad f_n(r) = \frac{1}{2\pi} \int_0^{2\pi} f(r\rme^{\rmi \theta} ) \cdot \rme^{-\rmi n \theta} d\theta \]
Notice that the functions $f_n(r)$ are in the space $ M^{s,2}_{r,\gamma}(\R^2)$, so that this sum is the desired candidate function. Because the series $\sum_{-N}^N |f_n(r)|^2$ is monotonically increasing and because by Parseval's identity it converges to $\frac{1}{2 \pi} \int_0^{2\pi} | f(r\rme^{\rmi \theta})|^2 d\theta$, letting $g_N(r) = 2 \pi \sum_{-N}^N |f_n(r)|^2$ a straight forward calculation shows that 
\[ \int_0^\infty \left( \int_0^{2\pi} |f - g_N|^2 \;d\theta \right) \; (1 +r^2)^{\gamma} rdr = \int_0^\infty \left [ \left ( \int_0^{2\pi} |f(r\rme^{\rmi \theta})|^2 \;d\theta \right) - g_N \right] \;(1+r^2)^\gamma rdr.\]
Then, by the monotone convergence theorem we may conclude 
\[ 
\left \| f(r\rme^{\rmi \theta}) - 2 \pi \sum_{-N}^N f_n(r) \rme^{\rmi n \theta} \right \|_{L^2_\gamma(\R^2)} = \| f - g_N\|_{L^2_\gamma (\R^2)} \rightarrow 0 , \quad \mbox{as} \quad N \rightarrow \infty,\]
as desired. Moreover, since for a.e. $r \in [0,\infty)$ the function $f \in H^s([0,2 \pi])$, a  similar argument can be carried out to show that as $N \rightarrow \infty $ the expressions  $\left \| \partial_\theta^\alpha f(r\rme^{\rmi \theta}) - 2 \pi \sum_{-N}^N (in)^\alpha f_n(r) \rme^{\rmi n \theta} \right \|_{L^2_\gamma(\R^2)} \rightarrow 0 $  for all integers $\alpha \leq s$. This completes the proof of the lemma.
\end{proof}

We also have the following result describing how elements in $M^{1,2}_{\gamma}(\R^d)$ decay at infinity (see Appendix \ref{s:appendixdecay} for a proof).
\begin{Lemma}\label{l:decay}
Given $f \in M^{1,2}_{\gamma}(\R^d)$, then $|f({\bf x})| \leq C\|f\|^{d/2}_{M^{1,2}_{\gamma}(\R^d)} \cdot (1+|{\bf x}|^2 )^{-(\gamma+d/2)}$ as ${\bf x} \rightarrow \infty$.
\end{Lemma}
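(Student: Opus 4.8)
The plan is to reduce the bound to a one--dimensional computation. By a standard density argument it suffices to prove the estimate for $f\in C_0^\infty(\R^d)$ and then pass to a general element of $M^{1,2}_\gamma(\R^d)$, both sides being controlled by $\|f\|_{M^{1,2}_\gamma}$. I would then use the polar decomposition of Lemma~\ref{l:polar} (for $d=2$; spherical harmonics for larger $d$) to write $f=\sum_n f_n(r)\,\rme^{\rmi n\theta}$, with each mode obeying $\|f_n(r)\rme^{\rmi n\theta}\|_{M^{1,2}_\gamma}\leq \|f\|_{M^{1,2}_\gamma}$; so it is enough to bound a single radial profile $w=f_n\in M^{1,2}_{r,\gamma}(\R^d)$ at large radius $R$ and then recombine the modes.

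For a radial $w$ the key identity is the fundamental theorem of calculus along a ray: provided $w(r)\to0$ as $r\to\infty$,
\[
|w(R)|^2 \;=\; -2\int_R^\infty \Re\big(\overline{w(r)}\,w'(r)\big)\,\rmd r \;\leq\; 2\int_R^\infty |w(r)|\,|w'(r)|\,\rmd r.
\]
The decay $w(r)\to0$ must be checked first: since $\int_1^\infty |w'|\,\rmd r \leq \big(\int_1^\infty |w'|^2(1+r^2)^{\gamma+1}r^{d-1}\,\rmd r\big)^{1/2}\big(\int_1^\infty (1+r^2)^{-\gamma-1}r^{-(d-1)}\,\rmd r\big)^{1/2}<\infty$ in the relevant range of $\gamma$, the profile $w$ has a finite limit at infinity, which must vanish because $w$ lies in $L^2$ against a weight bounded below. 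Next I would apply Cauchy--Schwarz, pairing $|w|$ with the weight $(1+r^2)^{\gamma/2}r^{(d-1)/2}$ and $|w'|$ with $(1+r^2)^{(\gamma+1)/2}r^{(d-1)/2}$, and bound the leftover factor $(1+r^2)^{-\gamma-1/2}r^{-(d-1)}$ by its value $\asymp R^{-2\gamma-d}$ at $r=R$ (it is monotone on $[R,\infty)$ for the admissible $\gamma$). This gives
\[
|w(R)|^2 \;\lesssim\; R^{-2\gamma-d}\,\|w\|_{M^{1,2}_{r,\gamma}}^2,
\]
which is precisely the claimed decay rate for one radial profile.

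The last step, and the one I expect to be the main obstacle, is to recombine the modes: $|f(R,\theta)|\leq\sum_n|f_n(R)|$, and this series must be summed using the per--mode estimates together with the angular regularity of $f$ --- the $\tfrac1r\partial_\theta$ part of $\nabla f$ controls a weighted version of $\sum_n n^2\|f_n\|^2$, which supplies enough decay in $n$ for the sum to converge. This is also where the $\|f\|^{d/2}$ power of the norm comes from: one effectively runs the one--dimensional slicing estimate once per coordinate direction (equivalently, one upgrades an $L^2$--on--spheres bound to an $L^\infty$--on--spheres bound through Sobolev embedding on $S^{d-1}$), paying a factor $\|f\|^{1/2}$ roughly $d$ times. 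The density reduction, the weighted Cauchy--Schwarz bookkeeping, and the monotonicity of the tail weights I would treat as routine; the genuine work is the passage from the radial/per--mode bound to the estimate for the full, non--radial function.
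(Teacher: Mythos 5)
Your radial estimate is exactly the paper's. The proof in Appendix B.1 integrates $\partial_r f$ from $\infty$ to $R$ along each ray and applies weighted Cauchy--Schwarz with precisely the splitting you describe (their exponent $\alpha=-(\gamma+1)+(1-d)/2$ is your bookkeeping), arriving at $\|f(\cdot,R)\|_{L^2(\Sigma)}\leq R^{-(\gamma+d/2)}\|\nabla f\|_{L^2_{\gamma+1}}$, where $\Sigma$ is the unit sphere. The only structural difference is the order of operations: the paper integrates the ray estimate over $\Sigma$ first and then invokes a Gagliardo--Nirenberg interpolation to pass from $L^2(\Sigma)$ to $L^\infty(\Sigma)$, whereas you decompose into angular modes first and propose to resum. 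These are the same step in different clothing: summability of $\sum_n n^{2s}|f_n(R)|^2$ with $s>1/2$ \emph{is} the Sobolev embedding $H^s(S^1)\hookrightarrow L^\infty(S^1)$.

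The step you flag as ``the genuine work'' is indeed where the argument is incomplete, and your sketched fix does not close it. The quantity $\|\tfrac1r\partial_\theta f\|^2_{L^2_{\gamma+1}}=\sum_n n^2\|f_n/r\|^2_{L^2_{\gamma+1}}$ controls only a radially \emph{integrated} version of $\sum_n n^2|f_n|^2$; to sum $\sum_n|f_n(R)|$ you need such a bound \emph{at the fixed radius} $R$, and converting the integrated bound into a pointwise-in-$R$ one by the same FTC-plus-Cauchy--Schwarz device requires $\|n\,\partial_r f_n\|_{L^2_{\gamma+1}}$, i.e.\ the mixed second derivative $\partial_r\partial_\theta f$, which the $M^{1,2}_{\gamma}$ norm does not control. (Distributing the angular weight asymmetrically, e.g.\ bounding $\sum_n n|f_n(R)|^2$ by $\bigl(\sum n^2\|f_n\|^2\bigr)^{1/2}\bigl(\sum\|f_n'\|^2\bigr)^{1/2}$, is admissible but then the resummation of $\sum_n|f_n(R)|$ fails because $\sum n^{-1}$ diverges.) To be fair, the paper's own proof is weakest at exactly this point --- it cites the interpolation inequality with $m=0$, $p=q=2$, which degenerates to the false bound $\|f(\cdot,R)\|_{\infty}\lesssim\|f(\cdot,R)\|_{L^2(\Sigma)}$ --- and neither argument actually produces the power $\|f\|^{d/2}$ appearing in the statement. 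So your proposal reproduces the paper's proof, gap included; a complete argument needs either more regularity than $M^{1,2}_{\gamma}$ or an independent control of the angular derivative on the sphere of radius $R$.
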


In addition, the next lemma characterizes the multiplication property for Kondratiev spaces. The lemma is more general than we need in the sense that it holds for complete Riemannian manifolds that are euclidean at infinity, $(\mathbb{M},e)$. A proof of this result can be found in \cite{choquet1981elliptic}. We have adapted the notation so that it is consistent with our definition of Kondratiev spaces.

\begin{Lemma}\label{l:multiplication}
If $(\mathbb{M},e)$ is a complete Riemannian manifold euclidean at infinity of dimension $d$, we have the continuous multiplication property 
\begin{align*}
 M^{s_1,2}_{\gamma_1}(\mathbb{M}) \times M^{s_2,2}_{\gamma_2}(\mathbb{M}) &\longrightarrow M^{s,2}_\gamma(\mathbb{M})\\
 (f_1,f_2) &\longmapsto f_1\cdot f_2
 \end{align*}
 
provided $s_1,s_2 \geq s, \; s< s_1 + s_1 - d/2,$ and $ \gamma< \gamma_1 + \gamma_2+d/2$.
\end{Lemma}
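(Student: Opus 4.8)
The natural route — essentially the one in \cite{choquet1981elliptic} — is to reduce the weighted estimate to a family of \emph{scale-invariant} Sobolev product estimates on a fixed annulus. Decompose $\mathbb{M}=K_0\cup E$, where $K_0$ is a compact core and $E$ is isometric to the exterior $\{|x|\ge R_0\}$ of a ball in $\R^d$, and cover $E$ by the overlapping dyadic shells $A_j=\{2^{j-1}\le|x|\le 2^{j+1}\}$, $j\ge j_0$ (with $2^{j_0}\asymp R_0$, so $j_0\ge 1$ after enlarging $K_0$ if needed). For $u$ defined on $E$ set $u_j(y):=u(2^jy)$, supported on the fixed annulus $A=\{1/2\le|y|\le 2\}$. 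On $A_j$ one has $(1+|x|^2)^{(\gamma+|\alpha|)/2}\asymp 2^{j(\gamma+|\alpha|)}$, while $D^\alpha_xu(x)=2^{-j|\alpha|}(D^\alpha_yu_j)(2^{-j}x)$ and $dx=2^{jd}\,dy$; hence, after a change of variables,
\[
\big\|D^\alpha u\,(1+|x|^2)^{(\gamma+|\alpha|)/2}\big\|_{L^2(A_j)}^2\asymp 2^{j(2\gamma+d)}\,\|D^\alpha_y u_j\|_{L^2(A)}^2 ,
\]
with constants independent of $j$ and $\alpha$ — this is exactly the point of the Kondratiev weight gaining one power of decay per derivative. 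Summing over $|\alpha|\le s$ and over $j$, and absorbing the contribution of $K_0$ (where the weights are bounded above and below) into an ordinary Sobolev norm, yields the norm equivalence
\[
\|u\|_{M^{s,2}_\gamma(\mathbb{M})}^2\asymp \|u\|_{H^s(K_0)}^2+\sum_{j\ge j_0}2^{j(2\gamma+d)}\,\|u_j\|_{H^s(A)}^2 .
\]

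Granting this, the rest is bookkeeping. On the compact core, the classical multiplication theorem gives $\|fg\|_{H^s(K_0)}\le C\|f\|_{H^{s_1}(K_0)}\|g\|_{H^{s_2}(K_0)}$ under the stated hypotheses $s_1,s_2\ge s$, $s<s_1+s_2-d/2$. On each shell, since $A$ is a \emph{fixed} bounded Lipschitz domain, the same theorem gives $\|f_jg_j\|_{H^s(A)}\le C\|f_j\|_{H^{s_1}(A)}\|g_j\|_{H^{s_2}(A)}$ with $C$ independent of $j$. Using the equivalence above for $f$ in the form $\|f_j\|_{H^{s_1}(A)}^2\le C\,2^{-j(2\gamma_1+d)}\|f\|_{M^{s_1,2}_{\gamma_1}}^2$ and pulling this factor out of the $j$-sum, one is left with
\[
\sum_{j\ge j_0}2^{j(2\gamma+d)}\,\|(fg)_j\|_{H^s(A)}^2\le C\,\|f\|_{M^{s_1,2}_{\gamma_1}}^2\sum_{j\ge j_0}2^{2j(\gamma-\gamma_1)}\,\|g_j\|_{H^{s_2}(A)}^2 .
\]
For $j\ge j_0\ge 1$ one has $2^{2j(\gamma-\gamma_1)}\le 2^{2j_0(\gamma-\gamma_1-\gamma_2-d/2)}\,2^{j(2\gamma_2+d)}$ as soon as $\gamma-\gamma_1-\gamma_2-d/2\le 0$, which is guaranteed (with room to spare) by the hypothesis $\gamma<\gamma_1+\gamma_2+d/2$; hence the last sum is bounded by a constant times $\|g\|_{M^{s_2,2}_{\gamma_2}}^2$. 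Combining the core and end contributions gives $\|fg\|_{M^{s,2}_\gamma(\mathbb{M})}\le C\|f\|_{M^{s_1,2}_{\gamma_1}(\mathbb{M})}\|g\|_{M^{s_2,2}_{\gamma_2}(\mathbb{M})}$, which is the assertion.

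Two routine points remain. First, one should check that the pointwise product $fg$ is a well-defined element of $L^1_{\mathrm{loc}}$ with weak derivatives up to order $s$: on each bounded piece the local product theorem already places $fg$ in $H^s_{\mathrm{loc}}$, so the global weighted bound just derived shows it lies in $M^{s,2}_\gamma(\mathbb{M})$; equivalently, one proves the estimate first for $f,g\in C_0^\infty(\mathbb{M})$ and extends the bilinear map by density. Second, the transition between $K_0$ and the dyadic shells involves only finitely many overlapping pieces, contributing fixed multiplicative constants. The step I expect to be the real work is the first paragraph — making the annular rescaling rigorous on $(\mathbb{M},e)$ and verifying that the derivative, weight, and measure scalings combine into a genuinely $j$-independent equivalence with the $H^s(A)$ norm — since this is precisely what converts the problem into the scale-invariant setting where the textbook Sobolev product theorem does all the analytic work.
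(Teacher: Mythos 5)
Your argument is correct, and it is essentially the standard proof of this multiplication property: the paper itself does not prove the lemma but defers to \cite{choquet1981elliptic}, whose argument is precisely this reduction, via dyadic annuli and the scaling $u_j(y)=u(2^jy)$, to the scale-invariant Sobolev product estimate on a fixed annulus plus the classical estimate on the compact core. The weight/derivative/measure bookkeeping in your norm equivalence and the final geometric-sum comparison under $\gamma<\gamma_1+\gamma_2+d/2$ both check out (note the lemma's hypothesis ``$s<s_1+s_1-d/2$'' is a typo for $s<s_1+s_2-d/2$, which is what you correctly use).
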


\subsection{Fredholm properties of the Laplacian and related operators}\label{s:fredholmProperties}

The main appeal of Kondratiev spaces for us is that the the Laplace operator is a Fredholm operator in these spaces. This is summarized in the following theorem, whose proof can be found in \cite{mcowen1979behavior}. This result is the basis for deriving Fredholm properties for other linear operators that will be encountered in Section \ref{s:nonlocal}.

\begin{Theorem}\label{McOwen}
Let $1<p=\frac{q}{q-1}<\infty$,  $d \geq 2$, and $\gamma \neq d/q + m $ or $\gamma \neq 2-d/p -m $, for some $m \in \N$.  Then 
\[ 
\Delta: M^{2,p}_{\gamma-2}(\R^d) \rightarrow L^p_{\gamma}(\R^d), 
\]
is a Fredholm operator and
\begin{enumerate}
\item for $2-d/p < \gamma < + d/q$ the map is an isomorphism;
\item for $ d/q + m < \gamma<  d/q + m+1$ , $m \in \N$, the map is injective with closed range equal to 
\[
R_m = \left\{ f \in L^p_{\gamma } : \int f(y)H(y) =0 \  \text{for all } \ H \in \bigcup_{j=0}^m \mmH_j\right\}; 
 \]
\item for $2-d/p - m -1 < \gamma <2-d/p -m $, $m \in \N$, the map is surjective with kernel equal to 
\[ N_m = \bigcup_{j=0}^m \mmH_j
.\]
\end{enumerate}
Here,  $\mmH_j $ denote the harmonic homogeneous polynomials of degree $j$.

On the other hand, if $\gamma = 2-d/p - m $ or $\gamma =   d/q + m$ for some $m \in \N$, then $\Delta$ does not have closed range.
 \end{Theorem}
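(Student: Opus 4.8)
The plan is to exploit the exact scaling structure of $\Delta$ — it is homogeneous of degree $-2$ under the dilations $u(x)\mapsto u(\lambda x)$ — together with interior elliptic (Calder\'on--Zygmund) estimates, and to read off the Fredholm data from the resonances of this scaling. Concretely, I would pass to the logarithmic radial variable $x = e^t\omega$, $t\in\R$, $\omega\in S^{d-1}$, under which $r^2\Delta$ becomes $\partial_t^2 + (d-2)\partial_t + \Delta_{S^{d-1}}$; decomposing into spherical harmonics of degree $j$ (for $p=2$ this is literally the mode decomposition of Lemma~\ref{l:polar}) and taking the Mellin transform in $r$ turns $\Delta$, suitably weighted, into a holomorphic operator family on $S^{d-1}$ whose invertibility on a horizontal line in the Mellin variable fails exactly at the \emph{indicial roots}. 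Solving the resulting quadratic $\mu(\mu+d-2)=j(j+d-2)$ gives the roots $\mu=j$ and $\mu=2-d-j$, i.e. precisely the homogeneity rates of the homogeneous harmonic polynomials $\mmH_j$ and of their singular counterparts $|x|^{2-d-j}\mmH_j$. Matching the rate $\mu = j$ with the admissible growth exponents of the domain and the codomain — near infinity $M^{2,p}_{\gamma-2}$ admits growth of order strictly below $2-\gamma-d/p$, while the dual $L^q_{-\gamma}$ of the codomain admits growth of order strictly below $\gamma-d/q$ — pins the exceptional weights down to $\gamma = 2 - d/p - m$ and $\gamma = d/q + m$, $m\ge 0$, which is exactly the exceptional set in the statement.

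The first technical step is the weighted a priori estimate. Cover $\R^d$ by the dyadic annuli $A_j=\{2^j\le |x|\le 2^{j+1}\}$, $j\in\Z$, rescale each $A_j$ to a fixed annulus, apply the interior estimate $\|u\|_{W^{2,p}}\lesssim \|\Delta u\|_{L^p}+\|u\|_{L^p}$ on a slightly larger annulus, and reassemble with the matching dyadic weights $2^{j(\gamma-2+d/p)}$. For $\gamma$ avoiding the exceptional set one obtains the semi-Fredholm estimate
\[ \|u\|_{M^{2,p}_{\gamma-2}(\R^d)} \le C\bigl(\|\Delta u\|_{L^p_{\gamma}(\R^d)} + \|u\|_{L^p_{\gamma-2}(B_R)}\bigr), \]
the second term being a relatively compact perturbation because $M^{2,p}_{\gamma-2}(B_R)\hookrightarrow L^p_{\gamma-2}(B_R)$ is compact. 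Non-exceptionality enters precisely here: it guarantees that the far-field part of the estimate, controlled through the invertibility of the Mellin family on the relevant horizontal line, carries no resonant loss. This estimate gives that $\Delta$ has finite-dimensional kernel and closed range on $M^{2,p}_{\gamma-2}$; running the same argument for the formal adjoint $\Delta\colon L^q_{-\gamma}(\R^d)\to M^{-2,q}_{2-\gamma}(\R^d)$ (which is again $\Delta$, now in the dual Kondratiev scale) gives a finite-dimensional cokernel, so $\Delta$ is Fredholm.

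Next I would identify the kernel, the cokernel, and the isomorphism regime. Any $u\in\ker\Delta\subset M^{2,p}_{\gamma-2}$ is, by hypoellipticity, a classical harmonic function on $\R^d$ of at most polynomial growth, hence a harmonic polynomial by Liouville's theorem; counting the degrees $j$ with $j<2-\gamma-d/p$ shows $\ker\Delta=\bigcup_{j=0}^m\mmH_j$ on $2-d/p-m-1<\gamma<2-d/p-m$, and the Fredholm alternative together with the dual count (the adjoint has trivial kernel on $L^q_{-\gamma}$ there) yields surjectivity — this is case (3). Symmetrically, on $d/q+m<\gamma<d/q+m+1$ the adjoint has kernel $\bigcup_{j=0}^m\mmH_j$, so $\Delta$ is injective with range the annihilator $R_m$ — this is case (2). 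On the central strip $2-d/p<\gamma<d/q$ both kernel and cokernel vanish; for $d\ge 3$ one exhibits the inverse explicitly as convolution with the Newtonian kernel $c_d|x|^{2-d}$, whose boundedness $L^p_\gamma\to M^{2,p}_{\gamma-2}$ holds exactly on this strip by a weighted Hardy--Littlewood--Sobolev (Stein--Weiss) inequality, the two endpoints being where the Newtonian potential of a compactly supported density (which decays like $|x|^{2-d}$) just fails to lie in the target space, respectively its dual fails. Finally, for the last assertion, at $\gamma=2-d/p-m$ I would take $P\in\mmH_m$ and set $u_n(x)=\chi(x/n)P(x)$: since $P$ is harmonic, $\Delta u_n$ is supported in $\{n\le|x|\le 2n\}$ and $\|\Delta u_n\|_{L^p_\gamma}\to 0$ while $\|u_n\|_{M^{2,p}_{\gamma-2}}$ stays bounded below with no subsequence converging modulo $\ker\Delta$, contradicting a closed-range estimate; the endpoint $\gamma=d/q+m$ is the dual construction.

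I expect the main obstacle to be twofold and intertwined: making the weighted estimate \emph{sharp at the indicial lines}, i.e. proving that the Mellin-side resolvent is uniformly bounded on exactly the non-exceptional horizontal lines, with the bound degenerating as one approaches a root (this is where the careful annulus-by-annulus bookkeeping and the structure of the spectrum of $\Delta_{S^{d-1}}$ must be combined); and pinning the kernel and cokernel down \emph{exactly} — the upper bounds come cheaply from Liouville plus exponent counting, but the lower bounds (that every listed harmonic polynomial genuinely obstructs, and that the range is exactly $R_m$) require controlling the precise decay rate of $\Delta^{-1}f$ through the Stein--Weiss endpoint analysis and the duality pairing, which is the delicate part of the argument.
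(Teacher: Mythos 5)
The paper offers no proof of this statement: it is quoted verbatim from McOwen \cite{mcowen1979behavior}, so the only meaningful comparison is with that classical argument, and your outline is essentially a reconstruction of it in the modern indicial-root packaging of Lockhart--McOwen (dyadic rescaled interior estimates for the semi-Fredholm inequality away from the exceptional lines, Liouville plus exponent counting for the kernel, duality for the cokernel, the Newtonian potential on the central strip, cutoff harmonic polynomials at the exceptional weights). Two points need attention. First, your closed-range counterexample is quantitatively off: with $P\in\mmH_m$, $u_n=\chi(\cdot/n)P$ and $\gamma=2-d/p-m$, one computes $\|\Delta u_n\|_{L^p_\gamma}\asymp 1$ (the exponent $(m-2+\gamma)p+d$ vanishes exactly at this weight), not $\to 0$; what diverges is $\|u_n\|_{M^{2,p}_{\gamma-2}}\asymp(\log n)^{1/p}$. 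The construction survives after normalizing $v_n=u_n/\|u_n\|_{M^{2,p}_{\gamma-2}}$, but you must then also show $\mathrm{dist}(v_n,\ker\Delta)\geq c>0$; this follows because $\ker\Delta$ consists of harmonic polynomials of degree at most $m-1$, so the degree-$m$ spherical-harmonic component of $v_n$ alone carries a uniformly positive fraction of the norm. Second, the step you yourself flag as the main obstacle is the real content of the theorem for $p\neq 2$: uniform invertibility of the Mellin family on a non-exceptional horizontal line, with bounds summable over spherical harmonics of all degrees, does not come from Plancherel and requires either the Nirenberg--Walker weighted estimates that McOwen invokes or a multiplier theorem on the cylinder; as written, your proposal asserts rather than proves this. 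A harmless remark: your Newtonian-potential argument for case (1) is restricted to $d\geq 3$, which costs nothing since for $d=2$ the interval $(2-d/p,\,d/q)$ is empty.
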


Notice that we can use the result from Lemma \ref{l:polar} to diagonalize the Laplacian. That is, given $u \in M^{2,2}_{\gamma-2}( \R^2)$, 
\[ \Delta u = \Delta \left (\sum_{n \in \Z} u_n(r) \rme^{\rmi n \theta} \right)=  \sum_{n \in \Z}   \left( \partial_{rr} u_n + \frac{1}{r} \partial_r u_n - \frac{n^2}{r^2}  u_n \right) \rme^{\rmi n \theta} = \sum_{n \in \Z} ( \Delta_n u_n)  \rme^{\rmi n \theta},\]
where $u_n \in M^{2,2}_{r,\gamma-2}(\R^2)$. 

One can now combine this decomposition together with Theorem \ref{McOwen} to arrive at the following lemma.

\begin{Lemma}\label{l:Delta_nFredholm}
Let $\gamma \in \R \backslash \Z$, and $n \in \Z$. Then, the operator $\Delta_n: M^{2,2}_{r, \gamma-2}(\R^2) \rightarrow L^2_{r,\gamma}(\R^2)$ given by 
\[ \Delta_n \phi = \partial_{rr} \phi + \frac{1}{r} \partial_r \phi - \frac{n^2}{r^2} \phi\]
is a Fredholm operator and,
\begin{enumerate}
\item for $1-n < \gamma< n+1$, the map is invertible;
\item for $\gamma>n+1 $, the map is injective with cokernel spanned by $r^n$;
\item for $\gamma< 1-n$, the map is surjective with kernel spanned by $r^n$.
\end{enumerate}
On the other hand, the operator is not Fredholm for integer values of $\gamma$.
\end{Lemma}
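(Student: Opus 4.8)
The plan is to reduce the claim about $\Delta_n$ acting between radial Kondratiev spaces to the already-established result of McOwen (Theorem~\ref{McOwen}) by exploiting the polar-mode decomposition of Lemma~\ref{l:polar}. First I would observe that if $u_n(r) e^{\mathrm{i} n\theta} \in M^{2,2}_{\gamma-2}(\R^2)$, then by the definition of the norm and the fact that $|e^{\mathrm{i} n\theta}|=1$, the function $u_n$ lies in $M^{2,2}_{r,\gamma-2}(\R^2)$, and conversely; moreover $\Delta(u_n e^{\mathrm{i} n\theta}) = (\Delta_n u_n) e^{\mathrm{i} n\theta}$, as recorded in the displayed computation just before the statement. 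Thus $\Delta_n$ on the radial space is exactly the restriction of the full Laplacian $\Delta : M^{2,2}_{\gamma-2}(\R^2)\to L^2_\gamma(\R^2)$ to the closed invariant subspace $m^n_{\gamma-2}$ (resp.\ $m^n_\gamma$), and Fredholmness plus the index/kernel/cokernel data transfer provided one tracks what the harmonic polynomials $\mathcal{H}_j$ contribute in the mode $n$.

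The key steps, in order: (i) record the identification $m^n_{\gamma-2}\cong M^{2,2}_{r,\gamma-2}(\R^2)$ and the intertwining $\Delta|_{m^n} = \Delta_n$, so that since $\Delta$ is Fredholm on the whole space (for $\gamma\notin\Z$, which for $d=2$, $p=q=2$ is exactly the excluded set $\{d/q+m\}\cup\{2-d/p-m\} = \{1+m\}\cup\{1-m\} = \Z$) it is Fredholm on each invariant summand; (ii) identify, among the harmonic homogeneous polynomials $\mathcal{H}_j$ of degree $j$, precisely which ones have a nonzero component in the angular mode $e^{\mathrm{i} n\theta}$: in $\R^2$, $\mathcal{H}_j = \mathrm{span}\{r^j e^{\mathrm{i} j\theta}, r^j e^{-\mathrm{i} j\theta}\}$ for $j\geq 1$ and $\mathcal{H}_0 = \mathrm{span}\{1\}$, so the mode-$n$ part of $\bigcup_{j=0}^m\mathcal{H}_j$ is spanned by $r^{|n|}$ exactly when $m\geq |n|$, and is $\{0\}$ otherwise; (iii) translate McOwen's three regimes. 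The regime $2-d/p<\gamma<d/q$, i.e.\ $0<\gamma<2$, gives the isomorphism for all $n$; but the lemma claims invertibility on the sharper range $1-n<\gamma<n+1$ (nonempty only for $n\geq 1$... for $n=0$ it is $1<\gamma<1$, empty, consistent; for $n\le -1$ also empty), so here one uses regimes (2) and (3) of McOwen mode by mode: for $\gamma>n+1$ one is in $d/q+m<\gamma<d/q+m+1$ with $m\geq n$ (taking $n\geq 0$), hence injective with cokernel the mode-$n$ part of $N_m$, which is $\mathrm{span}\{r^n\}$; symmetrically for $\gamma<1-n$ one is surjective with kernel $\mathrm{span}\{r^n\}$. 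One also needs to handle negative $n$ by the symmetry $\Delta_n=\Delta_{-n}$, replacing $n$ by $|n|$ throughout — I would state the lemma's ranges as written and simply note $r^n$ should read $r^{|n|}$, or observe WLOG $n\geq 0$.

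I expect the main obstacle to be the bookkeeping in step (iii): McOwen's theorem is stated in terms of closed ranges defined by orthogonality against $\bigcup_{j=0}^m\mathcal{H}_j$ and kernels equal to $\bigcup_{j=0}^m\mathcal{H}_j$, and one must verify that restricting these conditions to a single angular mode yields exactly the one-dimensional space $\mathrm{span}\{r^{|n|}\}$ and the expected injectivity/surjectivity — in particular checking that the orthogonality constraint "$\int f H = 0$ for all $H\in\mathcal{H}_j$" restricted to $f = f_n(r)e^{\mathrm{i} n\theta}$ is vacuous unless $j=|n|$ (by angular orthogonality of $e^{\mathrm{i} n\theta}$ against $e^{\mathrm{i} j\theta}$), and is then a single scalar condition. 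A secondary point is to make sure the "not Fredholm for integer $\gamma$" clause follows from the last sentence of Theorem~\ref{McOwen} together with the fact that for $d=2$ the bad set $\{2-d/p-m\}\cup\{d/q+m\} = \{1-m : m\in\N\}\cup\{1+m : m\in\N\}$ is all of $\Z$ (using $\N$ to include $0$, or at least noting $\gamma=1$ is bad as $m=0$); restricting the non-closed-range phenomenon to the mode-$n$ subspace requires only that the obstructing behavior occurs within that mode, which it does since the relevant $\mathcal{H}_j$ or approximate kernel elements live in a single mode. The rest — continuity of $\Delta_n$, closedness of $m^n_\gamma$ as a subspace — is routine given Lemma~\ref{l:polar}.
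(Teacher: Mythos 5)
Your route is exactly the paper's route: the paper's entire proof of this lemma is the one-line remark that the polar decomposition of Lemma~\ref{l:polar} diagonalizes $\Delta$ and that one then combines this with Theorem~\ref{McOwen}; your steps (i)--(iii) carry that out, including the correct identification of the mode-$n$ content of $\bigcup_{j\le m}\mathcal{H}_j$ as $\mathrm{span}\{r^{|n|}\}$ when $m\ge|n|$ and $\{0\}$ otherwise, and the observation that the orthogonality constraints of $R_m$ reduce to a single scalar condition in mode $n$. One arithmetic slip: with $d=2$ and $p=q=2$ one has $2-d/p=d/q=1$, so McOwen's regime (1) is the \emph{empty} interval $1<\gamma<1$, not $0<\gamma<2$, and it gives nothing for any mode. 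This is harmless because, as you then do, invertibility on $1-|n|<\gamma<|n|+1$ with $\gamma\neq 1$ follows entirely from regimes (2) and (3) applied with $m<|n|$, where the mode-$n$ parts of $N_m$ and of the orthogonality constraints are trivial; but the clause ``gives the isomorphism for all $n$'' should be deleted.

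The only genuine weak point is the last clause. Your justification that ``the obstructing behavior occurs within that mode'' holds only at $\gamma=1\pm|n|$: at any other integer, the failure of closed range for the full Laplacian is witnessed by sequences built from $r^{m}e^{\pm\rmi m\theta}$ with $m\neq|n|$, which have no component in mode $n$, so nothing can be concluded for $\Delta_n$ by restriction. Indeed, by your own regime analysis the integers strictly inside $(1-|n|,\,1+|n|)$ (e.g.\ $\gamma=1$ for any $|n|\ge1$) lie in the invertible range of $\Delta_n$, so the assertion that $\Delta_n$ is non-Fredholm at \emph{every} integer is false for $|n|\ge 1$; the correct statement is that $\Delta_n$ fails to be Fredholm precisely at $\gamma=1\pm|n|$ (at $\gamma=1$ when $n=0$). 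This is a defect of the lemma as printed rather than of your argument for items (1)--(3), which is sound, but your proposed proof of that clause as written would not go through.
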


The next lemma requires that we specify some notation. In this paper we use the symbol $W^{s,p}_{\sigma}$ to denote the space of locally summable, $s$ times weakly differentiable functions $u:\R^d \rightarrow \R$ endowed with the norm
\[ \| u\|^p_{W^{s,p}_\sigma(\R^d)} = \sum_{|\alpha|\leq s} \left \| D^{\alpha}u({\bf x}) \cdot (1+|{\bf x}|^2)^{\sigma/2} \right \|^p_{L^p(\R^d)}. \]
Then, we write $W^{s,p}_{r,\sigma}$ to denote the subspace of radially symmetric functions in $W^{s,p}_{\sigma}$. With this notation we can summarize the Fredholm properties of the operator $\mathscr{L}_\lambda$, defined in the next lemma.

\begin{Lemma}\label{l:fredholmLambda}
Given $\gamma \in \R$, $\lambda \in [0,\infty)$, and $p \in (1,\infty)$, the operator $\mathscr{L}_\lambda: \mathcal{D} \longrightarrow L^p_{r,\gamma}(\R^2)$ defined by 
\[ \mathscr{L}_\lambda \Phi = \partial_{rr} \Phi + \frac{1}{r} \partial_r \Phi - 2\lambda \partial_r \Phi \]
 and with domain $\mathscr{D}=\{ u \in M^{1,p}_{r,\gamma-1}(\R^2) \mid \partial_r u \in W^{1,p}_{r,\gamma}(\R^2) \}$, is Fredholm for $\gamma \neq 1-2/p$. Moreover,
\begin{itemize}
\item it is invertible for $\gamma>1-2/p$, and
\item it is surjective with $\ker =\{1\}$ for $\gamma<1-2/p$.
\end{itemize}
\end{Lemma}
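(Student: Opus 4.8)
The plan is to use the fact that $\mathscr{L}_\lambda$ depends on $\Phi$ only through its derivatives, so that the substitution $\zeta = \partial_r\Phi$ turns it into the first order operator $\mathscr{T}_\lambda\zeta = \partial_r\zeta + \zeta/r - 2\lambda\zeta$. Concretely, $\mathscr{L}_\lambda$ factors as the composition $\mathscr{D}\xrightarrow{\ \partial_r\ } W^{1,p}_{r,\gamma}(\R^2)\xrightarrow{\ \mathscr{T}_\lambda\ } L^p_{r,\gamma}(\R^2)$, and $\mathscr{T}_\lambda\zeta = f$ can be solved explicitly: multiplying by the integrating factor $\mu(r) = r\,\rme^{-2\lambda r}$ gives $\partial_r(\mu\zeta) = \mu f$, whence $\zeta(r) = \frac{\rme^{2\lambda r}}{r}\big(A + \int_0^r s\,\rme^{-2\lambda s}f(s)\,ds\big)$, and $\Phi$ is recovered by one more integration. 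The constant $A$ and the lower limits of integration are then fixed by demanding the algebraic growth/decay dictated by the target space $\mathscr{D}$, and the kernel/cokernel are read off from the two homogeneous solutions of $\mathscr{L}_\lambda\Phi = 0$, namely $\Phi\equiv 1$ and $\Phi = \int^r \rme^{2\lambda s}/s\,ds$ (the latter growing exponentially).

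When $\lambda = 0$ the operator is the radial Laplacian $\partial_{rr} + r^{-1}\partial_r$ on the ``mixed'' domain $\mathscr{D}$, and I would read off its Fredholm properties either from the explicit formula $\partial_r\Phi = r^{-1}\int_0^r sf(s)\,ds$, or by comparison with Theorem~\ref{McOwen} and Lemma~\ref{l:Delta_nFredholm}; the actual work is showing that the weighted norms of $\Phi$, $\partial_r\Phi$, $\partial_{rr}\Phi$ are controlled by $\|f\|_{L^p_{r,\gamma}}$, which is exactly a Hardy inequality for the operators $f\mapsto r^{-1}\int_0^r sf(s)\,ds$ and $\zeta\mapsto\int_r^\infty\zeta$ (or $\int_0^r\zeta$, depending on $\gamma$) against the weight $(1+r^2)^{\gamma/2}$. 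The threshold $\gamma = 1-2/p$ is precisely the value at which the constant $1$ enters or leaves $M^{1,p}_{r,\gamma-1}(\R^2)$ — since $1\in L^p_{r,\gamma-1}(\R^2)$ iff $\gamma-1 < -2/p$ — which is why the kernel is $\{1\}$ for $\gamma < 1-2/p$ and trivial for $\gamma > 1-2/p$, and why Hardy's inequality degenerates and the range fails to be closed exactly at $\gamma = 1-2/p$.

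For $\lambda > 0$ the same scheme applies, and the new ingredient is that the integrating factor $r\,\rme^{-2\lambda r}$ is exponentially localized, so it absorbs any algebraic weight: one gets $\int_0^\infty s\,\rme^{-2\lambda s}|f(s)|\,ds\lesssim\|f\|_{L^p_{r,\gamma}}$ for every $\gamma\in\R$, and more generally the solution operator is bounded from $L^p_{r,\gamma}$ into $\mathscr{D}$. The behavior as $r\to 0$ is again controlled by a Hardy inequality, while the exponential factor makes the analysis at infinity insensitive to $\gamma$; this is why the only threshold is again the one coming from admissibility of $1$ as a kernel element, $\gamma = 1-2/p$. Assembling the estimates then shows $\mathscr{L}_\lambda$ is bounded below with closed range for $\gamma\neq 1-2/p$, surjective for every such $\gamma$, with kernel $\{1\}$ for $\gamma < 1-2/p$ and trivial kernel for $\gamma > 1-2/p$, so the Fredholm index jumps by one across the critical weight, as expected for these operators.

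The step I expect to be the main obstacle is the bookkeeping that makes the last paragraph rigorous: verifying that the function produced by the explicit solution formula genuinely lies in $\mathscr{D} = \{u\in M^{1,p}_{r,\gamma-1}(\R^2)\mid\partial_r u\in W^{1,p}_{r,\gamma}(\R^2)\}$, which mixes two different weights and so forces one to control $u$, $\partial_r u$, and $\partial_{rr}u$ separately. This splits into (a) the behavior as $r\to 0$, where sharp Hardy-type inequalities are needed to rule out a spurious $r^{-1}$ singularity in $\partial_r u$ (which would destroy $\partial_{rr}u\in L^p_{r,\gamma}$), and (b) the behavior as $r\to\infty$, where the constant of integration must be tracked carefully so that the exponentially growing homogeneous mode $\rme^{2\lambda r}/r$ is not excited and the algebraic decay matching $\gamma$ is attained; one must also isolate the degenerate case $\gamma = 1-2/p$ where Hardy's inequality fails and the range is no longer closed.
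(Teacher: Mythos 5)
Your proposal is correct and follows essentially the same route as the paper: the paper also factors $\mathscr{L}_\lambda = (\partial_r + \tfrac{1}{r} - 2\lambda)\circ\partial_r$, inverts the first-order factors explicitly via integrating factors (integrating from infinity to avoid the exponentially growing homogeneous mode), and controls the weighted norms with Young/Hardy-type estimates, with the threshold $\gamma = 1-2/p$ arising exactly as you say from when the constant function enters $M^{1,p}_{r,\gamma-1}(\R^2)$ (see Proposition~\ref{p:dr} and Lemma~\ref{l:invertible} in Appendix~\ref{s:appendixRadialderivatives}). Your treatment of the $\lambda=0$ case and of the failure of closed range at the critical weight likewise matches Propositions~\ref{p:dr},~\ref{p:dr1overr} and Lemma~\ref{l:drnotclosed}.
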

\begin{proof}
The result follows from Proposition \ref{p:dr} and  Lemma \ref{l:invertible} in Appendix \ref{s:appendixRadialderivatives}, which show that the operator $\mathcal{L}_\omega  = (\partial_r + \frac{1}{r} - 2\lambda) \partial_r$ is the composition of a Fredholm index $i =0$ operator, $\partial_r : M^{1,p}_{r,\gamma-1}(\R^2) \rightarrow M^{1,p}_{r,\gamma}(\R^2)$, and an invertible operator, $ (\partial_r + \frac{1}{r} - 2\lambda): W^{1,p}_{r,\gamma}(\R^2) \rightarrow L^p_{r,\gamma}(\R^2)$. 
\end{proof}

Lastly, we include the following proposition whose proof can be found in Appendix \ref{s:appendixb}.
\begin{Proposition}\label{p:fredholmNormalF}
Let $\gamma \in \R$, $\alpha, \beta \in \Z^+ \bigcup \{ 0\}$, $m,d \in \Z^+$, and $l \in \Z$. Then, the operator, 
\[\Delta^m(\Id-\Delta)^{-l}:  \mathcal{D} \subset L^p_{\gamma- 2m} (\R^d)  \longrightarrow  L^p_\gamma(\R^d),\]
with domain $\mathcal{D}=\{u \in L^p_{\gamma-2m}(\R^d) \mid (Id-\Delta)^{-l}u \in M^{2m,p}_{\gamma-2m}(\R^d) \}$  
\begin{itemize}
\item is a Fredholm operator for $\alpha+d/p<\gamma< -\beta -d/p +2m$  with kernel and cokernel given by
\[ \ker = \bigcup_{j=0}^\beta \mathcal{H}_{j,k}, \quad  \coker = \bigcup_{j=0}^\alpha \mathcal{H}_{j,k};\]
 \item and not Fredholm for values of  $\gamma \in \{ j + d/p : j \in \Z\}$. 
\end{itemize}
\end{Proposition}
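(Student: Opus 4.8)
The plan is to reduce the Fredholm analysis of the operator $\Delta^m (\Id - \Delta)^{-l}$ to that of the pure iterated Laplacian $\Delta^m$, using the fact that $(\Id - \Delta)^{-l}$ is an isomorphism between appropriate Kondratiev spaces that does not shift the weight. First I would establish, as a preliminary step, that for every $\gamma \in \R \setminus \{j + d/p : j \in \Z\}$ the operator $(\Id - \Delta)^{l} : M^{2l,p}_\gamma(\R^d) \to L^p_\gamma(\R^d)$ is an isomorphism; equivalently $(\Id - \Delta)^{-l} : L^p_\gamma(\R^d) \to M^{2l,p}_\gamma(\R^d)$ is an isomorphism. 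This is the crucial input and should follow from the exponential localization of the Bessel kernel $G_l$ of $(\Id - \Delta)^{-l}$: convolution with an exponentially localized (in fact $L^1$ with all algebraic moments finite) kernel is bounded on every $L^p_\gamma$, and the elliptic regularity of $(\Id - \Delta)^{-1}$ upgrades $L^p_\gamma$ regularity to $M^{2l,p}_\gamma$ regularity with the same weight because each derivative falling on $G_l$ preserves exponential localization. Because $(\Id - \Delta)^{-l}$ has trivial kernel and dense range (it is injective with a bounded inverse on Schwartz functions), the mapping property upgrades to an isomorphism. The role of the restriction $\gamma \notin \{j + d/p\}$ here is only to keep us away from the exceptional weights of $\Delta$ itself.

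Next I would factor the operator as the composition
\[
L^p_{\gamma - 2m}(\R^d) \xrightarrow{\ (\Id-\Delta)^{-l}\ } M^{2l,p}_{\gamma-2m}(\R^d) \hookrightarrow M^{2m,p}_{\gamma-2m}(\R^d) \xrightarrow{\ \Delta^m\ } L^p_\gamma(\R^d),
\]
where the middle inclusion uses $2l$ vs $2m$ derivatives appropriately (one writes $\Delta^m$ as acting on the range of $(\Id-\Delta)^{-l}$, which by the previous step lies in $M^{2l,p}_{\gamma-2m}$; the domain $\mathcal{D}$ in the statement is precisely tailored so that $(\Id-\Delta)^{-l}u$ gains $2m$ derivatives). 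Since the first map is an isomorphism, the composite is Fredholm of index $i$ with kernel and cokernel of dimensions prescribed by $\Delta^m$ exactly when $\Delta^m : M^{2m,p}_{\gamma-2m}(\R^d) \to L^p_\gamma(\R^d)$ is Fredholm with those data. The kernel of the composite is $(\Id-\Delta)^{l}$ applied to $\ker \Delta^m$, but one checks that on the relevant range of $\gamma$ the kernel of $\Delta^m$ consists of polynomials (polyharmonic functions of controlled growth), and $(\Id - \Delta)^l$ maps the space of polynomials of degree $\le \beta$ isomorphically to itself, so $\ker = \bigcup_{j=0}^\beta \mathcal{H}_{j,k}$ in the notation of the statement; the cokernel is read off directly from $\Delta^m$.

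The remaining and genuinely substantial step is therefore the Fredholm theory of $\Delta^m$ on Kondratiev spaces: for $\alpha + d/p < \gamma < -\beta - d/p + 2m$ the operator $\Delta^m : M^{2m,p}_{\gamma - 2m}(\R^d) \to L^p_\gamma(\R^d)$ is Fredholm with kernel $\bigcup_{j=0}^\beta \mathcal{H}_{j,k}$ and cokernel $\bigcup_{j=0}^\alpha \mathcal{H}_{j,k}$, and it fails to be Fredholm when $\gamma \in \{j + d/p : j \in \Z\}$. I would obtain this by iterating Theorem \ref{McOwen}: write $\Delta^m$ as an $m$-fold composition of single Laplacians $\Delta : M^{2,p}_{\gamma_k - 2}(\R^d) \to L^p_{\gamma_k}(\R^d) = M^{0,p}_{\gamma_k}(\R^d) \subset M^{2,p}_{\gamma_k - 2}(\R^d)$ (after adjusting by an embedding) with weights $\gamma_k = \gamma - 2(m-k)$, $k = 1, \dots, m$, track the kernels and cokernels through the composition using the index additivity $i(\Delta^m) = \sum i(\Delta|_{\gamma_k})$ and the explicit $R_m$, $N_m$ from Theorem \ref{McOwen}, and count harmonic homogeneous polynomials of each degree. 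The main obstacle is bookkeeping: making sure that the chosen intermediate weights $\gamma_k$ all avoid the exceptional set simultaneously and lie in the correct half-line so that each factor is either injective-with-cokernel-$N_j$ or surjective-with-kernel-$N_j$ in a way that composes to give exactly $\bigcup_{j=0}^\beta \mathcal{H}_{j,k}$ and $\bigcup_{j=0}^\alpha \mathcal{H}_{j,k}$ and no spurious contributions from intermediate stages; the condition $\alpha + d/p < \gamma < -\beta - d/p + 2m$ is exactly what guarantees a valid such chain exists. The non-Fredholmness at $\gamma \in \{j + d/p\}$ follows because at such a weight one of the factors — or $(\Id-\Delta)^{-l}$ in the preliminary step — fails to have closed range, and composition with isomorphisms preserves this defect.
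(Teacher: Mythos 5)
Your overall architecture is the same as the paper's: factor $\Delta^m(\Id-\Delta)^{-l}$ as an isomorphism $(\Id-\Delta)^{-l}$ followed by the Fredholm operator $\Delta^m$ on Kondratiev spaces, get the latter's properties by iterating Theorem~\ref{McOwen}, and identify kernel and cokernel by counting (poly)harmonic homogeneous polynomials via the decomposition $\bbP_m = \bbH_m \oplus |{\bf x}|^2\bbH_{m-2}\oplus\cdots$. Where you differ is in how the isomorphism is obtained: you propose kernel estimates (exponential localization of the Bessel potential, Peetre's inequality, Young's inequality, elliptic regularity), whereas the paper conjugates $(\Id-\Delta)$ by the weight $\langle{\bf x}\rangle^\gamma$, views the resulting operator as a compact analytic perturbation of $(\Id-\Delta)$ on unweighted spaces, and invokes Kato's theorem on analytic families to rule out a nontrivial kernel for any $\gamma$. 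Your route is more elementary and equally valid for what it actually proves; the paper's route generalizes more readily to operators whose kernels one does not know explicitly.

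There is, however, a genuine gap in your isomorphism step: the claim that $(\Id-\Delta)^{-l}: L^p_\gamma(\R^d)\to M^{2l,p}_\gamma(\R^d)$ is an isomorphism is false. Membership in the Kondratiev space $M^{2l,p}_\gamma$ requires each derivative $D^\alpha u$ to lie in $L^p_{\gamma+|\alpha|}$, i.e.\ to gain one power of algebraic decay per derivative, and nothing in the convolution $u=G_l\ast f$ produces this gain: an oscillatory $f\in L^p_\gamma$ (e.g.\ $f({\bf x})=\sin(x_1)\langle{\bf x}\rangle^{-\gamma-d/p-\eps}$) yields a $u$ whose derivatives decay no faster than $u$ itself. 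What your argument actually delivers is an isomorphism $L^p_\gamma\to W^{2l,p}_\gamma$ onto the \emph{uniformly} weighted Sobolev space (this is exactly Lemma~\ref{l:isomorphism} of the paper), and the Kondratiev regularity of $(\Id-\Delta)^{-l}u$ is not automatic but is \emph{imposed} by the definition of the domain $\mathcal{D}$. The correct statement of the first factor is therefore that $(\Id-\Delta)^{-l}:\mathcal{D}\to M^{2m,p}_{\gamma-2m}(\R^d)$ is a bijection with bounded inverse (injectivity and boundedness of the inverse from the $W$-scale isomorphism, surjectivity because $(\Id-\Delta)^l$ maps $M^{2m,p}_{\gamma-2m}$ into $L^p_{\gamma-2m}$ with image inside $\mathcal{D}$ by construction). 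Relatedly, your intermediate arrow $M^{2l,p}_{\gamma-2m}\hookrightarrow M^{2m,p}_{\gamma-2m}$ points the wrong way for $l<m$ and is not needed once the factorization is set up through $\mathcal{D}$. A minor further slip: at the critical weights $\gamma\in\{j+d/p\}$ it is $\Delta^m$ (not $(\Id-\Delta)^{-l}$) that loses closed range; the Bessel potential factor is harmless for every $\gamma$.
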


\section{Nonlocal Eikonal Equations}\label{s:nonlocal}

At the outset, we recall our model nonlocal eikonal equation~\eqref{e:nonloc1} and the hypotheses on the coupling kernels. Our goal for this section is to show the existence of target wave solutions for~\eqref{e:nonloc1} which bifurcate from the spatially homogeneous solution. We concentrate on the equation
$$
 \phi_t = \mathcal{L}\ast \phi - | \mathcal{J} \ast\nabla  \phi|^2 + \eps g(x,y), \quad (x,y) \in \R^2, 
 $$
where again we assume $\mathcal{L}$ is a diffusive kernel that commutes with rotations. As a consequence its Fourier symbol, $L({\bf k})$, is real analytic and a radial function. We also recall the following assumptions.

 \begin{Hypothesis*}{\bf \ref{h:analyticity} }
  The  multiplication operator $L$ is a function of $\xi := | {\bf k} |^2$. Its domain can be extended to a strip in the complex plane, $\Omega = \R \times (-\rmi \xi_0, \rmi \xi_0)$ for some sufficiently small and positive $\xi_0 \in \R$, and on this domain the operator is uniformly bounded and analytic. Moreover, there is a constant $\xi_m \in \R$ such that the operator $L(\xi)$ is invertible with uniform bounds for $|\Re \xi| > \xi_m$.
  \end{Hypothesis*}
 
 Note that, because $L(\xi)$ is analytic, its zeros are isolated.
 
 \begin{Hypothesis*}{\bf \ref{h:multiplicity}}
 The multiplication operator $L(\xi)$ has a zero, $\xi^*$, of multiplicity $\ell$ which we assume is at the origin. Therefore, the symbol $L(\xi)$ admits the following Taylor expansion near the origin.
  \[ L(\xi ) = (- \xi)^\ell + \rmO(\xi^{\ell+1}), \quad \mbox{for} \quad \xi \sim 0.\]
  In particular, we pick $\ell=1$.
 \end{Hypothesis*}
 
\begin{Hypothesis*} {\bf \ref{h:nonlinearity}}
The kernel $\mathcal{J}$ is radially symmetric, exponentially localized, twice continuously differentiable, and 
\[ \int_{\R^2}\mathcal{J}({\bf x})\;d{\bf x}=1.\]
\end{Hypothesis*}

The idea behind our proof for the existence of target wave solutions is to first show that the convolution operator $\mathcal{L}$ behaves much like the Laplacian when viewed in the setting of Kondratiev spaces. In other words, both operators have the same Fredholm properties. Consequently it is possible to precondition equation \eqref{e:nonloc1} by an appropriate operator, $\mathscr{M}$, with average one, and obtain an expression which has the Laplacian as its linear part,
\[ \mathscr{M} \ast \partial_t\phi = \Delta \phi - \mathscr{M} \ast | \mathcal{J} \ast \nabla \phi|^2 + \eps \mathscr{M} \ast g.\]
We then proceed in a similar manner as in Section~\ref{s:eikonal} and look for solutions of the form, $\phi(x,y,t) = \tilde{\phi}(x,y) - \omega t$. Dropping the tildes from our notation we arrive at 
\begin{equation}\label{e:preconditioned}
 -\omega = \Delta \phi - \mathscr{M} \ast |\mathcal{J }\ast \nabla \phi|^2 + \eps \mathscr{M} \ast g.
 \end{equation}
 
We will look at the following two problems. 
\begin{enumerate}
\item Finding a solution to the intermediate approximation, described by equation \eqref{e:preconditioned} with the value of the frequency, $\omega$, equal to zero. 
\item Finding a solution valid on the whole domain described again by equation \eqref{e:preconditioned}, but where we let $\omega = \lambda^2$ be a non-negative parameter.
\end{enumerate} These two solutions are matched, and then the results of Theorem \ref{t:main} are shown.

 This section is organized as follows. In the next subsection we will derive Fredholm properties for the convolution operator $\mathcal{L}$, as well as mapping properties for a number of related convolution operators. Then in Sections \ref{s:nonlocalintermediate} and \ref{s:nonlocalfull} we prove the existence of solutions to the intermediate approximation and to the full problem, respectively. Finally, in Section \ref{s:nonlocalmatching} we prove Theorem \ref{t:main}.

 \subsection{Nonlocal Operators}\label{s:nonlocaloperators}
 
 The following proposition is the 2-dimensional version of the results form \cite{jaramillo2016effect}, but for convolution kernels with radial symmetry. The results below follow very closely the proofs outlined in Ref.~\cite{jaramillo2016effect}, and we include them for the sake of completeness. The proof shows that, with the Hypothesis~\ref{h:analyticity} and the more general version of Hypothesis \ref{h:multiplicity} in which we assume $\ell \in \mathbb{N}$, the convolution operator $\mathcal{L}$ has the same Fredholm properties as the operator $\Delta^\ell$. 
 
\begin{Proposition}\label{p:fredholmconv}
Let $\gamma \in \R$, $\alpha, \beta \in \N \bigcup \{0\}$, and suppose the convolution operator $\mathcal{L}: L^2_{\gamma -2}(\R^2) \rightarrow L^2_{\gamma}(\R^2)$ satisfies Hypothesis \ref{h:analyticity}, and Hypothesis \ref{h:multiplicity} with $\ell \in \N$. Then, with appropriate domain $\mathcal{D}$ and
 
\begin{itemize}
\item for $\alpha+2/p<\gamma< -\beta -2/p +2\ell$,  $\mathcal{L}$ is a Fredholm operator with kernel and cokernel given by
\[ \ker = \bigcup_{j=0}^\beta \mathcal{H}_{j,k}\quad  \coker = \bigcup_{j=0}^\alpha \mathcal{H}_{j,k},\]

 \item whereas for $\gamma = \{ j + 2/p : j \in \Z\}$, $\mathcal{L}$ is not Fredholm.
\end{itemize}

 \end{Proposition}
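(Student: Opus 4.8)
The plan is to reduce the Fredholm theory of the convolution operator $\mathcal{L}$ to that of $\Delta^\ell$, which is controlled by Proposition~\ref{p:fredholmNormalF}. The key structural observation is Hypothesis~\ref{h:analyticity}: since $L(\xi)$ is analytic and uniformly bounded on the strip $\Omega$, with a single zero at $\xi = 0$ of order $\ell$ (Hypothesis~\ref{h:multiplicity}), and is invertible with uniform bounds for $|\Re\xi|$ large, we can factor the symbol. First I would write $L(\xi) = (-\xi)^\ell \cdot \widetilde{L}(\xi)$ where, near the origin, $\widetilde{L}(\xi) = 1 + \rmO(\xi)$; the goal is to show $\widetilde{L}$ (or rather a regularized version of it) is the symbol of an \emph{invertible} operator between the appropriate weighted spaces, so that $\mathcal{L}$ and $\Delta^\ell$ differ only by pre-/post-composition with isomorphisms and hence share Fredholm index, kernel, and cokernel.

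The technical realization, following \cite{jaramillo2016effect}, is as follows. Because $\xi = |{\bf k}|^2$, the factor $(-\xi)^\ell$ is precisely the Fourier symbol of $\Delta^\ell$. For the remaining factor, one cannot simply invert $\widetilde{L}(\xi)$ as a bounded Fourier multiplier on weighted $L^2$ spaces (weighted spaces are not preserved by general multipliers), so instead I would use the standard device: compare $\mathcal{L}$ with the "reference" operator $\Delta^\ell(\Id-\Delta)^{-\ell}\cdot(\Id-\Delta)^{\ell} = \Delta^\ell$ by writing $L(\xi) = (-\xi)^\ell (1+\xi)^{-N} \cdot \big[(1+\xi)^N \widetilde{L}(\xi)\big]$ for $N$ large, so that the bracketed symbol decays and the prefactor $(-\xi)^\ell(1+\xi)^{-N}$ is handled by Proposition~\ref{p:fredholmNormalF}. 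The difference $\mathcal{L} - c\,\Delta^\ell(\Id-\Delta)^{-N}$ (for the right normalization $c$) should then be shown to be a \emph{relatively compact} perturbation on the Kondratiev scale: its symbol vanishes to order $\ell+1$ at the origin and decays at infinity, and by Hypothesis~\ref{h:analyticity} the analyticity on a strip gives extra localization/decay of the convolution kernel (exponential localization of $\mathcal{L}$, $\mathcal{J}$), which upgrades boundedness to compactness via a Rellich-type argument in the weighted spaces. Since compact perturbations preserve the Fredholm index and, here, also the kernel and cokernel (the latter because the perturbation is genuinely lower order and the zero modes are the harmonic polynomials $\mathcal{H}_{j,k}$ detected by the leading symbol), the index computation and the explicit $\ker/\coker$ descriptions transfer verbatim from Proposition~\ref{p:fredholmNormalF} with $m = \ell$, $l = 0$ (or the $(\Id-\Delta)^{-N}$-regularized version), giving the stated ranges $\alpha + 2/p < \gamma < -\beta - 2/p + 2\ell$.

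For the non-Fredholm claim at $\gamma \in \{j + 2/p : j \in \Z\}$, I would argue that at these weights $\Delta^\ell$ (equivalently the leading-order part of $\mathcal{L}$) fails to have closed range by the last assertion of Theorem~\ref{McOwen}/Proposition~\ref{p:fredholmNormalF}, and that the compact/lower-order perturbation cannot restore closed range: if $\mathcal{L}$ were Fredholm at such a $\gamma$, then so would be $\mathcal{L} - (\text{compact}) = c\,\Delta^\ell(\Id-\Delta)^{-N}$, a contradiction. The main obstacle I anticipate is the compactness step: verifying that the "error" convolution operator is relatively compact between the correct pair of weighted spaces requires carefully exploiting both the vanishing order of $L(\xi) - (-\xi)^\ell(\cdots)$ at $\xi = 0$ (to gain decay of the Fourier transform, i.e. spatial localization) and the analyticity strip (to gain smoothness/decay of the kernel), and then combining these with the weight to get a genuinely compact embedding — this is where the radial symmetry assumption and the polar decomposition of Lemma~\ref{l:polar} are likely needed to reduce to one-dimensional weighted Rellich embeddings on $[0,\infty)$, following the scheme of \cite{jaramillo2016effect}. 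The bookkeeping of the domain $\mathcal{D}$ (an analogue of the domains appearing in Lemma~\ref{l:fredholmLambda} and Proposition~\ref{p:fredholmNormalF}, of the form $\{u \in L^2_{\gamma-2\ell} : (\Id-\Delta)^{-N}$-type regularization lands in $M^{2\ell,2}_{\gamma-2\ell}\}$) is routine once the factorization is in place.
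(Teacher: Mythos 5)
Your opening paragraph identifies exactly the right strategy -- factor the symbol so that $\mathcal{L}$ differs from a normal form $\Delta^\ell(\Id-\Delta)^{-\ell}$ only by composition with isomorphisms -- but you then abandon it for a reason that is mistaken, and the substitute argument has a genuine gap. You claim that ``one cannot simply invert $\widetilde{L}(\xi)$ as a bounded Fourier multiplier on weighted $L^2$ spaces.'' For \emph{general} bounded multipliers that is true, but Hypothesis~\ref{h:analyticity} is there precisely to rule this out: the paper's Lemma~\ref{l:decompositionconv} writes $L(\xi) = M_L(\xi)L_{NF}(\xi) = L_{NF}(\xi)M_R(\xi)$ with $L_{NF}(\xi)=(-\xi)^\ell/(1+\xi)^\ell$, and Lemma~\ref{l:isomorphismMLR} shows $\mathcal{M}_{L/R}$ \emph{is} an isomorphism of $L^2_\gamma(\R^2)$ for every $\gamma$. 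The mechanism is Plancherel: $\|u\|_{L^2_\gamma}=\|\hat u\|_{H^\gamma}$, and multiplication by an analytic, uniformly bounded symbol with bounded derivatives (this is what analyticity on the strip buys) is bounded on $H^\gamma$; duality and interpolation extend this to all real $\gamma$, and the same applies to $M_{L/R}^{-1}$. With both factorizations in hand, $\ker\mathcal{L}=\ker\mathcal{L}_{NF}$ (from $\mathcal{L}=\mathcal{M}_L\circ\mathcal{L}_{NF}$) and $\coker\mathcal{L}\cong\coker\mathcal{L}_{NF}$ (from $\mathcal{L}=\mathcal{L}_{NF}\circ\mathcal{M}_R$), and the non-Fredholm weights transfer immediately, so everything reduces to Proposition~\ref{p:fredholmNormalF} with no compactness argument at all.

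Your replacement route leaves two holes. First, the relative compactness of $\mathcal{L}-c\,\Delta^\ell(\Id-\Delta)^{-N}$ between the weighted spaces is asserted, not proved, and you yourself flag it as the main obstacle; convolution operators are translation invariant, so any compactness must come entirely from the interaction with the weights, and nothing in the proposal actually produces such an argument. Second, and independently, even if compactness were established it would only preserve the Fredholm \emph{index}, not the kernel and cokernel: a compact perturbation can change $\dim\ker$ and $\dim\coker$ individually, so the explicit identifications $\ker=\bigcup_{j=0}^\beta\mathcal{H}_{j,k}$ and $\coker=\bigcup_{j=0}^\alpha\mathcal{H}_{j,k}$ -- which are part of the statement -- would not follow ``verbatim'' as you claim. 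The exact factorization through isomorphisms is what delivers these identifications, and it is available here precisely because of the analyticity hypothesis you set aside.
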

The above result follows from Proposition \ref{p:fredholmNormalF} and  Lemmas \ref{l:decompositionconv} and \ref{l:isomorphismMLR} shown below.

 \begin{Lemma}\label{l:decompositionconv}
 Let the multiplication operator, $L(\xi)$,  satisfy Hypothesis \ref{h:analyticity}, and Hypothesis \ref{h:multiplicity} with $\ell \in \N$. Then $L(\xi)$ admits the following decomposition:
 \[ L(\xi) = M_L(\xi) L_{NF}(\xi) = L_{NF}(\xi) M_R(\xi),\]
 where $L_{NF}(\xi) = (-\xi)^\ell / (1+ \xi)^\ell$, and $M_{L/R}(\xi)$ and their inverses are analytic and uniformly bounded on $\Omega$.
 \end{Lemma}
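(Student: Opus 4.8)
The plan is to factor out the zero of $L$ at the origin and isolate it into the model ``normal-form'' symbol $L_{NF}(\xi) = (-\xi)^\ell/(1+\xi)^\ell$, showing that the two remaining factors $M_L, M_R$ are analytic and invertible on the strip $\Omega$ with uniform bounds. First I would set $M_L(\xi) := L(\xi)/L_{NF}(\xi) = L(\xi)(1+\xi)^\ell/(-\xi)^\ell$ and, symmetrically, $M_R(\xi) := L_{NF}(\xi)^{-1} L(\xi)$ — since these are scalar multiplication operators the two definitions in fact coincide, so one really only needs to analyze a single function $M(\xi) = L(\xi)(1+\xi)^\ell/(-\xi)^\ell$; the two-sided statement is then automatic. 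The content of the lemma is that this $M$ and its reciprocal are analytic and uniformly bounded on $\Omega = \R\times(-\rmi\xi_0,\rmi\xi_0)$.

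The verification splits into three regions of $\Omega$. \emph{Near the origin:} by Hypothesis \ref{h:multiplicity}, $L(\xi) = (-\xi)^\ell(1 + \rmO(\xi))$, so $M(\xi) = (1+\xi)^\ell(1+\rmO(\xi))$, which extends analytically across $\xi = 0$ (the removable singularity) and is bounded away from $0$ and $\infty$ on a small disk $|\xi|<\rho$; shrinking $\xi_0$ if necessary we may assume this disk contains $\{|\Re\xi|\le\xi_m,\ |\Im\xi|<\xi_0\}\cap\{|\xi|<\rho\}$. \emph{For $|\Re\xi|$ large:} Hypothesis \ref{h:analyticity} gives that $L(\xi)$ is invertible with uniform bounds for $|\Re\xi|>\xi_m$, i.e. $0 < c \le |L(\xi)| \le C$ there, while $|(1+\xi)^\ell/(-\xi)^\ell|$ is bounded above and below uniformly for $|\Re\xi|$ large and $|\Im\xi|<\xi_0$ (it tends to $1$ as $|\xi|\to\infty$); hence $M$ and $1/M$ are uniformly bounded in this region. \emph{The intermediate region} $\{\xi_m \le |\Re\xi| \le R,\ |\Im\xi|<\xi_0\}$ for a suitable $R$: here one must rule out zeros of $L$. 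Since $L$ is analytic with isolated zeros and, on the real axis, $L(\xi)\ne 0$ for $\xi\in[\xi_m, R]$ except possibly — but in fact by Hypothesis \ref{h:multiplicity} the only zero is at the origin, and by the hypotheses the real symbol has no other zeros on $|\Re\xi|\le\xi_m$ that we have not already excluded — we get $|L|>c'>0$ on the compact real segment, and by continuity/analyticity $|L|$ stays bounded below on a thin enough strip over it (shrink $\xi_0$ once more); this compact region also gives the upper bound on $|L|$ by boundedness. Since $(-\xi)^\ell$ and $(1+\xi)^\ell$ are bounded above and below on this compact set (it avoids $\xi=0$ and $\xi=-1$, the latter because $-1\notin[\xi_m,R]$ provided $\xi_m>0$, which we may arrange), $M$ and $1/M$ are uniformly bounded there. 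Patching the three estimates and taking $\xi_0$ to be the minimum of the three shrunken values gives the claim.

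The main obstacle is the intermediate region: one has to be sure that $L(\xi)$ has no zeros other than the prescribed one at the origin \emph{inside the strip} $\Omega$, not merely on the real axis. This is where I would lean on the ``isolated zeros'' remark immediately preceding the proposition together with choosing $\xi_0$ small: analyticity forces zeros to be isolated, the real segment $[\xi_m, R]$ is zero-free and compact, so a standard argument (e.g. continuity of $\xi\mapsto\min_{|\Re\xi|\le R}|L(\xi)|$ as a function of the strip half-width, or simply uniform continuity of $L$ on a slightly larger compact set) yields a strip of positive half-width over it on which $|L|$ is bounded below. Everything else is bookkeeping: combining the three regional bounds, checking that the shrunk $\xi_0$ still satisfies Hypothesis \ref{h:analyticity}'s requirement, and noting that $L_{NF}$ visibly has the stated form with $M_L=M_R=M$ as scalar symbols.
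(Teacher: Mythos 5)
Your proof follows the paper's argument essentially verbatim: the paper also defines $M_L(\xi) = L(\xi)L_{NF}^{-1}(\xi)$ for $\xi\neq 0$, extends it across the origin by Riemann's removable singularity theorem using $\lim_{\xi\to 0}L(\xi)(1+\xi)^\ell/(-\xi)^\ell = 1$, and invokes Hypothesis~\ref{h:analyticity} for $|\Re\xi|>\xi_m$; you are in fact more careful than the paper about the intermediate compact region, where the paper silently assumes $L$ has no zeros other than the one at the origin. One slip worth flagging: your intermediate region $\{\xi_m\le|\Re\xi|\le R\}$ \emph{does} contain $\xi=-1$ (you treated $|\Re\xi|\in[\xi_m,R]$ as if it were $\Re\xi\in[\xi_m,R]$), and there $(1+\xi)^\ell$ vanishes, so $M_L(-1)=0$ and $M_L^{-1}$ is unbounded nearby --- but this is really a defect of the lemma as stated on the full two-sided strip (the paper's proof ignores it entirely), and it is harmless in the application, where only $\xi=|{\bf k}|^2$ for ${\bf k}$ in a thin complex strip, i.e.\ $\Re\xi$ bounded below by $-\xi_0^2$, ever arises.
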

 
\begin{proof}
 We will just show the result only for $M_L(\xi)$ since a similar argument holds for $M_R(\xi)$.  Let
 \[ M_L(\xi ) = \left \{ 
\begin{array}{c c c}
L(\xi) L^{-1}_{NF}(\xi) & \mbox{for} & \xi \neq 0\\[2ex]
\lim_{\xi \rightarrow 0} L(\xi) L^{-1}_{NF}(\xi) & \mbox{for} & \xi =0 
\end{array}
\right. \]
Since both, $L_{NF}(\xi)$ and $L(\xi)$, are analytic, uniformly bounded, and invertible on $\Omega \bigcap \{ \xi \in \C: |\Re \xi |> \xi_m\} $, it follows that the same is true for $M_L(\xi)$. That $M_L(\xi)$ is analytic and bounded invertible near the origin follows from Riemann's removable singularity theorem and the following result,
\[ \lim_{\xi \rightarrow 0} L(\xi) L^{-1}_{NF}(\xi) =  \lim_{\xi \rightarrow 0} L(\xi) \frac{(1+ \xi)^\ell}{(-\xi)^\ell} = 1.\]

\end{proof}

We show next that the operator $\mathcal{M}_{L/R}: L^2_{\gamma}(\R^2) \longrightarrow L^2_{\gamma}(\R^2)$ defined by
\[
\begin{array}{c c c}
 L^2_{\gamma}(\R^2)& \longrightarrow &L^2_{\gamma}(\R^2)\\
u & \longmapsto & \mathcal{F}^{-1}\left ( M_{L/R}(|{\bf k}|^2) \hat{u} \right),
\end{array}
\]
is an isomorphism.

\begin{Lemma}\label{l:isomorphismMLR}
The operator $\mathcal{M}_{L/R}: L^2_{\gamma}(\R^2) \longrightarrow L^2_{\gamma}(\R^2)$ with Fourier symbol $M_{L/R}(| {\bf k }|^2)$ is an isomorphism for all $\gamma \in \R$.
\end{Lemma}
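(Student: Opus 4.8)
\emph{Reduction to the unweighted case.} The plan is to bootstrap from $\gamma=0$, where the statement is immediate, to arbitrary weights, using the analyticity of $M_{L/R}$ on $\Omega$ to manufacture enough spatial decay. It suffices to prove that for every symbol $m$ that is the restriction to $[0,\infty)$ of a function analytic and uniformly bounded on $\Omega$, the operator $\mathcal{M}_m u:=\mathcal{F}^{-1}(m(|{\bf k}|^2)\hat u)$ is bounded on $L^2_\gamma(\R^2)$. Indeed, by Lemma~\ref{l:decompositionconv} both $M_{L/R}$ and $M_{L/R}^{-1}$ are of this type, and on the Fourier side $\mathcal{M}_{M_{L/R}}\mathcal{M}_{M_{L/R}^{-1}}=\Id=\mathcal{M}_{M_{L/R}^{-1}}\mathcal{M}_{M_{L/R}}$ on a dense subspace, so the two bounds together show that $\mathcal{M}_{L/R}$ is a bounded bijection with bounded inverse. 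For $\gamma=0$ this is Plancherel's theorem, since $m(|{\bf k}|^2)$ is a bounded function of ${\bf k}$; boundedness of $M_{L/R}^{-1}$ on $[0,\infty)$ combines the uniform invertibility for $|\Re\xi|>\xi_m$ of Hypothesis~\ref{h:analyticity} with continuity and $M_{L/R}(0)=1$ on the compact remainder.

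\emph{Arbitrary $\gamma$.} I would use that a diffusive, exponentially localized symbol admits a representation
\[
  m(\xi)=c_0+\int_0^\infty \rme^{-s\xi}\rho(s)\,\rmd s ,\qquad c_0=\lim_{\xi\to +\infty}m(\xi),
\]
with $\rho$ decaying faster than any polynomial (in fact exponentially), so that $\mathcal{M}_m=c_0\,\Id+\int_0^\infty \rme^{s\Delta}\rho(s)\,\rmd s$ is a superposition of heat semigroup operators. Peetre's inequality $(1+|{\bf x}|^2)^{\gamma/2}(1+|{\bf y}|^2)^{-\gamma/2}\le 2^{|\gamma|/2}(1+|{\bf x}-{\bf y}|^2)^{|\gamma|/2}$, the Gaussian moment bound $\int_{\R^2}(1+|{\bf z}|^2)^{|\gamma|/2}G_s({\bf z})\,\rmd{\bf z}\le C(1+s)^{|\gamma|/2}$, and Young's inequality give $\|\rme^{s\Delta}\|_{L^2_\gamma\to L^2_\gamma}\le C(1+s)^{|\gamma|/2}$; since $\int_0^\infty(1+s)^{|\gamma|/2}|\rho(s)|\,\rmd s<\infty$, one obtains $\|\mathcal{M}_m\|_{L^2_\gamma\to L^2_\gamma}<\infty$ for every $\gamma\in\R$.

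\emph{The main obstacle} is establishing the representation in the previous step: one must show that analyticity and uniform boundedness of $m$ on $\Omega$ (equivalently, exponential localization of the associated kernel together with its invertibility away from the diffusive zero at the origin) force the limit $c_0$ to exist and $m-c_0$ to be the Laplace transform of a rapidly decaying density. Equivalently, one can argue directly with the convolution kernel $\mathcal{K}_m=\mathcal{F}^{-1}[m(|\cdot|^2)]$: conjugating $\mathcal{M}_m$ by the weight reduces $L^2_\gamma$-boundedness to $L^2$-boundedness of the operator with Schwartz kernel $(1+|{\bf x}|^2)^{\gamma/2}(1+|{\bf y}|^2)^{-\gamma/2}\,\mathcal{K}_m({\bf x}-{\bf y})$, and by Peetre's and Young's (or Schur's) inequalities it is enough to know $\mathcal{K}_m=c_0\,\delta_0+\kappa$ with $\kappa$ locally integrable and $|\kappa({\bf z})|\le C_N(1+|{\bf z}|)^{-N}$ for all $N$. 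Proving this decay — by peeling off a smooth, exponentially localized remainder via $m(\xi)=m(\xi)(1+\xi)^{-N}(1+\xi)^{N}$ and then deforming the contour of the radially reduced integral $\mathcal{K}_m(r)\sim\int_0^\infty J_0(\rho r)\,m(\rho^2)\,\rho\,\rmd\rho$ into the complex $\xi=\rho^2$ plane, which is legitimate because $m$ is analytic and uniformly bounded on $\Omega$ (with Cauchy estimates controlling its derivatives on $[0,\infty)$) — is the only nontrivial point; the conjugation, Peetre's inequality, and Young's inequality are routine.
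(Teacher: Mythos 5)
Your overall architecture is sound and matches the paper's at the top level: bound $\mathcal{M}_{L/R}$ and $\mathcal{M}_{L/R}^{-1}$ separately on $L^2_\gamma$, compose them on a dense subspace, and conclude the isomorphism; the $\gamma=0$ case via Plancherel is exactly the paper's starting point. The gap is in your passage to general $\gamma$. The representation $m(\xi)=c_0+\int_0^\infty \rme^{-s\xi}\rho(s)\,\rmd s$ with rapidly decaying $\rho$ does not follow from Hypothesis~\ref{h:analyticity} and is false for generic admissible symbols: a Laplace transform of an $L^1$ density is analytic and bounded on the whole closed half-plane $\Re\xi\geq 0$, whereas the hypothesis only provides analyticity and uniform boundedness on the thin strip $\Omega$, so $M_{L/R}$ may well have singularities at points with $\Re\xi>0$, $|\Im\xi|>\xi_0$, and need not even have a limit as $\xi\to+\infty$ (so the constant $c_0$ you peel off need not exist). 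Your fallback kernel argument inherits both problems: writing $\mathcal{K}_m=c_0\delta_0+\kappa$ again presupposes the limit at infinity, and the decay $|\kappa({\bf z})|\leq C_N(1+|{\bf z}|)^{-N}$ for all $N$ would require symbol-class control of arbitrarily many ${\bf k}$-derivatives of $m(|{\bf k}|^2)$; the chain rule produces terms like $|{\bf k}|^{|\alpha|}m^{(|\alpha|)}(|{\bf k}|^2)$, and Cauchy estimates on the strip bound $m^{(j)}(\xi)$ but not $|\xi|^{j/2}m^{(j)}(\xi)$, so the contour-deformation step is not justified by the stated hypotheses. You correctly flagged this as the main obstacle, but it is not merely technical: it is the point where the approach, as formulated, fails.

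The paper sidesteps all of this by never leaving the Fourier side. Under $\mathcal{F}$, the weighted space $L^2_\gamma(\R^2)$ is identified with $H^\gamma(\R^2)$, so the claim becomes that pointwise multiplication by $M_{L/R}(|{\bf k}|^2)$ (and by its reciprocal) is bounded on $H^\gamma$. For integer $\gamma\geq 0$ this needs only that finitely many derivatives of the symbol are bounded, which is what Lemma~\ref{l:decompositionconv} supplies; negative integer weights follow by duality (the adjoint is multiplication by the conjugate symbol), and complex interpolation between consecutive integer Sobolev exponents gives every real $\gamma$. Two practical takeaways if you want to salvage a physical-side proof: you would need to \emph{assume} quantitative localization of the kernels $\mathcal{M}_{L/R}$ (the paper asserts exponential localization of $\mathcal{L}$ informally but does not derive it from Hypothesis~\ref{h:analyticity}), and even then you only need $(1+|{\bf z}|)^{|\gamma|}\kappa\in L^1$ for the Schur/Peetre argument --- decay of all polynomial orders is more than the lemma requires.
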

\begin{proof}
We first show that $\mathcal{M}_{L/R}$ are bounded from $L^2_\gamma(\R^2)$ to itself for values of $\gamma \in \N \bigcap \{ 0\}$ . Indeed this follows from Plancherel's Theorem and the results of Lemma \ref{l:decompositionconv}: Given $u \in L^2_\gamma(\R^2)$ we find that
\[ \| \mathcal{M}_{L/R}u ({\bf x}) \|_{L^2_\gamma} = \| M_{L/R}\hat{u} ({\bf k}) \|_{H^\gamma} \leq C(\|M\|_{C^\gamma} ) \| \hat{u} ({\bf k}) \|_{H^\gamma} =  C(\|M\|_{C^{\gamma}} )\| u ({\bf x}) \|_{L^2_\gamma}.\]
A similar argument shows that the their inverses, $\mathcal{M}_{L/R}^{-1}:L^2_{\gamma}(\R^2) \rightarrow L^2_{\gamma}(\R^2)$, are bounded.
We can extend this result to values of $\gamma \in \Z^-$ using duality. Then, because $H^\gamma(\R^2)$ is a complex interpolation space between $H^{\lfloor \gamma \rfloor}(\R^2)$ and $H^{\lfloor \gamma \rfloor +1}(\R^2)$, the result holds for all values of $\gamma \in \R$.
\end{proof}

From Lemmas \ref{l:decompositionconv} and \ref{l:isomorphismMLR} it follows that the convolution operator $\mathcal{L}:L^2_\gamma(\R^2) \rightarrow L^2_{\gamma}(\R^2)$ can be decomposed as
\[ \mathcal{L} u = \mathcal{M}_L \circ \mathcal{L}_{NF} u= \mathcal{L}_{NF} \circ \mathcal{M}_R u\]
with $\mathcal{L}_{NF} u = (Id-\Delta)^{-\ell} \Delta^\ell u$ and $\mathcal{M_{L/R} }: L^2_\gamma(\R^2) \rightarrow L^2_\gamma(\R^2)$ isomorphisms. Furthermore, both operators, $\mathcal{L}$ and $\mathcal{L}_{NF}$, share the same Fredholm properties. This proves Proposition \ref{p:fredholmconv}.

\begin{Remark*}
We note here that the integer $\ell$ that appears in the expression for $\mathcal{L}_{NF}$ is the same integer describing the form of the Fourier symbol $L(\xi)$ in Hypothesis \ref{h:multiplicity}. In particular, for  our problem we have that  $\ell=1$, so that for the remainder of the paper we will only consider this case.
\end{Remark*}

We are now in a position to define and describe the mapping properties of a related convolution operator, $\mathscr{M}$ which we will use to precondition equation \eqref{e:nonloc1}. In terms of   the invertible operator $\mathcal{M}_L$ defined above, $\mathscr{M}$ is given by
 \[\begin{array}{ c c c}
\mathscr{M}:H^s_\gamma(\R^2)& \longrightarrow &H^{s-2}_\gamma(\R^2),\\
  u & \longmapsto & (\Id-\Delta) \mathcal{M}_L^{-1}u.
 \end{array}
\]
It is clear from the definition that  $\mathscr{M} \ast c =c$ for all constants $c \in \R$.

In addition, we will also need to find mapping properties for, $\Id -\mathscr{M} $ and $\Id-J$. These operators will appear in the nonlinearities after preconditioning our equation with $\mathscr{M}$. We start with the next lemma which establishes the boundedness of the operator $\mathscr{M}$, which is straightforward to check.
\begin{Lemma}\label{l:convM}
For $\gamma \in \R$ and $s \in \Z$ the operator $\mathscr{M}: H^s_\gamma(\R^2) \rightarrow H^{s-2}_\gamma(\R^2)$ defined by 
\[ \mathscr{M}u = (\Id-\Delta)\mathcal{M}_L^{-1}u\]
is bounded.
\end{Lemma}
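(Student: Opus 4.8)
The plan is to prove boundedness of $\mathscr{M}: H^s_\gamma(\R^2) \to H^{s-2}_\gamma(\R^2)$ by factoring $\mathscr{M}$ as the composition of two maps whose boundedness we have already established or can establish directly. Writing $\mathscr{M}u = (\Id-\Delta)\mathcal{M}_L^{-1}u$, the first step is to recall from Lemma~\ref{l:isomorphismMLR} that $\mathcal{M}_L^{-1}: L^2_\gamma(\R^2) \to L^2_\gamma(\R^2)$ is an isomorphism for all $\gamma \in \R$; moreover, since the Fourier symbol $M_L^{-1}(|{\bf k}|^2)$ is analytic and uniformly bounded on $\Omega$ together with all of its derivatives (being a smooth bounded symbol that is in particular in $C^s$), the proof of Lemma~\ref{l:isomorphismMLR} in fact shows $\mathcal{M}_L^{-1}$ maps $H^s_\gamma(\R^2)$ boundedly into $H^s_\gamma(\R^2)$, with operator norm controlled by $\|M_L^{-1}\|_{C^s}$. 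The second step is to observe that $\Id - \Delta: H^s_\gamma(\R^2) \to H^{s-2}_\gamma(\R^2)$ is bounded: it is a second-order differential operator with constant coefficients, so it lowers the Sobolev index by exactly $2$ while preserving the algebraic weight $(1+|{\bf x}|^2)^{\gamma/2}$ (differentiation does not interact with the weight in the $W^{s,2}_\gamma$/$H^s_\gamma$ scale since, unlike the Kondratiev scale, the weight exponent is not shifted by $|\alpha|$). Composing these two bounded maps gives the claim.

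Concretely, I would carry out the following. First, fix $u \in H^s_\gamma(\R^2)$ and set $v = \mathcal{M}_L^{-1}u$. By the (Sobolev-weighted version of the) argument in Lemma~\ref{l:isomorphismMLR} we have $\|v\|_{H^s_\gamma} \leq C\|u\|_{H^s_\gamma}$, where $C$ depends only on $\|M_L^{-1}\|_{C^s}$. Second, estimate $\|(\Id-\Delta)v\|_{H^{s-2}_\gamma} \leq \|v\|_{H^{s-2}_\gamma} + \|\Delta v\|_{H^{s-2}_\gamma} \leq \|v\|_{H^s_\gamma} + C'\|v\|_{H^s_\gamma}$; the bound $\|\Delta v\|_{H^{s-2}_\gamma} \leq C'\|v\|_{H^s_\gamma}$ is the elementary statement that each pure second derivative $\partial_i\partial_j v$, when multiplied by the weight $(1+|{\bf x}|^2)^{\gamma/2}$, has all its derivatives up to order $s-2$ controlled by the weighted derivatives of $v$ up to order $s$, which follows by the Leibniz rule and the fact that the weight and its derivatives are bounded relative to $(1+|{\bf x}|^2)^{\gamma/2}$. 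Chaining the two estimates yields $\|\mathscr{M}u\|_{H^{s-2}_\gamma} \leq C''\|u\|_{H^s_\gamma}$.

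The only subtlety — and the one point requiring care rather than the routine differential-operator estimate — is the passage in the first step from $L^2_\gamma$-boundedness of $\mathcal{M}_L^{-1}$ to $H^s_\gamma$-boundedness. This is not literally the statement of Lemma~\ref{l:isomorphismMLR}, but it is exactly the content of its proof: there one shows $\|\mathcal{M}_{L/R}u\|_{L^2_\gamma} = \|M_{L/R}\hat u\|_{H^\gamma} \leq C(\|M\|_{C^\gamma})\|\hat u\|_{H^\gamma}$ for integer $\gamma$, and the identical computation with the roles of physical/frequency variables interchanged, using that $M_L^{-1}$ is a multiplier on $H^s(\R^2)$ for each $s \in \N$ (again because it lies in $C^s$ with bounds from Hypothesis~\ref{h:analyticity} and Lemma~\ref{l:decompositionconv}), gives boundedness on $H^s_\gamma$ for $\gamma \in \N \cup\{0\}$; duality handles $\gamma \in \Z^-$ and complex interpolation fills in all real $\gamma$, precisely as in the cited proof. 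I do not anticipate any genuine obstacle here — the lemma is flagged as "straightforward to check," and the proof is simply the bookkeeping of composing a smooth-symbol Fourier multiplier with the constant-coefficient operator $\Id - \Delta$ in the weighted Sobolev scale.
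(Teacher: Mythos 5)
Your argument is correct and is the natural (evidently intended) one: the paper offers no proof of this lemma, dismissing it as ``straightforward to check,'' and your factorization of $\mathscr{M}$ into the Fourier multiplier $\mathcal{M}_L^{-1}$ (bounded on $H^s_\gamma$ by the same Plancherel/multiplier computation as in Lemma~\ref{l:isomorphismMLR}, plus duality and interpolation in $\gamma$) followed by the constant-coefficient operator $\Id-\Delta: H^s_\gamma \to H^{s-2}_\gamma$ (trivially bounded since the weight in the $H^s_\gamma$ scale does not shift with the derivative order) is exactly that check. The only cosmetic imprecision is that the multiplier constant is controlled by $\|M_L^{-1}\|_{C^{\lceil|\gamma|\rceil}}$ (the Fourier-side Sobolev index matching the physical-side weight) rather than by $\|M_L^{-1}\|_{C^s}$, but this is immaterial because Hypothesis~\ref{h:analyticity} and Lemma~\ref{l:decompositionconv} give uniform bounds on all derivatives of the symbol via Cauchy estimates on the strip $\Omega$.
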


Next, we show that in the appropriate spaces the operator $\Id - \mathscr{M}$ behaves like the Laplacian operator.
\begin{Lemma}\label{l:conv1-M}
Let $\gamma \in \R$ and define  $\mathscr{M}$ as in Lemma \ref{l:convM}. Then, the convolution operator $$\Id- \mathscr{M}: M^{2,2}_{\gamma-2}(\R^2) \rightarrow L^2_\gamma(\R^2)$$ is a bounded operator. In particular, the operator is Fredholm with the same Fredholm properties as $\Delta:M^{2,2}_{\gamma-2}(\R^2) \rightarrow L^2_\gamma(\R^2)$.
\end{Lemma}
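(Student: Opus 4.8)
The plan is to write the Fourier symbol of $\Id - \mathscr{M}$ explicitly and factor out a copy of the symbol of $\Delta$, leaving a bounded, boundedly invertible multiplier. Recall that $\mathscr{M}$ has symbol $(1+\xi)M_L(\xi)^{-1}$ where $\xi = |{\bf k}|^2$; hence $\Id - \mathscr{M}$ has symbol
\[
m(\xi) = 1 - \frac{1+\xi}{M_L(\xi)} = \frac{M_L(\xi) - 1 - \xi}{M_L(\xi)}.
\]
By Lemma~\ref{l:decompositionconv} (with $\ell = 1$) and Hypothesis~\ref{h:multiplicity}, $L(\xi) = M_L(\xi)\,L_{NF}(\xi) = M_L(\xi)\,\dfrac{-\xi}{1+\xi}$, and $L(\xi) = -\xi + \rmO(\xi^2)$ near the origin, so $M_L(\xi) = \dfrac{-(1+\xi)L_{NF}(\xi)^{-1}L(\xi)}{\ldots}$ — more directly, $M_L(0) = 1$ and $M_L$ is analytic, hence $M_L(\xi) - 1 - \xi = \rmO(\xi)$ as $\xi \to 0$; a short computation using $L(\xi) = -\xi + \rmO(\xi^2)$ shows in fact $M_L(\xi) - 1 - \xi = -\xi + \rmO(\xi) + \ldots$. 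The point I would make precise is that $m(\xi)$ can be written as $m(\xi) = (-\xi)\, q(\xi)$ where $q(\xi) = \dfrac{1 + \xi - M_L(\xi)}{\xi\,M_L(\xi)}$ extends analytically across $\xi = 0$ (removable singularity, since the numerator vanishes at $\xi=0$ because $M_L(0)=1$), is analytic and uniformly bounded on the strip $\Omega$, and is bounded away from zero there: boundedness of $q$ and $q^{-1}$ away from the origin follows from the uniform bounds on $M_L, M_L^{-1}$ in Lemma~\ref{l:decompositionconv} together with the uniform invertibility of $L$ for $|\Re\xi|$ large in Hypothesis~\ref{h:analyticity}, while near the origin one checks $q(0) \neq 0$ from the expansion of $L$. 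Thus the operator $\mathcal{Q}$ with symbol $q(|{\bf k}|^2)$ is an isomorphism of $L^2_\gamma(\R^2)$ onto itself for every $\gamma \in \R$, by the same argument as in Lemma~\ref{l:isomorphismMLR} (Plancherel plus the fact that a symbol analytic and bounded with bounded inverse on the strip $\Omega$ gives a bounded multiplier on every $H^\gamma$, hence on every $L^2_\gamma$ after Fourier transform, with bounded inverse).

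Given this, I would factor $\Id - \mathscr{M} = \mathcal{Q}\circ(-\Delta)$ as operators $M^{2,2}_{\gamma-2}(\R^2) \to L^2_\gamma(\R^2) \to L^2_\gamma(\R^2)$: the first arrow is $-\Delta : M^{2,2}_{\gamma-2} \to L^2_\gamma$, which by Theorem~\ref{McOwen} is bounded with the stated Fredholm properties, and the second arrow $\mathcal{Q}: L^2_\gamma \to L^2_\gamma$ is an isomorphism by the previous paragraph. Boundedness of $\Id - \mathscr{M}$ is then immediate from the composition (it also follows directly from Lemma~\ref{l:convM} plus the embedding $M^{2,2}_{\gamma-2} \hookrightarrow H^2_{\gamma-2} \hookrightarrow L^2_\gamma$). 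Since composing a Fredholm operator with an isomorphism preserves the Fredholm property, the index, the kernel, and (up to the isomorphism, which here is the identity on the relevant cokernel representatives, i.e. it fixes the harmonic-polynomial obstructions since those are annihilated by $-\Delta$ and the pairing is the $L^2_\gamma$ pairing against $\mathcal{Q}$-images which does not change the orthogonal complement's dimension) the cokernel, we conclude that $\Id - \mathscr{M}$ has exactly the same Fredholm properties as $\Delta: M^{2,2}_{\gamma-2}(\R^2) \to L^2_\gamma(\R^2)$ from Theorem~\ref{McOwen}.

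The main obstacle I anticipate is verifying cleanly that $q$ — equivalently $m(\xi)/(-\xi)$ — is genuinely analytic and bounded \emph{with bounded inverse} on the whole strip $\Omega$, and in particular nonvanishing at the origin; this is where Hypotheses~\ref{h:analyticity} and~\ref{h:multiplicity} (with $\ell=1$) and Lemma~\ref{l:decompositionconv} all have to be invoked together. A secondary, more bookkeeping, point is to argue that passing from the isomorphism statement on the Sobolev scale $H^\gamma$ to the weighted $L^2_\gamma$ scale is legitimate for all real $\gamma$ (non-integer and integer alike), which is handled exactly as in the proof of Lemma~\ref{l:isomorphismMLR} via duality and complex interpolation. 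Once these multiplier facts are in hand, the Fredholm conclusion is a formal consequence of Theorem~\ref{McOwen} and the composition rule for Fredholm operators and isomorphisms.
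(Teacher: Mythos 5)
Your proposal is correct and takes essentially the same route as the paper: both factor the Fourier symbol of $\Id-\mathscr{M}$ as an invertible elliptic multiplier times the symbol of $-\Delta$ and then transfer the Fredholm properties from Theorem~\ref{McOwen}, the only cosmetic difference being that the paper obtains the factorization by applying Lemma~\ref{l:decompositionconv}/Proposition~\ref{p:fredholmconv} to $\tfrac{M_L(\xi)}{1+\xi}-1$, whereas you divide $m(\xi)$ by $-\xi$ directly and argue as in Lemma~\ref{l:isomorphismMLR}. The one step you defer, namely $q(0)\neq 0$, is precisely the point the paper also asserts without computation: since $M_L(\xi)=1+(1-c_2)\xi+\rmO(\xi^2)$ when $L(\xi)=-\xi+c_2\xi^2+\rmO(\xi^3)$, one finds $q(0)=c_2$, so the claim is \emph{not} a consequence of Hypothesis~\ref{h:multiplicity} alone but requires the second Taylor coefficient of $L$ to be nonzero --- an implicit assumption you share with the paper rather than a gap relative to it. (Minor slip: your parenthetical alternative proof of boundedness via $H^2_{\gamma-2}\hookrightarrow L^2_\gamma$ is false, as the weight increases; but your primary argument via the composition $\mathcal{Q}\circ(-\Delta)$ already gives boundedness.)
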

\begin{proof}
With the notation $\xi = | {\bf k}|^2$, we look at the Fourier Symbol of $\Id - \mathscr{M}$ and decompose it as
\[ \mathcal{F} ( \Id - \mathscr{M})(\xi)  = (1+ \xi^2) M^{-1}_L(\xi) \left (\frac{M_L(\xi)}{1 +\xi^2} -1 \right).\]
Since $\left (\frac{M_L(\xi)}{1 + \xi^2} -1 \right)$ satisfies Hypothesis \ref{h:analyticity} and Hypothesis \ref{h:multiplicity} with $\ell=1$, by Proposition \ref{p:fredholmconv} we can rewrite it as 
\[\left (\frac{M_L(\xi)}{1 + \xi^2} -1 \right) = F_L \frac{\xi^2}{1 + \xi^2 }, \]
where $F_L: H^s_\gamma(\R^2) \rightarrow H^s_\gamma(\R^2)$ is an isomorphism. The result of the lemma  follows from this decomposition.
\end{proof}

Lastly, we show next that the convolution operator $\Id - \mathcal{J}$ is also a Fredholm operator.
\begin{Lemma}\label{l:conv1-J}
The operator $\Id - \mathcal{J}: \mathcal{D} \rightarrow L^2_\gamma(\R^2)$, with domain 
$$\mathcal{D} =\{ u \in L^2_{\gamma-2}(\R^2) \mid (\Id- \Delta)^{-1} u \in M^{2,2}_{\gamma-2}(\R^2)\},$$
 has the same Fredholm properties as $\Delta (\Id-\Delta)^{-1} :\mathcal{D} \rightarrow L^2_\gamma(\R^2)$. Moreover, the operator $\Id -\mathcal{J} : M^{2,2}_{\gamma-2}(\R^2) \rightarrow H^2_{\gamma}(\R^2)$ is bounded. 
\end{Lemma}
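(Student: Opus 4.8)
The plan is to imitate the structure used for $\mathrm{Id}-\mathscr{M}$ in Lemma~\ref{l:conv1-M}, exploiting that $\mathcal{J}$ is radial and exponentially localized with $\int \mathcal{J} = 1$. First I would examine the Fourier symbol $\hat{\mathcal{J}}(\xi)$, $\xi = |{\bf k}|^2$. Since $\mathcal{J}$ is exponentially localized, $\hat{\mathcal{J}}$ extends analytically to a complex strip $\Omega$ and is uniformly bounded there, so it satisfies Hypothesis~\ref{h:analyticity} (invertibility for large $|\Re\xi|$ is clear, since $\hat{\mathcal{J}}(\xi) \to 0$ means $1 - \hat{\mathcal{J}}(\xi) \to 1$; it is $1-\mathcal{J}$ rather than $\mathcal J$ whose symbol I want to factor). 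The condition $\int \mathcal{J} = 1$ gives $\hat{\mathcal{J}}(0) = 1$, hence $1 - \hat{\mathcal{J}}(0) = 0$; radial symmetry plus $C^2$ regularity (Hypothesis~\ref{h:nonlinearity}) gives that $\hat{\mathcal{J}}$ is a genuine function of $\xi = |{\bf k}|^2$ and that the zero of $1 - \hat{\mathcal{J}}$ at the origin has multiplicity $\ell = 1$, i.e. $1 - \hat{\mathcal{J}}(\xi) = c\xi + \mathrm{O}(\xi^2)$ with $c \neq 0$ (the non-degeneracy $c\neq 0$ being the one small point needing the twice-differentiability and a normalization of second moments; if $c$ could vanish one would get a higher-multiplicity zero, still covered by Proposition~\ref{p:fredholmconv} with larger $\ell$, but for our application $\ell=1$ is what we use). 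Thus $\mathrm{Id} - \mathcal{J}$ satisfies Hypotheses~\ref{h:analyticity} and~\ref{h:multiplicity} with $\ell = 1$.

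Given this, I would apply Proposition~\ref{p:fredholmconv} (equivalently Lemmas~\ref{l:decompositionconv} and~\ref{l:isomorphismMLR}) directly to the operator $\mathrm{Id} - \mathcal{J}$: its symbol decomposes as
\[
1 - \hat{\mathcal{J}}(\xi) = M_J(\xi)\,\frac{-\xi}{1+\xi} = \frac{-\xi}{1+\xi}\,\widetilde{M}_J(\xi),
\]
with $M_J, \widetilde{M}_J$ and their inverses analytic and uniformly bounded on $\Omega$, so that the associated Fourier multipliers $\mathcal{M}_J, \widetilde{\mathcal{M}}_J : L^2_\gamma(\R^2) \to L^2_\gamma(\R^2)$ are isomorphisms for all $\gamma \in \R$. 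Consequently $\mathrm{Id} - \mathcal{J} = \mathcal{M}_J \circ \big[\Delta(\mathrm{Id}-\Delta)^{-1}\big] = \big[\Delta(\mathrm{Id}-\Delta)^{-1}\big]\circ\widetilde{\mathcal{M}}_J$ as operators on the stated domain $\mathcal{D} = \{u \in L^2_{\gamma-2}(\R^2) \mid (\mathrm{Id}-\Delta)^{-1}u \in M^{2,2}_{\gamma-2}(\R^2)\}$. Since composing a Fredholm operator with an isomorphism on either side preserves the Fredholm property, index, kernel and cokernel, $\mathrm{Id}-\mathcal{J}: \mathcal{D} \to L^2_\gamma(\R^2)$ has exactly the same Fredholm properties as $\Delta(\mathrm{Id}-\Delta)^{-1}: \mathcal{D} \to L^2_\gamma(\R^2)$, which is the first assertion.

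For the boundedness of $\mathrm{Id}-\mathcal{J}: M^{2,2}_{\gamma-2}(\R^2) \to H^2_\gamma(\R^2)$, I would write $\mathrm{Id}-\mathcal{J} = (\mathrm{Id}-\Delta)^{-1} \circ \big[(\mathrm{Id}-\Delta)(\mathrm{Id}-\mathcal{J})\big]$, note that the multiplier $(1+\xi)(1-\hat{\mathcal J}(\xi)) = -\xi\,\widetilde M_J(\xi)$ times $(\mathrm{Id}-\Delta)^{-1}$ maps $M^{2,2}_{\gamma-2}$ boundedly into $L^2_\gamma$ by the same factorization (the $-\xi$ factor is exactly $\Delta$, and $M^{2,2}_{\gamma-2} \hookrightarrow$ the domain on which $\Delta$ lands in $L^2_\gamma$, with $\widetilde{\mathcal M}_J$ an isomorphism of $L^2_\gamma$), and then $(\mathrm{Id}-\Delta)^{-1}: L^2_\gamma \to H^2_\gamma$ is bounded. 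The main obstacle I anticipate is the verification that the zero of $1-\hat{\mathcal J}$ at the origin is exactly simple, $\ell = 1$ — this requires knowing $\nabla^2_{\bf k}\hat{\mathcal J}(0) \neq 0$, which follows from $\int_{\R^2}\mathcal{J}\,d{\bf x} = 1 > 0$ together with radial symmetry forcing $\hat{\mathcal J}(\xi) = 1 - \tfrac{1}{4}\big(\int |{\bf x}|^2 \mathcal J\big)\,\xi + \mathrm{O}(\xi^2)$, so $c = \tfrac14\int|{\bf x}|^2\mathcal J\,d{\bf x} > 0$ by positivity of the kernel's second moment (or, failing a sign hypothesis on $\mathcal J$, one simply assumes this second moment is nonzero, as is implicit in treating $\mathcal J$ as a regularization of the identity). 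Everything else is a routine transcription of the preconditioning argument already carried out for $\mathscr{M}$.
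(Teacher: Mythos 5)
Your proposal is correct and follows essentially the same route as the paper: show that the Fourier symbol of $\Id-\mathcal{J}$ satisfies Hypotheses~\ref{h:analyticity} and~\ref{h:multiplicity} with $\ell=1$ (radial, analytic on a strip by exponential localization, value $1$ at the origin since $\int\mathcal{J}=1$, decay at infinity) and then invoke Proposition~\ref{p:fredholmconv}. The paper's own proof is terser; your observation that the simplicity of the zero at the origin tacitly requires a nonvanishing second moment of $\mathcal{J}$ is a legitimate point that the paper glosses over.
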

\begin{proof}
Since by hypothesis $\mathcal{J}$ is radially symmetric and exponentially localized it follows that its Fourier symbol, $J({\bf k})$, is a radial function and that $J({\bf k}) \rightarrow 0$ as $| {\bf k} | \rightarrow \infty$. Moreover, since its average is 1, we must also have $J(0) =1$. This implies that $\mathcal{F}( \Id - \mathcal{J}) (| {\bf k}|)$ satisfies Hypothesis \ref{h:analyticity} and Hypothesis \ref{h:multiplicity} with $\ell=1$, and that its Taylor expansion near the origin is $\mathcal{F}( \Id - \mathcal{J}) (| {\bf k}|) \sim \rmO(|{\bf k}|^2)$. The results of this lemma now follow by Proposition \ref{p:fredholmconv}.
\end{proof}

We are now ready to show the existence of solutions for the intermediate approximation and the full solution.

\subsection{Intermediate Approximation}\label{s:nonlocalintermediate}
In this section we concentrate on the equation
\begin{equation}\label{e:nonlocalinter}
 0 =  \Delta \phi - \mathscr{M}\ast | \mathcal{J} \ast \nabla \phi|^2 + \eps \mathscr{M} \ast g(x,y), \quad (x,y) \in \R^2,
\end{equation}
which is obtained by preconditioning equation \eqref{e:nonloc1} with the operator $\mathscr{M}$. Here, the result $\mathscr{M} \ast \mathcal{L}= \Delta$ follows from Proposition \ref{p:fredholmconv} and Lemma \ref{l:convM}, since the operator $\mathcal{L}$ satisfies Hypothesis \ref{h:analyticity} and Hypothesis \ref{h:multiplicity} with $\ell=1$. 
Our goal is to show existence of solutions to \eqref{e:nonlocalinter} when $g$ is assumed to be an algebraically localized function. More precisely, with the definition for the cut off function, $\chi$, stated in the introduction, we prove the following proposition.

\begin{Proposition}\label{p:nonlocalintermediate}
Let $m \in \N$ and suppose $g$ is a localized function in the space $L^2_\sigma(\R^2)$, with weight strength $\sigma \in (m+1, m+2)$. Then, there exists an $\eps_0 >0$ and a $C^1$ map
\begin{equation*}
	\begin{matrix}
 \mathbb{\phi}&: (-\eps_0,\eps_0) & \longrightarrow & M^{2,2}_{\gamma-2}(\R^2) \times \R\\
& \eps &\longmapsto &(\phi, a_0)\\
 	\end{matrix}
\end{equation*}
 with $1<\gamma<2$, that allows us to construct an $\eps$- dependent family of solutions to equation \eqref{e:nonlocalinter}. Moreover, this solutions are of the form
\begin{equation*} \Phi(r,\theta;\eps) =   - \chi(r) \ln( 1 - a_0 \ln (r) )  + \eps \sum_{\substack{\alpha=-m \\ \alpha \neq 0}}^m a_{\alpha} \left( \frac{ \chi(r)}{r^{|\alpha|}}\right) \rme^{i \alpha \theta}  + \phi(r,\theta;\eps),
\end{equation*}
where,
\begin{itemize}

\item the constant $a_0 = \eps a_{0,1} + \rmO(\eps^2)$, with $a_{0,1} = -\frac{1}{2\pi} \int_{\R^2}    g({\bf x}) \;d{\bf x}$; 

\item for $\alpha \in [-m,m] \bigcap \Z \backslash\{0\} $ the constants 
 $a_\alpha = \frac{1}{2\pi \alpha} \int_0^\infty \int_0^{2\pi}    \mathscr{M} \ast g(r,\theta) r^{\alpha} \rme^{i\alpha \theta} d\theta \;r dr$; and
 \item the function $\phi < C r^{\gamma-1 } $ as $r \rightarrow \infty$.
\end{itemize}
\end{Proposition}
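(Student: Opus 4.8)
The plan is to apply the implicit function theorem to a suitably reformulated version of \eqref{e:nonlocalinter}, where the ansatz absorbs the non-perturbative and logarithmically growing pieces so that the remainder $\phi$ lives in a fixed Kondratiev space $M^{2,2}_{\gamma-2}(\R^2)$ with $1<\gamma<2$. First I would substitute the ansatz
\[
\Phi(r,\theta;\eps) = -\chi(r)\ln(1-a_0\ln r) + \eps\sum_{\substack{\alpha=-m\\ \alpha\neq 0}}^m a_\alpha\left(\frac{\chi(r)}{r^{|\alpha|}}\right)\rme^{\rmi\alpha\theta} + \phi(r,\theta;\eps) =: \Phi_{\mathrm{app}}(a;\eps) + \phi
\]
into \eqref{e:nonlocalinter}, where $a=(a_0,\dots)$ collects the unknown constants, and define a map $F(\phi,a,\eps)$ equal to the residual. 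The first leading term is exactly the Hopf--Cole image of the intermediate outer solution $\zeta = 1/(r(C-\ln r))$ from Section~\ref{s:eikonal} with $C = a_0^{-1}$ (so that $\partial_r[-\ln(1-a_0\ln r)] = a_0/(r(1-a_0\ln r))$), and it solves the radial local eikonal equation $\Delta\phi-|\nabla\phi|^2=0$ exactly away from the origin; the cutoff $\chi$ introduces only a compactly supported, smooth error. The harmonic-type correction terms $\chi(r) r^{-|\alpha|}\rme^{\rmi\alpha\theta}$ are chosen because $\Delta(r^{-|\alpha|}\rme^{\rmi\alpha\theta})=0$ for $\alpha\neq 0$, so they will be available to cancel the non-radial part of $\eps\mathscr{M}\ast g$ at leading order; the stated formula for $a_\alpha$ is precisely what makes the $\rmO(\eps)$ non-radial residual lie in the range of the Laplacian on the appropriate weighted space.

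The key analytic input is the Fredholm theory of Section~\ref{s:weightedspaces}: by Theorem~\ref{McOwen} (or Lemma~\ref{l:Delta_nFredholm} mode by mode), $\Delta: M^{2,2}_{\gamma-2}(\R^2)\to L^2_\gamma(\R^2)$ with $1<\gamma<2$ is an isomorphism, so the linearization $D_\phi F(0,a^{(0)},0)=\Delta$ is invertible on these spaces. Before invoking the IFT I would verify the three structural requirements: (i) $F$ maps into $L^2_\gamma(\R^2)$ — here I use that $\mathscr{M}\ast g, \mathcal{J}\ast(\cdot)$ preserve algebraic localization (Lemmas~\ref{l:convM}, \ref{l:conv1-J} and the exponential localization of the kernels), that $\sigma\in(m+1,m+2)$ gives enough decay of $g$, and that the quadratic nonlinearity $\mathscr{M}\ast|\mathcal{J}\ast\nabla\phi|^2$ lands in $L^2_\gamma$ via the multiplication Lemma~\ref{l:multiplication} applied to $\nabla\phi\in H^1_{\gamma-1}$; (ii) $F$ is $C^1$ in $(\phi,a)$ — the only subtlety is differentiating the logarithmic term in $a_0$, which is smooth for $a_0$ near $0$ on any fixed ball where $|a_0\ln r|$ stays bounded on the relevant region, handled by the cutoff and by noting $\ln(1-a_0\ln r)$ grows only like $\ln\ln r$, hence its gradient decays and sits in the right space; (iii) $F(0,a^{(0)},0)=0$ for the natural choice $a^{(0)}=0$ (all constants vanish at $\eps=0$, the ansatz collapses to $0$, and $F(0,0,0)=\Delta 0 - 0 + 0 = 0$). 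I should also treat the $a_0$ equation as a scalar solvability (Lyapunov--Schmidt) condition if needed, but since $\Delta$ is an isomorphism here the constants $a_\alpha$ are determined explicitly by matching the $\rmO(\eps)$ terms and $a_0$ by a fixed-point/IFT argument in the scalar variable coupled to $\phi$; differentiating the $\rmO(\eps)$ expansion then yields $a_{0,1}=-\frac1{2\pi}\int g$ and the stated $a_\alpha$.

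With these verified, the implicit function theorem produces $\eps_0>0$ and a $C^1$ map $\eps\mapsto(\phi(\cdot;\eps),a_0(\eps))$ with $F\equiv 0$, and the Taylor expansion of this map at $\eps=0$ gives $a_0=\eps a_{0,1}+\rmO(\eps^2)$. The decay claim $\phi<Cr^{\gamma-1}$ as $r\to\infty$ is then just Lemma~\ref{l:decay} applied to $\phi\in M^{1,2}_{\gamma-2}(\R^2)$ (which gives pointwise decay like $(1+|{\bf x}|^2)^{-(\gamma-2+1)}=(1+|{\bf x}|^2)^{-(\gamma-1)}$, i.e. $\lesssim r^{-2(\gamma-1)}$, comfortably inside the claimed bound; in fact one gets $r^{1-\gamma}$ decay directly from membership in $M^{2,2}_{\gamma-2}$ since $\gamma-1>0$). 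The main obstacle I anticipate is item (ii)--(iii) bookkeeping around the logarithmic principal term: one must check carefully that $\Delta\big(-\chi(r)\ln(1-a_0\ln r)\big) - \mathscr{M}\ast|\mathcal{J}\ast\nabla(\cdot)|^2$ evaluated on the full ansatz produces a residual that is genuinely in $L^2_\gamma(\R^2)$ for $\eps$ small — the dangerous terms are the cross terms between the $\ln\ln$-type principal part and the $\eps$-order corrections, and the non-locality of $\mathscr{M}$ and $\mathcal{J}$ acting on a function that is not algebraically localized (it grows like $\ln\ln r$), so one needs that $\mathscr{M}\ast$ and $\mathcal{J}\ast$, being convolution with exponentially localized kernels, map such slowly-growing functions to functions differing from them by an algebraically localized correction. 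This is where the exponential localization in Hypotheses~\ref{h:analyticity} and~\ref{h:nonlinearity} is essential, and I would isolate it as a lemma (commuting $\mathscr{M}\ast$ past the logarithmic term up to an $L^2_\gamma$ error) before assembling the IFT argument.
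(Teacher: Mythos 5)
Your overall architecture (absorb the logarithmic and harmonic pieces into the ansatz, check the residual lands in $L^2_\gamma$, invoke the implicit function theorem, read off the constants) matches the paper, and your identification of the nonlocal-operators-acting-on-$\psi_0$ issue as the delicate analytic point is exactly right: the paper resolves it in Lemma~\ref{l:nonlocalnonlinearinter} by rewriting the nonlinearity in terms of $(\Id-\mathscr{M})$ and $(\Id-\mathcal{J})$, which by Lemmas~\ref{l:conv1-M} and~\ref{l:conv1-J} gain two powers of localization, combined with the fact that it is only $\nabla\psi_0\sim 1/(r\ln r)$, not $\psi_0$ itself, that enters.

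However, there is a genuine gap at the heart of your IFT step: you assert that $\Delta\colon M^{2,2}_{\gamma-2}(\R^2)\to L^2_\gamma(\R^2)$ is an isomorphism for $1<\gamma<2$, and this is false. In Theorem~\ref{McOwen} with $d=p=q=2$ the isomorphism window $2-d/p<\gamma<d/q$ is the empty interval $(1,1)$; for $1<\gamma<2$ the Laplacian is injective with a one-dimensional cokernel spanned by the constants. Consequently the linearization in $\phi$ alone is not surjective and the IFT cannot be applied as you describe. This is not a technicality to be patched later --- it is the reason $a_0$ must enter the problem as an unknown at all. The paper's linearization is the bordered operator $\mathscr{L}(\phi,a)=\Delta\phi+a\,\Delta\bigl(\chi\ln r\bigr)$, which is invertible from $M^{2,2}_{\gamma-2}(\R^2)\times\R$ to $L^2_\gamma(\R^2)$ precisely because $\int_{\R^2}\Delta(\chi\ln r)\,dx=2\pi\neq 0$ covers the constant cokernel; the solvability condition of the right-hand side against that cokernel is what produces $a_{0,1}=-\frac{1}{2\pi}\int_{\R^2}g$. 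In your write-up $a_0$ is determined by an unspecified ``fixed-point argument in the scalar variable,'' which cannot work if, as you claim, $\Delta$ were already invertible (there would then be nothing left to determine $a_0$). A second, smaller omission: the non-radial modes $\psi_n$, $n\neq 0$, must be solved in the more strongly weighted space $M^{2,2}_{r,\sigma-2}$ with $\sigma\in(m+1,m+2)$, where each $\Delta_n$ with $0<|n|\leq m$ is again Fredholm of index $-1$ with cokernel spanned by $\Delta_n(\chi/r^{|n|})$ (Lemma~\ref{l:invertibleDeltan}); your heuristic that the $a_\alpha$ make the residual ``lie in the range of the Laplacian'' is the right idea, but the precise mode-by-mode Lyapunov--Schmidt reduction in the $\sigma$-weighted spaces is what yields the stated formula for $a_\alpha$ and the decay $|\psi_n|\lesssim r^{-2m}$ needed downstream.
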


The proof is based on the following ansatz,
\begin{equation}\label{e:ansatznonlocal1}
 \phi(r,\theta;\eps) =  \psi_0(r) +\eps \sum_{\substack{ n\in \Z \\ n \neq0}} \psi_n(r) \rme^{\rmi \theta n}+ \eps \sum_{\substack{ \alpha=-m \\ \alpha \neq 0}}^m a_{\alpha}C_\alpha(r)  \rme^{i \alpha \theta} + \bar{\phi}(r,\theta;\eps)
\end{equation}
where the function $$\psi_0(r) = -\chi(r) \ln(1- a_0 \ln(r) ),\quad a_0  \in \R,$$ is motivated by the asymptotic analysis of Section \ref{s:eikonal}. In particular, notice that the function $  \ln(1- a_0 \ln(r) )$ solves the equation $\Delta \psi -(\partial_r \psi)^2 =0,$ and it admits the following series expansion 
\[ - \ln(1 -a_0\ln(r) ) =  a_0 \ln(r) + \frac{1}{2}(a_0 \ln(r) )^2 +\frac{1}{3}(a_0 \ln(r) )^3 + \cdots,\]
provided $|a_0\ln(r)| \ll1$. As a result we see that
\[ \Delta (\psi_0 - a_0 \chi \ln(r) ) - (\partial_r \psi_0)^2  + \Delta a_0 \chi \ln(r) =  \Delta a_0 \ln(r)  + \mbox{localized},\]
and it is this last equality which proves useful in the simplification of equation \eqref{e:nonlocalinter} and one of the reason why $\psi_0$ was picked as we have done.

 In addition, we let  $C_\alpha(r) = \frac{\chi}{r^{|\alpha|}}$ and  for the moment we assume that for $n \neq 0$ the pair $(\psi_n, a_n) \in M^{2,2}_{r, \sigma-2}(\R^2) \times \R$ solves the equations for the first order approximation to equation \eqref{e:nonlocalinter},
 \begin{eqnarray}\label{e:Delta_n0}
\partial_{rr} \psi_n + \frac{1}{r} \partial_r \psi_n - \frac{n^2}{r^2} \psi_n + a_n \Delta_n C_n  + \tilde{g}_n(r)  =0& \mbox{for}\quad 0<|n|\leq m\\ \label{e:Delta_n00}
\partial_{rr} \psi_n + \frac{1}{r} \partial_r \psi_n - \frac{n^2}{r^2} \psi_n   +  \tilde{g}_n(r)  =0& \mbox{for}\quad m < |n|.
\end{eqnarray}
Here, the inhomogeneous terms $\tilde{g}_n$, for $n \in \Z$, are just the polar modes of $\tilde{g} =\mathscr{M} \ast g$ obtained using Lemma \ref{l:polar}.

We now wish to look at the operator $F: M^{2,2}_{\gamma-2}(\R^2)\times \R\times \R \rightarrow L^2_\gamma(\R^2)$, obtained by inserting the ansatz \eqref{e:ansatznonlocal1} into equation \eqref{e:nonlocalinter}. A straight forward calculation using the results from Lemma \ref{l:polar} and the choice of $\psi_0$ shows that 
\begin{align*}
F(\bar{\phi}, a_0; \eps)&= \Delta \bar{\phi} + a_0 \Delta \chi \ln(r)+ \eps  \tilde{g}_0 + P(\bar{\phi}),\\
P(\bar{\phi}) &=  - \mathscr{M} \ast | \mathcal{J} \ast \nabla \phi|^2 + | \nabla \psi_0|^2 + ( \Delta \psi_0 - a_0 \Delta \chi \ln(r) - |\nabla \psi_0|^2).
\end{align*}
The dependence on $\bar{\phi}$ in the definition of the nonlinearity $P$ comes through the function $\phi$, which is just the ansatz \eqref{e:ansatznonlocal1}.
 
To show the results of Proposition \ref{p:nonlocalintermediate} we apply the implicit function theorem to find the zeros of $F$. It is easy to check that $F$ depends smoothly on $\eps$, so that we are left with showing three things:
\begin{enumerate}
\item \label{invertible} The equations \eqref{e:Delta_n0} and \eqref{e:Delta_n00} are invertible.
\item The operator $F: M^{2,2}_{\gamma-2}(\R^2)\times \R\times \R \rightarrow L^2_\gamma(\R^2)$ is well defined for $1<\gamma<2$.
\item \label{Frechet} The derivative $D_{\bar{\phi},a_0} F \mid_{\eps =0}  = \mathscr{L}$ defined as $\mathscr{L} :M^{2,2}_{\gamma-2}(\R^2) \times \R \rightarrow L^2_\gamma(\R^2)$,
\[ \mathscr{L}(\phi, a) = \Delta \phi + a \Delta \chi \ln(r)\]
is invertible.
\end{enumerate}
  
  We start with item~(\ref{invertible}). From Lemma \ref{l:Delta_nFredholm} we know that in the setting of radial Kondratiev spaces  and for $n \in \N \bigcup \{0\}$ the operators, 
\begin{equation}\label{e:Delta_ndefinition}
\begin{array}{c c c}
\Delta_n : M^{2,2}_{r,\sigma -2}(\R^2) & \longrightarrow &L^2_{r,\sigma}(\R^2),\\
\phi & \longmapsto &\partial_{rr} \phi  + \frac{1}{r} \partial_r \phi - \frac{n^2}{r^2}\phi 
\end{array}
\end{equation}
 are invertible for values of $\sigma \in (1-n,1 + n)$ and are Fredholm index $ i= -1$ for values of $\sigma > n+1$. It is for this reason, and our assumption that $m+1<\sigma<m+2$, that we have added correction terms, $C_\alpha(r)$, only for values of $0< |n| \leq m$. In particular, we show in the next lemma that the functions $\Delta_n C_n$ span the cokernel of the operator \eqref{e:Delta_ndefinition}, so that by an application of Lyapunov Schmidt reduction it follows that the equations, \eqref{e:Delta_n0} and \eqref{e:Delta_n00}, are solvable and that the value of the constants, $a_n$, is given by
\[ a_n = \frac{1}{n}\int_0^\infty \tilde{g}_n(r) r^n \;rdr, \quad  \mbox{for} \quad  0<|n|\leq m.\]
Moreover, since the functions $\psi_n$ are in $M^{2,2}_{r,\sigma-2}(\R^2)$ for values of $\sigma \in (m+1,m+2)$, the results from Lemma \ref{l:decay} imply that these functions satisfy $ | \psi_n| < C r^{-2m}$ for large values of $r$.

\begin{Lemma}\label{l:invertibleDeltan}
Let $ n \in \N$, take $\gamma> 1 + n$, and define  $\Delta_n = \partial_{rr} + \frac{1}{r} \partial_r + \frac{n}{r^2}$. Then, the operator $\mathcal{L}_n : M^{2,2}_{r,\gamma-2}(\R^2) \times \R \rightarrow L^2_{r,\gamma}(\R^2)$ defined as
\[ \mathcal{L}_n(\phi,a) = \Delta_n \phi + a \Delta_n \left( \frac{\chi}{r^{|n|}} \right)
  \]
is invertible. 
\end{Lemma}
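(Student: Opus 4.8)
The plan is to reduce the invertibility of $\mathcal{L}_n$ to the known Fredholm theory for $\Delta_n$ from Lemma~\ref{l:Delta_nFredholm}, the key point being that the extra one-dimensional range coming from the correction term $a\,\Delta_n(\chi r^{-|n|})$ precisely compensates the cokernel that $\Delta_n$ acquires once $\gamma > 1+n$. First I would observe that, by Lemma~\ref{l:Delta_nFredholm} part~(2), for $\gamma > n+1$ the operator $\Delta_n: M^{2,2}_{r,\gamma-2}(\R^2) \to L^2_{r,\gamma}(\R^2)$ is injective with closed range of codimension one, and that the cokernel is spanned by the functional $f \mapsto \int_0^\infty f(r)\, r^{n}\, r\,dr$ (this is the radial trace of the harmonic polynomial pairing in Theorem~\ref{McOwen}, restricted to the $n$-th polar mode). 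So $\mathcal{L}_n$ is invertible if and only if the vector $\Delta_n(\chi r^{-|n|}) \in L^2_{r,\gamma}(\R^2)$ is not annihilated by that functional, i.e.\ if and only if $\int_0^\infty \Delta_n(\chi(r) r^{-n})\, r^{n}\, r\,dr \neq 0$.

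The second step is to verify this membership and non-degeneracy. The function $r^{-n}$ is exactly harmonic for $\Delta_n$ on $r>0$ (it solves $\partial_{rr}u + \tfrac1r\partial_r u - \tfrac{n^2}{r^2}u = 0$), so $\Delta_n(\chi r^{-n})$ is supported in the annulus $\{1 \le r \le 2\}$ where $\chi$ transitions, hence it is smooth, compactly supported, and certainly lies in $L^2_{r,\gamma}(\R^2)$ for every $\gamma$; this also confirms that $\mathcal{L}_n$ maps into the stated target space and is bounded. To compute the pairing, write $\Delta_n(\chi r^{-n}) = \tfrac1r\partial_r\!\big(r\,\partial_r(\chi r^{-n})\big) - \tfrac{n^2}{r^2}\chi r^{-n}$ and integrate against $r^{n}\cdot r\,dr = r^{n+1}\,dr$. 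Integrating by parts twice and using that the boundary terms vanish because $\chi r^{-n}$ and its derivative agree with the genuine harmonic function $r^{-n}$ near $r=1$ (where the other solution $r^{n}$ would blow up in the conjugate sense) and vanish near $r=2$, one finds that the pairing collapses to a nonzero multiple of $\lim_{r\to\infty} r\cdot\big(r^{n}\partial_r(r^{-n}) - r^{-n}\partial_r(r^{n})\big)$-type Wronskian expression — concretely, $\int_0^\infty \Delta_n(\chi r^{-n})\, r^{n+1}\,dr = -2n \neq 0$ for $n \ge 1$. Thus the correction direction is transverse to the cokernel.

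The final step is bookkeeping: since $\Delta_n$ is injective on $M^{2,2}_{r,\gamma-2}(\R^2)$ for $\gamma > n+1$, and adding the one extra real variable $a$ together with a single vector outside the range of $\Delta_n$ produces an operator that is both injective (if $\Delta_n\phi + a\,\Delta_n(\chi r^{-|n|}) = 0$ then $a=0$ by transversality, hence $\phi = 0$ by injectivity) and surjective (the range of $\Delta_n$ has codimension one and is complemented by $\mathrm{span}\{\Delta_n(\chi r^{-|n|})\}$), the operator $\mathcal{L}_n$ is a bounded bijection, hence invertible by the open mapping theorem. I would also note that the resulting solution operator gives the constant $a_n$ explicitly as $a_n = \tfrac1n \int_0^\infty \tilde g_n(r)\, r^{n}\, r\,dr$ once the right-hand side $-\tilde g_n$ is decomposed against range and complement, matching the formula claimed just before the lemma.

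The main obstacle I anticipate is the second step: correctly identifying the cokernel functional of $\Delta_n$ in the \emph{radial} $M^{2,2}$ setting (it descends from the harmonic-polynomial pairing of Theorem~\ref{McOwen}, but one must track the Jacobian $r\,dr$ and the polar decomposition of Lemma~\ref{l:polar}), and then carrying out the integration-by-parts computation carefully enough to be sure the boundary contributions at $r=1$ and $r=2$ drop out and that the surviving constant is genuinely nonzero rather than accidentally zero for some $n$. Everything else — boundedness, the index count, and assembling injectivity plus surjectivity into invertibility — is routine given the machinery already established.
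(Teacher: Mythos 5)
Your proposal is correct and follows essentially the same route as the paper: invoke Lemma~\ref{l:Delta_nFredholm} to get injectivity of $\Delta_n$ with a one-dimensional cokernel paired against $r^n$, and then check that $\Delta_n\bigl(\chi r^{-|n|}\bigr)$ pairs nontrivially with that functional. The only discrepancy is the value of the pairing --- the paper states $\int_0^{\infty} \Delta_n(\chi r^{-n})\, r^{n}\, r\,dr = -n$ while your Green's-identity computation gives $-2n$ (which is in fact the correct value, as the boundary Wronskian $r(u'v-uv')$ with $u=r^{-n}$, $v=r^n$ contributes $-2n$ at infinity) --- but since both are nonzero this has no bearing on the invertibility claim.
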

\begin{proof}
This is a consequence of Lemma \ref{l:Delta_nFredholm} and the fact that the function $\Delta_n \left( \displaystyle \frac{\chi}{r^{|n|}}\right)$ spans the cokernel of $\Delta_n: M^{2,2}_{r,\gamma-2}(\R^2) \longrightarrow L^2_{r, \gamma}(\R^2)$, for $n \in \N$. Indeed, a short calculation shows that 
\[  \int_0^{\infty} \Delta_n \left( \frac{\chi}{r^{|n|}} \right) r^n \; rdr= -n.\]

\end{proof}

  Next, we show that the operator $F$ is well defined.
  \begin{Lemma}\label{l:nonlocalnonlinearinter}
  Let $\gamma \in (1,2)$, then the operator $P: M^{2,2}_{\gamma-2}(\R^2) \rightarrow L^2_\gamma(\R^2)$ given by
\[P(\bar{\phi}) = - \mathscr{M} \ast | \mathcal{J} \ast \nabla \phi|^2 + | \nabla \psi_0|^2 + ( \Delta \psi_0 - a_0 \Delta \chi \ln(r) - |\nabla \psi_0|^2),\] 
  and with $\phi(r,\theta;\eps)$ as in \eqref{e:ansatznonlocal1},   is well defined.
\end{Lemma}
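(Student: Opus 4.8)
The plan is to verify that each of the three groups of terms in the definition of $P$ lands in $L^2_\gamma(\R^2)$ for $1<\gamma<2$, treating them separately and then combining by the triangle inequality.

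\textbf{Step 1: The ``defect'' term $\Delta\psi_0 - a_0\Delta\chi\ln r - |\nabla\psi_0|^2$.} From the discussion preceding the lemma, since $-\ln(1-a_0\ln r)$ solves $\Delta\psi-(\partial_r\psi)^2=0$ on the region where $\chi\equiv1$, the only contributions to this quantity come from (i) the region $1<r<2$ where $\chi$ and its derivatives are supported but bounded, hence the term is compactly supported and trivially in $L^2_\gamma$; and (ii) the mismatch between $\Delta(\chi\ln(1-a_0\ln r))$ expanded in powers of $a_0\ln r$ versus $a_0\Delta\chi\ln r$. Using the series expansion $-\ln(1-a_0\ln r)=a_0\ln r+\tfrac12(a_0\ln r)^2+\cdots$ one sees the leading discrepancy is $\Delta$ applied to $\tfrac12 a_0^2(\ln r)^2$ times $\chi$, which decays like $r^{-2}(\ln r)$ (more precisely $\partial_{rr}$ and $r^{-1}\partial_r$ of $(\ln r)^2$ give $r^{-2}$ times logarithms); since $\gamma<2$, the weight $(1+|{\bf x}|^2)^{\gamma/2}\sim r^\gamma$ multiplied by $r^{-2}\ln r$ is square integrable against $r\,dr$ precisely when $2\gamma-4+1<-1$, i.e. $\gamma<1$... so I must be more careful: the correct statement is that $\Delta\psi_0-|\nabla\psi_0|^2$ is actually $\rmO(r^{-2})$ times no logarithm because $\psi_0$ solves the nonlinear equation exactly where $\chi=1$, and only $a_0\Delta\chi\ln r$ remains, which is compactly supported. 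So in fact this whole bracket is compactly supported and lies in $L^2_\gamma$ for every $\gamma$; I will just cite the algebraic identity verified before the lemma.

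\textbf{Step 2: The term $|\nabla\psi_0|^2$.} Since $\psi_0=-\chi\ln(1-a_0\ln r)$, one computes $\partial_r\psi_0 = \tfrac{a_0}{r(1-a_0\ln r)}$ on $r>2$ plus compactly supported pieces, so $|\nabla\psi_0|^2 \sim a_0^2 r^{-2}(1-a_0\ln r)^{-2}$, which is $\rmO(r^{-2}(\ln r)^{-2})$ as $r\to\infty$. Against the weight $r^{2\gamma}$ and measure $r\,dr$ the integrand behaves like $r^{2\gamma-3}(\ln r)^{-4}$, which is integrable at infinity exactly when $2\gamma-3\le -1$, i.e. $\gamma\le1$ — again borderline. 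The resolution is that $|\nabla\psi_0|^2$ is \emph{cancelled} by the corresponding piece inside $-\mathscr M\ast|\mathcal J\ast\nabla\phi|^2$ up to a more localized remainder; that is the point of grouping $P$ the way the authors do. So the right approach is not to bound $|\nabla\psi_0|^2$ in isolation but to bound the combination $-\mathscr M\ast|\mathcal J\ast\nabla\phi|^2 + |\nabla\psi_0|^2$ together.

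\textbf{Step 3: The leading nonlinear combination.} Write $\nabla\phi = \nabla\psi_0 + \nabla\rho$ where $\rho$ collects the $\eps$-terms and $\bar\phi$ in the ansatz \eqref{e:ansatznonlocal1}. Then expand
$-\mathscr M\ast|\mathcal J\ast\nabla\phi|^2 + |\nabla\psi_0|^2 = \big(|\nabla\psi_0|^2 - \mathscr M\ast|\mathcal J\ast\nabla\psi_0|^2\big) - \mathscr M\ast\big(2\,\mathcal J\ast\nabla\psi_0\cdot\mathcal J\ast\nabla\rho + |\mathcal J\ast\nabla\rho|^2\big)$. For the first bracket, use $\mathscr M\ast c=c$ and $\mathcal J\ast c=c$ for constants together with Lemmas~\ref{l:conv1-M} and~\ref{l:conv1-J}: both $\Id-\mathscr M$ and $\Id-\mathcal J$ gain two derivatives of localization (they have the Fredholm/mapping properties of $\Delta$ and $\Delta(\Id-\Delta)^{-1}$ respectively), so $|\nabla\psi_0|^2 - \mathscr M\ast|\mathcal J\ast\nabla\psi_0|^2$ decays like $r^{-2}$ faster than $|\nabla\psi_0|^2$ itself, i.e. like $r^{-4}(\ln r)^{-2}$, which pairs with weight $r^{2\gamma}$ in $L^2(r\,dr)$ for all $\gamma<3$, in particular for $\gamma\in(1,2)$. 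For the remaining terms, I will use the multiplication property (Lemma~\ref{l:multiplication}), the decay estimate Lemma~\ref{l:decay} applied to $\bar\phi\in M^{2,2}_{\gamma-2}$ and to the explicit correction functions $C_\alpha(r)=\chi r^{-|\alpha|}$ and $\psi_n\in M^{2,2}_{r,\sigma-2}$ with $\sigma\in(m+1,m+2)$ (giving $|\psi_n|<Cr^{-2m}$ and correspondingly fast-decaying gradients), the exponential localization of $\mathcal J$, and the boundedness of $\mathscr M$ from Lemma~\ref{l:convM}, to see that each cross term is a product of a factor decaying like $r^{-1}(\ln r)^{-1}$ (from $\nabla\psi_0$) with a factor in some $L^2_{\delta}$ with $\delta$ large, and hence lands in $L^2_\gamma$ with room to spare.

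\textbf{Main obstacle.} The delicate point is the borderline nature of the $r^{-2}$ tail of $|\nabla\psi_0|^2$: naively it is not in $L^2_\gamma$ for $\gamma>1$, so the proof genuinely relies on the cancellation structure built into $P$ — namely that the true equation \eqref{e:nonlocalinter} is solved to leading order by $\psi_0$ and that $\Id-\mathscr M$, $\Id-\mathcal J$ behave like $\Delta$ and therefore supply the two extra powers of decay needed. Getting the logarithmic factors and the exact power counting right in Step 3 (especially confirming that after subtracting $\mathscr M\ast|\mathcal J\ast\nabla\psi_0|^2$ the remainder is $\rmO(r^{-4}\log^2 r)$ rather than $\rmO(r^{-3})$) is where the real work lies; everything else is a routine application of the multiplication lemma, the decay lemma, and boundedness of $\mathscr M$ and of convolution with the exponentially localized kernel $\mathcal J$.
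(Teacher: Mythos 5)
Your proposal is correct and follows essentially the same route as the paper's proof: split off the compactly supported bracket, decompose $\nabla\phi=\nabla\psi_0+\nabla\rho$ (the paper's $\Phi=\phi-\psi_0$), handle the cross and quadratic terms in $\rho$ via the decay and multiplication lemmas together with boundedness of $\mathscr{M}$ and $\mathcal{J}$, and absorb the borderline $|\nabla\psi_0|^2$ tail by pairing it with $\mathscr{M}\ast|\mathcal{J}\ast\nabla\psi_0|^2$ and invoking the extra localization supplied by $\Id-\mathscr{M}$ and $\Id-\mathcal{J}$ (Lemmas~\ref{l:conv1-M} and~\ref{l:conv1-J}). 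Your identification of the $r^{-2}(\ln r)^{-2}$ tail as the genuine obstruction, and of the cancellation structure as its resolution, matches the paper exactly.
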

  
\begin{proof}

Since $\psi_0(r) = - \chi \ln( 1-a_0 \ln(r))$, we know that the expression $( \Delta \psi_0 - a_0 \Delta \chi \ln(r) - |\nabla \psi_0|^2)$ is localized.

To show that the remaining terms are in $L^2_\gamma(\R^2)$ we let $ \Phi(r,\theta;\eps) = \phi(r,\theta;\eps) - \psi_0$, and write
\[ \mathscr{M} \ast | \mathcal{J} \ast \nabla \phi|^2 = \mathscr{M} \ast \left[ | \mathcal{J} \ast \nabla \Phi|^2 + 2 (\mathcal{J} \ast \nabla \Phi) \cdot (\mathcal{J} \ast \nabla \psi_0) +| \mathcal{J} \ast \nabla \psi_o|^2 \right].\]

From Hypothesis \ref{h:nonlinearity} it is not hard to see that the operator $\mathcal{J}$ satisfies,
\[ \mathcal{J}: M^{s,2}_\gamma(\R^2) \longrightarrow \{ u \in M^{s,2}_\gamma(\R^2) \mid D^s u \in H^2_{\gamma+s}(\R^2)\} ,\quad \forall \gamma \in \R.\]
We can also check that the term $\Phi$ is in the space $M^{1,2}_{\gamma-1}(\R^2)$, which together with the mapping properties of $\mathcal{J}$ and Lemma \ref{l:decay}, implies that the expression  $(\mathcal{J} \ast \nabla \Phi)$ is bounded and decays  as $|x|^{-\gamma}$. As a result the function $| \mathcal{J} \ast \nabla \Phi|^2 \in H^2_\gamma(\R^2) $ and we may conclude from the mapping properties of $\mathscr{M}$ stated in Lemma \ref{l:convM} that the term  $\mathscr{M} \ast  |\mathcal{J} \ast \nabla \Phi|^2$ is well defined.

Similarly, since $\nabla \psi_0 \in M^{2,2}_\tau(\R^2)$ with $-1< \tau<0$, it follows from Lemma \ref{l:decay} and the mapping properties of $\mathcal{J}$ that $|\mathcal{J} \ast \nabla \psi_0 | $ decays as $ |x|^{-1}$. Consequently,  the function $(\mathcal{J}\ast \nabla \psi_0) \cdot (\mathcal{J} \ast \nabla \Phi)$ is in the space $H^2_\gamma(\R^2)$ and the product $\mathscr{M} \ast(\mathcal{J}\ast \nabla \psi_0) \cdot (\mathcal{J} \ast \nabla \Phi)$ is well defined.

Lastly, we look at the expression $\mathscr{M} \ast | \mathcal{J} \ast \nabla \psi_0|^2 - |\nabla \psi_0|^2$, and rewrite it as follows
\begin{align*}
 \mathscr{M} \ast | \mathcal{J} \ast \nabla \psi_0|^2 - |\nabla \psi_0|^2  = & \mathscr{M} \ast \left[ | (\mathcal{J}- \Id) \ast \nabla \psi_0|^2 + 2(\mathcal{J}-\Id) \ast \nabla \psi_0 \cdot \psi_0 \right]  \\
& + (\mathscr{M}- \Id) \ast |\nabla \psi_0|^2
\end{align*}
Using Lemma \ref{l:conv1-J} we see that the term $(\mathcal{J}-\Id) \ast \nabla \psi_0$ is in the space $H^2_{\gamma}(\R^2)$, which being a Banach algebra implies that the same term squared is in $H^2_{\gamma}(\R^2)$. Then, the mapping properties of $\mathscr{M}$ and the boundedness of $\nabla\psi_0$ show that the first term of the above expression is well defined. To show that the last term is also in the space $L^2_\gamma(\R^2)$ we use Lemma \ref{l:multiplication} to check that  $|\nabla \psi_0|^2 $ is in $M^{2,2}_\tau(\R^2)$ and then appeal to the results of Lemma \ref{l:conv1-M}.

\end{proof}
To complete the proof of Proposition \ref{p:nonlocalintermediate} we need to show item~(\ref{Frechet}). Since the nonlinearities are in the space $L^2_\gamma(\R^2)$ with $\gamma \in (1,2)$ the results from Theorem \ref{McOwen} show that the operator $\Delta : M^{2,2}_{\gamma-2}(\R^2) \rightarrow L^2_\gamma(\R^2)$ is injective with cokernel spanned by $1$.  A short calculation shows that $\int_{\R^2} \Delta \chi \ln(r) \;dx = 2\pi,$ from which it follows that the operator $\mathscr{L} :M^{2,2}_{\gamma-2}(\R^2) \times \R \rightarrow L^2_\gamma(\R^2)$,
\[ \mathscr{L}(\phi, a) = \Delta \phi + a \Delta \chi \ln(r)\]
is invertible. If we now consider regular expansions for $\bar{\phi}$ and $a_0$, and apply Lyapounov Schmidt reduction to the equation
\begin{equation*}
\Delta \bar{\phi} + a_0 \Delta \chi \ln(r)+ \eps  \tilde{g}_0 + P(\bar{\phi})=0,
\end{equation*}
 the result is that $a_0 = \eps a_{0,1} + \rmO(\eps^2)$ with 
\[a_{0,1} =- \frac{1}{2\pi} \int_{\R^2} \tilde{g}_0= -\frac{1}{2\pi} \int_{\R^2} g.\]
 In addition, using Lemma \ref{l:decay} we see that the function $\bar{\phi} < C r^{\gamma-1}$ for large values of $r$. This completes the proof of Proposition \ref{p:nonlocalintermediate}.

\subsection{ Full solution for fixed $\omega= \lambda^2$}\label{s:nonlocalfull}
The following equation is obtained by preconditioning \eqref{e:nonloc1} with the operator $\mathscr{M}$ and then assuming that we  have an ansatz $\phi(x,y,t, \eps, \lambda) = \tilde{\phi}(x,y, \eps) - \lambda^2 t$, where $\lambda$ is another parameter. Dropping the tildes from our notation:
\begin{equation}\label{e:eiknonlocal2}
 -\lambda^2=  \Delta \phi - \mathscr{M} \ast | \mathcal{J} \ast \nabla \phi|^2  + \eps \mathscr{M} \ast g(x,y) \quad (x,y) \in R^2.
\end{equation}
We are again interested in solutions that bifurcate from the steady state $\phi=0$ when $\eps \neq 0$ and $\lambda >0$. We point out that in our original problem the value of $\lambda^2$ depends on $\eps$, since it represents the frequency, $\omega$, of the target waves that emerge from the introduction of a perturbation. In the next subsection  we will find the value of $\omega =\lambda^2$ by matching the solutions from Section \ref{s:nonlocalintermediate} with the solutions from Proposition \ref{p:nonlocalfull} stated next.
\begin{Proposition}\label{p:nonlocalfull}
Suppose $g$ is in the space $ L^2_\sigma(\R^2)$ with $\sigma >0$. Then, there exists numbers $\eps_0, \lambda_0 >0$ and a $C^1$ map
\begin{equation*}
	\begin{matrix}
	\Gamma:& (-\eps_0,\eps_0) \times [0,\lambda_0)& \longrightarrow& \mathcal{D} \subset M^{1,2}_{\gamma-1}(\R^2)\\
	& (\eps,\lambda) &\mapsto & \phi
	\end{matrix}
\end{equation*}
where $\gamma>0$ and $ \mathcal{D} = \{\phi \in M^{1,2}_{\gamma-1}(\R^2) \mid  \nabla\phi \in H^1_\gamma(\R^2)  \}$, that allows us to construct an $(\eps,\lambda)$-dependent family of solutions to equation \eqref{e:eiknonlocal2}. Moreover, these solutions are of the form,
\begin{equation*}
\Phi(r,\theta;\eps,\lambda) =  - \chi(\lambda r) \ln( K_0( \lambda r)+  \phi(r,\theta;\eps,\lambda),
\end{equation*}
where
\begin{itemize}
\item  the function $K_0(z)$ is the zero-th order modified Bessel's function of the second kind, and
\item the function $\phi < C r^{-\gamma } $ as $r\rightarrow \infty$.
\end{itemize}
\end{Proposition}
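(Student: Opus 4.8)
The plan is to mirror the proof of Proposition~\ref{p:nonlocalintermediate}: insert the ansatz $\phi(r,\theta;\eps,\lambda)=\psi_\lambda(r)+\bar\phi(r,\theta;\eps,\lambda)$ into the preconditioned equation~\eqref{e:eiknonlocal2}, with $\psi_\lambda(r)=-\chi(\lambda r)\ln K_0(\lambda r)$, and solve for the remainder $\bar\phi\in\mathcal D$ by the implicit function theorem, treating $\lambda$ as an additional parameter. The choice of $\psi_\lambda$ is forced by the matched asymptotics of Section~\ref{s:eikonal}: since $K_0(\lambda r)$ solves the modified Bessel equation $\Delta\Psi=\lambda^2\Psi$, the Hopf--Cole identity shows $-\ln K_0(\lambda r)$ solves the radial viscous eikonal equation $\partial_{rr}\psi+\tfrac1r\partial_r\psi-(\partial_r\psi)^2=-\lambda^2$ \emph{exactly}, so the cutoff only contributes an error supported where $\chi'\neq0$, i.e.\ on the annulus $\lambda r\in[1,2]$, together with the constant $\lambda^2$ on the disc $\{\lambda r<1\}$ where $\psi_\lambda\equiv0$. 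Substituting produces an operator $F(\bar\phi;\eps,\lambda)=\Delta\bar\phi+\mathcal R_\lambda+\eps\,\mathscr M\ast g-2\,\mathscr M\ast[(\mathcal J\ast\nabla\psi_\lambda)\cdot(\mathcal J\ast\nabla\bar\phi)]-\mathscr M\ast|\mathcal J\ast\nabla\bar\phi|^2$, where $\mathcal R_\lambda=\lambda^2+\Delta\psi_\lambda-\mathscr M\ast|\mathcal J\ast\nabla\psi_\lambda|^2$, and one must check, exactly as for $F$ in Proposition~\ref{p:nonlocalintermediate}, that (i) $F$ is a well-defined smooth map $\mathcal D\times(-\eps_0,\eps_0)\times[0,\lambda_0)\to L^2_\gamma(\R^2)$ for $\gamma>0$ in a suitable range, and (ii) $D_{\bar\phi}F$ is invertible along $\eps=0$.

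For (i) the delicate point is that $\Delta\psi_\lambda$ and $|\mathcal J\ast\nabla\psi_\lambda|^2$ do \emph{not} individually lie in $L^2_\gamma$, because $\nabla\psi_\lambda\to\lambda\,\hat e_r$ at infinity; it is only the Hopf--Cole combination $\mathcal R_\lambda$ that is admissible, being identically zero where $\chi\equiv1$, hence supported on $\{r\lesssim1/\lambda\}$ with amplitude $\lesssim\lambda^2$, up to the corrections $(\mathcal J-\Id)\ast\nabla\psi_\lambda$ and $(\mathscr M-\Id)\ast|\nabla\psi_\lambda|^2$. Those corrections, together with the mixed and purely nonlinear terms in $\bar\phi$, are estimated exactly as in Lemma~\ref{l:nonlocalnonlinearinter}, using the boundedness of $\mathscr M$ (Lemma~\ref{l:convM}), that $\Id-\mathscr M$ and $\Id-\mathcal J$ behave like $\Delta$ on Kondratiev spaces (Lemmas~\ref{l:conv1-M}, \ref{l:conv1-J}), the smoothing of $\mathcal J$, the decay estimate of Lemma~\ref{l:decay}, and the multiplication property (Lemma~\ref{l:multiplication}); the new input is the bound $|\nabla\psi_\lambda|\lesssim\lambda$ on its support. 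Tracking constants one finds $\|\mathcal R_\lambda\|_{L^2_\gamma}=\rmO(\lambda^{1-\gamma})$ for $0<\gamma<1$, so $F(0;0,\lambda)$ is uniformly small on $[0,\lambda_0)$ while $F(0;0,0)=0$ exactly, since $\psi_0\equiv0$.

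For (ii) I would decompose the domain into polar modes via Lemma~\ref{l:polar}, making $D_{\bar\phi}F$ block diagonal. Since $\nabla\psi_\lambda=\rmO(\lambda)$ is supported in $\{r\gtrsim1/\lambda\}$ with far-field value $\lambda\,\hat e_r$, its leading contribution to the linearization is the drift $-2\lambda\,\partial_r$, so the zero-th block is a relatively compact perturbation of the operator $\mathscr L_\lambda$ of Lemma~\ref{l:fredholmLambda}, invertible for $\gamma>1-2/p=0$, while the blocks with $n\neq0$ are relatively compact perturbations of $\Delta_n-2\lambda\,\partial_r$, invertible in this weight range by Lemma~\ref{l:Delta_nFredholm} (the term $-n^2/r^2$ removing the kernel). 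Each block is thus Fredholm of index zero, and it remains to exclude a kernel. In the local case $\mathscr M=\mathcal J=\Id$ this is immediate from Hopf--Cole conjugation: the linearization of $\Delta\phi-|\nabla\phi|^2+\lambda^2$ about $\psi_\lambda$ equals $M_{K_0}^{-1}\circ(\Delta-\lambda^2)\circ M_{K_0}$ with $M_{K_0}$ multiplication by $K_0(\lambda r)$, whose decaying solutions are spanned by $K_{|n|}(\lambda r)\rme^{\rmi n\theta}$; dividing by $K_0(\lambda r)$ gives functions asymptotic to $\rme^{\rmi n\theta}$, which fail to lie in $M^{1,2}_{\gamma-1}(\R^2)$ once $\gamma>0$, so the kernel is trivial. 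For general coupling kernels and $\lambda_0$ small, $\Id-\mathscr M$ and $\Id-\mathcal J$ contribute only relatively compact corrections preserving triviality of the kernel; since $\lambda\mapsto D_{\bar\phi}F(0;0,\lambda)$ is Lipschitz in operator norm, one gets uniform invertibility bounds on $[0,\lambda_0)$. The implicit function theorem then yields the $C^1$ map $(\eps,\lambda)\mapsto\bar\phi=\phi$ with $\|\phi\|\lesssim\lambda^{1-\gamma}+|\eps|$, and the decay $\phi<Cr^{-\gamma}$ as $r\to\infty$ follows from $\phi\in M^{1,2}_{\gamma-1}(\R^2)$ via Lemma~\ref{l:decay}.

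The main obstacle is step (ii): establishing invertibility of the linearization uniformly in $\lambda$, i.e.\ reconciling the nonlocal operators $\mathscr M$, $\mathcal J$ and the variable coefficient $\mathcal J\ast\nabla\psi_\lambda$ with the constant-coefficient radial model $\mathscr L_\lambda$, and confirming that no kernel is created by these relatively compact perturbations. The Hopf--Cole cancellation bookkeeping that makes $F$ land in $L^2_\gamma$ in step (i) — and the resulting constraint $\gamma<1$ — is the other point requiring care.
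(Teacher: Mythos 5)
Your proposal follows the same overall strategy as the paper: the identical ansatz $\psi_\lambda+\bar\phi$ with $\psi_\lambda=-\chi(\lambda r)\ln K_0(\lambda r)$, the same Hopf--Cole cancellation bookkeeping for $\lambda^2+\Delta\psi_\lambda-\mathscr{M}\ast|\mathcal{J}\ast\nabla\psi_\lambda|^2$ and for the corrections $(\Id-\mathcal{J})\ast\nabla\psi_\lambda$, $(\Id-\mathscr{M})\ast|\nabla\psi_\lambda|^2$ (this is exactly Lemma~\ref{l:nonlocalnonlinearfull}), and the implicit function theorem for the remainder, with the decay of $\phi$ read off from Lemma~\ref{l:decay}. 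The one structural difference is the treatment of the linearization. The paper extracts the drift $-2\lambda\partial_r$ from the cross term and absorbs it into the linear operator $\mathscr{L}_\lambda=\Delta-2\lambda\partial_r$, proves invertibility of $\mathscr{L}_\lambda$ on $\mathcal{D}$ for $\gamma>0$ mode by mode (Lemmas~\ref{l:fredholmLambda} and~\ref{l:invertibleLambda}), preconditions by $\mathscr{L}_\lambda^{-1}$ so that the IFT is applied at $(\bar\phi,\eps,\lambda)=(0,0,0)$ with derivative equal to the identity, and then only needs $C^1$ dependence of $\mathscr{L}_\lambda^{-1}$ on $\lambda\ge 0$ (Lemma~\ref{l:continuousLambda}). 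You instead keep $\Delta$ as the linear part and try to invert the full variable-coefficient nonlocal linearization for each small $\lambda$. Your route through relative compactness and Hopf--Cole kernel exclusion is both unnecessary and, as written, unsound: convolution perturbations such as $\lambda(\Id-\mathscr{M}\ast\mathcal{J})\ast\partial_r$ are not relatively compact on $\R^2$, and --- more importantly --- a relatively compact perturbation of an invertible operator can perfectly well acquire a kernel, so ``relatively compact corrections preserving triviality of the kernel'' is not a valid principle. What rescues your step (ii) is its last sentence: since $|\mathcal{J}\ast\nabla\psi_\lambda|\lesssim\lambda$, the linearization at $\bar\phi=0$ is $\rmO(\lambda)$-close in operator norm to $\Delta:\mathcal{D}\to L^2_\gamma$ (equivalently to $\mathscr{L}_\lambda$), so invertibility for $\lambda\in[0,\lambda_0)$ follows from invertibility at $\lambda=0$ by a Neumann series; this is all that is needed and is, in effect, the paper's mechanism. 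Finally, your observation that $\|\lambda^2+\Delta\psi_\lambda-|\nabla\psi_\lambda|^2\|_{L^2_\gamma}\sim\lambda^{1-\gamma}$, so that continuity of the residual at $\lambda=0$ forces $\gamma<1$, is a genuine point: the paper states only ``$\gamma>0$'' and calls this term ``localized,'' but the restriction $\gamma\in(0,1)$ is implicitly present (and consistent with the exponent $\delta\in(0,1)$ appearing in Theorem~\ref{t:main}).
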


To show the results of the above proposition we again consider an appropriate ansatz motivated by the results from Section \ref{s:eikonal}
\begin{equation}\label{e:ansatznonlocal2}
\phi(r,\theta) = -\chi(\lambda r) \ln( K_0(\lambda r) ) + \bar{\phi}(r,\theta).
\end{equation}

For ease of notation we define $\psi_0(r) =-\chi( \lambda r) \ln( K_0(\lambda r) )$ and remark that the function $-\ln( K_0(\lambda r ) )$ is a solution to the equation $-\lambda^2 = \Delta \psi_0 - |\nabla \psi_0|^2$.  As a consequence, when inserting the above ansatz into equation \eqref{e:eiknonlocal2} we find that $\bar{\phi}$ must satisfy
\begin{equation}\label{e:nonlocal2}
0 = \Delta \bar{\phi} - 2\lambda \partial_r \bar{\phi} + \eps \mathscr{M} \ast g(r,\theta) + P(\bar{\phi}),\end{equation}
where the nonlinear term $P(\bar{\phi})$ is given by
\[ P(\bar{\phi}) = -\mathscr{M} \ast| \mathcal{J} \ast \nabla \phi|^2 + |\nabla \psi_0|^2 + 2\lambda \partial_r \bar{\phi} +( \lambda^2 + \Delta \psi_0 - |\nabla \psi_0|^2),\]
and again $\phi$ is as in ansatz \eqref{e:ansatznonlocal2}.

We have chosen to use polar coordinates to highlight the form of the linearization and to take advantage of the polar decomposition of Kondratiev spaces. As in the previous section, we make use of the right hand side of equation \eqref{e:nonlocal2} as an operator $F: \mathcal{D} \times \R\times \R \rightarrow \mathcal{D}$,
\[ F(\bar{\phi}; \eps, \lambda) = \Id - \mathscr{L}_\lambda^{-1}\left ( \eps \mathscr{M} \ast g(r,\theta)  +P(\bar{\phi})  \right).\]
where $ \mathcal{D} = \{\phi \in M^{1,2}_{\gamma-1}(\R^2) \mid  \nabla\phi \in H^1_\gamma(\R^2)  \}$, and $\mathcal{L}_\lambda $ is defined as  $\mathscr{L}_\lambda: \mathcal{D} \subset M^{1,2}_{\gamma-1}(\R^2) \longrightarrow L^2_\gamma(\R^2)$
  \[ \mathscr{L}_\lambda \phi = \Delta \phi - 2 \lambda \partial_r \phi, \quad \lambda \geq0.
  \]
  
   The results of the proposition then follow by finding the zeroes of $F$ via the implicit function theorem, keeping in mind that the operator is only defined for values of $\lambda \geq0$. We first notice that $F$ depends smoothly on $\eps$  for all values of $\eps \in \R$, that $F(0;0,0)=0$, and that the Fr\'echet derivative, $D_{\phi}F\mid_{(\eps,\lambda) = (0,0)} = I$, is invertible. Thus, we are left with showing the following two points:
   \begin{enumerate}
 \item \label{item:defined} The operator  $F: \mathcal{D} \times \R\times \R \rightarrow \mathcal{D}$ is well defined, and
 \item \label{item:smooth} it is $C^1$ with respect to $\lambda \in [0, \lambda_0)$ for some $\lambda_0>0$.
  \end{enumerate}
 
  The results of item \ref{item:defined}.) are true if we can show that the nonlinearities are well defined in $L^2_\gamma(\R^2)$, and if the linear operator $\mathcal{L}_\lambda$ is invertible. The following lemma shows that the first assertion is true.

\begin{Lemma}\label{l:nonlocalnonlinearfull}
Let $\gamma>0$, then the operator $P:\mathcal{D} \rightarrow L^2_\gamma(\R^2)$, defined by
\[ P(\bar{\phi}) = -\mathscr{M} \ast| \mathcal{J} \ast \nabla \phi|^2 + |\nabla \psi_0|^2 + 2\lambda \partial_r \bar{\phi} +( \lambda^2 + \Delta \psi_0 - |\nabla \psi_0|^2)\]
and with $\phi(r,\theta;\eps)$ as in equation \eqref{e:ansatznonlocal2}, is bounded.
\end{Lemma}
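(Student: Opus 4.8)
The plan is to follow the template of the proof of Lemma~\ref{l:nonlocalnonlinearinter}. Writing $\psi_0(r)=-\chi(\lambda r)\ln K_0(\lambda r)$ as in~\eqref{e:ansatznonlocal2}, I would split $P(\bar\phi)$ into (a)~a compactly supported remainder $\lambda^2+\Delta\psi_0-|\nabla\psi_0|^2$, (b)~the linear drift term $2\lambda\,\partial_r\bar\phi$, and (c)~the nonlinear convolution part $-\mathscr M\ast|\mathcal J\ast\nabla\phi|^2+|\nabla\psi_0|^2$, and estimate each in $L^2_\gamma(\R^2)$, using the $K_0$--asymptotics collected in Table~\ref{tab:Bessel} and noting that $\psi_0\equiv0$ when $\lambda=0$. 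The effective range of weights is $\gamma\in(0,1)$, and all bounds below should be taken locally uniform in $\lambda\in[0,\infty)$; that uniformity is not needed for the boundedness claimed here but is needed later in verifying item~(\ref{item:smooth}) of Proposition~\ref{p:nonlocalfull}.

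For (a): because $-\ln K_0(\lambda r)$ is an exact solution of $-\lambda^2=\Delta\psi-|\nabla\psi|^2$, the term $\lambda^2+\Delta\psi_0-|\nabla\psi_0|^2$ vanishes on $\{\lambda r>2\}$, hence is supported in the ball $\{r\le2/\lambda\}$ (and is identically zero when $\lambda=0$); being smooth and compactly supported it lies in $L^2_\gamma$. For (b): $\bar\phi\in\mathcal D$ means $\nabla\bar\phi\in H^1_\gamma(\R^2)$, so $2\lambda\,\partial_r\bar\phi\in H^1_\gamma(\R^2)\subset L^2_\gamma(\R^2)$. For (c) I would set $\nabla\phi=\nabla\psi_0+\nabla\bar\phi$ and expand $|\mathcal J\ast\nabla\phi|^2$ into $|\mathcal J\ast\nabla\bar\phi|^2$, $2(\mathcal J\ast\nabla\psi_0)\cdot(\mathcal J\ast\nabla\bar\phi)$ and $|\mathcal J\ast\nabla\psi_0|^2$. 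For the first two, the smoothing/localizing action of $\mathcal J$ (Hypothesis~\ref{h:nonlinearity}), exactly as in the corresponding step of Lemma~\ref{l:nonlocalnonlinearinter}, gives $\mathcal J\ast\nabla\bar\phi\in H^2_\gamma(\R^2)\cap L^\infty(\R^2)$ and $\mathcal J\ast\nabla\psi_0\in W^{2,\infty}(\R^2)$ (with norm uniform in $\lambda$ on compacts); since $H^2_\gamma(\R^2)\hookrightarrow L^\infty(\R^2)$ for $\gamma\ge0$, this space is closed under squaring and under multiplication by $W^{2,\infty}$ functions, so $|\mathcal J\ast\nabla\bar\phi|^2$ and $(\mathcal J\ast\nabla\psi_0)\cdot(\mathcal J\ast\nabla\bar\phi)$ both lie in $H^2_\gamma$, and $\mathscr M\colon H^2_\gamma\to L^2_\gamma$ (Lemma~\ref{l:convM}) puts the corresponding terms of $P$ in $L^2_\gamma$.

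The remaining, and genuinely new, piece is $\mathscr M\ast|\mathcal J\ast\nabla\psi_0|^2-|\nabla\psi_0|^2$. Here is where the proof departs from Lemma~\ref{l:nonlocalnonlinearinter}: since $|\nabla\psi_0(r)|\to\lambda$ as $r\to\infty$, $\nabla\psi_0$ is bounded but does not decay, and $|\nabla\psi_0|^2$ is only asymptotically constant ($\sim\lambda^2$), so it does not lie in $L^2_\gamma$ and the degree-two operator $\mathscr M$ cannot act on it directly. I would use the same rearrangement as in Lemma~\ref{l:nonlocalnonlinearinter},
\[
\mathscr M\ast|\mathcal J\ast\nabla\psi_0|^2-|\nabla\psi_0|^2 = (\mathscr M-\Id)\ast|\nabla\psi_0|^2 + \mathscr M\ast\!\Big[\,2\,\nabla\psi_0\cdot(\mathcal J-\Id)\ast\nabla\psi_0 + |(\mathcal J-\Id)\ast\nabla\psi_0|^2\,\Big] ,
\]
which exploits that $\mathcal J$ and $\mathscr M$ fix constants, so that $(\mathcal J-\Id)$ and $(\mathscr M-\Id)$ recover the decay lost by $\nabla\psi_0$. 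For $\gamma\in(0,1)$ the constant $\lambda^2$ is square integrable against $(1+|{\bf x}|^2)^{(\gamma-2)/2}$, while the derivatives of $\psi_0$ decay algebraically (Hessian $\rmO(r^{-1})$, third derivatives $\rmO(r^{-2})$), so a direct estimate gives $\nabla\psi_0,\ |\nabla\psi_0|^2\in M^{2,2}_{\gamma-2}(\R^2)$. Then Lemma~\ref{l:conv1-M} gives $(\Id-\mathscr M)\ast|\nabla\psi_0|^2\in L^2_\gamma(\R^2)$, Lemma~\ref{l:conv1-J} gives $(\Id-\mathcal J)\ast\nabla\psi_0\in H^2_\gamma(\R^2)$, so the bracket lies in $H^2_\gamma$ (square of an $H^2_\gamma$ function, and product of a $W^{2,\infty}$ function with an $H^2_\gamma$ function), and $\mathscr M\colon H^2_\gamma\to L^2_\gamma$ finishes. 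Collecting (a), (b), (c) yields $P(\bar\phi)\in L^2_\gamma(\R^2)$ with a bound continuous in $\bar\phi$ and locally uniform in $\lambda$.

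I expect the main obstacle to be exactly this non-decay of $\nabla\psi_0$ at spatial infinity, which is what makes the last term of $P$ delicate: in the intermediate approximation $\nabla\psi_0$ decayed and $|\nabla\psi_0|^2$ could be placed in a Kondratiev space using Lemma~\ref{l:multiplication}, whereas here one must first subtract the limiting constant $\lambda^2$ (using that $\mathcal J$ and $\mathscr M$ preserve constants) and restrict to weights $\gamma<1$ before $\mathscr M$ can be applied; making the weighted spaces match up across this rearrangement is the crux of the argument. A subsidiary, purely bookkeeping, point --- immaterial for the present statement but needed later for $C^1$ dependence on $\lambda$ --- is to carry all constants uniformly for $\lambda\in[0,\lambda_0]$, which follows from the smoothness of $z\mapsto-\ln K_0(z)$ away from the origin together with the fact that the cut-off $\chi(\lambda r)$ localizes the $\lambda=0$ degeneracy.
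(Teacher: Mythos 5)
Your proof is correct, but it handles the two delicate terms by a different mechanism than the paper. The paper's proof pairs the cross term with $2\lambda\partial_r\bar\phi$ and, in both the cross term and the pure $\psi_0$ term, writes $\partial_r\psi_0=(\partial_r\psi_0-\lambda)+\lambda$, exploiting that $\partial_r\psi_0-\lambda$ decays and that $\mathscr{M}$ and $\mathcal{J}$ fix constants; the leftover pieces such as $\lambda^2-(\partial_r\psi_0)^2$ are then declared localized. You instead (i) estimate $2\lambda\partial_r\bar\phi$ and the cross term separately, using only that $\mathcal{J}\ast\nabla\psi_0\in W^{2,\infty}$ multiplies $H^2_\gamma$ into itself, and (ii) for the pure $\psi_0$ term you reuse verbatim the rearrangement of Lemma~\ref{l:nonlocalnonlinearinter}, placing the non-decaying functions $\nabla\psi_0$ and $|\nabla\psi_0|^2$ into $M^{2,2}_{\gamma-2}(\R^2)$ for $\gamma\in(0,1)$ and letting the constant-annihilating, second-order operators $\Id-\mathscr{M}$ and $\Id-\mathcal{J}$ of Lemmas~\ref{l:conv1-M} and~\ref{l:conv1-J} recover the decay. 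Both routes work, and yours buys two things: it makes explicit the upper restriction $\gamma<1$, which the lemma's statement omits but which is also implicitly required by the paper's argument (there $(\partial_r\psi_0-\lambda)^2\sim r^{-2}/4$ lies in $L^2_\gamma(\R^2)$ only for $\gamma<1$); and it avoids the slightly awkward point in the paper where $\lambda^2-(\partial_r\psi_0)^2\sim-\lambda/r$ is called localized even though, for $\lambda>0$, a term decaying only like $r^{-1}$ is not in $L^2_\gamma(\R^2)$ for $\gamma\geq 0$ on its own — in the paper's grouping the required decay appears only after recombining it with $\mathscr{M}\ast[2\lambda\mathcal{J}\ast(\partial_r\psi_0-\lambda)]$, whereas in your grouping every displayed piece is individually admissible. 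The price is that your argument leans on the membership $|\nabla\psi_0|^2\in M^{2,2}_{\gamma-2}(\R^2)$, which should be verified from the $K_1/K_0$ asymptotics exactly as you indicate (the function tends to $\lambda^2$, its gradient is $\rmO(r^{-2})$, its Hessian $\rmO(r^{-3})$), but this is routine.
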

  \begin{proof}
 Because  $\psi_0 = -\chi \ln( K_0(\lambda r))$, the expression in the parenthesis is localized. To show that the remaining terms are also well defined in the space $L^2_\gamma(\R^2)$ we write
  \[ \mathscr{M} \ast | \mathcal{J} \ast \nabla \phi|^2 = - \mathscr{M} \ast \left[ | \mathcal{J} \ast \nabla \bar{\phi}|^2 - 2 (\mathcal{J} \ast \nabla \bar{\phi}) \cdot ( \mathcal{J} \ast \nabla \psi_0) + | \mathcal{J} \ast \nabla \psi_0|^2 \right]. \]
  Since $\bar{\phi}$ is in the space $\mathcal{D}$ it follows from Hypothesis \ref{h:nonlinearity} that the term $(\mathcal{J}\ast \nabla \bar{\phi})$ is in $H^3_\gamma(\R^2)$, and it is then clear that the expression, $ \mathscr{M} \ast | \mathcal{J} \ast \nabla \bar{\phi}|^2$ is in $L^2_\gamma(\R^2)$. 
  
  Next, we look at the difference
  \begin{align*}
   \lambda \partial_r \bar{\phi} -\mathscr{M}\ast \left[ (\mathcal{J} \ast \nabla \bar{\phi}) \cdot ( \mathcal{J} \ast \nabla \psi_0) \right] = &  \lambda \partial_r \bar{\phi} -\mathscr{M}\ast \left[ (\mathcal{J} \ast \partial_r \bar{\phi})  ( \mathcal{J} \ast \partial_r \psi_0)\right]\\
 =& \lambda \partial_r \bar{\phi} -\mathscr{M}\ast \left[ (\mathcal{J} \ast  \partial_r \bar{\phi} )  ( \mathcal{J} \ast (\partial_r \psi_0- \lambda + \lambda)) \right]  \\
 &  + \lambda \mathcal{J} \ast \partial_r \bar{\phi}- \lambda \mathcal{J} \ast \partial_r \bar{\phi} \\
  =& \lambda (\mathcal{J} - \Id) \ast \partial_r \bar{\phi} - \lambda \mathscr{M} \ast ( \mathcal{J} \ast \partial_r \bar{\phi}) \\
  &  - \mathscr{M} \ast \left[ ( \mathcal{J} \ast ( \partial_r \psi_0 - \lambda) (\mathcal{J} \ast\partial_r \bar{\phi} ) ) \right],
  \end{align*}
 where we used the fact that $\psi_0(r)$ is a radial function to do the simplifications. Using Lemmas \ref{l:convM} and \ref{l:conv1-J}, and because $\partial_r \bar{\phi}$
  is in the space $H^2_\gamma(\R^2)$, one is able to check that the first two terms in the last equality are well defined functions in $L^2_\gamma(\R^2)$. The last term is localized since $\partial_r \psi_0 = \frac{\lambda K'_0(\lambda r)}{ K_0(\lambda r)} $ approaches $\lambda$ as $r $ goes to infinity, see Table \ref{tab:Bessel} in Section \ref{s:eikonal}.
  
 Finally, we consider the difference
 \begin{align*}
 \mathscr{M}\ast | \mathcal{J} \ast \nabla \psi_0|^2 - |\nabla \psi_0|^2 =& \mathscr{M}\ast |\mathcal{J} \ast (\partial_r \psi_0 - \lambda + \lambda )|^2 - (\partial_r\psi_0)^2\\
 =&\mathscr{M}\ast \left [ |\mathcal{J} \ast (\partial_r \psi_0 - \lambda)|^2 +2 \lambda \mathcal{J} \ast( \partial_r \psi_0 - \lambda ) \right] + \lambda^2 - (\partial_r\psi_0)^2,
 \end{align*}
where in the he last equality  we used the fact that $\mathscr{M}\ast \lambda = \lambda$.  Because the function $\partial_r \psi_0$ approaches $\lambda$ in the far field, It is now easy to see that the above expression is localized.
\end{proof}

Our next task is to show that the linear operator $\mathscr{L}_\lambda :\mathcal{D} \longrightarrow L^2_\gamma(\R^2)$ is invertible. To that end we use Lemma \ref{l:polar} and decompose $\phi$ into its polar modes,
\[ \mathscr{L}_\lambda \phi = \sum_{n \in \Z} \left(\partial_{rr} \phi_n + \frac{1}{r} \partial_r \phi_n - \frac{n^2}{r^2} \phi_n - 2\lambda \partial_r \phi_n \right) \rme^{\rmi n \theta}. \]
It follows that $\mathscr{L}_\lambda$ is invertible if for each $n \in \Z$ the following operators are also invertible:
\[\begin{array}{c c c}
 \mathscr{L}_{\lambda,n} : \mathcal{D}_n &\longrightarrow &L^2_{r,\gamma}(\R^2)\\
\phi &\longmapsto& \partial_{rr} \phi + \frac{1}{r} \partial_r \phi - \frac{n^2}{r^2} \phi - 2\lambda \partial_r \phi.
\end{array}\]
Here we used the notation introduced in Section \ref{s:weightedspaces}, and defined $\mathcal{D}_n = \{ u \in m^n_{\gamma-1} \mid \partial_ru \in L^2_{r,\gamma}(\R^2), \partial_{rr} u \in L^2_{r,\gamma}(\R^2)\}$, so that $\mathcal{D} = \bigoplus \mathcal{D}_n$. Since the multiplication operator $\frac{n^2}{r^2}: \mathcal{D}_n \rightarrow L^2_{r,\gamma}(\R^2)$ is compact, the invertibility of  $\mathscr{L}_{\lambda,n}$ follows from Lemma \ref{l:fredholmLambda}, provided $\gamma>0$. We summarize this result in the following lemma.
\begin{Lemma}\label{l:invertibleLambda}
Given $\gamma>0$, the operator $\mathscr{L}_{\lambda} : \mathcal{D} \longrightarrow L^2_{\gamma}(\R^2),$
\[ \mathscr{L}_{\lambda} \phi =\Delta \phi -2  \lambda  \partial_r \phi\]
is invertible
\end{Lemma}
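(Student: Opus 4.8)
The plan is to diagonalize $\mathscr{L}_\lambda$ over the polar modes and reduce the statement to a family of scalar radial problems. Since $\mathscr{L}_\lambda$ involves only $\Delta$ and $\partial_r$ it commutes with rotations, so by Lemma~\ref{l:polar} it is block diagonal with respect to $\mathcal{D} = \bigoplus_{n\in\Z}\mathcal{D}_n$: writing $\phi = \sum_n \phi_n(r)\rme^{\rmi n\theta}$ one has $\mathscr{L}_\lambda\phi = \sum_n(\mathscr{L}_{\lambda,n}\phi_n)\rme^{\rmi n\theta}$ with $\mathscr{L}_{\lambda,n}\phi_n = \partial_{rr}\phi_n + \tfrac1r\partial_r\phi_n - \tfrac{n^2}{r^2}\phi_n - 2\lambda\partial_r\phi_n$. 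It will then be enough to show that each $\mathscr{L}_{\lambda,n}:\mathcal{D}_n\to L^2_{r,\gamma}(\R^2)$ is invertible, together with a uniform bound $\sup_n\|\mathscr{L}_{\lambda,n}^{-1}\|<\infty$, so that the inverses assemble into a bounded operator on the direct sum.

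For $n=0$ I would simply invoke Lemma~\ref{l:fredholmLambda} with $p=2$: the block $\mathscr{L}_{\lambda,0}$ is precisely the operator treated there, which is invertible exactly when $\gamma>1-2/p=0$. For $n\neq 0$ I would write $\mathscr{L}_{\lambda,n} = \mathscr{L}_{\lambda,0}\big|_{\mathcal{D}_n} - \tfrac{n^2}{r^2}$ — the restriction makes sense because $\mathcal{D}_n$ and the domain in Lemma~\ref{l:fredholmLambda} are cut out by the same weighted conditions — and then argue that multiplication by $\tfrac{n^2}{r^2}$ is a compact map $\mathcal{D}_n\to L^2_{r,\gamma}(\R^2)$. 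On bounded subsets of $\R^2$ this is compact by the Rellich embedding, using the extra derivatives built into $\mathcal{D}_n$ together with the behaviour of $\phi_n$ near the origin forced by membership in $m^n_{\gamma-1}$, which makes $r^{-2}\phi_n$ locally square integrable; tightness at infinity comes from the decay factor $r^{-2}$ combined with the weight gain in the Kondratiev norm. Hence $\mathscr{L}_{\lambda,n}$ is a compact perturbation of an invertible operator and is Fredholm of index $0$.

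It then remains to establish injectivity of $\mathscr{L}_{\lambda,n}$ for $n\neq 0$, since for an index-zero Fredholm operator injectivity is equivalent to invertibility. Here I would use a weighted energy identity: the natural weight turning $\mathscr{L}_{\lambda,0}$ into a symmetric operator is $w(r)=r\,\rme^{-2\lambda r}$, for which $\mathscr{L}_{\lambda,n}\phi_n = w^{-1}(w\phi_n')' - \tfrac{n^2}{r^2}\phi_n$. Given $\phi_n\in\mathcal{D}_n$ with $\mathscr{L}_{\lambda,n}\phi_n=0$, I would multiply by $w\bar\phi_n$ and integrate over $(0,\infty)$; the boundary terms $w\phi_n'\bar\phi_n$ vanish at $r=0$ (from the vanishing of $\phi_n$ at the origin) and at $r=\infty$ (by the algebraic decay of Lemma~\ref{l:decay}, reinforced by $\rme^{-2\lambda r}$ when $\lambda>0$), leaving
\[ \int_0^\infty\Big(w(r)\,|\phi_n'(r)|^2 + \tfrac{n^2}{r^2}\,w(r)\,|\phi_n(r)|^2\Big)\,dr = 0, \]
which forces $\phi_n\equiv 0$. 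The same identity, together with the interior elliptic estimates for $\mathscr{L}_{\lambda,0}$ coming from Lemma~\ref{l:fredholmLambda}, should also yield an a priori bound $\|\phi_n\|_{\mathcal{D}_n}\le C\,\|\mathscr{L}_{\lambda,n}\phi_n\|_{L^2_{r,\gamma}}$ with $C$ independent of $n$ (the sign-definite term $-n^2/r^2$ only helps as $|n|\to\infty$), supplying the uniform control of the inverses and finishing the proof. I expect the main obstacle to be exactly this compactness-and-uniformity step: one has to track how the $r^{-2}$ singularity at the origin is absorbed by the vanishing of $\phi_n$, and verify that the weighted energy estimate closes uniformly in $n$ and in $\lambda\in[0,\lambda_0]$. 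The non-self-adjointness introduced by the drift $-2\lambda\partial_r$ is what dictates the exponential weight $w=r\,\rme^{-2\lambda r}$ in place of $w=r$, and the argument must degenerate correctly at $\lambda=0$, where it recovers the familiar energy identity underlying Lemma~\ref{l:Delta_nFredholm}.
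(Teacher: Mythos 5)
Your proposal follows the same skeleton as the paper's proof: decompose over polar modes via Lemma~\ref{l:polar}, handle the block $n=0$ directly from Lemma~\ref{l:fredholmLambda} with $p=2$, and treat $-n^2/r^2$ as a compact perturbation for $n\neq 0$. Where you go beyond the paper is instructive, because the paper's own argument stops at ``compact perturbation of an invertible operator,'' which only yields that each $\mathscr{L}_{\lambda,n}$ is Fredholm of index $0$; passing from index $0$ to invertibility requires the injectivity step you supply. Your weighted energy identity is the right tool here and the weight is computed correctly: with $w(r)=r\,\rme^{-2\lambda r}$ one has $w'/w=(1-2\lambda r)/r$, so indeed $\mathscr{L}_{\lambda,n}\phi_n=w^{-1}(w\phi_n')'-\tfrac{n^2}{r^2}\phi_n$, and the resulting identity forces $\phi_n\equiv 0$ for $n\neq 0$. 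The only place needing real care is the vanishing of the boundary term $w\phi_n'\bar\phi_n$ at $r=0$ and $r=\infty$: near the origin one should argue along a sequence $r_j\to 0$ using that $\phi_n/r$, $\phi_n'\in L^2(r\,dr)$ locally (elements of $\mathcal{D}_n$ need not literally vanish at $r=0$), and at infinity the case $\lambda=0$ requires $r\phi_n'\bar\phi_n\to 0$ along a subsequence, which follows from $\phi_n'\in L^2_{r,\gamma}$ and $\phi_n\in L^2_{r,\gamma-1}$ with $\gamma>0$. You are also right that assembling the blockwise inverses into a bounded inverse on $\bigoplus\mathcal{D}_n$ requires $\sup_n\|\mathscr{L}_{\lambda,n}^{-1}\|<\infty$ — a point the paper passes over silently — and the coercivity gained from the positive term $\tfrac{n^2}{r^2}w|\phi_n|^2$ in your identity is the natural source of that uniformity. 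In short, your argument is correct and, if written out with the boundary-term and uniformity details, is more complete than the one in the paper.
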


Finally, to show item 2) we prove that $\mathscr{L}_\lambda^{-1} : L^2_\gamma(\R^2) \rightarrow \mathcal{D}$ depends continuously on the parameter $\lambda$.

\begin{Lemma}\label{l:continuousLambda}
Given $\gamma>0$, the operator $\mathscr{L}_{\lambda} : \mathcal{D} \longrightarrow L^2_{\gamma}(\R^2),$
\[ \mathscr{L}_\lambda  \phi = \Delta \phi - 2 \lambda \partial_r \phi,\]
and its inverse, are both $C^1$ in $\lambda$ for values of $\lambda \geq0$. 
\end{Lemma}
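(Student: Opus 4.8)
The plan is to exploit that $\lambda\mapsto\mathscr{L}_\lambda$ is \emph{affine}, and that operator inversion is a smooth map on the open set of invertible operators. First I would record that the radial derivative $\partial_r\colon\mathcal{D}\to L^2_\gamma(\R^2)$ is bounded: if $\phi\in\mathcal{D}$ then $\nabla\phi\in H^1_\gamma(\R^2)$, and since $|\partial_r\phi|\le|\nabla\phi|$ pointwise we get $\partial_r\phi\in L^2_\gamma(\R^2)$ with $\|\partial_r\phi\|_{L^2_\gamma}\le\|\nabla\phi\|_{H^1_\gamma}\le\|\phi\|_{\mathcal{D}}$; the same reasoning shows $\mathscr{L}_0=\Delta\colon\mathcal{D}\to L^2_\gamma(\R^2)$ is bounded. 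Hence
\[
\mathscr{L}_\lambda=\mathscr{L}_0-2\lambda\,\partial_r
\]
exhibits $\lambda\mapsto\mathscr{L}_\lambda$ as an affine, and in particular $C^\infty$, curve in the Banach space $\mathcal{B}(\mathcal{D},L^2_\gamma(\R^2))$ of bounded linear operators, with constant derivative $\tfrac{d}{d\lambda}\mathscr{L}_\lambda=-2\partial_r$. This already establishes the statement for $\mathscr{L}_\lambda$ itself.

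For the inverse I would use the Neumann series in the usual form: if $A\in\mathcal{B}(\mathcal{D},L^2_\gamma(\R^2))$ is invertible and $H\in\mathcal{B}(\mathcal{D},L^2_\gamma(\R^2))$ satisfies $\|A^{-1}H\|_{\mathcal{B}(\mathcal{D},\mathcal{D})}<1$, then $A+H=A(\Id+A^{-1}H)$ is invertible with
\[
(A+H)^{-1}=\Bigl(\sum_{n\ge0}(-1)^n(A^{-1}H)^n\Bigr)A^{-1},
\]
the series converging in $\mathcal{B}(\mathcal{D},\mathcal{D})$, which shows that $A\mapsto A^{-1}$ is $C^\infty$ on the open set of invertibles with derivative $H\mapsto-A^{-1}HA^{-1}$. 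By Lemma~\ref{l:invertibleLambda}, $\mathscr{L}_\lambda$ is invertible for every $\lambda\ge0$; since the set of invertible operators is open and $\lambda\mapsto\mathscr{L}_\lambda$ is continuous, the set of $\lambda$ for which $\mathscr{L}_\lambda$ is invertible is an open subset of $\R$ containing $[0,\infty)$, hence contains $(-\eps_1,\infty)$ for some $\eps_1>0$. Composing the affine curve $\lambda\mapsto\mathscr{L}_\lambda$ with the inversion map and applying the chain rule then shows that $\lambda\mapsto\mathscr{L}_\lambda^{-1}$ is $C^1$ (in fact real-analytic) on $(-\eps_1,\infty)$, so its restriction to $[0,\infty)$ is $C^1$ with one-sided derivative at $0$ equal to the restriction of the two-sided one, namely
\[
\frac{d}{d\lambda}\,\mathscr{L}_\lambda^{-1}=2\,\mathscr{L}_\lambda^{-1}\,\partial_r\,\mathscr{L}_\lambda^{-1}.
\]

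There is no substantial obstacle in this lemma; the only mildly delicate points are verifying the boundedness of $\partial_r$ and $\Delta$ out of $\mathcal{D}$, which is immediate from the definition of the norm on $\mathcal{D}$, and treating $\lambda=0$ as a boundary point, which is dispatched by the openness of the set of invertible operators. I would also note in passing that the Neumann series argument is legitimate despite $\mathcal{D}$ and $L^2_\gamma(\R^2)$ being distinct spaces, because $A^{-1}H$ acts within the single Banach space $\mathcal{D}$ (equivalently $HA^{-1}$ within $L^2_\gamma(\R^2)$), so one remains inside a genuine Banach algebra throughout.
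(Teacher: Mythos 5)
Your proposal is correct and follows essentially the same route as the paper: both rest on the invertibility of $\mathscr{L}_\lambda$ from Lemma~\ref{l:invertibleLambda}, the boundedness of $\partial_r:\mathcal{D}\to L^2_\gamma(\R^2)$ (so that $\lambda\mapsto\mathscr{L}_\lambda$ is an affine curve of bounded operators), and the standard resolvent/Neumann-series perturbation argument to transfer differentiability to $\mathscr{L}_\lambda^{-1}$, arriving at the same derivative formula $2\,\mathscr{L}_\lambda^{-1}\partial_r\mathscr{L}_\lambda^{-1}$. Your handling of the boundary point $\lambda=0$ via openness of the set of invertible operators is a slightly cleaner way of phrasing what the paper does with one-sided increments, but it is not a different argument.
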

\begin{proof}
That the operator $\mathscr{L}_\lambda$ is continuous with respect to the parameter $\lambda$ is straightforward. To show the same result for its inverse we will use the notation $\mathscr{L}_\lambda =\mathscr{L}(\lambda)$ to highlight the dependence of the operator on the parameter $\lambda$. At the same time, for  $f \in L^2_\gamma(\R^2)$ and for general $\lambda \geq 0$, we denote by $\phi(\lambda)$ the solution to $\mathscr{L}_\lambda\phi =f$ and we look at this next equality
\[ \phi(\lambda + h \lambda) - \phi(\lambda)  = - \mathscr{L}(\lambda)^{-1} \left[ \mathscr{L}(\lambda + h \lambda) - \mathscr{L}(\lambda) \right] \phi(\lambda + h \lambda).\]
Because the operator $\left[ \mathscr{L}(\lambda + h \lambda) - \mathscr{L}(\lambda) \right]$ is bounded from $\mathcal{D}$ to $L^2_\gamma(\R^2)$ it follows that $\mathscr{L}^{-1}(\lambda)$ is continuous with respect $\lambda$. A short calculation shows that the derivative of $\mathscr{L}^{-1}(\lambda)$ with respect to $\lambda$ is an operator from $L^2_\gamma(\R^2)$ to $\mathcal{D}$ of the form $\lambda \mathscr{L}^{-1}(\lambda) \partial_r \mathscr{L}^{-1}(\lambda)$, and a similar analysis justifies that this operator is indeed continuous with respect to $\lambda$.
\end{proof}
Notice that because $\phi \in \mathcal{D} \subset M^{2,2}_{\gamma-1}(\R^2)$,  Lemma \ref{l:decay} implies that $|\phi|< C r^{-\gamma}$ for large values of $r$. This completes the proof of Proposition \ref{p:nonlocalfull}

\subsection{Matching and proof of the main theorem}\label{s:nonlocalmatching}
We first find a relation between the frequency, $\omega$, and the parameter $\eps$, and then proceed to show the results from Theorem \ref{t:main}.

Given $g \in L^2_\sigma(\R^2)$ , with values of $\sigma>1$, we know from Proposition \ref{p:nonlocalintermediate} that the intermediate solution is of the form 
$$\Phi(r, \theta;\eps) = -\chi(r) \ln(1- a_0 \ln(r))+ \tilde{\phi}(r,\theta; \eps),$$ 
with  $\tilde{\phi} \leq  r^{-\delta}$, for large $r$ and for values of $\delta \in (0,1)$. At the same time the results from Section \ref{s:nonlocalfull} show that solutions to \eqref{e:eiknonlocal2} are given by
\[\Psi(r, \theta;\eps,\lambda) = -\chi(\lambda r) \ln( K_0(\lambda r)) + \tilde{\psi}(r,\theta;\eps,\lambda)\]
where again $\tilde{\psi} \leq r^{-\delta}$, for values of $\delta >0 $ and large $r$.

Since the intermediate approximation is only valid so long as $ | \ln(r) | \ll-1/a_0$,  and because we are assuming $\lambda$ is small beyond all orders, we may pick the scaling $ r= \eta \eps / \lambda $, with $\eta \sim \rmO(1)$, to do the matching. Notice that in this scaling the function $K_0(\lambda r) \sim -\ln(\lambda r/ 2) - \gamma_e$, where  $\gamma_e$ is the Euler constant. We can then match the wavenumbers, $\nabla \Phi$ and $\nabla \Psi$, to obtain   $ \lambda =  2 \rme^{ -\gamma_e}  \exp( -1/a_0) $. This in turn gives us an expression for $\omega$:
\begin{equation}\label{e:omega}
 \omega = 4\rme^{-2\gamma_e} \exp( -2/a_0)\sim 4 \, C(\eps) \rme^{-2\gamma_e} \exp\left( \frac{ 4 \pi }{ \eps \int_{\R^2} g} \right), \end{equation}
where we use the approximation $a_0 =  a_{0,1} \eps + a_{0,2} \eps^2 + \rmO(\eps^3)$, with $a_{0,1} = -\frac{1}{2\pi} \int_{\R^2} g$ so that
\[
C(\eps) = \exp\left(\frac{2}{\eps a_{0,1}} - \frac{2}{a_0}\right) = \exp\left( \frac{2 C'}{a_{0,1}^2}\right) + \rmO(\eps)
\].

\begin{Remark*}\label{r:omega}Notice that:

\begin{enumerate}
\item The above expression shows that if  $\eps \int_{\R^2} g(x) dx <0$ the frequency $\omega$ is smaller than $\eps$ beyond all orders, which is consistent with our assumptions. In addition,  $\omega(\eps)$ depends smoothly on $\eps$ on the interval $\eps \in (0,\infty)$, and by defining $\omega(0) = \partial_\eps \omega( 0) =0$ we may also conclude that $\omega(\eps)$ is continuously differentiable with respect to $\eps$ on the closed interval $[0,\infty)$. 
\item In Section \ref{s:eikonal} we derived the following approximation for the frequency: $$\omega \sim \frac{4 \rme^{-2 \gamma_e} }{r_c^2}\exp\left(\frac{4 \pi}{\eps \int_{\R^2} g}\right).$$  The constant $r_c$ that appears in this expression is therefore accounted for by the constant $C(\eps)$ in \eqref{e:omega}.
\end{enumerate}
\end{Remark*}

We can now prove our primary result, Theorem \ref{t:main}, for the nonlocal eikonal equation 
$$
 \phi_t = \mathcal{L} \ast \phi- |\mathcal{J} \ast \nabla \phi|^2 + \eps g(x,y) \quad (x,y) \in \R^2.
$$

\setcounter{Theorem}{0}

\begin{Theorem}
Suppose that the kernels $\mathcal{L}$ and $\mathcal{J}$ satisfy Hypotheses~\ref{h:analyticity},~\ref{h:multiplicity} with $\ell =1$, and~\ref{h:nonlinearity}. Additionally, suppose $g$ is in the space $  L^2_\sigma(\R^2)$ with $\sigma>1$ and let $M= \frac{1}{2\pi} \int_{\R^2} g <0$. Then, there exists a number $\eps_0>0$ and a $C^1$ map
\begin{equation*}
	\begin{matrix}
	\Gamma:& [0,\eps_0) &\longrightarrow & \mathcal{D} \subset M^{2,1}_{\gamma-1}(\R^2)\\
	& \eps& \longmapsto & \psi\\
	\end{matrix}
\end{equation*}

%Suppose $g$ is in the space $  L^2_\sigma(\R^2)$ with $\sigma>1$ and let $M= \frac{1}{2\pi} \int_{\R^2} g <0$. Then, there exists a number $\eps_0>0$ and a $C^1$ map
%\begin{equation*}
%	\begin{matrix}
%	\Gamma:& [0,\eps_0) &\longrightarrow & \mathcal{D} \subset M^{2,1}_{\gamma-1}(\R^2)\\
%	& \eps& \longmapsto & \psi\\
%	\end{matrix}
%\end{equation*}
where $\gamma>0$ and $ \mathcal{D} = \{\phi \in M^{1,2}_{\gamma-1}(\R^2) \mid  \nabla\phi \in H^1_\gamma(\R^2)  \}$, that allows us to construct an $\eps$-dependent family of target pattern solutions to \eqref{e:nonloc1}. Moreover, these solutions have the form
\[\Phi(r,\theta, t;\eps) = -\chi(\lambda(\eps) r) \ln( K_0(\lambda(\eps) r) ) + \psi(r, \theta ;\eps)- \lambda^2(\eps) t, \quad \lambda(\eps) >0.\]
In particular, as $r \rightarrow \infty$,
\begin{itemize}
\item $ \psi(r;\eps) < C r^{-\delta}$,  for $C \in \R$ and $\delta \in (0,1)$;  and
\item $ \lambda(\eps)^2 \sim 4\,C(\eps) \,\rme^{-2\gamma_e} \exp\left( \frac{ 2 }{ \eps M}\right)$,  where  $C(\eps) $ represents a constant that depends on $\eps$,  and $\gamma_e$ is the Euler constant.
\end{itemize}
In addition, these target pattern solutions have the following asymptotic expansion for their wavenumber
 \[ k( \eps) \sim  \exp\left (\frac{1}{\eps M} \right ) + \rmO\left( \frac{1}{r^{\delta+1}}\right)\quad \mbox{as} \quad r \rightarrow \infty.\]
\end{Theorem}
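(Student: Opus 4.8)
The plan is to assemble Theorem~\ref{t:main} from the two existence statements already proved, Proposition~\ref{p:nonlocalintermediate} (the intermediate, or ``inner'', solution, valid where $|a_0\ln r|\ll1$) and Proposition~\ref{p:nonlocalfull} (the ``outer'' solution, which exists for \emph{every} small $\lambda\ge0$ and carries asymptotic wavenumber $\lambda$), glued together by an asymptotic matching argument that singles out the value $\lambda=\lambda(\eps)$ for which these two describe one and the same target pattern. First, \textbf{preconditioning is an equivalence:} by Proposition~\ref{p:fredholmconv} and Lemma~\ref{l:convM} with $\ell=1$ one has $\mathscr{M}\ast\mathcal{L}=\Delta$ and $\mathscr{M}\ast c=c$ for constants, and $\mathscr{M}=(\Id-\Delta)\mathcal{M}_L^{-1}$ is invertible on the relevant scales, so $\phi$ solves~\eqref{e:nonloc1} with $\partial_t\phi=-\lambda^2$ if and only if it solves the preconditioned equation~\eqref{e:eiknonlocal2}; it therefore suffices to build a branch of solutions of~\eqref{e:eiknonlocal2}. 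Since $g\in L^2_\sigma(\R^2)$ with $\sigma>1$, Remark~\ref{r:weighted} supplies the algebraic localization needed for Proposition~\ref{p:nonlocalintermediate}, which gives a $C^1$ map $\eps\mapsto(\phi,a_0)$ with $a_0=a_0(\eps)=a_{0,1}\eps+\rmO(\eps^2)$, $a_{0,1}=-\tfrac1{2\pi}\int_{\R^2}g=-M>0$; in particular $a_0(\eps)>0$ for $\eps>0$ small, and the intermediate solution $\Phi_{\mathrm{in}}=-\chi(r)\ln(1-a_0\ln r)+\tilde\phi$ has its $\tilde\phi$-remainder of lower order than the logarithmic core in the overlap region. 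Proposition~\ref{p:nonlocalfull} gives, for $(\eps,\lambda)$ near $(0,0)$ with $\lambda\ge0$, a $C^1$ map $(\eps,\lambda)\mapsto\tilde\psi$ producing the outer solution $\Phi_{\mathrm{out}}=-\chi(\lambda r)\ln K_0(\lambda r)+\tilde\psi$ with $\tilde\psi<Cr^{-\delta}$.

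\textbf{Matching and the formula for $\omega$.} Because $\Phi_{\mathrm{in}}$ is valid only for $|\ln r|\ll 1/a_0$ and $\lambda$ will turn out exponentially small, I would match the two radial wavenumbers $\partial_r\Phi_{\mathrm{in}}$ and $\partial_r\Phi_{\mathrm{out}}$ on the overlap ring $r=\eta\eps/\lambda$, $\eta\sim\rmO(1)$, where $K_0(\lambda r)\sim-\ln(\lambda r/2)-\gamma_e$ from Table~\ref{tab:Bessel}. Both sides there have the shape $1/\big(r(\,c-\ln r\,)\big)$, and equating the constants $c$ yields the transcendental relation $\lambda=2\rme^{-\gamma_e}\exp(-1/a_0(\eps))$, hence
\[
\omega=\lambda^2=4\rme^{-2\gamma_e}\exp(-2/a_0(\eps))\sim 4\,C(\eps)\,\rme^{-2\gamma_e}\exp\big(\tfrac{2}{\eps M}\big),
\]
where $C(\eps)=\exp\big(\tfrac{2}{\eps a_{0,1}}-\tfrac{2}{a_0}\big)=\exp\big(\tfrac{2a_{0,2}}{a_{0,1}^2}\big)+\rmO(\eps)$; since $M<0$ this is small beyond all orders in $\eps$, which matches the heuristic of Section~\ref{s:eikonal}.

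\textbf{Regularity and conclusion.} Since $a_0\in C^1$ with $a_0(\eps)=-M\eps+\rmO(\eps^2)$, the formula $\lambda(\eps)=2\rme^{-\gamma_e}\exp(-1/a_0(\eps))$ defines a $C^1$ function on $(0,\eps_0)$; because $\exp(-1/a_0)$ and each of its $\eps$-derivatives decay faster than any power of $\eps$ as $\eps\to0^+$ (the polynomial growth of $a_0'/a_0^2$, etc., being dominated by the exponential), $\lambda$ extends to a $C^1$ map on $[0,\eps_0)$ with $\lambda(0)=\lambda'(0)=0$. Substituting $\lambda=\lambda(\eps)$ into the map of Proposition~\ref{p:nonlocalfull} and composing $C^1$ maps produces the branch $\Gamma:\eps\mapsto\psi$, and $\Phi(r,\theta,t;\eps)=-\chi(\lambda(\eps)r)\ln K_0(\lambda(\eps)r)+\psi-\lambda^2(\eps)t$ then solves~\eqref{e:nonloc1} by the preconditioning equivalence. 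The bound $\psi<Cr^{-\delta}$ with $\delta\in(0,1)$ follows from $\psi\in M^{1,2}_{\gamma-1}(\R^2)$ (with $\nabla\psi\in H^1_\gamma$) and Lemma~\ref{l:decay} after choosing $\gamma$ suitably; the asymptotic wavenumber is $k(\eps)=\lim_{r\to\infty}|\nabla\Phi|=\lambda(\eps)\sim\exp(1/(\eps M))$, the finite-$r$ remainder $\rmO(r^{-\delta-1})$ coming from $\partial_r\psi$ together with the expansion $\lambda K_1(\lambda r)/K_0(\lambda r)=\lambda\,(1+\rmo(1/(\lambda r)))$ of Table~\ref{tab:Bessel}.

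\textbf{The main obstacle.} The crux is the matching step: one must verify that the overlap ring $r\sim\eta\eps/\lambda$ lies simultaneously inside the ranges of validity of both propositions, that on it the two solutions --- built in different weighted spaces around genuinely different cores, $-\ln(1-a_0\ln r)$ versus $-\ln K_0(\lambda r)$ --- agree to the order needed to pin down $\lambda$ uniquely, and that the transcendental relation $\lambda=2\rme^{-\gamma_e}\exp(-1/a_0(\eps))$ transports the $C^1$ dependence of $a_0$ on $\eps$ all the way to the endpoint $\eps=0$ with vanishing derivative. Confirming that the outer solution with this $\lambda(\eps)$ indeed reduces, in the core region $1\ll r\ll1/\lambda$, to the intermediate solution --- so that the branch genuinely bifurcates from $\phi\equiv0$ and encodes the inhomogeneity $g$ --- is the conceptual point that closes the argument.
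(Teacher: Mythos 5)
Your proposal follows essentially the same route as the paper: precondition via $\mathscr{M}$, invoke Propositions~\ref{p:nonlocalintermediate} and~\ref{p:nonlocalfull}, match the wavenumbers on the ring $r\sim\eta\eps/\lambda$ to obtain $\lambda(\eps)=2\rme^{-\gamma_e}\exp(-1/a_0(\eps))$, extend $\lambda$ to a $C^1$ function on $[0,\eps_0)$ with $\lambda(0)=\lambda'(0)=0$, and then rerun the implicit-function-theorem argument of Proposition~\ref{p:nonlocalfull} with $\lambda=\lambda(\eps)$ substituted, so that the operator depends on $\eps$ alone. The decay and wavenumber asymptotics are extracted exactly as in the paper, so the argument is correct and matches the paper's proof.
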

\begin{proof}
Considering solutions $\Phi(x,t) = \phi(x) - \lambda^2 t$ we arrive equation \eqref{e:eiknonlocal2}. Then, letting 
\[\phi(r,\theta, t;\eps) = -\chi(\lambda(\eps) r) \ln( K_0(\lambda(\eps) r) ) + \psi(r, \theta ;\eps) - \lambda^2(\eps) t, \quad \lambda(\eps) >0,\]
 the above analysis together with the matched asymptotics shows the smooth dependence of $\lambda$ on the parameter $\eps$, for $\eps \in (0,\infty)$. We can further define $\lambda$ so that it is continuously differentiable with respect to $\eps$ as $\eps \rightarrow 0$. The same arguments as in Proposition \ref{p:nonlocalfull} can be adapted, but now the nonlinear operator $F$ specified in that proof depends only on $\eps$. Therefore, there exists an $\eps_0>0$ so that the above ansatz indeed represents an $\eps$-dependent family of solutions to \eqref{e:eiknonlocal2} for $\eps \in [0,\eps_0)$.  The theorem follows directly.
 \end{proof}

\section{Discussion} \label{s:discuss}

In this paper we derive a model nonlinear integro-differential equation  to describe the phase evolution of an array of oscillators with nonlocal diffusive coupling ( see equation ~\eqref{e:nonloc1} and Appendix~\ref{sec:hierarchy}). We prove the existence of target wave solutions in such systems when a pacemaker
 is introduced, which we model as a localized perturbation of the natural frequency of the oscillators. This work builds on earlier results by Doelman et al on phase dynamics for modulated wave trains \cite{doelman2005dynamics}, and on work by Koll\'ar and Scheel on radial patterns generated by inhomogeneities \cite{kollar2007coherent}. In contrast to the systems considered in the earlier papers, our model equation accounts for nonlocal interaction and nonradially symmetric perturbations, so our results are applicable to a wider class of systems. However, in other aspects, the earlier results are stronger and they motivate natural directions for future work. In particular:
\begin{enumerate}
\item Starting from a reaction-diffusion system, Doelman et al rigorously prove the validity of the (local) viscous eikonal equation as a reduced, modulational equation \cite{doelman2005dynamics}. Our nonlocal model~\eqref{e:nonloc1} generalizes this equation but its derivation in Appendix~\ref{sec:hierarchy} is purely formal. A natural question would be the rigorous validation of our model as a reduction of a nonlocal reaction-diffusion equation, e.g a neural field model.
\item On the other hand, Koll\'ar and Scheel showed the existence of target wave solutions for the ``original" reaction-diffusion system~\eqref{e:reaction-diffusion}, in a situation where the inhomogeneity $g = g(r)$ is a radial function. This motivates proving the existence of target patterns in the ``original" neural field or continuum coupled model  without appealing to a reduced model of phase dynamics.

\item Koll\'ar and Scheel also considered the case of perturbations with positive mass, i.e. the ``mass" $\eps \int g > 0$, and showed the existence of weak sinks and contact defects. The latter can be characterized as waves which propagate towards the core, but with wavenumber that converges to zero at infinity. We conjecture that a similar result should hold in our model~\eqref{e:nonloc1} for nonradial inhomogeneities and radial convolution kernels (where the origin is a distinguished point). We believe that this case is accessible to the methods presented in this paper.

\item The case of a mass zero perturbation was considered  previously in \cite{simon1976bound}. Here, the author shows  the existence of a weakly bound ground state with an energy $-\omega  \sim - \exp(-1/(c\eps)^2)$, with $\eps, c>0$. Preliminary analysis indicates that an analogous result is true for our nonlocal eikonal equation~\eqref{e:nonloc1}.

\end{enumerate}

One of our main findings is that nonradially symmetric inhomogeneities generate target patterns which are radially symmetric  in the far field. This behavior can be understood heuristically by noticing that when $\eps =0$ the proposed phase equation is invariant under rotations. On the other hand, when $\eps\neq 0$ the inhomogeneity breaks the radial symmetry but only in a region  near its core where its presence is not negligible. Our simulations confirm this behavior showing target patterns that are nonradially symmetric near the origin, but that recover this symmetry in the far field. This behavior can also be explained by the analysis. When we decompose our full equation into polar modes (see Section \ref{s:nonlocalfull}), the only solution that does not decay algebraically is the one that corresponds to the zero-th mode equation. This suggest that what is relevant for the system is the mass of the inhomogeneity and not its shape. This stands in contrasts to the behavior of radially symmetric propagating fronts in bistable reaction-diffusion system for dimensions two or higher \cite{roussier2004stability}. In this case, nonradially symmetric perturbations to the original front persist and solutions lose their original radial symmetry. 

Also, as mentioned in the introduction, the approximations that we have obtained for the frequency of the target patterns are in good agreement with, and in fact refine previous results. However, for values of $\eps$ that we can numerically compute with, they are not accurate enough to enable us to compare the expressions for the frequency in~\eqref{e:omega} with simulations. In ongoing work, we are developing higher order asymptotic solutions to correctly predict and compare the frequency with numerical experiments. 

Finally, the applications that motivated our model equation were large collections of discrete objects, e.g. neurons or particles in a granular medium. Because they involve a large number of interacting units, it seems reasonable to use a continuum model to describe these systems. Our results show that, in general, target wave patterns are found whenever an oscillatory extended system with (possibly nonlocal) diffusive coupling is perturbed by an inhomogeneity (of the appropriate sign). A natural question is to characterize patterns  in   smaller networks, i.e. systems that cannot be approximated as a continuum. This question involves an extra complication, since the topology of the network now becomes very relevant. 

For regular networks, like chains or rings of oscillators with nearest neighbor coupling, it has been shown that a single pacemaker can entrain the whole system \cite{radicchi2006entrainment}. A similar result was found for networks where the oscillators are connected at random \cite{kori2004entrainment}. However, in general, the question of understanding the transition to synchrony and the stability of this solution in networks with different topologies is challenging (see \cite{ arenas2008synchronization} and references therein). For example, in the case of all to all coupling this transition is a function of the coupling strength \cite{kuramoto1975self}, but as connections are removed at random the transition depends now on how many oscillators  are still connected \cite{restrepo2005onset}.  At the same time, new types of synchronous states called chimeras have been discovered in simulations of oscillators with nonlocal coupling, \cite{kuramoto2002coexistence}. These solutions are characterized by having a fraction of the oscillators in phase while the rest are in complete asynchrony (see \cite{yao2013robustness} and references). A similar behavior was also found in a reaction-diffusion system that can be approximated by an integro-differential equation. In this case, chimera states appear as spiral wave solutions with a core that is not in synchrony \cite{shima2004rotating}. 
There is a wealth of such physical phenomena that are yet to be analyzed rigorously, and in the future we hope to further investigate aspects of the central question, namely the interplay between discreteness, nonlocal spatial coupling and ``local" oscillatory behavior.

\section*{Acknowledgements}
Both authors would like to thank Arnd Scheel for many useful discussions and suggestions. G.J. acknowledges the support from the National Science Foundation through the grant DMS-1503115. S.V. acknowledges the support from the Simons Foundation through award 524875.

\appendix

\section{A hierarchy of evolution equations for the phases of coupled oscillators} \label{sec:hierarchy}

A continuum of oscillators with stable limit cycles and small variations in their natural frequencies can be described by the Stuart-Landau equations \cite{kuramoto1984book}
$$
\partial_t z(x,y,t) = i(1+\epsilon^2 g(x,y)) z + (1-|z|^2) z
$$
where $\epsilon \ll 1$, and time has been scaled so that the natural frequencies are all close to 1. We assume the coefficient of the nonlinear term $|z|^2 z$ is real, while full generality allows for a non-zero imaginary part resulting in a dependence of the oscillation frequency on the local amplitude $|z|$ \cite{kuramoto1984book}. Introducing a non-zero imaginary part for this coefficient will not change any of our conclusions, since $|z| \approx 1$ as we argue below, so the frequency is still nearly constant spatially, just ``renormalized" to a different value.

 There are indeed many different ways to spatially couple these oscillators. In order to constrain the possible models, we will impose the following requirements:
\begin{enumerate}
\item If $\epsilon = 0$, the uniform states $z = e^{i t+\phi_0}$ are (neutrally) stable. This is clearly necessary if we hope to describe the $\epsilon \neq 0$ behavior through a slow modulation of the phase, i.e. by replacing the constant $\phi_0$ with a slowly varying (in time)  function $\phi(x,y,t)$.
\item The spatial coupling between the oscillators is weak. Quantitatively, we will assume that the coupling is $O(\epsilon)$.
\item The spatial coupling is isotropic and translation invariant. In particular, this implies that the entire systems is nearly isotropic and translation invariant, and the only place where this symmetry is ``weakly" broken is through explicit spatial dependence of the natural frequencies of the oscillators encoded in the term $\epsilon^2 g(x,y)$.
\item The overall dynamics has a global {\em gauge symmetry} $z(x,y,t) \mapsto e^{i \phi_0} z(x,y,t)$ for any constant $\phi_0$. This symmetry is appropriate for studying slow behavior in systems with fast oscillations (here the natural frequencies $\approx 1$ for the oscillators correspond to the ``fast" time scale) and reflects the fact that the slow behavior is independent of the exact choice of the origin of time (the phase shift $\phi_0$) if it is governed by averaging over many periods of the fast oscillations \cite{aranson_cgle_2002}.
\end{enumerate}
Spatial couplings satisfying these requirements are of the form $ \epsilon \mathcal{L} \star F(|z|^2) z$, where $\mathcal{L} \star$ denotes convolution with a kernel that is isotropic and $O(1)$ as $\epsilon \to 0$. Convolutions are manifestly translation invariant and $F(|z|^2) z$ is the general form of a gauge invariant term. Finally, in order to maintain the neutral stability of the uniform states $z = e^{i t+\phi_0}$ for $\epsilon = 0$, we will need that $\mathcal{L} \star 1 = 0$, i.e. constants are in the kernel of $\mathcal{L}$. Taken together, this suggests that we should consider models of the form
$$
\partial_t z = i(1+\epsilon g) z + (1-|z|^2) z + \epsilon(\mathcal{L}_1 \star F_1(|z|^2) z + \mathcal{L}_2 \star F_2(|z|^2) z + \ldots + \mathcal{L}_n \star F_n(|z|^2) z)
$$
This is the appropriate generalization of the complex Ginzburg-Landau equation \cite{kuramoto1984book,aranson_cgle_2002} to our context of nonlocally coupled oscillators whose frequencies are amplitude independent. Following the approach in \cite{kuramoto1984book}, we write $z(x,y,t) = r(x,y,t)e^{i t + i \phi(x,y,t)}$ and expect that $r(x,y,t) \approx 1$ (corresponding to the natural dynamics of the oscillators) and $|\partial_t \phi| \ll 1$ (since we have already accounted the dominant part of the oscillatory behavior $z \sim e^{it}$). We will call $\phi$ the relative phase of the oscillators. While the relative phase varies slowly in time, {\em we make no a priori assumptions about its spatial variation}. The above arguments suggest that we can replace $|z|^2$ by 1 in the above expressions to get
\begin{align*}
\mathcal{L} & = F_1(1) \mathcal{L}_1 +  F_2(1) \mathcal{L}_2+ \cdots +  F_n(1) \mathcal{L}_n \\
\partial_t r + i \partial_t \phi & = i \epsilon^2 g + (1-|r|^2)r + \epsilon e^{-i \phi} \mathcal{L} \star r e^{i \phi}.
\end{align*}
 Separating the real and imaginary parts gives the evolution equations for $r$ and $\phi$. Our assumptions imply that the dominant part of the $r$ dynamics is $\partial_t r \approx r(1-r^2)$ so that the system relaxes to its {\em slow manifold} $r \approx 1$ {\em quickly}, i.e on the time scale of the natural oscillations. Indeed, $r$ is slaved to $\phi$ in this regime and we can solve by linearizing $r(1-r^2) \approx - 3(r-1)$ near $r = 1$ to obtain
 $$
 r \approx 1 + \frac{\epsilon}{3} \mathrm{Re}\left[e^{-i \phi} \mathcal{L} \star e^{i \phi}\right] \approx 1.
 $$
 To this order of approximation, the evolution equation for the phase is therefore
 \begin{equation}
  \partial_{\tau} \phi = \epsilon g +  \mathrm{Im}\left[e^{-i \phi} \mathcal{L} \star e^{i \phi}\right],
  \label{eq:nonlocal_schrodinger}
 \end{equation}
where $\tau = \epsilon t$ is the natural slow time that arises from balancing the time derivative of $\phi$ with the nonlocal coupling. This equation can also be recast as $\partial_{\tau} \phi(\mathbf{x}) = \epsilon g(\mathbf{x}) + \int K_R(|\mathbf{x}-\mathbf{x'}|) \sin(\phi(\mathbf{x}') - \phi(\mathbf{x})) d \mathbf{x}' +  \int K_I(|\mathbf{x}-\mathbf{x'}|) \cos(\phi(\mathbf{x}') - \phi(\mathbf{x})) d \mathbf{x}'$ by splitting into real and imaginary parts.  This is a variant of a well known equation in the context of neural field models (see~\cite{schwemmer2012theory} and references therein).
 
 We now specialize~\eqref{eq:nonlocal_schrodinger} to particular types of spatial coupling $\mathcal{L}$. The class of translation invariant, isotropic, differential operators of finite order that annihilate the constants are given by 
$$
\mathcal{L} \star z = (a_1+ i b_1) \Delta z + (a_2+ ib_2) \Delta^2 z + \cdots + (a_n+ ib_n) \Delta^n z
$$
where $\Delta = \partial_x^2 + \partial_y^2$ is the Laplacian. This motivates the definitions
$$
\Gamma_k[\phi] = \mathrm{Im}\left[e^{-i \phi} \Delta^k e^{i \phi}\right], \quad \Sigma_k[\phi] = \mathrm{Re}\left[e^{-i \phi} \Delta^k e^{i \phi}\right], \quad k = 1,2,3,\ldots
$$
In terms of these operators, the slow evolution of the relative phase $\phi$ is given by
\begin{equation}
\partial_{\tau} \phi = \epsilon g + a_1 \Gamma_1[\phi] + b_1 \Sigma_1[\phi] + \cdots + a_n \Gamma_n[\phi] + b_n \Sigma_n[\phi] 
\label{eq:master}
\end{equation}
These evolution equations, for different choices of $a_i,b_i$, are {\em universal} models for the slow time evolution of the relative phases of coupled oscillators. We can compute the first few $\Gamma_k$ and $\Sigma_k$ explicitly to obtain 
\begin{align*}
\Gamma_1[\phi] & = \Delta \phi \\
\Sigma_1[\phi] & = -|\nabla \phi|^2 \\
\Gamma_2[\phi] & = \Delta^2 \phi - 2 |\nabla \phi|^2 \Delta \phi - 4 \nabla \phi \cdot \nabla \nabla \phi \nabla \phi \\
\Sigma_2[\phi] & = |\nabla \phi|^4 - 4 \nabla \phi \cdot \nabla(\Delta \phi) -2 |\nabla \nabla \phi|^2 - (\Delta \phi)^2
\end{align*}
where $\nabla \nabla \phi$ is the $2 \times 2$ Hessian matrix. The expressions for $\Gamma_k$ and $\Sigma_k$ quickly get very complicated as $k$ increases,. We therefore record a few observations on the structure of $\Gamma_k$ and $\Sigma_k$ that are easy to show by an inductive argument using the recursion $\displaystyle{\Sigma_k + i \Gamma_k = e^{-i \phi} \Delta \left[ (\Sigma_{k-1}[\phi] + i \Gamma_{k-1}[\phi])e^{i \phi}\right]}$
\begin{enumerate}
\item Every term in $\Gamma_k$ and $\Sigma_k$ has a total of $2k$ spatial derivatives of $\phi$.
\item $\Gamma_k$ has terms that are homogeneous with degrees $1,3,\ldots,2k-1$ in $\phi, \nabla \phi, \nabla \nabla \phi, \ldots$ and $\Sigma_k$ has terms that are homogeneous with degrees $2,4,\ldots,2k$.
\item $\Gamma_k[\phi] = \Delta^k \phi +$ h.o.t, $\Sigma_k[\phi]$ is an isotropic sum/difference of squares of $k$th order spatial derivatives of $\phi$ + h.o.t, where h.o.t. refers to terms that are higher order in the homogeneity in $\phi$. 
\end{enumerate}
We can obtain the {\em dispersion relation} by substituting a plane wave ansatz $\phi = k_x x + k_y y - \omega(k_x,k_y) \tau$ into~\eqref{eq:master} with $\epsilon = 0$ to obtain
$$
 \omega = b_1 k^2 - b_2 k^4 + \cdots
$$
where $k^2 = k_x^2+k_y^2$. 
We will henceforth restrict ourselves to  the case where the long-wave dispersion coefficient $\displaystyle{\left.\frac{d^2 \omega}{d k^2}\right|_{k = 0} = 2b_1 > 0}$. We define a hierarchy of evolution equations by retaining the linear and quadratic terms in $\Gamma_k$ and $\Sigma_k$. The first two equations in this hierarchy are
\begin{align}
\partial_{\tau} \phi & = \epsilon g + a_1 \Delta \phi - b_1 |\nabla \phi|^2 \nonumber \\
\partial_{\tau} \phi & = \epsilon g + a_1 \Delta \phi + a_2 \Delta^2 \phi - b_1 |\nabla \phi|^2 - b_2 (2 |\nabla \nabla \phi|^2 + (\Delta \phi)^2) 
\label{eq:hierarchy}
\end{align}

A basic consistency check for our procedure is that the first equation in the hierarchy is indeed the usual viscous eikonal equation.

A useful approach in modeling physical phenomena is to retain just as many terms in the model as necessary to obtain the desired behavior in the solutions. For example, this principle leads to the Kuramoto-Sivashinsky equation, where the term $a_2 \Delta^2 \phi$ is retained if $a_1 < 0$, but the quadratic terms involving four derivatives $-b_2 (2 |\nabla \nabla \phi|^2 + (\Delta \phi)^2)$ are nonetheless dropped although they are formally of the same order in derivatives.  While very successful, the logic of this approach is somewhat circular. 

We take a slightly different modeling approach. Our goal is to get the right small $k$ (long-wavelength) behavior. To this end, we will approximatet he quadratic terms involving the sums/differences of squares by a single term of the form $- |\mathcal{J} \ast \nabla \phi|^2$ that captures the long-wave  behavior of these nonlinearities. That is, we will require that, for an exponent $\alpha$ that is as large as possible, and for all compactly supported smooth function $\psi \geq 0$, we have
$$
\int \left\{ b_1 |\nabla \phi|^2 + b_2 (2 |\nabla \nabla \phi|^2 + (\Delta \phi)^2) -  |\mathcal{J} \ast \nabla \phi|^2 \right\} \psi\left(\frac{x}{L},\frac{y}{L}\right) \, dx dy \sim O(L^{-\alpha}) \mbox{ as } L \to \infty.
$$
For any function $g \in L^1(\mathbb{R}^2)$, the dominated convergence theorem implies that, as $L \to \infty$,  $\displaystyle{\int g(x,y)  \psi\left(\frac{x}{L},\frac{y}{L}\right) \, dx dy \to \psi(0) \hat{g}(0})$ where $\hat{g} = \mathcal{F}(g)$ denotes the Fourier transform of $g$. Consequently, for $\phi \in C_0(\R^2)$, our requirement that the approximation should capture the right long-wave behavior of the nonlinearity can be written as
\[ \mathcal{F}\left(b_1 |\nabla \phi|^2 + b_2 (2 |\nabla \nabla \phi|^2 + (\Delta \phi)^2) \right)(0) = \mathcal{F}\left(|\mathcal{J} \ast \nabla \phi|^2\right)(0)
\]
We now show that we can indeed find an expression for the Fourier symbol corresponding to the convolution operator $\mathcal{J}$ satisfying this condition. 

With ${\bf k}= (k_1,k_2)$ representing our variable in Fourier space and defining the nonlinear operator $N$ by 
 \begin{align*}
  N( \hat{\phi}) = &\mathcal{F}\Big( b_1(\phi_x^2 + \phi_y^2)  + 2 b_2(  \phi_{xx}^2 + 2\phi_{xy}^2 + \phi_{yy}^2)+ b_2( \phi_{xx}^2+ 2\phi_{xx}\phi_{yy} + \phi_{yy}^2)  \Big) \\
    = & -b_1 ( k_1 \hat{\phi} \ast  k_1 \hat{\phi} +  k_2 \hat{\phi} \ast k_2 \hat{\phi}) + 3b_2 (k^2_1 \hat{\phi} \ast  k^2_1 \hat{\phi} + k_2^2 \hat{\phi} \ast  k_2^2 \hat{\phi})\\
  & + 4b_2( k_1k_2 \hat{\phi} \ast  k_1k_2 \hat{\phi}) + 2b_2(k_1^2 \hat{\phi} \ast k_2^2 \hat{\phi}) ),
  \end{align*}
a straight forward computation then shows that
\begin{align*}
 N( \hat{\phi})(0) = &\int_{\R^2} (b_1 + 3b_2 | {\bf q}|^2)|{\bf q} |^2 \hat{\phi}(- {\bf q})\hat{\phi}( {\bf q})\;d {\bf q}\\
    N( \hat{\phi})(0)=& \int_{\R^2} J(-{\bf q})  \hat{\phi}(- {\bf q}) \langle \rmi q_1 , \rmi q_2  \rangle \cdot \langle -\rmi q_1, -\rmi q_2 \rangle J( {\bf q})\hat{\phi}( {\bf q}) \; d {\bf q},
   \end{align*}
 where $J( {\bf k}) = \sqrt{b_1 + 3b_2 | {\bf k}|^2}$. Since this holds for all $\phi \in C_0(\R^2)$, it is clear then that the Fourier symbol for $\mathcal{J}$ can be picked to be precisely the multiplication operator $J( {\bf k})$.

An inductive argument allows us to extend these ideas to higher order equations in the hierarchy (cf.~\eqref{eq:hierarchy}), {\em viz.} truncating~\eqref{eq:master} at quadratic order, followed by a reduction of the quadratic terms as above will give a reduced model
$$
\partial_{\tau} \phi = \sum a_j \Delta^j \phi - |\mathcal{J} \ast \nabla \phi|^2,
$$
where the kernel $\mathcal{J}$ has a Fourier symbol of the form 
$$
J({\bf k}) = \left[\sum_j c_j b_j |{\bf k}|^{2(j-1)}\right]^{1/2},
$$
and the constants $c_j$ are universal (i.e. independent of the differential operator $\mathcal{L}$) and can be computed explicitly. In particular, $c_1 = 1, c_2 = 3$.

If we assume that the function $e^{i \phi}$ is band-limited, a physically reasonable assumption precluding the presence of arbitrarily small-scale structures in the pattern of the relative phase, we can extend the above argument to reduce~\eqref{eq:master} for general (i.e. not necessarily a finite order differential operators) $\mathcal{L}$ and $\mathcal{J}$ with continuous Fourier symbols $L = L(|{\bf k}|)$, $J = J(|{\bf k}|)$ that are also exponentially localized. Indeed observing that the space $C_0(\R^2)$ is dense in weighted Sobolev spaces,  an argument by approximation gives a reduction of~\eqref{eq:master} to 
$$
\partial_{\tau} \phi = \epsilon g + \mathcal{L \ast \phi} - |\mathcal{J} \ast \nabla \phi|^2.
$$
By rescaling $\tau$ and $\phi$ we can, W.L.O.G, set $a_1=1, b_1 = 1$, so that $L(0) = 1, J(0) = 1$. This is the basic model (Eq.~\eqref{e:nonloc1}) that we consider in the body of the paper.

\section{Proofs of subsidiary results} \label{sec:proofs}

\subsection{Algebraic decay in Kondratiev spaces}\label{s:appendixdecay}

\begin{Lemma*}{ \bf \ref{l:decay}.}
Let $f \in M^{1,2}_{\gamma}(\R^d)$ then $|f({\bf x})| \leq C \|f\|_{M^{1,2}_{\gamma}(\R^d)} (1+ |{\bf x}|^2)^{-(\gamma+d/2)}$ as $|{\bf x}| \rightarrow \infty$.
\end{Lemma*}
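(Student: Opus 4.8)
The plan is to reduce the pointwise bound to a one–dimensional Hardy-type estimate in the radial variable, handling the angular dependence separately through the polar decomposition of Lemma~\ref{l:polar}.

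\textbf{Reduction to radial profiles.} For $d=2$ I would write $f = \sum_{n\in\Z} f_n(r)\,\rme^{\rmi n\theta}$ as in Lemma~\ref{l:polar}, with $f_n \in M^{1,2}_{r,\gamma}(\R^2)$; Parseval in $\theta$ gives $\sum_n \|f_n\|_{M^{1,2}_{r,\gamma}}^2 \leq C\|f\|_{M^{1,2}_\gamma}^2$ (the radial norms only see the gradient through $|\partial_r f|$). In general dimension one uses the analogous spherical-harmonic decomposition. It therefore suffices to estimate each radial profile and then recombine.

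\textbf{Decay of a radial profile.} Let $w = f_n \in M^{1,2}_{r,\gamma}(\R^d)$. By Cauchy--Schwarz against the weight, $w' \in L^1([1,\infty))$, so $w(r)$ has a finite limit as $r\to\infty$; since $\int_0^\infty |w|^2(1+r^2)^{\gamma} r^{d-1}\,\rmd r < \infty$ this limit vanishes. The fundamental theorem of calculus then gives $w(R)^2 = -2\int_R^\infty w\,w'\,\rmd r$, and writing
\[
|w|\,|w'| = \Bigl(|w|(1+r^2)^{\gamma/2} r^{(d-1)/2}\Bigr)\Bigl(|w'|(1+r^2)^{(\gamma+1)/2} r^{(d-1)/2}\Bigr)\,(1+r^2)^{-\gamma-\tfrac12}\,r^{-(d-1)},
\]
I would pull out $\sup_{r\geq R}\bigl[(1+r^2)^{-\gamma-\tfrac12} r^{-(d-1)}\bigr] \leq C\,R^{-(2\gamma+d)}$ (valid when $2\gamma+d>0$) and apply Cauchy--Schwarz to the remaining two factors, whose $L^2(\rmd r)$-norms are each bounded by $\|w\|_{M^{1,2}_{r,\gamma}}$. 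This yields $|w(R)| \leq C\,R^{-(\gamma+d/2)}\|w\|_{M^{1,2}_{r,\gamma}}$, the claimed rate of algebraic decay. If $f$ is radial this already finishes the proof.

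\textbf{Recombining the modes.} For general $f$ the remaining task is to pass from the per-mode bounds to a bound uniform in $\theta$. At a fixed large $R$ the map $\theta \mapsto f(R,\theta)$ lies in $H^1(S^{d-1})$, whose norm --- for $r$ bounded away from the origin, where the factor $r^{-1}$ is harmless --- is controlled by $\|f\|_{M^{1,2}_\gamma}$ because the Kondratiev norm controls the full gradient, hence the angular derivative $r^{-1}\partial_\theta f$; a one-dimensional Sobolev embedding on $S^{d-1}$ then upgrades the radial estimate to one uniform in the angular variable. I expect this last step to be the main obstacle: the embedding is borderline in dimension two, so one cannot simply invoke $H^1 \hookrightarrow L^\infty$ (which fails for $d\geq 2$) and must instead use the weighted angular-derivative control built into $M^{1,2}_\gamma$ rather carefully. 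I note that in every application of this lemma in the body of the paper the part of the solution that fails to decay algebraically is radial, so it is the clean radial estimate above that is actually invoked.
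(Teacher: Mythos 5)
Your core estimate is the paper's. The paper also proves the decay by writing $f(\theta,R)$ as an integral of $\partial_r f$ along rays, inserting the weight $s^{\gamma+1}s^{(d-1)/2}$, and applying Cauchy--Schwarz; the only organizational difference is that it integrates the resulting pointwise-in-$\theta$ inequality over the unit sphere $\Sigma$ all at once, obtaining $\|f(\cdot,R)\|_{L^2(\Sigma)} \leq R^{-(\gamma+d/2)}\|\nabla f\|_{L^2_{\gamma+1}}$, rather than working mode by mode as you do. Summing your per-mode bounds in $\ell^2$ reproduces exactly that sphere-averaged estimate, and for radial $f$ both arguments are complete. Note that what either computation actually yields is decay like $|{\bf x}|^{-(\gamma+d/2)}$, i.e.\ $(1+|{\bf x}|^2)^{-(\gamma+d/2)/2}$; the exponent in the statement of Lemma~\ref{l:decay} carries a factor-of-two slip relative to its own proof (and the power $\|f\|^{d/2}$ in the Section~\ref{s:weightedspaces} version is likewise a typo for $\|f\|^1$). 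Your rate matches what is actually proved.

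The step you flag as the obstacle --- upgrading the angular $L^2$ (or per-mode) bound to a bound uniform in $\theta$ --- is indeed the genuine weak point, but be careful about which dimension is relevant: the trace $f(R,\cdot)$ lives on $S^{d-1}$, so for $d=2$ the embedding $H^1(S^1)\hookrightarrow L^\infty(S^1)$ is subcritical and available; you are conflating it with $H^1(\R^d)\hookrightarrow L^\infty$. The real difficulty is that the Agmon-type inequality $\|u\|_{L^\infty(S^1)}^2 \lesssim \|u\|_{L^2(S^1)}\|u\|_{H^1(S^1)}$ needs a pointwise-in-$R$ bound on $\|\partial_\theta f(R,\cdot)\|_{L^2(S^1)}$, while $M^{1,2}_\gamma$ controls $r^{-1}\partial_\theta f$ only in a radially integrated sense, so recombining costs half a power of $R$ (or requires a second derivative). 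You should know that the paper's own proof does not resolve this either: after the $L^2(\Sigma)$ estimate it invokes the Gagliardo--Nirenberg inequality from Adams--Fournier with $m=0$, $p=q=2$, which degenerates to the false inequality $\|f(\cdot,R)\|_{L^\infty(\Sigma)}\leq C\|f(\cdot,R)\|_{L^2(\Sigma)}$. So your closing observation --- that in the body of the paper the non-decaying pieces of the ansatz are radial and the remainders to which the lemma is applied carry extra regularity or are smoothed by $\mathcal{J}\ast$ --- is the correct reading of how the lemma is actually used, and your proposal is no less complete than the published argument.
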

\begin{proof}
Let $(\theta, r)$ represent spherical coordinates in $d$ dimensions, with $r$ being the radial direction and $\theta$ representing the coordinates in the unit sphere, $\Sigma$. Then 
\begin{align*}
 \int_\Sigma |f(\theta,r) |^2\; d\theta & \leq \int_\Sigma \left( \int_{\infty}^R |\partial_r f (\theta,s) | \;ds \right)^2 \;d\theta\\[2ex]
 & = \int_\Sigma \left (  \int_{\infty}^R s^{\alpha} s^{\gamma+1} |\partial_r f (\theta,s) | s^{(d-1)/2}\;ds  \right)^2\;d\theta\\[2ex]
 & \leq \int_\Sigma \left (\int_{\infty}^R s^{2 \alpha} \right)^2 \left(\int_{\infty}^R  s^{2(\gamma+1)} |\partial_r f (\theta,s) |^2 s^{(d-1)}\;ds \right)^2\;d\theta \\[2ex]
 & \leq R^{2\alpha +1} \|\nabla f\|^2_{L^2_{\gamma+1}(\R^d)}
 \end{align*}
where $\alpha =  -(\gamma+1)+ (1-d)/2$ and $R$ is fixed. We therefore have the following inequality
\[ \|f(\cdot, R)\|_{L^2} \leq R^{-(\gamma+d/2)}  \|\nabla f\|_{L^2_{\gamma+1}(\R^d)} \]

From Adams and Fournier \cite[Theorem, 5.9]{adams2003sobolev} we know that there exist a constant $C$ such that
\[ \| f(\cdot, R) \|_{\infty} \leq C \| f(\cdot,R)\|_{W^{m,p}(\R^d)}^\delta \|f(\cdot,R) \|_{L^q(\R^d)}^{(1-\delta)}\]
where $\delta = dp/ ( dp + (mp -d)q)$ and $1/q + 1/p = 1$ . Choosing $p=q=2$  and $m=0$, we obtain

\[ \| f(\cdot, R) \|_{\infty} \leq C \|f(\cdot,R) \|_{L^2} \leq C R^{-(\gamma+d/2)}  \|\nabla f\|_{M^{1,2}_{\gamma}(\R^d)} \]

\end{proof}

\subsection{Fredholm properties of $\Delta^m(\Id-\Delta)^{-l}$}\label{s:appendixb}
In this section we look at the Fredholm properties of the operator \[ \begin{array}{c c c}
\Delta^m(\Id-\Delta)^{-l}:  \mathcal{D} \subset L^p_{\gamma- 2m} (\R^d) & \longrightarrow & L^p_\gamma(\R^d)\\
  \end{array}\]
 with domain $\mathcal{D} = \{ u({\bf x}) \in L^p_{\gamma-2m}(\R^d) \mid (\Id-\Delta)^{-l} u({\bf x}) \in M^{2m,p}_{\gamma-2m}(\R^d)\}$. 
 More precisely, we prove the following proposition.
\begin{Proposition*}{\bf \ref{p:fredholmNormalF}.}
Let $\gamma \in \R$, $\alpha, \beta \in \Z^+ \bigcup \{ 0\}$, $m,d \in \Z^+$, and $l \in \Z$. Then, the operator, 
\[\Delta^m(\Id-\Delta)^{-l}:  \mathcal{D} \subset L^p_{\gamma- 2m} (\R^d)  \longrightarrow  L^p_\gamma(\R^d),\]
with domain $\mathcal{D}=\{u \in L^p_{\gamma-2m}(\R^d) \mid (\Id-\Delta)^{-l}u \in M^{2m,p}_{\gamma-2m}(\R^d) \}$  
\begin{itemize}
\item is a Fredholm operator for $\alpha+d/p<\gamma< -\beta -d/p +2m$  with kernel and cokernel given by
\[ \ker = \bigcup_{j=0}^\beta \mathcal{H}_{j,k}, \quad  \coker = \bigcup_{j=0}^\alpha \mathcal{H}_{j,k};\]
 \item and not Fredholm for values of  $\gamma \in \{ j + d/p : j \in \Z\}$. 
\end{itemize}
\end{Proposition*}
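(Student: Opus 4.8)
The plan is to strip off the Bessel factor $(\Id-\Delta)^{-l}$ by an isomorphism argument and reduce the whole statement to the Fredholm theory of the iterated Laplacian $\Delta^m$ on Kondratiev spaces, which one obtains by iterating Theorem~\ref{McOwen}.

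\emph{Step 1: removing the Bessel factor.} Since $\Delta^m$ and $(\Id-\Delta)^{-l}$ commute, I would write $\Delta^m(\Id-\Delta)^{-l}=\Delta^m\circ(\Id-\Delta)^{-l}$ and observe that the domain $\mathcal D$ is defined exactly so that
\[
 (\Id-\Delta)^{-l}:\ \mathcal D\ \longrightarrow\ M^{2m,p}_{\gamma-2m}(\R^d)
\]
is a bijection. Injectivity is clear since the Fourier symbol $(1+|\mathbf k|^2)^{-l}$ is nowhere zero; surjectivity amounts, for $v\in M^{2m,p}_{\gamma-2m}(\R^d)$, to checking that $(\Id-\Delta)^{l}v\in L^p_{\gamma-2m}(\R^d)$, which follows from the chain of embeddings $M^{2m,p}_{\gamma-2m}\hookrightarrow M^{j,p}_{\gamma-2m}$ built into the Kondratiev scale when $l\le m$, and by a short bootstrap using the mapping properties of $(\Id-\Delta)^{\pm1}$ between the Kondratiev and Bessel-potential scales in the remaining cases. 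Boundedness of the map and of its inverse on the weighted spaces $L^p_\sigma$ follows from the Mikhlin--H\"ormander multiplier theorem together with the standard commutator estimates for conjugation by the weights $(1+|\mathbf x|^2)^{\sigma/2}$. An isomorphism does not alter Fredholm properties; moreover $(\Id-\Delta)^{\pm l}$ maps the (finite-dimensional) space of polynomials of degree $\le k$ bijectively onto itself for every $k$, being unipotent there in the monomial basis because $\Delta$ is strictly degree-lowering. It therefore suffices to prove the asserted statement for $\Delta^m:M^{2m,p}_{\gamma-2m}(\R^d)\to L^p_\gamma(\R^d)$.

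\emph{Step 2: Fredholmness of $\Delta^m$ by iteration.} I would first record the higher-regularity form of Theorem~\ref{McOwen}: the map $\Delta:M^{s+2,p}_{\gamma'-2}\to M^{s,p}_{\gamma'}$ is Fredholm, with the same kernel and cokernel as the $s=0$ case, whenever $\gamma'$ avoids McOwen's exceptional set --- this is interior elliptic regularity, together with the fact that the obstruction spaces consist of polynomials and so lie in every Kondratiev space. Factoring
\[
 M^{2m,p}_{\gamma-2m}\ \xrightarrow{\ \Delta\ }\ M^{2m-2,p}_{\gamma-2m+2}\ \xrightarrow{\ \Delta\ }\ \cdots\ \xrightarrow{\ \Delta\ }\ M^{0,p}_{\gamma}=L^p_\gamma,
\]
each factor is Fredholm precisely when its weight $\gamma-2m+2j$, $j=1,\dots,m$, avoids the exceptional set, and off the union of these conditions $\Delta^m$ is Fredholm with index the sum of the indices of the factors. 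One then checks that the union of all these exceptional conditions reduces to the single set $\{\,j+d/p:j\in\Z\,\}$, and that at those critical weights $\Delta^m$ fails to have closed range, by adapting McOwen's singular-sequence construction.

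\emph{Step 3: kernel, cokernel, and the main obstacle.} For $\alpha+d/p<\gamma<-\beta-d/p+2m$, any $w\in\ker\Delta^m\cap M^{2m,p}_{\gamma-2m}(\R^d)$ is polyharmonic of order $m$; since its Fourier transform is supported at the origin it must be a polynomial of degree $\le 2m-1$, and the growth budget dictated by the weight $\gamma-2m$ keeps exactly those of degree $\le\beta$, which gives $\ker\Delta^m=\bigcup_{j=0}^{\beta}\mathcal H_{j,k}$. Dually, $\coker\Delta^m\cong\ker\big((\Delta^m)^{*}\big)$ computed on $M^{-2m,q}_{2m-\gamma}(\R^d)$, and the constraint $\gamma>\alpha+d/p$ now selects polynomials of degree $\le\alpha$, giving $\coker\Delta^m=\bigcup_{j=0}^{\alpha}\mathcal H_{j,k}$. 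Transporting these back along the isomorphism of Step~1, which preserves the relevant polynomial spaces, yields the asserted kernel and cokernel of $\Delta^m(\Id-\Delta)^{-l}$. I expect the main obstacle to be precisely this last bit of bookkeeping --- matching the Kondratiev decay thresholds at the two ends of the factorization with the integers $\alpha$ and $\beta$, so that the surviving obstruction spaces are exactly $\bigcup_{j=0}^{\beta}\mathcal H_{j,k}$ and $\bigcup_{j=0}^{\alpha}\mathcal H_{j,k}$ rather than larger polyharmonic families --- together with the explicit singular-sequence argument required for the non-closed-range statement at the critical weights.
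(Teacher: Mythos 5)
Your decomposition is exactly the paper's: Proposition \ref{p:fredholmNormalF} is proved there by writing $\Delta^m(\Id-\Delta)^{-l}$ as the composition of the isomorphism $(\Id-\Delta)^{-l}:\mathcal D\to M^{2m,p}_{\gamma-2m}(\R^d)$ with the Fredholm operator $\Delta^m:M^{2m,p}_{\gamma-2m}(\R^d)\to L^p_\gamma(\R^d)$, so your Step~1 versus Steps~2--3 split matches Lemmas \ref{l:isomorphism} and \ref{l:Laplacefredholm}. The differences lie in how the sub-steps are justified. For the isomorphism, the paper does not invoke Mikhlin--H\"ormander with commutator estimates; it conjugates $\Id-\Delta$ by the weight, observes that the conjugated operator is a compact perturbation of $\Id-\Delta$ on unweighted spaces, and excludes a kernel for every weight via Kato's theorem on analytic families of compact operators. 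Either route is viable, but yours leaves the weighted boundedness and invertibility as an unproved ``standard'' fact, which is precisely where the paper spends its effort. Your explicit factorization of $\Delta^m$ through the intermediate Kondratiev spaces is more detailed than the paper, which simply cites \cite{mcowen1979behavior} for Fredholmness and is compatible with your iteration. For the kernel and cokernel the paper works algebraically with the Fischer inner product on homogeneous polynomials (Lemma \ref{l:kernel}) to get $\ker(\Delta^k|_{\bbP_j})=\mathcal H_{j,k}$, whereas you argue via Fourier support and duality; this also works, but one intermediate claim is false: a tempered solution of $\Delta^m w=0$ is a polynomial, yet not one of degree at most $2m-1$, since every harmonic polynomial of arbitrary degree is polyharmonic of order $m$. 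The correct truncation of the degree comes only from the weight, as your next clause states, and in the admissible range $\alpha+\beta+2d/p<2m$ forces $\beta\le 2m-1$ anyway, so your final answer is unaffected; but the degree bound as written is not a consequence of the Fourier-support argument and should be removed or replaced by the weight-based count.
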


The proof consists in showing that the operator $\Delta^m(\Id-\Delta)^l$ is the composition of 
\begin{enumerate}
\item \label{item:invertible} an invertible operator $ (\Id-\Delta)^{-l} : \mathcal{D}\subset L^p_{\gamma-2}(\R^d) \longrightarrow M^{2m,p}_{\gamma-2m}(\R^d) \subset W^{2m,p}_{\gamma-2m}(\R^d)$
\item \label{item:fred} and a Fredholm operator $\Delta^m: M^{2m,p}_{\gamma-2}(\R^d) \longrightarrow L^p_{\gamma}(\R^d).$
\end{enumerate}

Item (\ref{item:defined}.) follows from Lemma \ref{l:isomorphism}, and the result of item (\ref{item:fred}.) from Lemma \ref{l:Laplacefredholm}. Then, the span of the kernel and cokernel of $\Delta^m(\Id-\Delta)^l$ can be determined using Lemma \ref{l:kernel} and duality.

We start by showing the operator $(\Id-\Delta)$ is an isomorphism in weighted spaces. As part of the proof we need the following Theorem by Kato, see \cite{kato2013perturbation}.

\begin{Theorem*}[Kato, p.370]
Let $T(\gamma)$ be a family of compact operators in a Banach space $X$ which are holomorphic for all $ \gamma \in \C$. Call $\gamma$ a singular point if 1 is an eigenvalue of $T(\gamma)$. Then either all $\gamma \in D$ are singular points or there are only finitely many singular points in each compact subset of $D$.
\end{Theorem*}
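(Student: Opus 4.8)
The final statement is the analytic Fredholm alternative in its ``$1$ is an eigenvalue'' form, and the plan is to prove it by the classical three-step argument: reduce being a singular point to non-invertibility, localise non-invertibility to the vanishing of a scalar holomorphic function, and then globalise using connectedness of the domain. Throughout I take $D \subseteq \C$ to be a connected open set on which $\gamma \mapsto T(\gamma) \in \mathcal{B}(X)$ is holomorphic with each $T(\gamma)$ compact; the case $D = \C$ is then a special case. First I would record that, for each fixed $\gamma$, compactness of $T(\gamma)$ together with Riesz--Schauder theory makes $\Id - T(\gamma)$ Fredholm of index $0$, so it is invertible precisely when it is injective, i.e. precisely when $1$ is not an eigenvalue of $T(\gamma)$. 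Hence the set $S$ of singular points equals $\{\gamma \in D : \Id - T(\gamma) \text{ is not invertible}\}$, and since $\gamma \mapsto \Id - T(\gamma)$ is continuous and the invertible operators are open in $\mathcal{B}(X)$, the set $S$ is closed in $D$.

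Next I would carry out the local reduction. Fix $\gamma_0 \in D$. Since $\Id - T(\gamma_0)$ is Fredholm of index $0$, there is a finite-rank operator $F_0 = \sum_{i=1}^r \langle\,\cdot\,, \phi_i\rangle\, e_i$, with $\phi_i \in X^*$ and $e_i \in X$, such that $A(\gamma_0) := \Id - T(\gamma_0) + F_0$ is invertible; by holomorphy of $T$ and a Neumann series, $A(\gamma) := \Id - T(\gamma) + F_0$ is invertible on a connected neighbourhood $N$ of $\gamma_0$ with $\gamma \mapsto A(\gamma)^{-1}$ holomorphic there. Factoring $\Id - T(\gamma) = A(\gamma)\bigl(\Id - A(\gamma)^{-1}F_0\bigr)$ reduces invertibility of $\Id - T(\gamma)$ on $N$ to that of $\Id - A(\gamma)^{-1}F_0 = \Id - \sum_{i=1}^r \langle\,\cdot\,,\phi_i\rangle\, f_i(\gamma)$, where $f_i(\gamma) := A(\gamma)^{-1} e_i$ is holomorphic. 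The Fredholm determinant identity for finite-rank perturbations then shows this last operator is invertible if and only if
\[ d_{\gamma_0}(\gamma) := \det\bigl( \delta_{ij} - \langle f_j(\gamma), \phi_i\rangle \bigr)_{i,j=1}^{r} \ne 0, \]
and $d_{\gamma_0}$ is a scalar holomorphic function on $N$. Thus $S \cap N$ is exactly the zero set of $d_{\gamma_0}$ in $N$, and by the identity theorem either $d_{\gamma_0} \equiv 0$ on $N$ — so $N \subseteq S$ — or its zeros are isolated in $N$ — so $S \cap N$ is discrete.

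Finally I would globalise. Let $Z := \{\gamma \in D : \gamma \text{ has an open neighbourhood contained in } S\}$, which is open by definition and satisfies $Z \subseteq S$. I claim $D \setminus Z$ is also open: given $\gamma_0 \in D \setminus Z$, with $N$ and $d_{\gamma_0}$ as in the previous step, the function $d_{\gamma_0}$ cannot vanish identically on $N$, since that would give $N \subseteq S$ and hence $\gamma_0 \in Z$; and if some $\gamma_1 \in N$ lay in $Z$, a neighbourhood $N_1 \subseteq S$ of $\gamma_1$ would force $d_{\gamma_0}$ to vanish on the nonempty open set $N \cap N_1$, and therefore, by the identity theorem, on all of $N$, a contradiction. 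Hence $N \subseteq D \setminus Z$, so $D \setminus Z$ is open. Since $D$ is connected, either $Z = D$ — in which case $Z \subseteq S \subseteq D = Z$ gives $S = D$, the first alternative — or $Z = \emptyset$ — in which case, by the previous step, every point of $D$ has a neighbourhood meeting $S$ in a discrete set, so $S$ is discrete and, being closed in $D$, meets every compact subset of $D$ in a finite set, the second alternative.

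The main obstacle is the opening of the second step: producing a single finite-rank $F_0$ with $\Id - T(\gamma_0) + F_0$ invertible, so that all the subsequent $f_i(\gamma)$ and the determinant $d_{\gamma_0}$ live in spaces of a fixed finite dimension. One cannot get this by norm-approximating $T(\gamma_0)$ by finite-rank operators, since that fails for compact operators on a Banach space lacking the approximation property; it is precisely the index-zero Fredholm property from Riesz--Schauder that supplies $F_0$. Once that reduction is in place, the holomorphy of $A(\cdot)^{-1}$ and of $d_{\gamma_0}$, the finite-rank determinant identity, and the topological patching in the third step are all routine.
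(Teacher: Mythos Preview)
Your proof is correct and follows the classical analytic Fredholm alternative argument. However, the paper does not supply its own proof of this statement: it is quoted verbatim as a known result from Kato's \emph{Perturbation Theory for Linear Operators} and is used only as a black box in the proof of Lemma~\ref{l:isomorphism}. There is therefore nothing in the paper to compare your argument against beyond noting that you have reproduced the standard proof that Kato gives.
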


With the notation $\langle {\bf x} \rangle = ( 1+ | {\bf x}|^2)^{1/2}$, we are now ready to show that:
\begin{Lemma}\label{l:isomorphism}
Given $s \in \Z, p \in (1,\infty), \gamma \in \R$, the operator $(\Id-\Delta): W^{s,p}_{\gamma}(\R^d) \rightarrow W^{s-2,p}_{\gamma}(\R^d)$ is an isomorphism.
\end{Lemma}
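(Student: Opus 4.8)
The plan is to establish injectivity and surjectivity separately. \emph{Injectivity} is elementary and holds for all $s\in\Z$, $p\in(1,\infty)$ and $\gamma\in\R$: any $u\in W^{s,p}_\gamma(\R^d)$ lies in $L^1_{\mathrm{loc}}(\R^d)$ with local $L^1$ norms growing at most polynomially (apply H\"older against $\langle{\bf x}\rangle^{-\gamma}$), so $u$ is a tempered distribution; if $(\Id-\Delta)u=0$ then $(1+|{\bf k}|^2)\hat u=0$, and because $(1+|{\bf k}|^2)^{-1}$ is smooth with all derivatives bounded it is a multiplier on $\mathcal S'(\R^d)$, whence $\hat u=0$ and $u=0$.

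For \emph{surjectivity} I would first reduce to the unweighted case. The map $u\mapsto\langle{\bf x}\rangle^\gamma u$ is an isomorphism of $W^{t,p}_\gamma(\R^d)$ onto $W^{t,p}(\R^d)$ for every integer $t\ge0$, which one checks from the Leibniz rule and the bounds $|D^\beta\langle{\bf x}\rangle^\gamma|\le C_\beta\langle{\bf x}\rangle^\gamma$ by induction on $|\beta|$. Hence it suffices to show that $T_\gamma:=\langle{\bf x}\rangle^\gamma(\Id-\Delta)\langle{\bf x}\rangle^{-\gamma}\colon W^{s,p}(\R^d)\to W^{s-2,p}(\R^d)$ is an isomorphism; we may take $s\ge2$, the case of negative $s$ reducing to this one by duality since $\Id-\Delta$ is formally self-adjoint. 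A direct computation gives $T_\gamma=(\Id-\Delta)+B_\gamma$, where $B_\gamma$ is a differential operator of order $\le1$ with coefficients $\langle{\bf x}\rangle^\gamma\nabla\langle{\bf x}\rangle^{-\gamma}$ and $\langle{\bf x}\rangle^\gamma\Delta\langle{\bf x}\rangle^{-\gamma}$; these are polynomials in $\gamma$, smooth, and — with all their derivatives — decay like $\langle{\bf x}\rangle^{-1}$ at infinity.

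Next I would combine two facts. First, $(\Id-\Delta)\colon W^{s,p}(\R^d)\to W^{s-2,p}(\R^d)$ is an isomorphism: for $p=2$ this is Plancherel, and for general $p$ it follows from the Mikhlin--H\"ormander multiplier theorem applied to $(1+|{\bf k}|^2)^{\pm1}$ (equivalently, $(\Id-\Delta)^{-1}$ is convolution with the integrable order-$2$ Bessel kernel, followed by interior elliptic estimates on a ball covering). Second, $B_\gamma\colon W^{s,p}(\R^d)\to W^{s-2,p}(\R^d)$ is \emph{compact}: it maps boundedly into $W^{s-1,p}(\R^d)$, and because its coefficients vanish at infinity the image of a bounded sequence is precompact in $W^{s-2,p}$ on each ball (Rellich) and uniformly small outside a large ball, so a diagonal argument extracts a convergent subsequence. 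Hence $T_\gamma$ is Fredholm of index $0$, and therefore so is $(\Id-\Delta)\colon W^{s,p}_\gamma\to W^{s-2,p}_\gamma$; combined with the injectivity above this forces it to be an isomorphism. Alternatively, one may invoke Kato's theorem: $\gamma\mapsto(\Id-\Delta)^{-1}B_\gamma$ is an entire family of compact operators on $W^{s,p}(\R^d)$ vanishing at $\gamma=0$, so $\Id+(\Id-\Delta)^{-1}B_\gamma$ is invertible off a discrete set of $\gamma\in\C$, and the injectivity argument shows it is in fact invertible at every real $\gamma$.

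The only substantive point is the compactness of $B_\gamma$: on the noncompact domain $\R^d$ a one-derivative gain does not by itself give compactness, and it is precisely the $\langle{\bf x}\rangle^{-1}$ decay of the commutator coefficients — which is built into the structure of the weight $\langle{\bf x}\rangle^\gamma$ — together with Rellich's theorem on balls that rescues the argument. Everything else is either classical or a routine application of the Fredholm alternative (respectively Kato's analytic Fredholm theorem).
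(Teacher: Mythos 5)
Your proof is correct, and its overall skeleton coincides with the paper's: conjugate by the weight $\langle{\bf x}\rangle^\gamma$ to reduce to the unweighted Sobolev scale, write the conjugated operator as $(\Id-\Delta)+B_\gamma$ with $B_\gamma$ a first-order operator whose coefficients decay like $\langle{\bf x}\rangle^{-1}$, and deduce via Rellich--Kondrachov (plus the uniform smallness of the coefficients outside large balls, which you rightly flag as the essential point) that the perturbation is compact, so the operator is Fredholm of index zero. Where you genuinely diverge from the paper is in ruling out a kernel. The paper argues indirectly: a nontrivial kernel element at some $\gamma^*$ would, by the monotone embedding of the weighted spaces, produce nontrivial kernels for all $\gamma<\gamma^*$, and Kato's theorem on analytic families of compact operators then forces a nontrivial kernel at every $\gamma\in\C$, contradicting invertibility at $\gamma=0$. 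You instead prove injectivity directly: every element of $W^{s,p}_\gamma$ is a tempered distribution, $(\Id-\Delta)u=0$ gives $(1+|{\bf k}|^2)\hat u=0$, and since $(1+|{\bf k}|^2)^{-1}$ is a multiplier on $\mathcal S'$ this forces $u=0$. Your route is more elementary and self-contained (no appeal to analytic Fredholm theory), and it localizes the vanishing of the kernel to a one-line Fourier argument valid uniformly in $\gamma$; the paper's route has the advantage of being a template that generalizes to operators whose symbol one cannot invert so explicitly, which is presumably why the authors phrase it that way (the same Kato argument reappears in their treatment of the convolution operators). One shared, very minor gap: both you and the paper treat $s\ge 2$ directly and $s\le 0$ by duality, which leaves $s=1$ to an unstated interpolation step; since the paper does exactly the same, this is not a defect of your argument relative to theirs.
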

\begin{proof}
We show the result only for $s\geq 2$, since the results can be extended to all $s \in \Z$ by duality. We have the following commutative diagram
\begin{center}
\begin{tikzcd}[column sep = large, row sep= huge]
W^{s,p}_\gamma(\R^d) \arrow[r, "(\Id-\Delta)"] \arrow[d, "\langle {\bf x} \rangle^\gamma"] 
& W^{s-2,p}_\gamma(\R^d) \arrow[d, "\langle {\bf x} \rangle^\gamma"] \\
W^{s,p}(\R^d) \arrow[r, "A(\gamma)"] 
& W^{s-2,p}(\R^d)
\end{tikzcd}
\end{center}
where $A(\gamma)u = (\Id- \Delta) u + T(\gamma) u$, and $T(\gamma) u= -\gamma(\gamma+2)|{\bf x}|^2 \langle {\bf x} \rangle^{-4} u + \gamma \langle {\bf x} \rangle^{-2} u  + 2\gamma \langle {\bf x} \rangle^{-2} {\bf x} \cdot \nabla u$ . The operator $T(\gamma)$ may be approximated using compactly supported functions. It then follows by the Rellich Kondrachov embedding theorem that $T(\gamma)$ is a compact perturbation of $(\Id-\Delta): W^{s,p}(\R^d) \rightarrow W^{s-2,p}(\R^d)$. 

The results of the lemma follow if we can show that $\ker A(\gamma) =\{0\}$ for all $\gamma \in \R$. Suppose for the moment that this is not true and that we can find a number $\gamma^*\in \R$ and a function $u \in W^{s,p}(\R^d)$ such that $A(\gamma^*)u =0$. Then, from the commutative diagram and the embedding $W^{s,p}_{\gamma^*}(\R^d) \subset W^{s,p}_{\gamma}(\R^d)$, with $\gamma< \gamma^*$, we obtain that $A(\gamma)u = 0$ for all $\gamma< \gamma^*$.

On the other hand, the operator 
\[
\begin{array}{c c c}
W^{s,p}(\R^d) & \longrightarrow &W^{s-2,p}(\R^d)\\
u &\longmapsto  & (\Id-\Delta)^{-1} T(\gamma) u,
\end{array}\]
which is  compact and analytic for all $\gamma \in \C$, has $\lambda =1$ as an eigenvalue if and only if  $A(\gamma)$ has a non trivial kernel. We have therefore shown that $\lambda=1$ is an eigenvalue of $(\Id-\Delta)^{-1} T(\gamma)$ for all $\gamma<\gamma^*$. Now, Kato's Theorem implies that $\lambda =1$ is an eigenvalue of $(\Id-\Delta)^{-1} T(\gamma)$ for all $\gamma \in \C$. In particular, this result holds form $\gamma=0$, which is a contradiction as $(\Id-\Delta)$ is invertible. It therefore follows that $\ker A(\gamma) =\{0\}$ for all $\gamma \in \R$ and that $(\Id-\Delta): W^{s,p}_{\gamma}(\R^d) \rightarrow W^{s-2,p}_{\gamma}(\R^d)$ is an isomorphism.

\end{proof}

Before showing the Fredholm properties of $\Delta^m$ in Kondratiev spaces we need some results concerning homogenous polynomials. For more detailed proofs regarding the following results see \cite{stein2016introduction}.

{\bf Homogenous Polynomials:} Let $\bbP_m$ denote the space of complex valued homogenous polynomials in $\R^d$ of degree $m$, and let $\bbH_m \subset \bbP_m$ denote the subspace of harmonic polynomials. One can define a inner product in $\bbP_m$, via $\langle P, Q\rangle = P(D)\bar{Q}$, where the bar  denotes complex conjugation and $P(D)$ is the differential operator by which $x_i$ is replaced with $\partial/\partial_j$. With this notion of an inner product it is possible to show that:

\begin{Lemma}\label{l:factorization}
The space  $\bbP_m$ is a direct sum of the form
\[\bbP = \bbH_m \oplus |{\bf x} |^2 \bbH_{m-2} \oplus |{\bf x} |^4 \bbH_{m-4} \oplus \cdots \oplus |{\bf x} |^{2 \alpha} \bbH_{m -2\alpha},\]
where $\alpha$ is a positive integer such that $m =2 \alpha$ if $m$ is even, or $ m+1 = 2 \alpha$ if $m$ is odd.
\end{Lemma}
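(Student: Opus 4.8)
The plan is to endow $\bbP_m$ with the pairing $\langle P, Q\rangle = P(D)\bar Q$, verify that it is a genuine positive-definite inner product, identify $\bbH_m$ as the orthogonal complement of the subspace $|{\bf x}|^2\bbP_{m-2}$ inside $\bbP_m$, and then iterate the resulting splitting down to degree $0$ or $1$.

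First I would check positive-definiteness. Writing $P = \sum_{|\alpha|=m} a_\alpha {\bf x}^\alpha$, a direct computation gives $\langle P, P\rangle = P(D)\bar P = \sum_{|\alpha|=m} |a_\alpha|^2\,\alpha!$, since $D^\alpha {\bf x}^\beta$ vanishes whenever $\alpha \neq \beta$ are multi-indices of the same total degree $m$, and equals $\alpha!$ when $\alpha = \beta$. Hence $\langle\cdot,\cdot\rangle$ is an honest inner product on each $\bbP_k$, and in particular it is nondegenerate. Next I would establish the adjoint identity: for $P \in \bbP_{m-2}$ and $Q \in \bbP_m$,
\[
\langle |{\bf x}|^2 P,\, Q\rangle = \langle P,\, \Delta Q\rangle .
\]
This follows because the polynomial $|{\bf x}|^2 P({\bf x}) = \big(\textstyle\sum_j x_j^2\big)P({\bf x})$ corresponds under $P \mapsto P(D)$ to the operator $\big(\sum_j \partial_j^2\big)P(D) = \Delta \circ P(D)$, so that $\langle |{\bf x}|^2 P, Q\rangle = \Delta\big(P(D)\bar Q\big) = P(D)\big(\Delta\bar Q\big) = P(D)\overline{\Delta Q} = \langle P, \Delta Q\rangle$, using that $\Delta$ and $P(D)$ are constant-coefficient operators and $\Delta$ has real coefficients.

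From this identity, a polynomial $Q \in \bbP_m$ is orthogonal to the subspace $|{\bf x}|^2\bbP_{m-2}$ if and only if $\langle P, \Delta Q\rangle = 0$ for all $P \in \bbP_{m-2}$; since $\Delta Q \in \bbP_{m-2}$ and the inner product on $\bbP_{m-2}$ is nondegenerate, this holds precisely when $\Delta Q = 0$, i.e. $Q \in \bbH_m$. Therefore the orthogonal decomposition of the finite-dimensional inner product space $\bbP_m$ reads $\bbP_m = \bbH_m \oplus |{\bf x}|^2\bbP_{m-2}$. Applying the same statement successively to $\bbP_{m-2}$, $\bbP_{m-4}$, \ldots, and noting that the recursion terminates at $\bbP_0$ (when $m$ is even) or $\bbP_1$ (when $m$ is odd), both of which consist entirely of harmonic polynomials, yields the claimed telescoping decomposition $\bbP_m = \bbH_m \oplus |{\bf x}|^2\bbH_{m-2} \oplus |{\bf x}|^4\bbH_{m-4} \oplus \cdots \oplus |{\bf x}|^{2\alpha}\bbH_{m-2\alpha}$.

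The only step requiring real care — the main, though modest, obstacle — is the adjoint identity $\langle |{\bf x}|^2 P, Q\rangle = \langle P, \Delta Q\rangle$: one must confirm that multiplication by $|{\bf x}|^2$ on the polynomial side corresponds exactly to pre-composition with the Laplacian on the operator side, and keep track of the complex conjugation so that $\Delta$ lands on $Q$ with no stray constants. Everything else is linear algebra in finite dimensions, together with the elementary observation that polynomials of degree $\leq 1$ are harmonic.
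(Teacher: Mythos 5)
Your proof is correct, and it is exactly the standard Stein--Weiss argument that the paper itself points to: the paper does not prove this lemma but introduces the pairing $\langle P,Q\rangle = P(D)\bar{Q}$ immediately beforehand and cites \cite{stein2016introduction}, and your verification of positive-definiteness, the adjoint identity $\langle |{\bf x}|^2P,Q\rangle=\langle P,\Delta Q\rangle$, and the iteration of $\bbP_k=\bbH_k\oplus|{\bf x}|^2\bbP_{k-2}$ is precisely that intended proof.
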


Since the dimension of $\bbP_m$ is given by $d_m = { n +m -1 \choose m}$, it follows from Lemma \ref{l:factorization} that the $\dim (\bbH_m ) ={ n +m -1 \choose m} - { n +m -3 \choose m-2}.$

The following notation,
\[
\mathcal{H}_{m,k} = \left\{
\begin{array}{l c c}
  \bbP_m &\mbox{if} & m< 2k\\
 \bbH_m \oplus |{\bf x} |^2\bbH_{m-2} \oplus \cdots \oplus |{\bf x} |^{2(k-1)}\bbH_{m -2(k-1)} & \mbox{if} & m\geq 2k
 \end{array}
 \right. \]
is used in the next lemma.
\begin{Lemma}\label{l:kernel}
The operator $\Delta^k:\bbP_m \rightarrow \bbP_{m-2}$ is onto with kernel $\mathcal{H}_{m,k}$
\end{Lemma}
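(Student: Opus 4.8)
The plan is to reduce everything to one explicit computation — the action of $\Delta$ on the building blocks $|{\bf x}|^{2j}\bbH_p$ — and then read off the kernel and image from the decomposition in Lemma~\ref{l:factorization}. Throughout I will use that $\bbP_m = \bigoplus_{0\leq j\leq \lfloor m/2\rfloor} |{\bf x}|^{2j}\bbH_{m-2j}$, with the convention $\bbH_q = \{0\}$ for $q<0$, and I will prove the statement with codomain $\bbP_{m-2k}$ (which is what is forced by homogeneity: $\Delta^k$ lowers the degree by $2k$).

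\emph{Step 1: the fundamental identity.} I would first record that if $h\in\bbH_p$ is harmonic and homogeneous of degree $p$, then for every integer $j\geq 1$,
\[
\Delta\bigl(|{\bf x}|^{2j}h\bigr) = 2j\,(2j+2p+d-2)\,|{\bf x}|^{2(j-1)}h .
\]
This is immediate from $\Delta(fg)=f\,\Delta g+2\,\nabla f\cdot\nabla g+g\,\Delta f$ with $f=|{\bf x}|^{2j}$, $g=h$, using $\Delta h=0$, Euler's relation ${\bf x}\cdot\nabla h=p\,h$, and $\nabla|{\bf x}|^{2j}=2j\,|{\bf x}|^{2(j-1)}{\bf x}$, $\Delta|{\bf x}|^{2j}=2j(2j+d-2)|{\bf x}|^{2(j-1)}$. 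The scalar $2j(2j+2p+d-2)$ is strictly positive for all $j\geq1$, $p\geq0$, $d\geq1$. Hence $\Delta$ maps $|{\bf x}|^{2j}\bbH_p$ isomorphically onto $|{\bf x}|^{2(j-1)}\bbH_p$ for $j\geq1$, and annihilates $\bbH_p$ ($j=0$).

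\emph{Step 2: block decomposition of $\Delta^{k}$.} Since $\Delta$ respects the grading $\bbP_m=\bigoplus_j |{\bf x}|^{2j}\bbH_{m-2j}$ by Step~1, so does $\Delta^k$. On the $j$-th summand, $\Delta^k$ is the composite of $\min(j,k)$ of the nonzero rescalings of Step~1, followed by a zero map when $j<k$. Thus: if $j\leq k-1$ then $\Delta^k$ kills $|{\bf x}|^{2j}\bbH_{m-2j}$, whereas if $j\geq k$ then $\Delta^k$ restricts to an isomorphism of $|{\bf x}|^{2j}\bbH_{m-2j}$ onto $|{\bf x}|^{2(j-k)}\bbH_{m-2j}$. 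Summing the killed summands gives
\[
\ker\Delta^k=\bigoplus_{0\leq j\leq\min(k-1,\,\lfloor m/2\rfloor)} |{\bf x}|^{2j}\bbH_{m-2j},
\]
which is exactly $\mathcal{H}_{m,k}$: when $m\geq 2k$ the upper index is $k-1$ and we recover $\bbH_m\oplus|{\bf x}|^2\bbH_{m-2}\oplus\cdots\oplus|{\bf x}|^{2(k-1)}\bbH_{m-2(k-1)}$; when $m<2k$ the upper index is $\lfloor m/2\rfloor$ and the sum is all of $\bbP_m$. For surjectivity, the image is $\bigoplus_{k\leq j\leq\lfloor m/2\rfloor}|{\bf x}|^{2(j-k)}\bbH_{m-2j}$; reindexing by $i=j-k$ (so $0\leq i\leq\lfloor(m-2k)/2\rfloor$) this becomes $\bigoplus_i |{\bf x}|^{2i}\bbH_{(m-2k)-2i}$, which is precisely $\bbP_{m-2k}$ by a second application of Lemma~\ref{l:factorization}, and is $\{0\}$ when $m<2k$. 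Hence $\Delta^k:\bbP_m\to\bbP_{m-2k}$ is onto with kernel $\mathcal{H}_{m,k}$.

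The argument is essentially bookkeeping once Step~1 is available, so I do not expect a genuine obstacle. The one point that needs care is the claim in Step~2 that the image is \emph{all} of $\bbP_{m-2k}$ rather than a proper subspace — this is where Lemma~\ref{l:factorization} is invoked a second time (on degree $m-2k$) — together with correctly handling the degenerate cases $m-2k<0$, where the target space is trivial, and $m-2j<0$, where the corresponding summands vanish.
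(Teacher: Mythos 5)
Your proof is correct, but it takes a genuinely different route from the paper's. You work entirely on the "primal" side: the explicit identity $\Delta(|{\bf x}|^{2j}h)=2j(2j+2p+d-2)\,|{\bf x}|^{2(j-1)}h$ for $h\in\bbH_p$ (which is right, via Euler's relation and $\Delta|{\bf x}|^{s}=s(s+d-2)|{\bf x}|^{s-2}$) shows that $\Delta^k$ acts block-diagonally on the decomposition $\bbP_m=\bigoplus_j|{\bf x}|^{2j}\bbH_{m-2j}$, killing the blocks with $j\le k-1$ and mapping each block with $j\ge k$ isomorphically onto $|{\bf x}|^{2(j-k)}\bbH_{m-2j}$; the kernel and image are then read off directly, invoking Lemma~\ref{l:factorization} a second time in degree $m-2k$ to identify the image with all of $\bbP_{m-2k}$. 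The paper instead exploits the Fischer inner product $\langle P,Q\rangle=P(D)\bar Q$ already introduced for Lemma~\ref{l:factorization}: surjectivity follows because a $q$ orthogonal to the image forces $\langle p,p\rangle=0$ for $p=|{\bf x}|^{2k}q$, and the kernel is identified as the orthogonal complement of $|{\bf x}|^{2k}\bbP_{m-2k}$, using that multiplication by $|{\bf x}|^{2k}$ is adjoint to $\Delta^k$. The duality argument is shorter and needs no constants; your computation is more elementary and self-contained, makes the block structure and the positive eigenvalue-type coefficients explicit, and also cleanly handles the degenerate cases $m<2k$ and the codomain $\bbP_{m-2k}$ (the ``$\bbP_{m-2}$'' in the statement is evidently a typo, which you correctly silently fix).
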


\begin{proof}
The result is trivially true if $m \leq 2k$. For the case $m>2k$ suppose the result does not hold, so that there exist a $q({\bf x} ) \in \bbP_{m-2k}$ such that $\langle \Delta^k p, q \rangle =0$ for all $p({\bf x} ) \in \bbP_m$. Take $p({\bf x} ) = |{\bf x} |^{2k} q({\bf x} )$ and notice that 
\[ 0 = \overline{\langle \Delta^k p,q \rangle }= \langle q, \Delta^k p \rangle = q(D) \Delta^k \overline{p} = \Delta^k q(D)\bar{p} = \langle p, p \rangle.\]
Since $\langle \cdot, \cdot \rangle$ is an inner product, it follows that $p({\bf x} ) \equiv 0$ and as a consequence the map $\Delta^k:\bbP_m \rightarrow \bbP_{m-2k}$ is onto.

Now, let $r({\bf x} ) = |{\bf x} |^{2k} q({\bf x} )$  for some $q({\bf x} ) \in \bbP_{m-2k}$, and take $p({\bf x} ) \in \bbP_m$ such that $\langle r,p \rangle =0$. This last equality holds if and only if
\[ \langle r,p \rangle = r(D) \bar{p} = q(D) \Delta^k \bar{p} =\langle q, \Delta^k p \rangle =0. \]
That is, if and only if $p({\bf x} ) \in \ker \Delta^k$. Therefore, $\bbP_m = \ker(\Delta^k) \oplus |{\bf x} |^{2k} \bbP_{m-2k}$, and by Lemma \ref{l:factorization} we must have
\begin{align*}
 \ker(\Delta^k) &= \bbH_m \oplus |{\bf x} |^2\bbH_{m-2} \oplus \cdots \oplus |{\bf x} |^{2(k-1)}\bbH_{m -2(k-1)} \\
 &=\mathcal{H}_{m,k}.
 \end{align*}

\end{proof}

We are now ready to state the Fredholm properties of $\Delta^m:M^{2m,p}_{\gamma-2m}(\R^d) \rightarrow L^p_{\gamma}(\R^d)$.

\begin{Lemma}\label{l:Laplacefredholm}
Given $\gamma \in \R$, $\alpha, \beta \in \N \bigcup \{ 0\}$, $m, n \in \Z^+$, and $p \in (1,\infty)$ the operator
\[\begin{array}{c c c}
 M^{2m,p}_{\gamma-2m} (\R^d)& \longrightarrow & L^p_{\gamma}(\R^d)\\
u & \longmapsto &\Delta^m u
\end{array}
\]
is  
\begin{itemize}
\item Fredholm for $\alpha+n/p<\gamma< -\beta -n/p +2m$ with kernel and cokernel given by
\[ \ker = \bigcup_{j=0}^\beta \mathcal{H}_{j,k}\quad \coker = \bigcup_{j=0}^\alpha \mathcal{H}_{j,k};\]
\item and it is not Fredholm for  values of $\gamma \in \{ j + n/p : j \in \Z\}$.
\end{itemize}
\end{Lemma}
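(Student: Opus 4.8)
The plan is to factor $\Delta^m$ through a chain of Kondratiev spaces and reduce everything to the single–Laplacian result, Theorem~\ref{McOwen}. Directly from the definition of the $M^{s,p}_\gamma$–norm one checks that, for every $\sigma\in\R$ and $1\le k\le m$, the map $\Delta: M^{2k,p}_{\sigma-2k}(\R^d)\to M^{2(k-1),p}_{\sigma-2(k-1)}(\R^d)$ is bounded, and that composing these $m$ maps (with $\sigma=\gamma$) is exactly $\Delta^m: M^{2m,p}_{\gamma-2m}(\R^d)\to L^p_\gamma(\R^d)$. The first and, I expect, most technical step is a weighted elliptic–regularity statement: for each $k$, the operator $\Delta: M^{2k,p}_{\sigma-2k}\to M^{2(k-1),p}_{\sigma-2(k-1)}$ has the same kernel, cokernel, and Fredholm index as $\Delta: M^{2,p}_{\sigma-2k}\to L^p_{\sigma-2k+2}$. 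Indeed, if $u\in M^{2,p}_{\sigma-2k}$ and $\Delta u\in M^{2(k-1),p}_{\sigma-2k+2}$, then applying interior elliptic $L^p$ estimates (Calder\'on--Zygmund) on dyadic annuli $\{|{\bf x}|\sim 2^j\}$ and tracking the scaling of the weight $\langle{\bf x}\rangle:=(1+|{\bf x}|^2)^{1/2}$ across each annulus bootstraps $u$ into $M^{2k,p}_{\sigma-2k}$; conversely $M^{2k,p}_{\sigma-2k}\subset M^{2,p}_{\sigma-2k}$, so the two operators share the same kernel, and their (closed) ranges have the same finite–codimensional complement in $L^p_{\sigma-2k+2}$.

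With this in hand, apply Theorem~\ref{McOwen} to the $k$–th factor with target weight $\sigma_k:=\gamma-2(k-1)$, $k=1,\dots,m$. The numbers $\sigma_1,\dots,\sigma_m$ are pairwise congruent modulo $2$, hence either all lie in, or all lie outside, the exceptional set $\{\,j+d/p:j\in\Z\,\}$. For $\gamma\notin d/p+\Z$ every factor is thus Fredholm, so the composite $\Delta^m$ is Fredholm with index the sum of the factor indices; following the McOwen dichotomy across the window $\alpha+d/p<\gamma<2m-\beta-d/p$ produces the asserted Fredholm range. For $\gamma\in d/p+\Z$ the outermost factor $\Delta: M^{2,p}_{\gamma-2}\to L^p_\gamma$ already fails to have closed range by the last clause of Theorem~\ref{McOwen}; precomposing it with the remaining $m-1$ factors, which are Fredholm and so have closed range and finite–dimensional kernel and cokernel, cannot restore closed range, so $\Delta^m$ is not Fredholm there.

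Finally, I would identify $\ker$ and $\coker$ directly rather than through the composition. If $u\in\ker(\Delta^m: M^{2m,p}_{\gamma-2m}\to L^p_\gamma)$, then $\Delta^m u=0$ as a distribution, so by elliptic regularity $u$ is a smooth polyharmonic function of order $m$; membership in a Kondratiev space forces at most polynomial growth, and a Liouville argument via the Almansi representation $u=\sum_{i=0}^{m-1}|{\bf x}|^{2i}h_i$ with $h_i$ harmonic then shows $u$ is a polyharmonic polynomial of order $m$. The weight $\langle{\bf x}\rangle^{\gamma-2m}$ admits exactly the polynomials of degree $<2m-\gamma-d/p$, i.e.\ degree $\le\beta$, so by Lemma~\ref{l:kernel} and Lemma~\ref{l:factorization} the kernel is $\bigcup_{j=0}^{\beta}\mathcal{H}_{j,m}$. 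For the cokernel, $\Delta^m$ is formally self–adjoint and the relevant duals are $(L^p_\gamma)^*=L^q_{-\gamma}$ and $(M^{2m,p}_{\gamma-2m})^*=M^{-2m,q}_{2m-\gamma}$, so $\coker(\Delta^m)\cong\ker(\Delta^m: L^q_{-\gamma}\to M^{-2m,q}_{2m-\gamma})$, and the same polyharmonic–polynomial argument — now with the weight $\langle{\bf x}\rangle^{-\gamma}$, which under $\gamma>\alpha+d/p$ admits precisely the polynomials of degree $\le\alpha$ — gives $\coker=\bigcup_{j=0}^{\alpha}\mathcal{H}_{j,m}$. Beyond the weighted regularity lifting of the first paragraph, the only delicate point is keeping the McOwen index bookkeeping consistent at the endpoints of the $\gamma$–window; the rest is routine.
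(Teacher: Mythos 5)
Your proposal reaches the right conclusion by essentially the same ingredients as the paper --- Theorem~\ref{McOwen} for the single Laplacian, Lemma~\ref{l:kernel} to identify the polyharmonic polynomials, and the weight condition for when $\bbP_j$ sits inside the domain or the dual of the target --- but it supplies a genuine argument where the paper's proof is a three-line citation. The two real additions are (i) the factorization of $\Delta^m$ through the chain $M^{2k,p}_{\gamma-2k}\to M^{2(k-1),p}_{\gamma-2(k-1)}$, which requires the weighted elliptic-regularity lifting you sketch on dyadic annuli (this is exactly the step the paper buries in the phrase ``from the results in McOwen it is straightforward''; McOwen's original theorem is in fact stated for $\Delta:M^{s+2,p}_\delta\to M^{s,p}_{\delta+2}$ with general $s$, which is what the lifting reproves), and (ii) the direct identification of kernel and cokernel via the Liouville/Almansi argument and duality, which is cleaner than trying to track kernels through the composition. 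Both are sound.

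Two points deserve correction. First, in the non-Fredholm case your parenthetical claim that the inner $m-1$ factors ``are Fredholm and so have closed range'' is false: their McOwen weights $\gamma-2(k-1)$ differ from $\gamma$ by even integers, and the exceptional set is invariant under integer shifts, so at $\gamma\in d/p+\Z$ \emph{every} factor is non-Fredholm. The conclusion survives by running the containment the other way: $\mathrm{Ran}(\Delta^m)=\Delta\bigl(\Delta^{m-1}(M^{2m,p}_{\gamma-2m})\bigr)\subseteq \mathrm{Ran}\bigl(\Delta: M^{2,p}_{\gamma-2}\to L^p_\gamma\bigr)$, and if the left-hand side were closed of finite codimension then the right-hand side, being sandwiched between a closed finite-codimensional subspace and $L^p_\gamma$, would itself be closed, contradicting the last clause of Theorem~\ref{McOwen}. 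Second, in the cokernel computation you correctly place the annihilator of the range in $L^q_{-\gamma}$, but then state the admissibility threshold as $\gamma>\alpha+d/p$; membership of $\bbP_\alpha$ in $L^q_{-\gamma}$ requires $\gamma>\alpha+d/q$, consistent with clause (2) of Theorem~\ref{McOwen}. The discrepancy is inherited from the lemma's own statement (which writes $n/p$ throughout) and is invisible in the paper's applications since only $p=2$ is used, but your proof should not silently conflate the two exponents.
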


\begin{proof}
From the results in \cite{mcowen1979behavior}, it is straight forward to see that the operator is Fredholm. The span of the kernel and cokernel follows from Lemma \ref{l:kernel}, while the range of values for $\gamma$ can be found by determining when the subspace $\bbP_j $ is contained in $ \in L^p_{\gamma-2m}(\R^d)$, or in  $L^p_{-\gamma}(\R^d)$.
\end{proof}

\subsection{Fredholm properties of radial derivatives} \label{s:appendixRadialderivatives}
In what follows, $\gamma \in \R$ and $p,q \in (1,\infty)$ are conjugate exponents. We also use the notation $\langle r \rangle = (1+r^2)^{1/2}$.
\begin{Lemma}\label{l:invertible}
Let $\gamma \in \R$, and $p \in (1, \infty)$. Then  the operator 
\[
\begin{array}{c c c}
W^{1,p}_{r,\gamma}(\R^2)& \longrightarrow &L^p_{r,\gamma}(\R^2)\\
u & \longmapsto&  \partial_ru + \frac{1}{r}u -u 
\end{array}
\]
 is an invertible operator.
\end{Lemma}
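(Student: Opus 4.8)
The plan is to prove invertibility by exhibiting the inverse explicitly. The operator $u \mapsto \partial_r u + \tfrac{1}{r} u - u$ can be rewritten in a way that absorbs the zeroth-order terms into an integrating factor: noting that $\partial_r u + \tfrac 1r u = \tfrac 1r \partial_r(ru)$, the equation $\partial_r u + \tfrac 1r u - u = f$ becomes, after setting $v = ru$, the first-order linear ODE $\partial_r v - v = r f$. This integrates to $v(r) = -\rme^{r}\int_r^\infty \rme^{-s}\, s f(s)\, ds$, provided $f$ has enough decay at infinity (which it does, since $f \in L^p_{r,\gamma}(\R^2)$ and the weight plus the exponential make the tail integral converge), and the choice of $\infty$ as the base point is dictated by requiring $v$, hence $u = v/r$, to decay rather than grow exponentially. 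Thus the candidate inverse is
\[
(\mathcal{S}f)(r) = -\frac{\rme^{r}}{r}\int_r^\infty \rme^{-s}\, s\, f(s)\, ds.
\]
First I would verify the algebraic identity $(\partial_r + \tfrac1r - 1)\mathcal{S}f = f$ and $\mathcal{S}(\partial_r + \tfrac1r - 1)u = u$ for $u \in W^{1,p}_{r,\gamma}(\R^2)$ by a direct differentiation, being careful about the behaviour at $r = 0$: since $f$ is (locally) in $L^p$ on the disk, $s f(s)$ is integrable near the origin and $\mathcal{S}f(r) = \rmO(1/r)\cdot\rmO(1)$, but in fact the factor $\int_r^\infty$ combined with regularity forces $\mathcal{S}f$ to be $\rmO(r)$ as $r \to 0$, consistent with membership in a radial Sobolev space on $\R^2$; this point needs a short argument using that radial $W^{1,p}(\R^2)$ functions vanish appropriately at the origin.

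The substantive analytic step is the boundedness estimate: $\|\mathcal{S}f\|_{W^{1,p}_{r,\gamma}(\R^2)} \le C\|f\|_{L^p_{r,\gamma}(\R^2)}$. I would split the estimate into the region $r \le 1$ and $r \ge 1$. On $r \ge 1$, the exponential factors give pointwise domination: $|\mathcal{S}f(r)| \le \tfrac{\rme^r}{r}\int_r^\infty \rme^{-s} s |f(s)|\,ds$, and since $s/r \le s$ and $\rme^{r-s} \le \rme^{-(s-r)}$ decays, this is a convolution-type bound against an integrable kernel on $[1,\infty)$; because the weight $\langle r\rangle^\gamma$ varies slowly (it is essentially constant on unit scales) while the exponential kernel is concentrated on unit scales, a standard Schur-test / Young's inequality argument controls $\|\langle r\rangle^\gamma \mathcal{S}f\|_{L^p(r\,dr)}$ by $\|\langle r\rangle^\gamma f\|_{L^p(r\,dr)}$. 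The derivative $\partial_r \mathcal{S}f = \mathcal{S}f + f$ (from the defining ODE $\partial_r v - v = rf$ one gets $\partial_r u = u/r - u + \ldots$; more directly, $\partial_r\mathcal{S}f$ equals $f$ minus lower-order terms already controlled), so the $W^{1,p}$ bound reduces to the $L^p$ bound. On $r \le 1$ the exponentials are harmless and one uses instead the elementary Hardy-type inequality for the averaging operator $f \mapsto r^{-1}\int_r^\infty sf\,ds$ restricted to a bounded interval, together with a crude bound on the contribution of the tail $\int_1^\infty$.

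The main obstacle I anticipate is the bookkeeping near $r = 0$ and the uniform-in-$\gamma$ weighted estimate: one must check that $\mathcal{S}$ genuinely lands in $W^{1,p}_{r,\gamma}$ (not merely $L^p_{r,\gamma}$) and that the operator $\mathcal{S}$ is the two-sided inverse rather than just a right inverse, which requires knowing that the homogeneous equation $\partial_r u + \tfrac1r u - u = 0$, whose solutions are multiples of $\rme^r/r$, has no nonzero solution in $W^{1,p}_{r,\gamma}(\R^2)$ — this is immediate since $\rme^r/r$ grows exponentially and so cannot be weighted-$L^p$ for any polynomial weight $\langle r\rangle^\gamma$. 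With uniqueness in hand and the a priori bound established, the closed graph theorem (or simply the explicit formula) finishes the proof that $\partial_r + \tfrac1r - 1 : W^{1,p}_{r,\gamma}(\R^2) \to L^p_{r,\gamma}(\R^2)$ is an isomorphism.
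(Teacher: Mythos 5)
Your proposal follows essentially the same route as the paper's proof: the same explicit inverse $\mathcal{S}f(r) = \tfrac{1}{r}\int_\infty^r \rme^{r-s} s f(s)\,ds$ obtained from the integrating factor with base point at infinity, the same splitting of the estimate into $B_1$ and $\R^2\setminus B_1$, the same Young's-inequality argument for the exponentially localized kernel in the far field (the paper uses $\langle r\rangle^{\eta}\langle s\rangle^{-\eta}\le \langle r-s\rangle^{|\eta|}$ to reduce to a convolution on the half-line), and the same recovery of the derivative bound from the ODE. Your added remark on injectivity (the homogeneous solution $\rme^r/r$ grows exponentially) is correct and is left implicit in the paper.

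However, the one step you flag as needing ``a short argument'' --- the behaviour at $r=0$ --- is a genuine gap, and the conclusion you assert there is false as stated. As $r\to 0$ the tail integral converges to $A := \int_0^\infty \rme^{-s}s f(s)\,ds$, which is finite for $f\in L^p_{r,\gamma}(\R^2)$ but is \emph{not} zero for generic $f$ (e.g.\ $f\ge 0$ supported in $[1,2]$ gives $A>0$ and $\mathcal{S}f(r) = -A\rme^r/r$ for all $r<1$). Hence $\mathcal{S}f\sim -A/r$, not $\rmO(r)$; its gradient behaves like $r^{-2}$, which fails to be in $L^p$ near the origin in $\R^2$ for every $p\ge 1$, so $\mathcal{S}f\notin W^{1,p}_{r,\gamma}(\R^2)$. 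The alternative solution of the ODE that is regular at the origin, $\tfrac{\rme^r}{r}\int_0^r \rme^{-s}sf(s)\,ds$, grows like $A\rme^r/r$ at infinity, so for $A\neq 0$ no solution lies in the target domain: with the spaces as defined, the operator is injective with a one-dimensional cokernel (the functional $f\mapsto \int_0^\infty \rme^{-s}sf(s)\,ds$) rather than invertible. You should be aware that the paper's own proof contains exactly the same unjustified assertion (``$u(r)$ has the same type of singularity as $rf(r)$ near the origin''), so you have faithfully reproduced the published argument together with its weakest point; closing the gap requires either restricting to the closed subspace $\{f: \int_0^\infty \rme^{-s}sf(s)\,ds=0\}$, or enlarging the domain so that a $1/r$ singularity at the origin is admissible, neither of which is addressed in your proposal.
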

\begin{proof}
We show that the inverse operator
\[
\begin{array}{c c c}
L^p_{r,\gamma}(\R^2) & \longrightarrow & W^{1,p}_{r,\gamma}(\R^2)\\
f(r) & \longmapsto&   u(r) = \frac{1}{r} \int_{\infty}^r \rme^{-(r-s)}f(s) s \; ds
\end{array}
\]
is a bounded operator using the following inequality
\[ \| u\|_{L^p_\gamma (\R^2)} \leq \| u\|_{L^p_\gamma (B_1)} + \| u\|_{L^p_\gamma (\R^2 \backslash B_1)},\]
where $B_1$ is the unit ball in $\R^2$. 
First, given $f \in C^{\infty}_0$ a simple calculation shows that  $u(r)$  has the same type of singularity as $r f(r)$ near the origin. Since $C^{\infty}_0(\R)$ is dense in $L^p_\gamma(\R^2)$, the result also holds for any $f \in L^p_\gamma(\R^2)$ and consequently,
\[ \|u\|_{L^p_\gamma(B_1)} \leq C \|f\|_{L^p_\gamma(\R^2)},\]
as well as
\[ \|u(r)/r\|_{L^p_\gamma(B_1)} \leq C \|f\|_{L^p_\gamma(\R^2)}.\]

On the other hand, because $\langle r \rangle ^\eta \langle s \rangle^{-\eta} \leq \langle r-s \rangle^{|\eta|}$ holds for any $\eta\in \R$ we obtain
\begin{align*}
 \|u\|^p_{L^p_\gamma(\R^2 \backslash B_1)}  & = \int_1^{\infty} \left |\frac{1}{r} \int_\infty^r \rme^{(r-s) }f(s) s \;ds  \right|^p \langle r \rangle^{\gamma p} r \;dr \\
 & \leq  \int_1^{\infty} \left | \int_\infty^r \rme^{(r-s) } \langle r-s \rangle^{|\gamma - 1 + 1/p|} f(s) \langle s \rangle^{\gamma +1/p} \;ds  \right|^p  \;dr,
\end{align*}
from which we deduce using Young's inequality that $ \|u\|_{L^p_\gamma (\R^2 \backslash B_1)} \leq C(\gamma) \|f \|_{L^p_\gamma (\R^2)}$.  It is then straightforward to see that $\| \partial_r u \|_{L^p_\gamma (\R^2)} \leq \|f \|_{L^p_\gamma (\R^2)}$, since $\partial_r u = u - \frac{1}{r} u - f$.

\end{proof}

The next propositions states the Fredholm properties for $\partial_r$ and its adjoint $\partial_r + 1/r$.
\begin{Proposition}\label{p:dr}
Given $p \in (1,\infty)$ the operator $\partial_r :M^{k,p}_{r,\gamma-1}(\R^2) \rightarrow L^p_{r,\gamma}(\R^2)$ is a Fredholm operator and
\begin{itemize}
\item for $\gamma>1 -2/p$ it is invertible, whereas 
\item for $\gamma<1 -2/p$ it is surjective with $\ker =\{1\}$.
\end{itemize}
On the other hand, the operator does not have closed range for $\gamma = 1-2/p$.
\end{Proposition}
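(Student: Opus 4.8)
\emph{The plan} is to exploit that all functions here are radial and reduce the statement to one–dimensional weighted estimates on the half-line $(0,\infty)$ with the measure $r\,dr$. For radial $v$ one has $\|v\|_{L^p_{r,\beta}(\R^2)}^p \asymp \int_0^\infty |v(r)|^p(1+r^2)^{\beta p/2}\,r\,dr$, so that weighted membership is governed by the behaviour as $r\to\infty$, the weight being comparable to $1$ near the origin. I will carry out the argument for $k=1$ (the case used later, and the only one for which the target in the statement is literally $L^p_{r,\gamma}$); larger $k$, or the variants with target $M^{k-1,p}_{r,\gamma}$, follow by bootstrapping the higher derivatives of the antiderivative through the identity $\partial_r^{\,j}u=\partial_r^{\,j-1}f$.

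\emph{Kernel.} A radial $u$ killed by $\partial_r$ is constant, and $1\in M^{1,p}_{r,\gamma-1}(\R^2)$ exactly when $\int_0^\infty(1+r^2)^{(\gamma-1)p/2}r\,dr<\infty$, i.e. $(\gamma-1)p<-2$, i.e. $\gamma<1-2/p$. Hence $\ker\partial_r=\{1\}$ for $\gamma<1-2/p$ and $\ker\partial_r=\{0\}$ for $\gamma>1-2/p$.

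\emph{Bounded right inverse off the critical weight.} On the dense subspace of radial $C_0^\infty$ functions supported away from the origin and from infinity I would set, for $\gamma>1-2/p$, $Tf(r)=-\int_r^\infty f(s)\,ds$, and, for $\gamma<1-2/p$, $Tf(r)=\int_0^r f(s)\,ds$; in either case $\partial_r(Tf)=f$. The estimate $\|Tf\|_{L^p_{r,\gamma-1}}\le C\|f\|_{L^p_{r,\gamma}}$ is precisely the weighted Hardy inequality for the averaging operator $g\mapsto\int_r^\infty g$ (resp. $g\mapsto\int_0^r g$), whose validity is equivalent to $a:=(\gamma-1)p+2>0$, i.e. to $\gamma>1-2/p$ (resp. $\gamma<1-2/p$). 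Extending $T$ by continuity to $L^p_{r,\gamma}$ and using that $\partial_r$ is a closed operator $L^p_{r,\gamma-1}\to L^p_{r,\gamma}$, one gets $\partial_r T=\mathrm{id}$, hence $\partial_r$ has closed range and is onto. Combined with the kernel count: $\partial_r$ is an isomorphism for $\gamma>1-2/p$ and is Fredholm of index $1$ with $\ker=\{1\}$ for $\gamma<1-2/p$.

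\emph{The critical exponent $\gamma=1-2/p$ and the main obstacle.} This is the heart of the matter. Here the constant is no longer in the space ($(\gamma-1)p=-2$ makes $\int(1+r^2)^{-1}r\,dr$ diverge logarithmically), so if the range were closed there would be an a priori bound $\|u\|_{M^{1,p}_{r,\gamma-1}}\le C\|\partial_r u\|_{L^p_{r,\gamma}}$ on the whole domain (open mapping theorem). I would contradict this with the explicit family $u_n$ equal to $1$ on $[0,n]$, to $(2\log n-\log r)/\log n$ on $[n,n^2]$, and to $0$ on $[n^2,\infty)$ (mollified to be smooth, compactly supported, hence in the domain): then $\partial_r u_n=-(r\log n)^{-1}$ on $[n,n^2]$, so $\|\partial_r u_n\|_{L^p_{r,\gamma}}^p\sim(\log n)^{-p}\int_n^{n^2}r^{(\gamma-1)p+1}\,dr=(\log n)^{1-p}\to 0$ (using $(\gamma-1)p+1=-1$ and $p>1$), whereas $\|u_n\|_{L^p_{r,\gamma-1}}^p\gtrsim\int_1^n r^{-1}\,dr=\log n\to\infty$; rescaling $u_n$ to unit norm violates the a priori bound, so $\partial_r$ does not have closed range. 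I expect this endpoint construction to be the only genuinely delicate step: the two weighted Hardy inequalities are classical, the reduction to a half-line problem is immediate, and the density and closedness bookkeeping for the right inverse is routine.
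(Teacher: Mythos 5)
Your overall strategy --- reduce to one-dimensional weighted estimates in $r$, invert by an explicit antiderivative plus a Hardy inequality off the critical weight, and defeat closed range at $\gamma=1-2/p$ by a sequence violating the a priori bound forced by the open mapping theorem --- is essentially the paper's. For $\gamma>1-2/p$ your antiderivative from infinity is exactly Lemma~\ref{l:dr} (the paper proves the Hardy bound via the substitution $\tau=\ln r$ and Young's inequality), and at the endpoint your logarithmic profile is a valid alternative to the paper's normalized dilations $\psi(r/n)/\|\psi(r/n)\|$ in Lemma~\ref{l:drnotclosed}; your kernel computation is also correct. The step that fails as written is the right inverse for $\gamma<1-2/p$. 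The equivalence you invoke for $g\mapsto\int_0^r g$ is the classical \emph{power-weight} Hardy inequality on $(0,\infty)$, which requires the weight to be homogeneous down to $r=0$. In the Kondratiev norm the weight is comparable to $1$ near the origin (as you yourself note), and the planar measure $s\,ds$ degenerates there; the resulting local inequality $\bigl(\int_0^1\bigl|\int_0^rf\bigr|^p\,r\,dr\bigr)^{1/p}\le C\bigl(\int_0^1|f|^p\,s\,ds\bigr)^{1/p}$ holds only for $p>2$. For $1<p\le 2$ it fails outright: $f(s)=s^{-2/p}|\log s|^{-1}\chi_{(0,1/2)}(s)$ lies in $L^p_{r,\gamma}(\R^2)$ for every $\gamma$, yet $\int_0^rf(s)\,ds=+\infty$ for all $r>0$; truncating $f$ away from the origin gives a bounded family in $L^p_{r,\gamma}$ whose images under your $T$ blow up in $L^p_{r,\gamma-1}(B_1)$, so $T$ does not extend by continuity. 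Note that $p=2$ is precisely the case the paper uses downstream (Lemmas~\ref{l:fredholmLambda} and~\ref{l:invertibleLambda}).

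The gap is local and repairable: base the primitive at $r=1$, i.e.\ $Tf(r)=\int_1^rf(s)\,ds$. On $(0,1)$ H\"older gives $\bigl|\int_r^1f\bigr|\le\|f\|_{L^p(s\,ds)}\bigl(\int_r^1s^{-q/p}\,ds\bigr)^{1/q}\lesssim\|f\|\,(1+r^{1/q-1/p})$, whose $p$-th power is integrable against $r\,dr$; on $(1,\infty)$ the weight genuinely is a power and your Muckenhoupt computation applies verbatim, with finite constant exactly when $\gamma<1-2/p$. (The basepoint $r=\infty$ is also unavailable here, since for $\gamma<1-2/p$ elements of $L^p_{r,\gamma}$ need not be integrable at infinity.) The paper sidesteps this regime entirely by a different device: it proves the Hardy-type bound only for $\gamma>1-2/p$ (Lemma~\ref{l:dr}), establishes the analogous result for the formal adjoint $\partial_r+\tfrac1r$ when $\gamma>2-2/p$ (Lemma~\ref{l:dr1overr}), and then obtains surjectivity with kernel $\{1\}$ for $\gamma<1-2/p$ by duality. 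Your direct route is arguably more elementary once the basepoint is corrected.
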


Similarly, 
\begin{Proposition}\label{p:dr1overr}
Given $p \in (1,\infty)$, the operator $\partial_r +\frac{1}{r}:M^{k,p}_{r,\gamma-1}(\R^2) \rightarrow L^p_{r,\gamma}(\R^2)$ is a Fredholm operator and
\begin{itemize}
\item for $\gamma> 2-2/p$ it is injective with $\coker =\{1\}$, whereas
\item for $\gamma < 2-2/p$ it is invertible.
\end{itemize}
On the other hand, the operator does not have closed range for $\gamma = 2-2/p$.

\end{Proposition}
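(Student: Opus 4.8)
The plan is to deduce Proposition~\ref{p:dr1overr} from Proposition~\ref{p:dr} by duality, exploiting the fact that $\partial_r+\tfrac1r$ is, up to sign, the formal adjoint of $\partial_r$ with respect to the radial measure $r\,dr$. The starting point is the integration-by-parts identity: for radial functions $u,v$ that are smooth, regular at the origin and decaying at infinity,
\[
\int_0^\infty\Big(\partial_r u+\tfrac1r u\Big)\,v\;r\,dr
=\int_0^\infty\partial_r(ru)\,v\;dr
=-\int_0^\infty u\,\partial_r v\;r\,dr,
\]
the boundary contributions vanishing since $r\,uv\to0$ both as $r\to0^+$ and as $r\to\infty$. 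Because the pairing $\langle f,g\rangle=\int_0^\infty fg\;r\,dr$ realizes $L^q_{r,-\gamma}(\R^2)$ as the dual of $L^p_{r,\gamma}(\R^2)$ and $M^{-k,q}_{r,1-\gamma}(\R^2)$ as the dual of $M^{k,p}_{r,\gamma-1}(\R^2)$ (the duality of Kondratiev spaces recorded in Section~\ref{s:weightedspaces}, with $p,q$ conjugate), this identity says precisely that the Banach-space adjoint of $\partial_r+\tfrac1r:M^{k,p}_{r,\gamma-1}(\R^2)\to L^p_{r,\gamma}(\R^2)$ is $-\partial_r:L^q_{r,-\gamma}(\R^2)\to M^{-k,q}_{r,1-\gamma}(\R^2)$.

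Next I would apply Proposition~\ref{p:dr} to $\partial_r$ with the conjugate exponent $q$ and the weight parameter $1-\gamma$ in place of $\gamma$, so that the critical value $\gamma=1-2/p$ there corresponds to $1-\gamma=1-2/q$, i.e.\ to $\gamma=2-2/p$ here. For $\gamma>2-2/p$ (so $1-\gamma<1-2/q$), Proposition~\ref{p:dr} gives that $\partial_r$ in the dual setting is surjective with one-dimensional kernel spanned by the constant $1$; one checks directly that $1\in L^q_{r,-\gamma}(\R^2)$ exactly when $\gamma>2-2/p$. Dualizing, $\partial_r+\tfrac1r$ is then injective with closed range of codimension one, whose annihilator is $\mathrm{span}\{1\}$, i.e.\ $\coker=\{1\}$. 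For $\gamma<2-2/p$ the constants leave $L^q_{r,-\gamma}(\R^2)$, Proposition~\ref{p:dr} makes $\partial_r$ an isomorphism in the dual setting, and hence $\partial_r+\tfrac1r$ is an isomorphism. Finally, at $\gamma=2-2/p$ the dual operator $\partial_r$ has non-closed range by Proposition~\ref{p:dr}, and therefore so does $\partial_r+\tfrac1r$, since closed range is preserved under passing to the adjoint. Throughout I use the standard facts that a bounded operator between reflexive Banach spaces is Fredholm iff its adjoint is, with $\dim\ker$ and $\dim\coker$ interchanged.

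As an independent cross-check — and an alternative that sidesteps the dual-space bookkeeping — one can argue directly from the explicit solution formula: $(\partial_r+\tfrac1r)u=f$ is equivalent to $\partial_r(ru)=rf$, whose unique solution regular at the origin is $u(r)=\tfrac1r\int_0^r sf(s)\,ds$. The homogeneous solution $u=1/r$ is never in $M^{k,p}_{r,\gamma-1}(\R^2)$, since its radial derivative fails to be $p$-integrable against $r\,dr$ near $0$ for every $p>1$; this re-proves injectivity. A weighted Hardy/Young estimate on the formula above, entirely parallel to the one in the proof of Lemma~\ref{l:invertible}, shows the solution operator is bounded $L^p_{r,\gamma}(\R^2)\to M^{k,p}_{r,\gamma-1}(\R^2)$ exactly when $\gamma<2-2/p$ (the threshold being dictated by the $1/r$ decay of $u$ at infinity), while for $\gamma>2-2/p$ solvability requires the single condition $\int_0^\infty f(s)\,s\,ds=0$, i.e.\ $\langle f,1\rangle=0$, accounting for the one-dimensional cokernel.

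The step I expect to be the main obstacle is the careful matching of function spaces under duality: one must confirm that the radial pairing $\int_0^\infty fg\,r\,dr$ genuinely identifies the duals of the radial weighted spaces in play — in particular that the dual of $M^{k,p}_{r,\gamma-1}(\R^2)$ is $M^{-k,q}_{r,1-\gamma}(\R^2)$ as claimed in Section~\ref{s:weightedspaces} — and that the integration by parts is legitimate on a dense subspace with no spurious boundary terms at $r=0$ or $r=\infty$. Once the adjoint identification is secured, the remainder is routine Fredholm bookkeeping together with the transfer of the borderline non-closed-range case.
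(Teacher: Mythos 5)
Your strategy is essentially the paper's: the proposition is proved there ``from the next lemmas and duality,'' where the lemmas are exactly your two ingredients --- an explicit inverse formula with a weighted Young/Hardy estimate (Lemmas~\ref{l:invertible}, \ref{l:dr}, \ref{l:dr1overr}) and a duality transfer from the $\partial_r$ result. Your direct cross-check via $u(r)=\tfrac1r\int_0^r sf(s)\,ds$, the identification of the threshold $\gamma=2-2/p$ from the $1/r$ tail, the solvability condition $\int_0^\infty f(s)\,s\,ds=0$, and the injectivity argument from $1/r\notin M^{k,p}_{r,\gamma-1}$ all match the paper's treatment.

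The one step that does not go through as you wrote it is the invocation of Proposition~\ref{p:dr} for the adjoint. The Banach-space adjoint of $\partial_r+\tfrac1r\colon M^{k,p}_{r,\gamma-1}\to L^p_{r,\gamma}$ is $-\partial_r\colon L^q_{r,-\gamma}\to M^{-k,q}_{r,1-\gamma}$, i.e.\ it maps an $L^q$ space into a \emph{negative-order} Kondratiev space. Proposition~\ref{p:dr} only addresses $\partial_r\colon M^{k,q}_{r,\tilde\gamma-1}\to L^q_{r,\tilde\gamma}$, which is a different operator (shifted by one order of regularity on both sides), so ``apply Proposition~\ref{p:dr} with $q$ and $1-\gamma$'' is not literally licensed. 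The paper bridges precisely this gap by defining \emph{extended} operators $\partial_r,\ \partial_r+\tfrac1r\colon L^p_{r,\gamma-1}\to M^{-1,p}_{r,\gamma}$ through the dual pairing and proving their Fredholm properties in separate lemmas before dualizing. The same mismatch affects your treatment of the borderline case: non-closed range of $\partial_r$ at $\gamma=1-2/q$ on the positive-order spaces does not formally transfer to the extended adjoint, and the paper instead settles $\gamma=2-2/p$ directly, via the explicit sequence $u_n=\psi(r/n)/\|\psi(r/n)\|_{L^p_{r,\gamma-1}}$ with $\psi\sim 1/r$ on an annulus (Lemma~\ref{l:dr1rnotclosed}). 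Since your explicit-formula argument already covers both open ranges, the only genuinely missing piece is such a direct counterexample (or the extended-operator machinery) at the critical weight.
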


The proof of the above propositions follows from the next lemmas and duality.

\begin{Lemma}\label{l:dr}
Given $\gamma > 1- 2/p $ and $p \in (1,\infty)$, the operator $\partial_r: M^{1,p}_{r,\gamma-1}(\R^2) \rightarrow L^p_{r,\gamma}(\R^2)$ is invertible.
\end{Lemma}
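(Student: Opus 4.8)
The plan is to construct an explicit bounded two–sided inverse. Note first that $\partial_r$ is automatically bounded from $M^{1,p}_{r,\gamma-1}(\R^2)$ into $L^p_{r,\gamma}(\R^2)$, since $\|\langle r\rangle^{\gamma}\partial_r u\|_{L^p(\R^2)}$ is one of the terms in the definition of $\|u\|_{M^{1,p}_{r,\gamma-1}(\R^2)}$; so the whole content is invertibility. For surjectivity I would set, for $f\in L^p_{r,\gamma}(\R^2)$,
\[ (Kf)(r) = -\int_r^\infty f(s)\,ds, \]
the unique antiderivative of $f$ with a chance of landing in the target space. Convergence of the integral for each $r>0$ follows from H\"older's inequality with the measure $s\,ds$,
\[ \int_r^\infty |f(s)|\,ds \le \|f\|_{L^p_{r,\gamma}(\R^2)}\left(\int_r^\infty \langle s\rangle^{-\gamma q} s^{-q/p}\,ds\right)^{1/q}, \qquad \tfrac1p+\tfrac1q=1, \]
and the hypothesis $\gamma>1-2/p$ is precisely equivalent to $\gamma q + q/p>1$, which forces the integrand to decay faster than $s^{-1}$ at infinity and makes the factor finite. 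By construction $\partial_r(Kf)=f$.

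Next I would check that $Kf\in M^{1,p}_{r,\gamma-1}(\R^2)$, i.e. $\langle r\rangle^{\gamma-1}Kf\in L^p(\R^2)$ (the derivative term is free since $\partial_r Kf=f$). Split $\R^2=B_1\cup(\R^2\setminus B_1)$. On $\R^2\setminus B_1$, where $\langle r\rangle\sim r$, the estimate
\[ \int_1^\infty\Big|\int_r^\infty f(s)\,ds\Big|^p r^{(\gamma-1)p+1}\,dr \le C\int_0^\infty |f(r)|^p\langle r\rangle^{\gamma p}r\,dr \]
is a one–dimensional weighted Hardy inequality for $f\mapsto\int_r^\infty f$ (applied, say, to $f\mathbf{1}_{\{r>1\}}$), valid exactly when $(1-\gamma)p-1<1$, i.e. $\gamma>1-2/p$. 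Equivalently, the substitution $r=e^{t}$ turns the far–field part of $\partial_r$ into $\partial_t$ acting on $L^p((0,\infty), e^{\mu t}\,dt)$ with $\mu=(\gamma-1)p+2>0$, whose inverse $f\mapsto -\int_t^\infty f$ is bounded by an elementary convolution estimate (Minkowski's integral inequality), just as in the proof of Lemma~\ref{l:invertible}. On $B_1$ the H\"older bound above gives $|Kf(r)|\le |Kf(1)|+C\|f\|_{L^p_{r,\gamma}(\R^2)}(1+r^{1-2/p})$ (with an extra logarithmic factor when $p=2$), and $r\mapsto 1+r^{1-2/p}$ lies in $L^p(B_1,r\,dr)$ for every $p\in(1,\infty)$. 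Hence $K:L^p_{r,\gamma}(\R^2)\to M^{1,p}_{r,\gamma-1}(\R^2)$ is bounded and $\partial_r K=\Id$.

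Finally I would prove injectivity: if $u\in M^{1,p}_{r,\gamma-1}(\R^2)$ with $\partial_r u=0$, then $u$ equals a constant $c$, and $c\in M^{1,p}_{r,\gamma-1}(\R^2)$ would require $\int_0^\infty\langle r\rangle^{(\gamma-1)p}r\,dr<\infty$, i.e. $(\gamma-1)p+1<-1$, i.e. $\gamma<1-2/p$; under our hypothesis this forces $c=0$. Thus $\ker\partial_r=\{0\}$, $K=(\partial_r)^{-1}$ is a bounded two–sided inverse, and $\partial_r$ is invertible.

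I expect the main obstacle to be the weighted $L^p$ bound for $K$: establishing the sharp Hardy–type inequality on the far field and, separately, taming the possible singularity of $Kf$ as $r\to0$, while checking that the single threshold $\gamma=1-2/p$ simultaneously governs convergence of $\int_r^\infty f$ at infinity, boundedness of $K$ between the weighted spaces, and triviality of $\ker\partial_r$.
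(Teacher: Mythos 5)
Your proposal is correct and follows essentially the same route as the paper: the same explicit right inverse $u(r)=\int_\infty^r f(s)\,ds$, the same near-field/far-field splitting at $r=1$, and the same far-field estimate (your logarithmic substitution $r=e^t$ plus a convolution/Young bound is exactly the paper's argument with $\sigma=\gamma-1+2/p>0$). You additionally spell out the injectivity step (constants fail to lie in $M^{1,p}_{r,\gamma-1}$ precisely when $\gamma>1-2/p$) and give a more explicit near-origin bound than the paper's brief appeal to Sobolev embedding, both of which are welcome.
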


\begin{proof}
We define the inverse operator
\[ \begin{array}{c c l}
\partial_r^{-1}: L^p_{r,\gamma}(\R^2) & \rightarrow & M^{1,p}_{r,\gamma-1}(\R^2)\\
f(r) &\longmapsto & u(r) = \int_\infty^r f(s) \; ds
\end{array}
\]

and show, using the following inequality 
\[ \| u\|_{L^p_{\gamma-1} (\R^2)} \leq \| u\|_{L^p_{\gamma-1} (B_1)} + \| u\|_{L^p_{\gamma-1} (\R^2 \backslash B_1)},\]
where $B_1$ is the unit ball in $\R^2$,  that $\|u\|_{L^p_{\gamma-1} (\R^2)} < \|f\|_{L^p_\gamma (\R^2)}$. 

First, the result  $\|u\|_{L^p_{\gamma-1} (B_1)} \leq \|f\|_{L^p_\gamma (B_1)}$ is a consequence of the Sobolev embeddings and the fact that $\langle r \rangle$ is a bounded function on $B_1$. 

To show that  $\| u\|_{L^p_{\gamma-1} (\R^2 \backslash B_1)} \leq \| f \|_{L^p_{\gamma-1} (\R^2 \backslash B_1)}$  consider the following scaling variables 
\[ \tau = \ln(r), \quad r \in [1,\infty), \quad w(\tau) = u(\rme^\tau) \rme^{\sigma \tau}, \quad g(\tau) = f(\rme^{\tau}) \rme^{(\sigma+1)\tau}\]
and notice that $w(\tau)$ satisfies $\partial_\tau (w \cdot \rme^{-\sigma \tau}) = \rme^{-\sigma \tau} g(\tau)$, hence $w(\tau) = \int_{\infty}^\tau \rme^{\sigma (\tau -s)} g(s) \;ds$. Then, letting $\sigma = \gamma - 1 + 2/p>0$ and using Young's inequality we arrive at 
\[ \| u\|_{L^p_{\gamma-1}(\R^2\backslash B_1) }= \| w\|_{L^p [1,\infty)} \leq \frac{1}{\sigma} \|g\|_{L^p[1,\infty)} \leq C \|f\|_{L^p_\gamma (\R^2\backslash B_1)}.\]

\end{proof}

\begin{Lemma}\label{l:dr1overr}
Given $\gamma> 2-2/p$ and $p \in (1,\infty)$, the operator $\partial_r + \frac{1}{r}: M^{1,p}_{r,\gamma-1}(\R^2) \rightarrow L^p_{r,\gamma}(\R^2)$ is Fredholm index $i =-1$ with $\coker = \{ 1\}$.
\end{Lemma}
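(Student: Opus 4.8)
The plan is to prove Lemma~\ref{l:dr1overr} directly, by constructing an explicit partial right inverse and closely following the template of the proof of Lemma~\ref{l:dr}. The starting point is the identity $(\partial_r+\tfrac1r)u=\tfrac1r\,\partial_r(ru)$, so that $(\partial_r+\tfrac1r)u=f$ is equivalent to $\partial_r(ru)=rf$. Three elementary facts set up the Fredholm structure. First (injectivity): the homogeneous solutions are $u=c/r$, and a direct computation shows that the weighted $L^p$ norm of $\partial_r(c/r)=-c/r^2$ diverges at the origin, so $c/r\notin M^{1,p}_{r,\gamma-1}(\R^2)$ unless $c=0$. Second (range condition): for $u\in M^{1,p}_{r,\gamma-1}(\R^2)$ one has $\int_{\R^2}(\partial_r+\tfrac1r)u\,d{\bf x}=2\pi\bigl[\lim_{r\to\infty}ru(r)-\lim_{r\to0}ru(r)\bigr]$, and both limits vanish --- the first because a Hölder estimate gives $|u(r)|\lesssim r^{-(\gamma-1+2/p)}\|u\|_{M^{1,p}_{r,\gamma-1}}$ (the $L^p$ analogue of Lemma~\ref{l:decay}), so $ru(r)\to0$ precisely when $\gamma>2-2/p$, and the second by the local regularity of $M^{1,p}$ functions at the origin. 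Third, the functional $f\mapsto\int_{\R^2}f\,d{\bf x}$ is bounded on $L^p_{r,\gamma}(\R^2)$, equivalently $1\in L^q_{r,-\gamma}(\R^2)$, again exactly when $\gamma>2-2/p$. Hence the range is contained in the closed hyperplane $\mathcal{R}=\{f\in L^p_{r,\gamma}(\R^2):\int_{\R^2}f=0\}$.

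Next I would show the range equals $\mathcal{R}$. Given $f\in\mathcal{R}$, set $u(r)=-\tfrac1r\int_r^\infty sf(s)\,ds=\tfrac1r\int_0^r sf(s)\,ds$ (the two expressions agree since $\int_0^\infty sf=0$); by construction $(\partial_r+\tfrac1r)u=f$, and one checks $\partial_r u=f-u/r$. It remains to prove $u\in M^{1,p}_{r,\gamma-1}(\R^2)$ with $\|u\|_{M^{1,p}_{r,\gamma-1}}\lesssim\|f\|_{L^p_{r,\gamma}}$, split as in Lemma~\ref{l:dr} into $B_1$ and $\R^2\setminus B_1$. On $B_1$ one uses the form $\tfrac1r\int_0^r sf\,ds$ and a weighted Hardy inequality to bound $\|u\|_{L^p(B_1)}$ and $\|u/r\|_{L^p(B_1)}$ by $\|f\|_{L^p(B_1)}$, and then controls $\partial_r u=f-u/r$. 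On $\R^2\setminus B_1$ one uses the form $-\tfrac1r\int_r^\infty sf\,ds$ together with the substitution $\tau=\ln r$, $w(\tau)=u(e^\tau)e^{\sigma\tau}$ with $\sigma=\gamma-1+2/p$; this turns the estimate into $w=-K\ast g$ with $g(\rho)=e^{(\sigma+1)\rho}f(e^\rho)$ and convolution kernel $K(s)=e^{(\sigma-1)s}\mathbf{1}_{\{s\le0\}}$. Since $\gamma>2-2/p$ gives $\sigma>1$ and therefore $\|K\|_{L^1(\R)}=(\sigma-1)^{-1}<\infty$, Young's inequality closes the estimate, and $\partial_r u=f-u/r$ again handles the derivative.

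Putting these together: $\partial_r+\tfrac1r$ is injective, has closed range equal to $\mathcal{R}$, which has codimension one, and admits a bounded right inverse on $\mathcal{R}$. Hence it is Fredholm of index $0-1=-1$, and its cokernel $L^p_{r,\gamma}(\R^2)/\mathcal{R}$ is one-dimensional; since $\mathcal{R}$ is the annihilator of the constant function $1\in L^q_{r,-\gamma}(\R^2)$ under the pairing $(f,g)\mapsto\int_{\R^2}fg\,d{\bf x}$, this gives $\coker=\{1\}$ in the paper's notation. Finally, at the critical value $\gamma=2-2/p$ one has $\sigma=1$, so $K$ degenerates to the non-integrable $\mathbf{1}_{\{s\le0\}}$ and $1\notin L^q_{r,-\gamma}$; a standard argument mirroring the critical case for $\partial_r$ in Proposition~\ref{p:dr} then shows the range is not closed. (One could alternatively try to deduce the lemma from Lemma~\ref{l:dr} by duality, noting that $-(\partial_r+\tfrac1r)$ is the formal adjoint of $\partial_r$ for the radial pairing $\int uv\,r\,dr$, but the adjoint lands in a negative-regularity space, so bridging to the stated $M^{1,p}\to L^p$ mapping requires extra input; I prefer the direct route above.)

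I expect the main obstacle to be the far-field estimate in the second step: one must take the Volterra integral in the correct orientation --- integrating inward from $+\infty$, which is forced by the decay requirement --- and then recognize that this produces a convolution kernel supported on $\{s\le0\}$ whose $L^1$ norm is finite \emph{exactly} under the hypothesis $\gamma>2-2/p$. The near-origin Hardy estimate and the vanishing of the boundary terms (which rely on the $L^p$ decay estimate for $M^{1,p}_{r,\gamma-1}$ functions) are the other points needing care, though both follow the pattern already used for Lemma~\ref{l:dr}.
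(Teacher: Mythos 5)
Your proposal is correct and follows essentially the same route as the paper: identify the closed, codimension-one subspace $\{f\in L^p_{r,\gamma}:\int f=0\}$ (using that $1$ is a bounded functional precisely for $\gamma>2-2/p$), show the range lies in it, and verify that the explicit right inverse $u(r)=\tfrac1r\int_\infty^r sf(s)\,ds$ is bounded via a near-origin Hardy-type estimate and a far-field logarithmic substitution with Young's inequality. The paper omits these boundedness details (deferring to Lemmas~\ref{l:invertible} and~\ref{l:dr}) and establishes the range inclusion by $C_0^\infty$-density and integration by parts rather than your direct boundary-term computation, but these are cosmetic differences.
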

\begin{proof}
Since for $\gamma > 2-2/p$ the function $1$ is in the dual of $L^p_{r,\gamma}(\R^2)$ and it defines a bounded linear functional. Therefore, the space 
\[L^p_{r,\gamma,\perp} = \{ u \in L^p_{r,\gamma}(\R^2) \mid \int_0^\infty u \cdot r \;dr =0\}\]
 is a closed subspace. Moreover, given $u \in C^{\infty}_0 \subset M^{1,p}_{\gamma-1}$ integration by parts shows that  $\partial_r u +\frac{1}{r} u \in L^p_{r,\gamma,\perp}$. Then, because $\gamma>2-2/p$ and $C^\infty_0 $ is dense in $M^{1,p}_{\gamma-1}$ the same is true for any $ u \in M^{1,p}_{r, \gamma-1}$. The results of the lemma then follow if one shows that the inverse operator
 \[ \begin{array}{c c l}
 L^p_{r,\gamma,\perp} & \rightarrow & M^{1,p}_{r,\gamma}(\R^2)\\
 f(r) & \longmapsto & u(r) = \frac{1}{r} \int_{\infty}^r f(s) \cdot s \;ds
 \end{array}
 \]
 is bounded. The proof of this last statement follows a similar arguments as in Lemma \ref{l:invertible} and Lemma \ref{l:dr}, and we therefore omit it.
 
 \end{proof}
 
 To complete the proof of Propositions \ref{p:dr} and \ref{p:dr1overr} we need to define the extended operators $\partial_r: L^p_{r,\gamma-1}(\R^2) \rightarrow M^{-1,p}_{\gamma}(\R^2)$ and $\partial_r+\frac{1}{r}: L^p_{r,\gamma-1}(\R^2) \rightarrow M^{-1,p}_{r,\gamma}(\R^2)$ defined via 
 \[ \partial_r u (v) = \llangle u, (\partial_r)^* v \rrangle = \llangle u, (\partial_r + 1/r) v \rrangle, \qquad \forall u \in L^p_{r,\gamma-1}(\R^2), \forall v \in M^{1,q}_{r,-\gamma}(\R^2)\]
where the double brackets $\llangle u, v \rrangle$ denote the paring between an element $v \in X$ and a linear functional $u \in X^*.$ Notice as well that the definition for these operators is a natural extension of $\partial_r:M^{1,p}_{r,\gamma-1}(\R^2) \rightarrow L^p_{r,\gamma}(\R^2)$, and $\partial_r+ \frac{1}{r}:M^{1,p}_{r,\gamma-1}(\R^2)\rightarrow L^p_{r,\gamma}(\R^2)$ since by duality
\[ \partial_r u (v) = \llangle u, (\partial_r)^* v \rrangle = \llangle u, (\partial_r + 1/r) v \rrangle, \qquad \forall u \in M^{1,p}_{r,\gamma-1}(\R^2), \forall v \in M^{1,q}_{r,1-\gamma}(\R^2)\]
\begin{Lemma}
Let $p \in (1,\infty)$, then the operator $\partial_r :L^p_{r,\gamma-1}(\R^2) \rightarrow M^{-1,p}_{r,\gamma}(\R^2)$ is 
\begin{itemize}
\item injective for $\gamma>1-2/p$.
\item Fredholm with $\ker = \{1 \}$ and index $i =1$ for $\gamma< 1-2/p$.
\end{itemize}
\end{Lemma}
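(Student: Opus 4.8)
The plan is to recognize the extended operator $\partial_r \colon L^p_{r,\gamma-1}(\R^2) \to M^{-1,p}_{r,\gamma}(\R^2)$ as the Banach-space adjoint of the ordinary operator
\[ S := \partial_r + \frac{1}{r} \colon M^{1,q}_{r,-\gamma}(\R^2) \longrightarrow L^q_{r,1-\gamma}(\R^2), \qquad \frac{1}{p}+\frac{1}{q}=1, \]
and then simply quote the Fredholm properties of $S$ together with the standard duality dictionary. That $\partial_r = S^*$ is built into its definition: the defining identity $\partial_r u(v) = \langle u,(\partial_r + 1/r)v\rangle$ is precisely the adjoint relation, once one uses the identifications $(L^q_{r,1-\gamma}(\R^2))^* \cong L^p_{r,\gamma-1}(\R^2)$ and $(M^{1,q}_{r,-\gamma}(\R^2))^* \cong M^{-1,p}_{r,\gamma}(\R^2)$ under the unweighted pairing $\langle u,v\rangle = \int_0^\infty u(r)v(r)\, r\, dr$.

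First I would substitute $(p,\gamma) \mapsto (q,1-\gamma)$ in Proposition~\ref{p:dr1overr} (and in the proof of Lemma~\ref{l:dr1overr}): the operator $S$ is Fredholm unless $1-\gamma = 2-2/q$, i.e.\ unless $\gamma = 1-2/p$; for $\gamma > 1-2/p$ it is an isomorphism; and for $\gamma < 1-2/p$ it is injective of index $-1$ with closed range $R = \{ f \in L^q_{r,1-\gamma}(\R^2) : \int_0^\infty f(r)\, r\, dr = 0 \}$. Next I would invoke the elementary facts that a bounded operator $T$ between Banach spaces is Fredholm iff $T^*$ is, with $\operatorname{ind}(T^*) = -\operatorname{ind}(T)$, $\ker T^* = (\operatorname{range}T)^\perp$, $\operatorname{range}T^* = (\ker T)^\perp$, and that $T$ has closed range iff $T^*$ does.

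Assembling these: for $\gamma > 1-2/p$, $S$ is an isomorphism, hence $\partial_r = S^*$ is an isomorphism, in particular injective. For $\gamma < 1-2/p$, $S$ has index $-1$, so $\partial_r$ has index $+1$, and $\ker\partial_r = R^\perp$; since $R$ is exactly the annihilator of the constant function $1$, and since $1 \in L^p_{r,\gamma-1}(\R^2)$ precisely when $(\gamma-1)p + 2 < 0$, i.e.\ precisely when $\gamma < 1-2/p$, we get $R^\perp = \operatorname{span}\{1\}$; thus $\ker\partial_r = \{1\}$, and index $1$ with a one-dimensional kernel forces surjectivity. Finally $\gamma = 1-2/p$ is excluded because $S$, hence $S^* = \partial_r$, fails to have closed range there.

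The only point requiring care is the bookkeeping of the weighted duals and their pairings, so that ``the extended $\partial_r$'' is literally $S^*$, and the identification of the annihilator $R^\perp$. What makes everything fit is that the number $\gamma = 1-2/p$ is simultaneously the Fredholm threshold for $S$ (through $1-\gamma = 2-2/q$) and the integrability threshold for the constant function in $L^p_{r,\gamma-1}(\R^2)$ — which is exactly why the kernel picks up the constant on the side $\gamma < 1-2/p$ and nothing on the other side.
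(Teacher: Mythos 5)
Your proposal is correct and follows essentially the same route as the paper: both rest on viewing the extended $\partial_r$ as the Banach-space adjoint of $\partial_r + \tfrac{1}{r} : M^{1,q}_{r,-\gamma}(\R^2) \to L^q_{r,1-\gamma}(\R^2)$ and importing its Fredholm properties from Lemma~\ref{l:dr1overr}/Proposition~\ref{p:dr1overr}, with your version merely making the duality dictionary and the identification of the annihilator of the range with $\mathrm{span}\{1\}$ explicit. The only real divergence is the injectivity claim for $\gamma > 1-2/p$, which the paper handles by an approximation argument with $C^\infty_0$ functions, whereas you deduce it from the surjectivity (indeed invertibility) of $\partial_r + \tfrac{1}{r}$ on the predual side — a cleaner derivation of the same fact.
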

\begin{proof}
To show it is injective for $\gamma>1-2/p$, suppose there is a $u \in L^p_{r,\gamma-1}(\R^2)$ such that $\partial_r u =0$. Then using a sequence $\{ u_n \} \in C^\infty_0(\R^2)$ such that $u_n \rightarrow u$ in $L^p_{r,\gamma-1}(\R^2)$ we find that for all $v \in M^{1,q}_{r,-\gamma}(\R^2)$
\[ 0= \partial_r u = \llangle u, (\partial_r)^* v \rrangle = \lim_{n \rightarrow \infty} \llangle \partial_r u_n, v \rrangle. \]
It follows that $\partial_r u_n \rightarrow 0$ in $L^p_{r, \gamma}$. Since $\gamma>1-2/p$ we must have $u =0$.

The result for $\gamma < 1-2/p$ follows from the definition $\partial_r u (v) = \llangle u, (\partial_r +1/r) v \rrangle$, and Lemma \ref{l:dr1overr}.
\end{proof}
Similar arguments as in the above lemma show that
\begin{Lemma}
Given $p \in (1,\infty)$,  the operator $\partial_r+ \frac{1}{r} :L^p_{r,\gamma-1}(\R^2) \rightarrow M^{-1,p}_{r,\gamma}(\R^2)$ is 
\begin{itemize}
\item injective for $\gamma< 2-2/p$.
\item Fredholm with $\coker = \{1 \}$ and index $i =-1$ for $\gamma> 2-2/p$.
\end{itemize}
\end{Lemma}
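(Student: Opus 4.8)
The plan is to deduce the statement from the corresponding facts about the plain radial derivative $\partial_r$ by a duality argument, paralleling the proof of the preceding lemma. Write $q=p/(p-1)$ for the conjugate exponent. The key observation is that, by the very way the extension is defined (the displayed identity $\partial_r u(v)=\llangle u,(\partial_r)^*v\rrangle=\llangle u,(\partial_r+1/r)v\rrangle$ and its twin $(\partial_r+1/r)u(v)=\llangle u,\partial_r v\rrangle$ for $v\in M^{1,q}_{r,-\gamma}(\R^2)$), the extended operator $\partial_r+\frac1r:L^p_{r,\gamma-1}(\R^2)\to M^{-1,p}_{r,\gamma}(\R^2)$ is (up to an immaterial sign) exactly the Banach-space adjoint of the classical operator
\[ \partial_r : M^{1,q}_{r,-\gamma}(\R^2) \longrightarrow L^q_{r,1-\gamma}(\R^2), \]
using $(M^{1,q}_{r,-\gamma})^*=M^{-1,p}_{r,\gamma}$ and $(L^q_{r,1-\gamma})^*=L^p_{r,\gamma-1}$. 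Hence Fredholmness of one is equivalent to Fredholmness of the other with the index sign reversed, $\ker(\partial_r+\frac1r)\cong\coker$ of the classical $\partial_r$, and $\coker(\partial_r+\frac1r)\cong\ker$ of the classical $\partial_r$.

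First I would handle the range $\gamma<2-2/p$, which is the same as $1-\gamma>1-2/q$. By Lemma~\ref{l:dr}, applied with exponent $q$ and weight parameter $1-\gamma$, the classical operator $\partial_r:M^{1,q}_{r,-\gamma}\to L^q_{r,1-\gamma}$ is an isomorphism; dualizing, $\partial_r+\frac1r:L^p_{r,\gamma-1}\to M^{-1,p}_{r,\gamma}$ is an isomorphism as well, hence in particular injective, which is the asserted conclusion. As a consistency check one can also see injectivity directly: $(\partial_r+\frac1r)u=0$ forces $ru$ constant, so $u=c/r$, and an integration by parts shows the extended operator sends $c/r$ to $c$ times a Dirac mass at the origin, which is non-zero in $M^{-1,p}_{r,\gamma}$ unless $c=0$.

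Next I would treat $\gamma>2-2/p$, i.e. $1-\gamma<1-2/q$. In this subcritical-weight regime the classical operator $\partial_r:M^{1,q}_{r,-\gamma}\to L^q_{r,1-\gamma}$ is \emph{surjective} with one-dimensional kernel spanned by the constant function and Fredholm index $+1$: this is proved exactly as Lemma~\ref{l:dr}, the only changes being that the bounded right inverse is now $f\mapsto\int_0^r f(s)\,ds$ instead of $\int_\infty^r$, and that the constants lie in $M^{1,q}_{r,-\gamma}(\R^2)$ precisely when $-\gamma<-2/q$, i.e. for exactly this range of weights. Dualizing this statement, $\partial_r+\frac1r:L^p_{r,\gamma-1}\to M^{-1,p}_{r,\gamma}$ is injective, has index $-1$, and has one-dimensional cokernel; since $\ker(\partial_r)=\mathrm{span}\{1\}$, the functional spanning the cokernel is $f\mapsto\llangle f,1\rrangle$, i.e. $\coker=\{1\}$ in the notation of the lemma. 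Finally, at the threshold $\gamma=2-2/p$ one has $1-\gamma=1-2/q$, the critical weight at which the classical $\partial_r$ fails to have closed range (Proposition~\ref{p:dr}); loss of closed range passes to the adjoint, so $\partial_r+\frac1r$ is not Fredholm there either.

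The only ingredient that is not purely formal bookkeeping is the weighted Hardy inequality underpinning the boundedness of $f\mapsto\int_0^r f$ from $L^q_{r,1-\gamma}$ into $M^{1,q}_{r,-\gamma}$ when $1-\gamma<1-2/q$; this is a routine variant of the estimate already used to prove Lemma~\ref{l:dr} (pass to the variable $\tau=\ln r$ on $[1,\infty)$ and apply Young's inequality, treating the unit ball separately by Sobolev embedding), and it is exactly here that the weight threshold $\gamma=2-2/p$ is forced. So the ``hard part'' is mostly organizational: keeping straight which conjugate-exponent/reflected-weight instance of Lemma~\ref{l:dr} is invoked in each regime, and verifying at the outset that the extension was set up so as to coincide with the genuine Banach adjoint of the classical operator.
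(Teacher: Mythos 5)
Your proposal is correct and takes essentially the same route as the paper, whose proof of this lemma is literally ``similar arguments as in the above lemma'': identify the extended $\partial_r+\tfrac1r$ on $L^p_{r,\gamma-1}$ with the Banach adjoint of the classical $\partial_r:M^{1,q}_{r,-\gamma}\to L^q_{r,1-\gamma}$ and transfer the Fredholm data across the duality, exactly as you do. The only (harmless) deviations are that you get injectivity for $\gamma<2-2/p$ from the adjoint-of-an-isomorphism principle rather than the paper's density argument, and that you re-derive the subcritical surjectivity of $\partial_r$ with the explicit right inverse $f\mapsto\int_0^r f$ instead of quoting Proposition~\ref{p:dr}.
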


\begin{Lemma}\label{l:drnotclosed}
Given $\gamma = 1-2/p$, the operator $\partial_r :M^{1,p}_{r,\gamma-1}(\R^2) \rightarrow L^p_{r,\gamma}(\R^2)$ does not have a closed range.
\end{Lemma}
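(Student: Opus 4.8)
The plan is to argue by contradiction, exploiting that at the critical weight $\gamma = 1 - 2/p$ the operator $\partial_r$ is still \emph{injective} but fails to be \emph{bounded below}; for a bounded operator between Banach spaces these two properties together would force the range to be closed, so the failure of the lower bound gives the result.

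First I would record that $\ker \partial_r = \{0\}$ on $M^{1,p}_{r,\gamma-1}(\R^2)$ when $\gamma = 1-2/p$: the only radial functions annihilated by $\partial_r$ are constants, and a nonzero constant fails to lie in $M^{1,p}_{r,\gamma-1}(\R^2)$ precisely at this weight, since $\int_1^\infty (1+r^2)^{(\gamma-1)p/2}\, r\, dr$ behaves like $\int_1^\infty r^{(\gamma-1)p+1}\, dr$ and $(\gamma-1)p+1 = -1$ there. Moreover $\partial_r : M^{1,p}_{r,\gamma-1}(\R^2) \to L^p_{r,\gamma}(\R^2)$ is bounded — indeed $\|\partial_r u\|_{L^p_{r,\gamma}} \le \|u\|_{M^{1,p}_{r,\gamma-1}}$, the target norm being one of the summands in the domain norm. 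Hence, by the bounded inverse theorem, if the range were closed there would be a constant $c>0$ with $\|u\|_{M^{1,p}_{r,\gamma-1}} \le c^{-1}\|\partial_r u\|_{L^p_{r,\gamma}}$ for every $u$. It therefore suffices to construct a sequence $(u_n)$ violating every such inequality.

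For the construction I would use the logarithmic change of variables $\tau = \ln r$ already employed in the proof of Lemma~\ref{l:dr}. Writing $v(\tau) = u(\rme^\tau)$, for functions supported in $\{r\ge 1\}$ the powers of $r$ contributed by the weights and by the Jacobian cancel \emph{exactly} at $\gamma = 1-2/p$, because $(\gamma-1)p+2 = 0$ and $\gamma p - p + 2 = 0$ there. Consequently $\|u\|^p_{L^p_{r,\gamma-1}(\R^2)}$ and $\|\partial_r u\|^p_{L^p_{r,\gamma}(\R^2)}$ are comparable, with constants independent of $u$, to $\int_0^\infty |v(\tau)|^p\, d\tau$ and $\int_0^\infty |v'(\tau)|^p\, d\tau$ respectively (the equivalence of $(1+r^2)^\beta$ and $r^{2\beta}$ on $[1,\infty)$ and the factor from polar coordinates only affect $n$-independent constants). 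This reduces matters to the elementary fact that $d/d\tau : W^{1,p}(0,\infty) \to L^p(0,\infty)$ is not bounded below: fixing a nonzero $\phi \in C^\infty_0((1,2))$ and setting $v_n(\tau) = \phi(\tau/n)$, one has $\|v_n\|_{L^p}^p = n\|\phi\|_{L^p}^p$ while $\|v_n'\|_{L^p}^p = n^{1-p}\|\phi'\|_{L^p}^p$, so $\|v_n'\|_{L^p}/\|v_n\|_{W^{1,p}} \to 0$ since $p>1$. Translating back, $u_n(r) := \phi(\ln r/n)$ is smooth and supported in $\rme^n \le r \le \rme^{2n}$ — hence automatically in $M^{1,p}_{r,\gamma-1}(\R^2)$, there being nothing to check near the origin — and $\|\partial_r u_n\|_{L^p_{r,\gamma}} / \|u_n\|_{M^{1,p}_{r,\gamma-1}} \to 0$, contradicting the putative lower bound.

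The only genuinely delicate point is the exact cancellation of the powers of $r$ at $\gamma = 1 - 2/p$ — precisely the criticality condition, and the reason the lower bound degenerates exactly there. Everything else (the behavior near $r = 0$, the comparability of weights on $[1,\infty)$, the $2\pi$ from the area element) contributes only $n$-independent constants and is dispatched by taking $\phi$ supported away from the origin. I expect the write-up to be short, with the main care going into verifying $(\gamma-1)p+2 = 0$ and $\gamma p - p + 2 = 0$ at the critical weight.
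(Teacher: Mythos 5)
Your proposal is correct and follows essentially the same route as the paper: both arguments reduce the claim to the failure of a lower bound $\|u\|_{M^{1,p}_{r,\gamma-1}} \leq C\|\partial_r u\|_{L^p_{r,\gamma}}$ (legitimate because $\partial_r$ is injective and bounded at the critical weight) and then exhibit a dilation-type sequence exploiting the scale invariance of $r^{-1}\,dr$ when $\gamma = 1-2/p$. The only difference is cosmetic: the paper uses the normalized plateau functions $\psi(r/n)/\|\psi(r/n)\|_{L^p_{r,\gamma-1}}$, whose $L^p_{\gamma-1}$ mass grows like $(\ln n)^{1/p}$ while the derivative stays bounded, whereas you use a log-dilated bump $\phi(\ln r/n)$ supported in $[\rme^{n},\rme^{2n}]$; both give $\|\partial_r u_n\|_{L^p_{r,\gamma}}/\|u_n\|_{M^{1,p}_{r,\gamma-1}} \to 0$.
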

\begin{proof}
Let $\psi(r) \in C^\infty_0(\R^2)$ be a radial function with $\supp \psi(r) \in B_1$ and $\psi(r) = 1 $ for $r<1/2$. Define $u_n (r) = \psi(r/n)/ \| \psi(r/n) \|_{L^p_{r,\gamma-1}}$ for $n \in \N$ and notice that $\partial_r u_n \rightarrow 0$ in $L^p_{r,\gamma}(\R^2)$ yet for $\gamma = 1-2/p$ the sequence $\{u_n\}$ does not converge in $L^p_{r,\gamma-1}(\R^2)$. It follows then that $\partial_r :M^{1,p}_{r,\gamma-1}(\R^2) \rightarrow L^p_{r,\gamma}(\R^2)$ does not have closed range.
\end{proof}

\begin{Lemma}\label{l:dr1rnotclosed}
Given $\gamma = 1-2/p$, the operator $\partial_r + \frac{1}{r} :M^{1,p}_{r,\gamma-1}(\R^2) \rightarrow L^p_{r,\gamma}(\R^2)$ does not have a closed range.
\end{Lemma}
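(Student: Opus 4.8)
The plan is to follow the template of Lemma~\ref{l:drnotclosed}: reduce the statement to the failure of an a priori estimate and then violate that estimate with an explicit rescaled sequence. First I would observe that $\partial_r+\tfrac1r$ is injective on $M^{1,p}_{r,\gamma-1}(\R^2)$ for every $p\in(1,\infty)$, regardless of $\gamma$: a radial solution of $(\partial_r+\tfrac1r)u=\tfrac1r\partial_r(ru)=0$ must satisfy $ru\equiv c$, i.e. $u=c/r$, and since $\partial_r(c/r)=-c/r^2$ fails to lie in $L^p_{r,\gamma}(\R^2)$ near the origin when $p>1$, the only such $u$ in the domain is $u\equiv0$. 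Consequently, if $\partial_r+\tfrac1r$ had closed range, then it would be a continuous bijection from $M^{1,p}_{r,\gamma-1}(\R^2)$ onto a closed (hence complete) subspace of $L^p_{r,\gamma}(\R^2)$, so the bounded inverse theorem would furnish a constant $C>0$ with
\[
\|u\|_{M^{1,p}_{r,\gamma-1}(\R^2)}\;\le\;C\,\bigl\|(\partial_r+\tfrac1r)u\bigr\|_{L^p_{r,\gamma}(\R^2)}\qquad\text{for all }u\in M^{1,p}_{r,\gamma-1}(\R^2).
\]
The proof then amounts to producing a normalized sequence that contradicts this bound.

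The sequence should be built from truncated copies of the homogeneous solution $r^{-1}$, which is exactly the object whose admissibility in the domain degenerates at the critical weight (its far‑field decay $r^{-1}$ is borderline against the weight there, just as $r^0\equiv1$ was borderline for $\partial_r$ in Lemma~\ref{l:drnotclosed}). Concretely, with $\chi$ a radial cut‑off that vanishes near the origin and equals $1$ for $r\ge1$, and with $\zeta\in C_0^\infty$ equal to $1$ on $[0,1]$ and $0$ on $[2,\infty)$, I would set $u_n(r)=\chi(r)\,\zeta(r/n)/r$ and then normalize, $\tilde u_n=u_n/\|u_n\|_{M^{1,p}_{r,\gamma-1}}$. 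Since $ru_n=\chi\,\zeta(\cdot/n)$, one has
\[
(\partial_r+\tfrac1r)u_n=\frac1r\Bigl(\chi'(r)\,\zeta(r/n)+\tfrac1n\,\chi(r)\,\zeta'(r/n)\Bigr),
\]
which is supported in a fixed annulus near the origin together with the dyadic annulus $\{n\le r\le 2n\}$. A routine scaling estimate on each piece shows that $\|(\partial_r+\tfrac1r)u_n\|_{L^p_{r,\gamma}}$ remains bounded in $n$, while $\|u_n\|_{M^{1,p}_{r,\gamma-1}}\to\infty$ — the latter because $u_n$ coincides with $\chi(r)/r$ on the growing region $\{1\le r\le n\}$, whose contribution to the domain norm diverges at the critical weight. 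Hence $\|\tilde u_n\|_{M^{1,p}_{r,\gamma-1}}=1$ while $\|(\partial_r+\tfrac1r)\tilde u_n\|_{L^p_{r,\gamma}}\to0$, contradicting the estimate above; so the range is not closed.

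The only delicate point is bookkeeping: one must choose the cut‑offs and track the scaling exponents so that the weighted norms of $(\partial_r+\tfrac1r)u_n$ and of $u_n$ balance in the correct direction precisely at the critical weight — this is where the exponent $1-2/p$ enters, exactly as in Lemma~\ref{l:drnotclosed}. An alternative that avoids the construction is to invoke the Banach closed‑range theorem: since the pairing computation in this section exhibits $\partial_r+\tfrac1r$ as the (formal) adjoint of $\partial_r$ between the corresponding dual weighted spaces, and a bounded operator has closed range if and only if its adjoint does, the statement follows from Lemma~\ref{l:drnotclosed} after reading off the dual weight; I would nonetheless keep the direct construction above as the primary argument, since it stays entirely within the radial spaces used throughout.
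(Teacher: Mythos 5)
Your architecture is the right one — injectivity of $\partial_r+\tfrac1r$ on the domain, reduction to an a priori bound $\|u\|_{M^{1,p}_{r,\gamma-1}}\le C\|(\partial_r+\tfrac1r)u\|_{L^p_{r,\gamma}}$, and violation of that bound by a sequence equal to the homogeneous solution $1/r$ on a growing annulus. Indeed your $u_n=\chi(r)\zeta(r/n)/r$ is better adapted than the paper's own sequence, which is a pure dilation $\psi(r/n)$ of a profile equal to $1/r$ on a \emph{fixed} annulus: a pure dilation rescales the domain and target norms by the same power of $n$ and produces no logarithm, whereas your growing annulus is what actually generates the divergence. But there is a genuine quantitative error in your key step. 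The contribution of $\{1\le r\le n\}$ to the domain norm is
\[
\int_1^n r^{-p}\,(1+r^2)^{(\gamma-1)p/2}\,r\,dr \;\approx\; \int_1^n r^{(\gamma-2)p+1}\,dr,
\]
which diverges (logarithmically) if and only if $(\gamma-2)p+2=0$, i.e.\ $\gamma=2-2/p$. At the weight $\gamma=1-2/p$ stated in the lemma this integral converges as $n\to\infty$, so $\|u_n\|_{M^{1,p}_{r,\gamma-1}}$ stays bounded and no contradiction is obtained. This is not mere bookkeeping: for $\gamma=1-2/p<2-2/p$, Proposition~\ref{p:dr1overr} asserts that $\partial_r+\tfrac1r$ is \emph{invertible}, so the lemma as literally stated cannot be proved; the value $1-2/p$ is evidently a typo for $2-2/p$. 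The shift by one relative to Lemma~\ref{l:drnotclosed} reflects the homogeneity of the kernel profile: for $\partial_r$ the borderline function is $r^0$ and the critical weight is $1-2/p$; for $\partial_r+\tfrac1r$ it is $r^{-1}$ and the critical weight moves to $2-2/p$. Your sentence claiming the divergence occurs ``exactly as in Lemma~\ref{l:drnotclosed}'' at $1-2/p$ therefore asserts the wrong exponent.

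The fix is to flag the typo and run your construction at $\gamma=2-2/p$, where everything you wrote goes through: the inner transition term $\chi'(r)\zeta(r/n)/r$ is $n$-independent for large $n$, the outer term $\tfrac{1}{nr}\chi(r)\zeta'(r/n)$ contributes $\approx n^{-2p+\gamma p+2}=n^0$ to $\|\cdot\|^p_{L^p_{r,\gamma}}$, so the image stays bounded while the domain norm grows like $(\log n)^{1/p}$. The same correction applies to your duality alternative: dualizing $\partial_r:M^{1,q}_{r,\gamma'-1}\to L^q_{r,\gamma'}$ at the critical weight $\gamma'=1-2/q$ of Lemma~\ref{l:drnotclosed} produces $\partial_r+\tfrac1r$ mapping into a space of weight $1-\gamma'=2/q=2-2/p$, so ``reading off the dual weight'' also lands at $2-2/p$, not $1-2/p$.
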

\begin{proof}
The proof is similar to that of Lemma \ref{l:drnotclosed} only we use the following sequence instead: Let $\psi(r) \in C^\infty_0(\R^2)$ be a radial function with $\supp \psi(r) \in B_2$ and $\psi(r) = 1/r $ for $1< r<3/2$, and $\psi(r) = 0 $ for $r<1/2$. Then define $u_n (r) = \psi(r/n)/ \| \psi(r/n) \|_{L^p_{r,\gamma-1}(\R^2)}$.
\end{proof}

\bibliographystyle{amsalpha}	% (uses file "plain.bst")
\bibliography{Pace2d}

\end{document}